\newtheorem{theorem}{Theorem}
\newtheorem{proposition}{Proposition}
\newtheorem{lemma}{Lemma}
\newtheorem{corollary}{Corollary}
\theoremstyle{definition}
\newtheorem{definition}{Definition}
\newtheorem{remark}{Remark}
\newtheorem{notation}{Notation}
\newtheorem{example}{Example}
\newtheorem{case}{Case}
\newcommand{\fsets}{{\bf Sets}_{\omega}}
\newcommand{\sets}{{\bf Sets}}
\newcommand{\Sh}{\mathrm{Sh}}
\newcommand{\pifsets}{\Cl_f \pi_1}
\newcommand{\mfsets}{\mathscr{B}_f M}
\newcommand{\Adfa}{\textrm{$A$-$\dfa$}}
\newcommand{\profmon}{\bf{Prof.Mon}}
\newcommand{\semigalois}{\bf{Semi\mathchar`-Galois}}
\newcommand{\freemon}{\bf{Free.Mon}}
\newcommand{\bialg}{\bf{BiAlg}}
\newcommand{\Et}{{\bf \textbf{{\'E}}t}}
\newcommand{\bool}{\bf{Bool}}
\newcommand{\Cl}{\mathscr{B}}
\newcommand{\dfa}{{\bf DFA}}
\newcommand{\sgc}{\langle \C, \F \rangle}
\newcommand{\rep}{\mathscr{B}_f}
\newcommand{\colim}{\mathrm{colim}}
\newcommand{\D}{{\bf C}}
\newcommand{\C}{\mathscr{C}}
\newcommand{\E}{\mathscr{E}}
\newcommand{\ff}{\mathscr{F}}
\newcommand{\G}{\mathscr{G}}
\newcommand{\varreg}{\mathfrak{R}}
\newcommand{\varmon}{\mathfrak{M}}
\newcommand{\vardfa}{\mathfrak{D}} 
\newcommand{\F}{\mathrm{F}}
\newcommand{\rr}{\mathrm{R}}
\newcommand{\V}{\mathcal{V}}
\newcommand{\vv}{{\bf V}}
\newcommand{\va}{\mathscr{V}}
\newcommand{\End}{\mathrm{End}}
\newcommand{\Auto}{\mathrm{Aut}}
\newcommand{\Hom}{\mathrm{Hom}}
\newcommand{\spec}{\mathrm{Spec}}
\newcommand{\uu}{{\bf u}}
\newcommand{\limmon}{\lim_{\Gamma} \End(\Gamma)}
\newcommand{\id}{\mathrm{id}}
\newcommand{\field}{\mathbb{F}}
\newcommand{\reg}{\mathrm{Reg}}
\renewcommand{\spec}{\mathrm{Spec}}
\begin{document}

\begin{frontmatter}



\title{Semi-galois Categories I:\\ The Classical Eilenberg Variety Theory}


\author{Takeo Uramoto}

\address{Research Center of Pure and Applied Mathematics,\\ Graduate School of Information Sciences, Tohoku University}
\begin{abstract}
This paper is an extended version of our proceedings paper \cite{Uramoto16} announced at LICS'16; in order to complement it, this version is written from a different viewpoint including topos-theoretic aspects of \cite{Uramoto16} that were not discussed there. Technically, this paper introduces and studies the class of \emph{semi-galois categories}, which extend galois categories and are dual to profinite monoids in the same way as galois categories are dual to profinite groups; the study on this class of categories is aimed at providing an axiomatic reformulation of \emph{Eilenberg's theory of varieties of regular languages}--- a branch in formal language theory that has been developed since the mid 1960s and particularly concerns systematic classification of regular languages, finite monoids and deterministic finite automata. In this paper, detailed proofs of our central results announced at LICS'16 are presented, together with topos-theoretic considerations. The main results include (I) a proof of the duality theorem between profinite monoids and semi-galois categories, extending the duality theorem between profinite groups and galois categories; based on this results on semi-galois categories we then discuss (II) a reinterpretation of Eilenberg's theory from a viewpoint of duality theorem; in relation with this reinterpretation of the theory, (III) we also give a purely topos-theoretic characterization of classifying topoi $\Cl M$ of profinite monoids $M$ among general coherent topoi, which is a topos-theoretic application of (I). This characterization states that a topos $\E$ is equivalent to the classifying topos $\Cl M$ of some profinite monoid $M$ if and only if $\E$ is (i) coherent, (ii) noetherian, and (iii) has a surjective coherent point $p: \sets \rightarrow \E$. This topos-theoretic consideration is related to the logical and geometric problems concerning Eilenberg's theory that we addressed at LICS'16, which remain open in this paper. 
\end{abstract}
\begin{keyword}
semi-galois category \sep profinite monoid \sep Eilenberg variety theory \sep classifying topos of profinite monoid


\end{keyword}

\end{frontmatter}
\section{Introduction}
\label{s1}
\noindent
This paper is an extended version of our proceedings paper \cite{Uramoto16} announced at the 31st Annual ACM/IEEE symposium on \emph{Logic in Computer Science} (LICS'16), where we discussed an axiomatic reformulation of \emph{Eilenberg's theory of varieties of regular languages} \cite{Eilenberg} (cf.\ \S \ref{s1s1}). The current paper gives detailed proofs of the central results announced there; and in order to complement the discussions in \cite{Uramoto16}, we elaborate on them with topos-theoretic considerations and new results (cf.\ \S \ref{s1s2}). In particular, as an application of our result in \cite{Uramoto16}, we provide a simple characterization of classifying topoi $\Cl M$ of profinite monoids $M$ in a purely topos-theoretic terminology: That is, a topos $\E$ is equivalent to the classifying topos $\Cl M$ of some profinite monoid $M$ if and only if $\E$ is (i) \emph{coherent}, (ii) \emph{noetherian}, and (iii) \emph{has a surjective coherent point} $p: \sets \rightarrow \E$ (cf.\ \S \ref{s6}). The motivation of proving this characterization is related to the logical / geometric problems concerning Eilenberg's theory that we addressed at LICS'16 (cf.\ \S \ref{s7}); but this result itself will be of independent interest. 

Throughout this paper, the reader is assumed to be familier with basic terminlogies in automata theory, finite semigroup theory, and category theory including topos theory. The reader who is not familiar with these fields is refered to e.g.\ \cite{Hopcroft_Ulman,Pin} for automata and regular languages; \cite{Almeida94,Rhodes_Steinberg} for finite (and profinite) semigroups; and \cite{MacLane,MacLane_Moerdijk,Johnstone,Johnstone_elephant} for categories and topoi. Other necessary references will be mentioned below at the corresponding places. 
 
The rest of the current section (\S \ref{s1}) is devoted to a brief overview of research context (\S \ref{s1s1}) and our contribution (\S \ref{s1s2}) as well as other related works (\S \ref{s1s3}). The technical argument starts from the next section (\S \ref{s2}) and ends in the sixth section (\S \ref{s6}). The last section (\S \ref{s7}) is devoted to a discussion on some future directions, which are formulated based on the results developed in this paper.

\paragraph{Acknowledgements}
We are grateful to Naohiko Hoshino, Shinya Katsumata and the annonimous reviewers of our proceedings paper \cite{Uramoto16} for their helpful comments, suggestions and careful readings, which improved the original version of that paper. We are also grateful to Samuel van Gool and Mai Gehrke for their fruitful comments and discussions at LICS'16 on the logical problem that we addressed there; in particular, Gool suggested to us considering topoi over semi-galois categories in relation with this problem. We also thank Hisashi Aratake and Yoshihiro Maruyama for discussions on classifying topoi and coherent topoi; in particular, Aratake suggested to us several references on the topic and the discussion with him motivated us to consider our result in \S \ref{s6}. This work was done during we were at Research Institute for Mathematical Sciences, Kyoto University, to whom we are grateful for their hospitality. This work was supported by JSPS Kakenhi Grant Number JP16K21115.

\subsection{Research context}
\label{s1s1}
\noindent
The current paper, as well as our previous one \cite{Uramoto16}, is related to recent reconsiderations \cite{Gehrke_Grigorieff_Pin,Rhodes_Steinberg,Adamek_general,Chen_Urbat,Bojanczyk,Adamek_category} on \emph{Eilenberg's variety theorem} \cite{Eilenberg}, which claims a bijective correspondence between \emph{varieties of regular languages} and \emph{pseudo-varieties of finite monoids} (\S \ref{s5}). This theorem was first proved in 1975 and is rather classical, but was recently refleshed \cite{Gehrke_Grigorieff_Pin,Rhodes_Steinberg,Adamek_general,Chen_Urbat,Bojanczyk,Adamek_category} in the light of Stone-type duality theorems. The current work also belongs in this research line in that we also develop a reformulation of Eilenberg's theorem from a viewpoint of duality theorem, but in a slightly different way so that our formulation of the theory is coherent with a version of Galois theory (cf.\ \S \ref{s1s2}).

Technically, on the one hand, a \emph{variety of regular languages} is defined as a class of regular languages that is closed under taking Boolean combinations, quotients by finite words, and inverse images of monoid homomorphisms; on the other hand, a \emph{pseudo-variety of finite monoids} is a class of finite monoids that is closed under taking quotients, submonoids, and finite products (cf.\ \S \ref{s5}). Although these structures are irrelevant at least a priori, Eilenberg's variety theorem indicates that there is a canonical isomorphism between the lattice consisting of varieties of regular languages and that of pseudo-varieties of finite monoids. Regardless of its ostensible abstractness, this theorem played a fundamental role in clarifying a principle behind the traditional method based on finite monoids of proving several decision problems on regular languages, particularly including the decidability results of several fragments of \emph{B\"uchi's monadic second-order logic over finite words} \cite{Buchi}. (See e.g.\ \cite{survey_logic,Pin} for more background on logic over finite words.)

One of the major motivations to reconsider this rather classical result is that Eilenberg's theorem is a good model of systematic classification of classes of formal languages; therefore, it is natural to seek a right direction to extend the theory beyond regular languages. In a sense, the recent works \cite{Gehrke_Grigorieff_Pin,Rhodes_Steinberg, Adamek_general,Chen_Urbat,Bojanczyk,Adamek_category} came to grips with this project, where the authors particularly reviewed the theory from the viewpoint of Stone-type duality theorems.

The work of Gehrke, Grigorieff and Pin \cite{Gehrke_Grigorieff_Pin}, to our knowledge, is a watershed for this project, which complemented the early ideas due to Almeida \cite{Almeida94} and Pippenger \cite{Pippenger}. The authors gave a reinterpretation of Eilenberg's theorem (to be more precise, its extended variant due to Pin \cite{Pin_var}) based on Priestley duality theorem \cite{Priestley}; independently, Rhodes and Steinberg gave in their monograph \cite{Rhodes_Steinberg} on finite semigroup theory a similar (but more compatible with commutative algebra) review on Eilenberg's original theorem based on Stone duality theorem. Their insight immediately promoted the later works due to Ad{\'a}mek, Millius, Myers and Urbat \cite{Adamek_general,Adamek_category}, Chen and Urbat \cite{Chen_Urbat}, and Boja{\'n}czyk \cite{Bojanczyk}, where they discussed generic frameworks unifying several existing variants of Eilenberg's theorem that were studied after Eilenberg but proved in somewhat technically independent ways. At least to our knowledge, it is Ad{\'a}mek et al.\ \cite{Adamek_general,Adamek_category} and Chen et al.\ \cite{Chen_Urbat} who first showed it possibile to unify some variants of Eilenberg's theorem on regular languages (e.g.\ \cite{Pin,Polak,Reutenauer}) based on the idea of Gehrke et al.\ \cite{Gehrke_Grigorieff_Pin} and Rhodes et al.\ \cite{Rhodes_Steinberg}, where they adopted the framework of universal algebra and coalgebra in particular--- by that, they also proved a new variant of Eilenberg's theorem; Boja{\'n}czyk \cite{Bojanczyk} made the first step to a uniform generalization of Eilenberg's theory for several sorts of languages including tree and infinite-word languages based on the concept of monads. This line of extensions is being continued by several authors, e.g.\ \cite{Gehrke_ultrafilter,Millius_profinite_monad,Chen_Urbat_product}, and represents one of leading research trends in the context of algebraic language theory.

\subsection{Our contribution}
\label{s1s2}
\noindent
In this paper, we develop a slightly different framework so that Eilenberg's variety theory becomes coherent with Grothendieck's formulation of Galois theory \cite{SGA}. This yet another formulation of Eilenberg's theory is (I) to develop a new approach to classical problems in the theory, as well as (II) to indicate yet another direction to extend this theory; a more detailed backgound motivation of the current study will be discussed in the last section (\S 7) with some discussions on further problems based on the results developed in this paper. Basically, the current paper is committed to developing particularly fundamental components of our intended framework. 

Technically speaking, this paper focuses on the study of the general structure of \emph{semi-galois categories} (\S \ref{s2}) in relation with a review of Eilenberg's varierty theory as well as its topos-theoretic consideration. This paper, except for the last discussion section (\S \ref{s7}), consists of five main sections (\S 2 - \S 6), which is divided roughly into three parts: That is, (\S 2 - \S 4), (\S 5), and (\S 6), in view of their respective subjects. 

\paragraph{1. General study of semi-galois categories}
In the first part (\S 2 - \S 4), we introduce the class of \emph{semi-galois categories} and investigate their general structures. The axiom of semi-galois categories is provided in \S \ref{s2}, where we also give some proto-typical examples of semi-galois categories (\S \ref{s2s2}). The main goal here is to prove the duality theorem between profinite monoids and semi-galois categories (Theorem \ref{duality}, \S \ref{s4}), which extends the duality theorem between profinite groups and galois categories \cite{SGA}. Several concepts necessary to this goal are studied in \S \ref{s3}, including \emph{galois objects} (\S \ref{s3s1}) and \emph{fundamental monoids} (\S \ref{s3s2}) of semi-galois categories. The technical results developed in this first part will be used for the consideration of the second part (\S \ref{s5}) and the third part (\S 6). 

\paragraph{2. Review of Eilenberg's variety theory}
The general study of semi-galois categories will be first used in our review on Eilenberg's variety theory \cite{Eilenberg}; the second part of this paper (\S \ref{s5}) is devoted to this consideration. Technically speaking, the central theorem in this theory is \emph{Eilenberg's variety theorem}; and our review is essentially about a reformulation of this theorem.

To be more precise, we actually review two variants of Eilenberg's variety theorem due to Straubing \cite{Straubing} and Chaubard, Pin and Straubing \cite{Chaubard_Pin_Straubing} rather than original Eilenberg's version; the reason of this specific choice of our starting point will be discussed in \S \ref{s5} as well. Briefly speaking, these theorems state canonical bijective correspondences between certain classes of (i) regular languages, (ii) finite monoids, and (iii) deterministic finite automata (DFAs) (\S \ref{section var one}); in relation to our reinterpretation of Eilenberg's theory, we give yet another proof of these theorems based on appropriate duality theorems (\S \ref{section var two}). 
This proof is not intended to simplify the original proofs; but instead, to gain a more conceptual understanding of these theorems by highlightening a duality principle behind them. This reinterpretation of the variety theorems then gives us a reason to get concerned with the general structure of semi-galois categories; in this relation, we proceed in this paper to specify the class of topoi equivalent to the class of semi-galois categories in order to obtain a topos-based classification of (local) varieties of regular languages. (See below.)

\paragraph{3. Topos Representation}
Technically speaking, the subject of the third part (\S \ref{s6}) is to give a fragment of the duality between \emph{pretopoi} and \emph{coherent topoi} \cite{Johnstone}. On the one hand, pretopoi are those categories which satisfy certain exactness conditions; clearly, the class of semi-galois categories is a proper subclass of that of pretopoi. On the other hand, coherent topoi are those topoi which are equivalent to sheaf topoi $\Sh(\C,J_\C)$ over those sites $(\C,J_\C)$ which satisfy certain finitary condition. In the literature on topos theory, it has been known that there is a duality between pretopoi and coherent topoi (cf.\ \S \ref{s6s1}). In the case of galois categories (hence pretopoi), the coherent topoi dual to them are exactly the classifying topoi $\Cl G$ of profinite groups $G$; interestingly, this class of topoi has several purely-topos theoretic characterizations: For instance, a topos $\E$ is equivalent to the classifying topos $\Cl G$ of some profinite group $G$ if and only if $\E$ is a \emph{hyperconnected pointed topos with proper diagonal} \cite{Moerdijk_proper}. This gives a ``galois-categorical fragment'' of the duality between pretopoi and coherent topoi, in that it specifies the class of coherent topoi exactly dual to galois categories among general coherent topoi.

We give in \S \ref{s6} a semi-galois categorical counterpart to this fact, applying our duality theorem between profinite monoids and semi-galois categories. As mentioned above, the class of semi-galois categories is a proper subclass of pretopoi; and it can be shown that the coherent topoi dual to semi-galois categories are exactly the classifying topoi $\Cl M$ of profinite monoids $M$ as in the case of galois categories. We then see that this class of coherent topoi also admits a purely topos-theoretic characterization: That is, a topos $\E$ is equivalent to the classifying topos $\Cl M$ of some profinite monoid $M$ if and only if $\E$ is (i) coherent, (ii) \emph{noetherian}, and (iii) \emph{has a surjective coherent point $p: \sets \rightarrow \E$} (cf.\ \S \ref{s6s2}). This characterization gives a semi-galois categorical fragment of the duality between pretopoi and coherent topoi. Together with the result in \S \ref{s5}, this provides a topos-based classification of (local) varieties of regular languages in the precise sense of canonical bijective correspondence described in \S \ref{s5} and \S \ref{s6}. 

\subsection{Other related work}
\label{s1s3}
\noindent
The current work also belongs in the context of studies on several duality theorems, particularly, between several classes of categories (such as galois categories \cite{SGA}, tannakian categories in the sense of Deligne and Milne \cite{Deligne_Milne}, and their several variants) and corresponding classes of algebras (such as profinite groups, algebraic groups etc.). The most general result in this direction is, to the best of our knowledge, the work due to Sch\"appi \cite{Schappi}, where he developed a generic framework of duality between tannakian-type enriched categories and comonoids in symmetric monoidal categories; the opposite of our duality theorem is closely related to his duality in that opposite of our duality concerns the duality between (opposite of) semi-galois categories and comonoids in the category $\bool$ of Boolean algebras. Nevertheless, an unignorable gap still exists between our duality theorem and the one due to Sch\"appi maily because of the difference of the basic setting. (The opposites of semi-galois categories are still those enriched in $\sets$, rather than in $\bool$.) As far as we could find, it is Alain Brugui{\`e}res who first claimed in his 2013 slide \cite{Bruguieres} a generic framework that aims to unify several existing duality theorems for tannakian-type categories as well as their opposite analogue (i.e.\ the one for galois categories); importantly, his claim of duality theory subsumes an essential part of our duality theorem. Although, according to him \cite{Bruguieres_personal}, his preprint still remains in preparation, his work should be recognized here as well.


Provided here is a proof of the duality theorem between profinite monoids and semi-galois categories in the full form of a contravariant equivalence between suitable categories. Our proof is given intentionally as a natural extension of an elementary proof of the duality between profinite groups and galois categories. This elementary proof will be valuable in its own right mostly for those who are particularly concerned with profinite monoids and with comparison with the case of profinite groups: The structure of profinite monoids is still mysterious in general, while their analysis plays a fundamental role in Eilenberg's theory \cite{Almeida}. We shall not proceed to generalize this duality theorem itself in this paper, since we instead proceed to another direction in \S \ref{s6}: We consider another dual (coherent topoi) of semi-galois categories and see that they are axiomatized exactly as coherent noetherian topoi with coherent surjective points. This consideration serves yet another (topos-theoretic) viewpoint on the classical Eilenberg variety theory. 

\section{Semi-galois Categories}
\label{s2}
\noindent
Here we prepare basic concepts and constructions on semi-galois categories (\S \ref{s2s1} -- \S \ref{s2s2}). The axiom of semi-galois categories is described in the first subsection \S \ref{s2s1}, where we also fix some general notations. Several proto-typical examples of semi-galois categories as well as their corresponding topoi are discussed in the second subsection \S \ref{s2s2}. Our detailed study on the general structure of semi-galois categories will start from the next section \S \ref{s3}; our first goal is to prove in \S \ref{s4} the duality theorem for semi-galois categories (Theorem \ref{duality}). 

\subsection{Axiom and notations}
\label{s2s1}
\noindent
\begin{notation}
Throughout this paper, we denote by $\sets$ the category of sets and maps; by $\fsets$ the full subcategory of $\sets$ consisting of finite sets. The empty set is denoted by $\emptyset$, while the singleton set $\{ \emptyset \}$ is denoted by $1$. Although $\sets$ can be replaced by more general topoi in the following argument, we assume that categories, sites, and topoi in this paper are all over $\sets$ in order to avoid inessential abstractness of the theory. 
\end{notation}

\emph{Semi-galois categories} are defined as follows, which constitute the class of categories of our central concern in this paper. 

\begin{definition}[Semi-galois category]
A \emph{semi-galois category} is a pair $\sgc$ of (i) an (essentially) small category $\C$ and (ii) a functor $\F:\C \rightarrow \fsets$ satisfying the following six axioms: 
\begin{description}
 \item[$C_0$)] $\C$ has the initial object $\emptyset_\C$ and the final object $1_\C$;
 \item[$C_1$)] $\C$ has finite pullbacks and finite pushouts;
 \item[$C_2$)] every arrow $f:X \rightarrow Y$ in $\C$ factors as $f = j_f \circ \pi_f$ such that $\pi_f: X \twoheadrightarrow Z$ is an epimorphism, and $j_f: Z \hookrightarrow Y$ is a monomorphism;
      \begin{equation*}
        \xymatrix{ X \ar[rr]^f \ar@/_/@{->>}[rd]_{\pi_f} && Y \\
                   & Z \ar@/_/@{^{(}->}[ru]_{j_f} & 
                  }
      \end{equation*}
 \item[$F_0$)] $\F(\emptyset_\C)=\emptyset$ and $\F(1_\C)=1$;
 \item[$F_1$)] $\F$ preserves finite pullbacks and finite pushouts;
 \item[$F_2)$] $\F$ reflects isomorphisms.
\end{description}
The functor $\F$ is called a \emph{fiber functor} on $\C$; sometimes we call $\C$ itself a semi-galois category if it forms a semi-galois category $\sgc$ with some fiber functor $\F$ on it. 
\end{definition}

\begin{notation}
\label{composition of arrows}
Here, the composition of arrows $f: X \rightarrow Y$, $g: Y \rightarrow Z$ in a semi-galois category is denoted $g \circ f: X \rightarrow Z$ and sometimes by $gf$ for short. Also, for each $f: X \rightarrow Y$ in $\C$, the corresponding map $\F(f): \F(X) \rightarrow \F(Y)$ in $\fsets$ is denoted simply by $f_*: \F(X) \rightarrow \F(Y)$, and its action on elements $\xi \in \F(X)$ is \emph{from the left}, i.e.\ denoted $f_* \xi \in \F(X)$. (Note that this and the next notations are reverse to those in our proceedings paper \cite{Uramoto16}.)
\end{notation}

\begin{notation}
\label{composition of natural transformation}
 To the contrary, for notational reasons, natural transformations $\phi: \F \Rightarrow \F$ on a fiber functor $\F$ are assumed to act from the right on each element $\xi \in \F(X)$ at each object $X \in \C$. So, we denote by $\xi \phi_X \in \F(X)$ the action of a natural transformation $\phi: \F \Rightarrow \F$ on $\xi \in \F(X)$ at $X \in \C$; the composition of $\phi$ and $\psi: \F \Rightarrow \F$ (first $\phi$; second $\psi$) is written as $\phi \cdot \psi$ or simply $\phi \psi$. (Thus one has $\xi (\phi \cdot \psi)_X = (\xi \phi_X) \psi_X$ for each $X \in \C$, $\xi \in \F(X)$ and $\phi, \psi: \F \Rightarrow \F$.)
\end{notation}

\begin{notation}
For symplicity, the initial object $\emptyset_\C$ and the final object $1_\C$ of $\C$ are abusively denoted by $\emptyset$ and $1$ respectively. Also, for each $f: X \rightarrow Y$ in $\C$, we say \emph{the image of $f$} to mean an object $Z$ such that $f:X \rightarrow Y$ factors as $X \twoheadrightarrow Z \hookrightarrow Y$ as in the axiom $C_2$ above. Since such an object $Z$ is unique up to isomorphism, we shall denote $Z =: \mathrm{Im}(f)$ and say ``the'' image of $f$. 
\end{notation}



\subsection{Examples}
\label{s2s2}
\noindent

\subsubsection*{Example 1: The category of finite $M$-sets}
\label{classifying topoi of profinite monoids}
\noindent
Given a profinite monoid $M$, we can construct the category of (finite) \emph{right $M$-sets} and \emph{$M$-equivariant maps}, which provides prototypes of topoi (resp.\ semi-galois categories) studied in this paper. Formally, these structures are defined as follows:

\begin{definition}[right $M$-set]
Let $M$ be a profinite monoid. A \emph{right $M$-set} (or simply, \emph{$M$-set}) is a pair $\langle S, \rho \rangle$ of (i) a set $S \in \sets$ and (ii) a continuous map $\rho: S \times M \rightarrow S$ with respect to the discrete topology on $S$ that satisfies (ii-a) $\rho (\rho(\xi,m), n) = \rho (\xi, mn)$ and (ii-b) $\rho (\xi,1) = \xi$ for every $\xi \in S$ and $m,n \in M$. (Here $1 \in M$ denotes the identity of $M$.) An $M$-set $\langle S, \rho \rangle$ is called \emph{finite} if $S$ is a finite set. 
\end{definition}

\noindent
We use capital romans $X,Y,Z \cdots$ to denote $M$-sets; and for each $M$-set $X$, we use symbols $S_X$ and $\rho_X$ to mean $X=\langle S_X, \rho_X \rangle$. That is, $S_X$ is the \emph{underlying set} of the $M$-set $X$, while $\rho_X: S_X \times M \rightarrow S_X$ is the \emph{action map} for $X$. Given an $M$-set $X$, we write $\xi \cdot m := \rho_X (\xi, m)$ for each $\xi \in S_X$ and $m \in M$.

\begin{definition}[$M$-equivariant map]
 Let $X, Y$ be $M$-sets. An \emph{$M$-equivariant map} $f:X \rightarrow Y$ from $X$ to $Y$ is a map $f_*: S_X \rightarrow S_Y$ between their underlying sets that \emph{commutes with $M$-actions}, that is, $f_* (\xi \cdot m) =  (f_* \xi) \cdot m$ for every $\xi \in S_X$ and $m \in M$. 
\end{definition}

\noindent
These structures form a category:

\begin{definition}[The categories $\Cl M$ and $\Cl_f M$]
The category $\Cl M$ is defined as the one whose objects are (not necessarily finite) right $M$-sets and arrows are $M$-equivariant maps between them; also, the full subcategory of $\Cl M$ consisting of finite right $M$-sets is denoted $\Cl_f M$. 
\end{definition}

\noindent
The latter category, $\Cl_f M$, is equipped with the canonical functor $\F_M: \Cl_f M \rightarrow \fsets$ that assigns to each $X \in \Cl_f M$ the underlying set $S_X$; and to each $f: X \rightarrow Y$ the map $f_*: S_X \rightarrow S_Y$. Then the pair $\langle \Cl_f M, \F_M \rangle$ forms a semi-galois category. 

\begin{remark}[left and right]
 These facts and the following arguments hold even when right $M$-sets are replaced by \emph{left} $M$-sets, although one needs to shift notations given in \S \ref{s2s1} from right to left and vice versa. In this paper we deal with right $M$-sets (unlike we dealt with left $M$-sets in \cite{Uramoto16}) in order to be compatible with a convention in automata theory. 
\end{remark}

\begin{remark}[$M$-sets for profinite semigroups]
 One can define right $M$-sets and $M$-equivariant maps for profinite \emph{semigroups} $M$ in a similar way to the case of profinite monoids, except for (ii-b); and the category of finite $M$-sets and $M$-equivariant maps also forms a semi-galois category. 
\end{remark}

\begin{remark}
 While the semi-galois category $\Cl_f M$ itself is not a topos in general (indeed, just a \emph{pretopos}), the category $\Cl M$ on the other forms a topos. In fact, $\Cl M$ is the \emph{classifying topos} of the profinite monoid $M$. As known in topos theory, \emph{coherent topoi} (including $\Cl M$) are dually equivalent to pretopoi (including $\Cl_f M$) in a certain definite way; and under this equivalence, the semi-galois category $\Cl_f M$ corresponds to the classifying topos $\Cl M$ (cf.\ \S \ref{s6}). The study on semi-galois categories (\S \ref{s2} -- \S \ref{s4}) will be used in \S \ref{s6} to give a characterization of the topoi of the form $\Cl M$ (i.e.\ exactly the class of classifying topoi of profinite monoids) among the class of general coherent topoi in the same manner as that classifying topoi of profinite groups could be characterized in several ways. 
\end{remark}

\subsubsection*{Example 2: The category of deterministic finite automata}
\noindent
The semi-galois category $\Adfa$ whose objects are \emph{deterministic finite automata} (Definition \ref{dfa}) is of central concern in \S \ref{s5}. As briefly mentioned below (cf.\ Remark \ref{dfas are simplest}), this semi-galois category is in some sense the simplest example of semi-galois categories, but will play a central role in the reinterpretation of classical concepts in Eilenberg's theory. For the basic concepts concerning DFAs and regular languages, the reader is refered to e.g.\ \cite{Hopcroft_Ulman,Pin}. 

\begin{notation}
First let us fix some general notations and terminology on finite words. We call \emph{alphabets} any finite non-empty sets; symbols $A, B \cdots$ are used to denote alphabets. Elements of an alphabet $A$ are called \emph{letters}; finite sequences $w=a_1 a_2 \cdots a_n$ of letters $a_i$ in $A$ are called \emph{finite words over $A$}, where $n$ is the \emph{length} of $w$ and denoted $n=|w|$. The \emph{empty word} is the one of length $0$ and denoted $\varepsilon$. The set of all finite words over $A$ is denoted $A^*$, which forms a monoid (free over $A$) with multiplication of finite words $u, v \in A^*$ given by the concatenation $uv \in A^*$; the empty word $\varepsilon \in A^*$ is the unit of this monoid $A^*$. 
\end{notation}

\begin{definition}[deterministic finite automaton]
\label{dfa}
 Let $A$ be an alphabet. A \emph{deterministic finite automaton} (DFA) over $A$ is a pair $\langle S,\rho \rangle$ of (i) a finite set $S$ and (ii) a map $\rho: S \times A \rightarrow S$. Elements in $S$ are called \emph{states} of the DFA, while $\rho$ is called its \emph{transition function}. 
\end{definition}

\begin{notation}
\label{transition function}
The transition function $\rho: S \times A \rightarrow S$ can be extended to (and so, identified with) a map $\hat{\rho} : S \times A^* \rightarrow S$ by induction on the length of finite words: That is, given $\rho: S \times A \rightarrow S$, we can define $\hat{\rho}: S \times A^* \rightarrow S$ by $\hat{\rho}(\xi,\varepsilon) := \xi$ and $\hat{\rho}(\xi,wa) := \hat{\rho}(\hat{\rho}(\xi,w), a)$ for $w \in A^*$ and $a \in A$. We denote $\hat{\rho}$ simply by $\rho: S \times A^* \rightarrow S$; and write $\rho(\xi,u) =: \xi \cdot u$ for each $\xi \in S$ and $u \in A^*$. Moreover, similarly to the case of $M$-sets, we use capital romans $X,Y,Z \cdots$ to denote DFAs; and if $X$ is a DFA, symbols $S_X$ and $\rho_X$ represent respectively the set of states in $X$ and its transition function, i.e.\ $X = \langle S_X,\rho_X \rangle$. 
\end{notation}

\begin{remark}[DFA as graph]
\label{dfa as graph}
 DFAs over an alphabet $A$ are often writen as a kind of \emph{directed graphs} whose edges are labeled by letters in $A$; to see this convention and to use it below, a few remarks and notation are in order here. (See also Notation \ref{transition function graphically}.)

 Let $A$ be an alphabet and let $X=\langle S ,\rho \rangle$ be a DFA over $A$. For states $\xi, \xi' \in S$ and a letter $a \in A$, we write $\xi \xrightarrow{a} \xi'$ if and only if $\xi' =\xi \cdot a$, i.e.\ $\xi' = \rho(\xi, a)$. In more graph-theoretic terminology, this says that a DFA $X$ defines a directed finite graph $\Gamma_X$ whose vertices are states $\xi \in S$, and edges are labeled by letters $a \in A$ as $\xi \xrightarrow{a} \xi'$ in $\Gamma$. Conversely, a directed finite graph $\Gamma$ with edges labeled by $A$ arises from a DFA exactly when for each vertex $\xi$ in $\Gamma$ and $a \in A$ there is a \emph{unique} vertex $\xi'$ such that $\xi \xrightarrow{a} \xi'$ in $\Gamma$. In other words, under this correspondence, DFAs are equivalent structures to such directed finite graphs. 
\end{remark}

\begin{notation}
\label{transition function graphically}
The above notation (Notation \ref{transition function}) is rephrased in this graph-theoretic terminology. Let $X=\langle S,\rho \rangle$ be a DFA over $A$; $u = a_1 a_2 \cdots a_n \in A^*$ be a finite word over $A$; and $\xi \in S$ be a state in $X$. Then we have $\xi' = \xi \cdot u$ (i.e.\ $\xi' = \hat{\rho}(\xi,u)$) if and only if there is a \emph{path from $\xi$ to $\xi'$ labeled by $u$}, i.e.\ there are adjacent edges $\xi \xrightarrow{a_1} \xi_1 \xrightarrow{a_2} \xi_2 \cdots \xrightarrow{a_n} \xi_n = \xi'$ starting from $\xi$ and ending at $\xi'$ in the associated graph $\Gamma_X$. When this is the case, we write $\xi \xrightarrow{u} \xi'$ in what follows, representing $\xi \cdot u = \xi'$. 
\end{notation}

The morphisms between DFAs are those called \emph{transition-preserving maps}: 

\begin{definition}[transition-preserving map]
 Let $A$ be an alphabet and $X, Y$ be DFAs over $A$. A \emph{transition-preserving map} $f: X \rightarrow Y$ from $X$ to $Y$ is a map $f_*: S_X \rightarrow S_Y$ such that $(f_* \xi) \cdot u = f_* (\xi \cdot u)$ for every $u \in A^*$ and $\xi \in S_X$. In other words, a transition-preserving map $f: X \rightarrow Y$ is a map of directed graphs $\Gamma_X \rightarrow \Gamma_Y$ that preserves labels on edges.
\end{definition}

\begin{definition}[The category of DFAs]
 Let $A$ be an alphabet. Then the category $\Adfa$ is defined as the one whose objects are DFAs over $A$ and arrows are transition-preserving maps between them. 
\end{definition}

This category $\Adfa$ is equipped with a functor $\F_A: \Adfa \rightarrow \fsets$ that assigns to each $X \in \Adfa$ the set of states $S_X$; and to each $f: X \rightarrow Y$ the map $f_*: S_X \rightarrow S_Y$. Then the pair $\langle \Adfa, \F_A \rangle$ forms a semi-galois category. 

\begin{remark}
\label{dfas are simplest}
 It is readily seen that the category $\Adfa$ is canonically equivalent to the category $\Cl_f\widehat{A^*}$ (cf.\ \emph{Example 1}) for the free profinite monoid $\widehat{A^*}$. This is simply because DFAs over $A$, say $\rho: S \times A^* \rightarrow S$, are essentially the same as finite $\widehat{A^*}$-sets, say $\rho: S \times \widehat{A^*} \rightarrow S$; note that the monoid $A^*$ is dense in $\widehat{A^*}$. In this sense, the category $\Adfa$ of DFAs is the simplest example of semi-galois categories. 
\end{remark}


\subsubsection*{Example 3: galois category}
\noindent
\emph{Galois categories} \cite{SGA} are always semi-galois categories, although this fact is not very apparent from the original axiom of galois categories given as follows. (The reader interested in galois categories is refered to more detailed references e.g.\ \cite{Lenstra,Szamuely,Tonini} on galois categories.)

\begin{definition}[galois category \cite{SGA}]
A \emph{galois category} is a pair $\sgc$ of (i) an (essentially) small category $\C$ and (ii) a functor $\F:\C \rightarrow \fsets$ satisfying the following seven axioms: 
\begin{description}
 \item[$C_0$)] $\C$ has the final object $1_\C$, finite pullbacks, and finite coproducts, including the initial object $\emptyset_\C$;
 \item[$C_1$)] every object $X \in \C$ has \emph{universal quotients $X/G$ by any subgroup} $G \leq \mathrm{Aut}(X)$ (cf.\ Remark \ref{quotients});
 \item[$C_2$)] every arrow $f:X \rightarrow Y$ in $\C$ factors as $f = j_f \circ \pi_f$ such that $\pi_f: X \twoheadrightarrow Z$ is an epimorphism, and $j_f: Z \hookrightarrow Y$ is a monomorphism;
 \item[$C_3$)] every monomorphism $j: X \hookrightarrow Y$ is a direct summand of a coproduct $X \sqcup X' = Y$; 
 \item[$F_0$)] $\F(1_\C)=1$, and $\F$ preserves finite pullbacks and finite coproducts;
 \item[$F_1$)] $\F$ preserves universal quotients and epimorphisms;
 \item[$F_2)$] $\F$ reflects isomorphisms.
\end{description}
Similarly the functor $\F$ is called a \emph{fiber functor} on $\C$; again, we call $\C$ itself a galois category if it is for some fiber functor on it. 
\end{definition}

\begin{remark}[universal quotient]
\label{quotients}
 Let $X$ be an object in $\C$; and $G \leq \Auto(X)$ be a subgroup of automorphisms on $X$. The \emph{universal quotient} of $X$ by $G$ is defined as an arrow $p: X \rightarrow Y$ such that:
 \begin{description}
  \item[$Q_0$)] the arrow $p:X \rightarrow Y$ \emph{annihilates} $G$, i.e.\ we have $p\circ g = p$ for every $g \in G$; 
  \item[$Q_1$)] if $q: X \rightarrow Z$ annihilates $G$, then there is a unique arrow $q': Y \rightarrow Z$ such that the following diagram commutes:
       \begin{eqnarray*}
         \xymatrix{ X \ar[r]^q \ar[d]_p & Z  \\
                    Y \ar@{.>}[ru]_{\exists! q'} &
                  }
       \end{eqnarray*}
 \end{description}
 If $p: X \rightarrow Y$ satisfies $Q_0$ and $Q_1$, then one can see that it is necessarily an epimorphism and unique up to canonical isomorphism; thus we write $p: X \twoheadrightarrow X/G$ to mean that $p$ is the universal quotient of $X$ by $G$; and if we denote $Y=X/G$, it means that $Y$ is the codomain of the universal quotient $p: X \rightarrow X/G$. The axiom $C_1$ says that, more precisely, there exists a universal quotient $p: X \rightarrow X/G$ for every $X \in \C$ and $G \leq \Auto(X)$. 
\end{remark}

\begin{remark}[galois and semi-galois]
 Note that the axiom of galois categories itself does not require the existence of e.g.\ all finite pushouts (more than just coproducts), while that of semi-galois categories does. So it is not apparent from the axiom itself that galois categories are always semi-galois, i.e.\ have all finite pushouts. This is, however, the case because it is known that every galois category $\C$ is equivalent to the category of the form $\Cl_f G$ (cf.\ \emph{Example 1}) for some profinite group $G$; and as mentioned in \emph{Example 1}, $\Cl_f G$ satisfies the axiom of semi-galois category. 

The only difference that characterizes galois categories among semi-galois categories is the axiom $C_3$ above, namely, \emph{``every monomorphism $j: X \hookrightarrow Y$ is a direct summand of a coproduct $X \sqcup X' = Y$''}. To be more precise, a semi-galois category $\sgc$ is a galois category if and only if it satisfies $C_3$; and not every semi-galois category satisfies $C_3$. In fact, the semi-galois category $\Adfa$ of DFAs (cf.\ Remark \ref{fundamental facts on galois categories}) is a typical example that fails to satisfy $C_3$. 

\end{remark}

\begin{remark}[Some references on galois categories]
\label{fundamental facts on galois categories}
 A fundamental result on galois categories is the duality theorem between (suitable categories of) profinite groups and galois categories (e.g.\ \cite{Tonini}). In particular this duality theorem implies that (i) every galois category $\sgc$ is in fact equivalent to that of the form $\langle \Cl_f G, \F_G \rangle$ for a profinite group $G$ called the \emph{fundamental group} of $\sgc$ denoted $\pi_1(\C,\F)$; and (ii) that, roughly speaking, there is a bijective correspondence between ``galois subcategories'' of $\sgc$ and closed subgroups of $\pi_1(\C,\F)$ (cf.\ \S \ref{s4}). The usage of galois categories in topology and algebraic geometry can be found in e.g.\ \cite{Lenstra,Szamuely}--- where (ii) above instanciates the classical Galois' fundamental theorems for field extensions and covering spaces; for a well-organized proof of these fundamental results on galois categories the reader is refered to e.g.\ \cite{Tonini} as well as \cite{Lenstra}. 

\end{remark}

\section{Fundamental Monoids}
\label{s3}
\noindent
From now on we are going to develop the general theory on the structure of semi-galois categories (\S \ref{s3} -- \S \ref{s4}). The duality theorem between profinite monoids and semi-galois categories is proved in Theorem \ref{duality}, \S \ref{s4}, extending that between profinite groups and galois categories. This duality theorem is used in this paper in two ways: First we will review Eilenberg's theorem in \S \ref{s5} based on the duality theorem; and motivated by this review of the theory, we then give a relatively simple characterization of classifying topoi $\Cl M$ of profinite monoids $M$. 

In the current section \S \ref{s3}, we prepare a necessary construction of \emph{fundamental monoids} $\pi_1(\C,\F)$ of semi-galois categories $\sgc$ (\S \ref{s3s3}), which are profinite monoids canonically associated to semi-galois categories. In the case where $\sgc$ is a galois category, the fundamental monoid $\pi_1(\C,\F)$ is equal to the classical fundamental group. The construction here is based on the concept of \emph{galois objects} in semi-galois categories $\sgc$ (\S \ref{s3s2}), naturally extending that of fundamental groups; before discussing this construction, we firstly make some elementary general remarks on semi-galois categories in \S \ref{s3s1}, which will be used freely throughout this paper without any further comments.

\subsection{Elementary remarks}
\label{s3s1}
\noindent
\begin{lemma}
 A semi-galois category has equalizers and coequalizers for every pair of parallel arrows $h,k: X \rightarrow Y$; and also, the fiber functor $\F$ preserves them. 
\end{lemma}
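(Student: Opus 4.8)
The plan is to construct equalizers and coequalizers directly from the structure guaranteed by the semi-galois axioms, rather than to invoke any representation theorem. Recall that the axioms give us finite pullbacks and finite pushouts ($C_1$), a final and an initial object ($C_0$), and a fiber functor $\F$ preserving all of these ($F_0$, $F_1$). The standard categorical fact is that equalizers can be built from pullbacks together with products, and dually coequalizers from pushouts together with coproducts. So my first move is to reduce the problem to the existence of suitable products and coproducts: given parallel arrows $h,k: X \rightarrow Y$, I would like to form the pullback of the pair $\langle \mathrm{id}_X, h \rangle, \langle \mathrm{id}_X, k \rangle: X \rightarrow X \times Y$, whose pullback is precisely the equalizer of $h$ and $k$.

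The key steps, in order, are as follows. First I would establish that $\C$ has finite products: the product $X \times Y$ is the pullback of the unique arrows $X \rightarrow 1$ and $Y \rightarrow 1$ into the final object, which exists by $C_0$ and $C_1$. Dually, finite coproducts $X \sqcup Y$ arise as pushouts under the initial object $\emptyset$, using $C_0$ and $C_1$ again. Second, with binary products in hand, I form the graph morphisms $\langle \mathrm{id}_X, h \rangle$ and $\langle \mathrm{id}_X, k \rangle$ from $X$ into $X \times Y$ and take their pullback $E \rightarrow X$; the universal property of this pullback, combined with the fact that the first projection makes $E \rightarrow X$ a monomorphism equalizing $h$ and $k$, yields the equalizer. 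Third, the coequalizer is obtained by the exactly dual construction, forming the pushout of the corresponding cographs $X \sqcup Y \leftarrow Y$ out of the coproduct. Fourth, for the preservation claim, I would chase $\F$ through each of these constructions: since $\F$ preserves finite pullbacks and pushouts by $F_1$, and preserves the initial and final objects by $F_0$, it preserves the derived finite products and coproducts, and hence preserves the equalizers and coequalizers built from them. Concretely, $\F$ of the equalizer diagram is again a pullback-of-graphs diagram in $\fsets$, which computes the equalizer there.

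The part demanding the most care will be verifying that the pullback-of-graphs really does produce an equalizer and not merely a monomorphism into $X$ with some weaker property, and symmetrically that the pushout-of-cographs yields a genuine coequalizer. The subtlety is that $X \times Y$ and $X \sqcup Y$ must be shown to be honest categorical (co)products in $\C$ with the expected universal properties, so that the graph and cograph arrows are well-defined and their (co)limits carry the correct mediating-arrow uniqueness. I would handle this by checking the universal properties elementwise after applying $\F$ where convenient, but the categorical construction itself must be argued in $\C$; since $\F$ reflects isomorphisms ($F_2$), once the candidate (co)equalizer agrees with the expected one after applying $\F$ and the comparison map is natural, reflection of isomorphisms can be used to conclude the comparison is an isomorphism in $\C$. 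Thus $F_2$ provides a convenient safety net for transporting the verification from $\fsets$ back to $\C$ wherever the direct categorical argument becomes cumbersome.
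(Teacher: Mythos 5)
Your proposal is correct and follows essentially the same route as the paper, which simply cites the standard fact (from MacLane) that equalizers and coequalizers are constructible from finite pullbacks/pushouts together with the terminal/initial object, and then notes that $\F$ preserves all of these ingredients by $F_0$ and $F_1$. Your elaboration via products-as-pullbacks-over-$1$, graphs, and the dual cograph construction is exactly the content of that cited fact (modulo a harmless arrow-direction slip in writing the cograph), so nothing further is needed.
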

\begin{proof}
 This follows from the general fact that equalizers (resp.\ coequalizers) can be constructed using the initial object $\emptyset$ and finite pullbacks (resp.\ the final object $1$ and finite pushouts). See MacLane \cite{MacLane}. 
\end{proof}

\begin{lemma}
 Let $f: X \rightarrow Y$ be an arrow in a semi-galois category $\langle \C, \F \rangle$. 
 \begin{enumerate}
  \item $f$ is an epimorphism in $\C$ if and only if $f_*$ is a surjection in $\fsets$;
  \item $f$ is a monomorphism in $\C$ if and only if $f_*$ is an injection in $\fsets$.
 \end{enumerate}
\end{lemma}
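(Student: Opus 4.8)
The plan is to characterize monomorphisms and epimorphisms internally, via their kernel pairs and cokernel pairs, and then to transport these characterizations across $\F$ using that $\F$ preserves finite pullbacks and pushouts (axiom $F_1$) and reflects isomorphisms (axiom $F_2$). Both statements then follow by the same argument, dualized, and the forward directions do \emph{not} require $\F$ to preserve monos/epis directly (which functors need not do in general); preservation of isomorphisms is enough once the property has been re-encoded as an isomorphism.

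For (2), I would first record the purely categorical fact that, in any category possessing the relevant pullback, an arrow $f: X \rightarrow Y$ is a monomorphism if and only if the diagonal $\Delta: X \rightarrow X \times_Y X$ into its kernel pair is an isomorphism (its inverse being either projection $p_1 = p_2$). Since $\C$ has finite pullbacks ($C_1$), this applies. Applying $\F$ and using that $\F$ preserves the pullback $X \times_Y X$, so that $\F(X \times_Y X) \cong \F(X) \times_{\F(Y)} \F(X)$ canonically with $\F$ carrying the two projections to the two projections, one checks that $\F(\Delta)$ is identified with the diagonal $\F(X) \rightarrow \F(X) \times_{\F(Y)} \F(X)$ of $f_*$. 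In $\fsets$ this diagonal is an isomorphism precisely when $f_*$ is injective. Hence: if $f$ is mono then $\Delta$ is iso, so $\F(\Delta)$ is iso and $f_*$ is injective; conversely if $f_*$ is injective then $\F(\Delta)$ is iso, whence $\Delta$ is iso by $F_2$ and $f$ is mono.

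For (1), I would run the dual argument with the cokernel pair. Since $\C$ has finite pushouts ($C_1$), the pushout $Y \sqcup_X Y$ exists, and $f$ is an epimorphism if and only if the codiagonal $\nabla: Y \sqcup_X Y \rightarrow Y$ is an isomorphism. As $\F$ preserves pushouts, $\F(\nabla)$ is identified with the codiagonal of $f_*$, and in $\fsets$ this is an isomorphism exactly when $f_*$ is surjective (the two copies of $\F(Y)$ being glued along the image of $f_*$, which is all of $\F(Y)$ iff $f_*$ is onto). Preservation and reflection of isomorphisms by $\F$ then yield the equivalence exactly as before.

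The only genuinely computational points are the two characterizations inside $\fsets$: that the kernel pair of a map collapses onto the diagonal exactly for injections, and that the cokernel pair collapses onto the codiagonal exactly for surjections. The latter is the more delicate of the two, since pushouts in $\sets$ are formed as quotients by a generated equivalence relation rather than as a simple subset; I expect this verification of the cokernel pair of a surjection to be the main obstacle, though it remains elementary. Everything else is formal, resting only on $F_1$ and $F_2$.
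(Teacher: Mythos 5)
Your proof is correct and is essentially the paper's own argument: the paper phrases the key fact as ``$f$ is an epimorphism iff the square with two copies of $f$ and two identities is a pushout,'' which is exactly your statement that the codiagonal $Y \sqcup_X Y \rightarrow Y$ of the cokernel pair is an isomorphism, and it likewise concludes via $F_1$ and $F_2$, treating the mono case dually. Your writeup simply spells out the verifications in $\fsets$ that the paper leaves implicit.
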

\begin{proof}
 Firstly, recall that an arrow $f: X \rightarrow Y$ in a category is an epimorphism if and only if the following diagram is a pushout diagram:
\begin{eqnarray*}
 \xymatrix{
   X \ar[r]^f \ar[d]_f & Y \ar@{=}[d]^{\id_Y} \\
   Y \ar@{=}[r]_{\id_Y} & Y
} 
\end{eqnarray*}
The first claim follows from this fact together with the axiom $F_1$ that the fiber functor $\F: \C \rightarrow \fsets$ preserves finite pushouts and the axiom $F_2$ that $\F$ reflects isomorphisms. The second claim is similar. 
\end{proof}

\begin{lemma}
 The fiber functor $\F: \C \rightarrow \fsets$ is faithful: i.e.\ for each parallel arrows $h,k: X \rightarrow Y$, the equality $h_* = k_*$ implies $h=k$. 
\end{lemma}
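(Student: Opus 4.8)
The plan is to exploit the two facts just established in this subsection: that a semi-galois category has equalizers which $\F$ preserves, and that $\F$ reflects isomorphisms (axiom $F_2$). Faithfulness is then the standard ``equalizer trick'': given $h, k: X \rightarrow Y$ with $h_* = k_*$, one shows that the equalizer of $h$ and $k$ is forced to be an isomorphism, whence $h = k$.

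Concretely, I would first form the equalizer $e: E \rightarrow X$ of the parallel pair $h, k: X \rightarrow Y$, which exists by the first lemma of this subsection. Its defining property gives $h \circ e = k \circ e$. Since $\F$ preserves equalizers (again by that lemma), the map $e_* = \F(e): \F(E) \rightarrow \F(X)$ is the equalizer of the pair $h_*, k_*: \F(X) \rightarrow \F(Y)$ in $\fsets$. Here the hypothesis enters: because $h_* = k_*$, the equalizer of this pair in $\fsets$ is, up to canonical isomorphism, the identity $\id_{\F(X)}$, since $\{ \xi \in \F(X) \mid h_* \xi = k_* \xi \} = \F(X)$. Hence $e_*$ is an isomorphism in $\fsets$. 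Since $\F$ reflects isomorphisms by $F_2$, the arrow $e: E \rightarrow X$ is itself an isomorphism in $\C$, and cancelling $e$ on the right of $h \circ e = k \circ e$ yields $h = k$.

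I expect no real obstacle here: faithfulness is a purely formal consequence of $\F$ preserving equalizers and reflecting isomorphisms, and both of these are already in hand. The only point to verify with any care is the elementary observation that the equalizer of two \emph{equal} maps in $\fsets$ is an isomorphism, which is immediate from the explicit description of equalizers of sets.
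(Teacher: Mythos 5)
Your argument is exactly the paper's proof: form the equalizer $e$ of $h,k$, observe that $h_*=k_*$ forces $e_*$ to be an isomorphism in $\fsets$, and conclude by $F_2$ that $e$ is an isomorphism, whence $h=k$. The proposal is correct and matches the paper's proof, merely spelling out the elementary set-level step in more detail.
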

\begin{proof}
 Let $e: Z \rightarrow X$ be an equalizer of $h,k: X \rightarrow Y$. If $h_* = k_*$, then $e_*: \F(Z) \rightarrow \F(X)$ is an isomorphism in $\fsets$. Thus $e: Z \rightarrow X$ itself is an isomorphism by the axiom $F_2$, which implies $h=k$. 
\end{proof}

\begin{corollary}
 A semi-galois category $\sgc$ is \emph{locally finite}: That is, for every objects $X,Y \in \C$, the set of arrows $\Hom_\C(X,Y)$ is a finite set. In particular, the monoid $\End_\C(X)$ of endomorphisms on $X$ is a finite monoid. 
\end{corollary}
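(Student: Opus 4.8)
The plan is to deduce everything from the faithfulness of the fiber functor established in the immediately preceding lemma, combined with the crucial fact that $\F$ takes values in \emph{finite} sets. First I would record that faithfulness of $\F$ says precisely that, for all $X, Y \in \C$, the assignment $f \mapsto f_*$ defines an \emph{injective} map
\[
\F_{X,Y}: \Hom_\C(X,Y) \longrightarrow \Hom_{\fsets}(\F(X), \F(Y)),
\]
since the equality $h_* = k_*$ forces $h = k$.

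Next I would observe that the target of this injection is itself a finite set: because $\F: \C \rightarrow \fsets$ lands in finite sets, both $\F(X)$ and $\F(Y)$ are finite, and hence the collection of all maps between them is finite, of cardinality $|\F(Y)|^{|\F(X)|}$. An injection whose codomain is finite has finite domain, so $\Hom_\C(X,Y)$ is finite. This already establishes local finiteness.

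For the final clause I would specialize to $Y = X$: then $\End_\C(X) = \Hom_\C(X,X)$ is finite by the above, and the composition of arrows in $\C$ (cf.\ Notation \ref{composition of arrows}) equips it with a monoid structure having unit $\id_X$; hence $\End_\C(X)$ is a finite monoid, as claimed.

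As for the main obstacle: there is essentially none remaining at this stage. All of the genuine content has already been absorbed into the faithfulness lemma, whose proof in turn rests on the existence of equalizers and on the axiom $F_2$ that $\F$ reflects isomorphisms. Given faithfulness, the corollary is merely the remark that an injection into a hom-set of $\fsets$ has finite source. The only point worth keeping in mind is that the codomain category is $\fsets$ rather than $\sets$ --- this is exactly what guarantees that the target hom-set $\Hom_{\fsets}(\F(X), \F(Y))$ is finite, and thereby forces $\Hom_\C(X,Y)$ to be finite as well.
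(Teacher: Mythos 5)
Your argument is correct and is exactly what the paper intends by its one-line proof ``Immediate from the above lemma'': faithfulness of $\F$ gives an injection $\Hom_\C(X,Y) \hookrightarrow \Hom_{\fsets}(\F(X),\F(Y))$, and the latter set is finite because $\F$ lands in $\fsets$. You have simply made explicit the steps the paper leaves implicit, so there is nothing to add.
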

\begin{proof}
 Immediate from the above lemma. 
\end{proof}

\begin{notation}
 In what follows we sometimes ommit the subscripts in $\Hom_\C(X,Y)$ and $\End_\C(X,Y)$ if it is clear from the context that $X,Y$ are objects in $\C$. 
\end{notation}

\noindent
As in the study of galois categories, we need the construction of universal quotients $p: X \twoheadrightarrow X/G$ of objects $X \in \C$. However, in the case of semi-galois categories, we actually use a more flexible construction of quotients; as one will see in arguments in \S \ref{s4}, this is roughly because quotients $M \twoheadrightarrow N$ of profinite monoids are given by \emph{congruences} $M \twoheadrightarrow M/E$ unlike quotients $G \twoheadrightarrow H$ of profinite groups could be given by (normal) subgroups $G \twoheadrightarrow G/N$. 

For this reason we need the concept of \emph{universal quotient $p: X \twoheadrightarrow X/E$ of an object $X$ by a right congruence $E$ on the monoid $\End(X)$}; this concept in fact makes sense for an arbitrary binary relation $E \subseteq \End(X) \times \End(X)$ on endomorphisms on $X$, as defined as follows (but always isomorphic to universal quotients by right congruences, cf.\ Lemma \ref{universal quotient is by right congruence}). 

\begin{definition}[universal quotient]
 Let $X \in \C$ be an object and $E \subseteq \End(X) \times \End(X)$ be an arbitrary relation on $\End(X)$. The \emph{universal quotient of $X$ by $E$} is defined as an arrow $p: X \rightarrow Y$ such that:
\begin{description}
 \item[$Q_0$)] the arrow $p:X \rightarrow Y$ \emph{coequalizes} $E$, i.e.\ we have $p \circ h = p \circ k$ for every $(h,k) \in E$;
 \item[$Q_1$)] if an arrow $q: X \rightarrow Z$ coequalizes $E$, there is a unique arrow $q': Y \rightarrow Z$ such that the following diagram commutes:
       \begin{eqnarray*}
         \xymatrix{ X \ar[r]^q \ar[d]_p & Z  \\
                    Y \ar@{.>}[ur]_{\exists! q'} &
                  }
       \end{eqnarray*}
\end{description}
If $p: X \rightarrow Y$ satisfies $Q_0$ and $Q_1$, then one can see that $p$ is necessarily an epimorphism and unique up to canonical isomorphism; thus we write $p: X \twoheadrightarrow X/E$ to mean that $p$ is the universal quotient of $X$ by $E$; and if we denote $Y=X/E$, it means that $Y$ is the codomain of the universal quotient $p: X \rightarrow X/E$.
\end{definition}

Universal quotients are essentially those by right congruences: 

\begin{lemma}
\label{universal quotient is by right congruence}
 Let $E \subseteq \End(X) \times \End(X)$ be any relation; and $\hat{E} \subseteq \End(X) \times \End(X)$ be the smallest right congruence on the monoid $\End(X)$ containing $E$. Then $p_E$ and $p_{\hat{E}}$ are isomorphic. In other words, every universal quotient is the one by a right congruence on $\End(X)$. 
\end{lemma}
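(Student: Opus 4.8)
The plan is to compare the two universal quotients by showing that they solve the same universal problem. By definition, $p_E : X \twoheadrightarrow X/E$ is the universal arrow out of $X$ that coequalizes $E$ (axioms $Q_0$, $Q_1$), and likewise $p_{\hat E}$ is the universal arrow coequalizing $\hat E$. If I can prove that, for an arbitrary arrow $q : X \rightarrow Z$, $q$ coequalizes $E$ if and only if $q$ coequalizes $\hat E$, then the two universal properties coincide verbatim; hence one universal quotient exists precisely when the other does, and they are canonically isomorphic. So the whole statement reduces to this equivalence of coequalizing conditions.

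The key device is the ``kernel relation'' of an arrow. For any $q : X \rightarrow Z$, set
\[
 E_q \;:=\; \{\, (h,k) \in \End(X) \times \End(X) \;:\; q \circ h = q \circ k \,\}.
\]
First I would check that $E_q$ is a right congruence on the monoid $\End(X)$: it is evidently an equivalence relation, and it is compatible with composition on the right, since $q \circ h = q \circ k$ gives $q \circ (h \circ g) = (q \circ h) \circ g = (q \circ k) \circ g = q \circ (k \circ g)$ for every $g \in \End(X)$. Note that only post-composition of the members of the pair by a fixed $g$ is controlled in this way, so $E_q$ is in general a right congruence but not a left one; this is precisely the reason why right congruences, rather than two-sided congruences, are the correct notion here.

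With $E_q$ in hand the equivalence is immediate. Saying that $q$ coequalizes $E$ is literally saying $E \subseteq E_q$. Since $E_q$ is a right congruence containing $E$ and $\hat E$ is by definition the smallest right congruence containing $E$, we obtain $\hat E \subseteq E_q$, i.e.\ $q$ coequalizes $\hat E$. The converse is trivial because $E \subseteq \hat E$. This establishes the desired equivalence, and hence the lemma, since $p_E$ and $p_{\hat E}$ are then universal arrows for one and the same condition.

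The step that needs the most care is the verification that $E_q$ is genuinely a right congruence with respect to the conventions fixed above --- in particular getting the handedness right, so that closure under $h \mapsto h \circ g$ matches the multiplication on $\End(X)$ read as composition (the calibration with the right regular representation in \emph{Example 1} confirms that $\End(X)$ under $\circ$ carries the intended monoid structure). One should also record, as a preliminary, that the smallest right congruence $\hat E$ containing $E$ exists at all: the total relation $\End(X) \times \End(X)$ is a right congruence and an arbitrary intersection of right congruences is again one, so $\hat E$ may be taken as the intersection of all right congruences containing $E$. Everything else is formal manipulation of the universal property.
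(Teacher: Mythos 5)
Your proposal is correct and uses essentially the same device as the paper: the paper introduces the kernel relation $E' = \{(h,k) \mid p_E \circ h = p_E \circ k\}$, observes it is a right congruence containing $E$ hence containing $\hat{E}$, and concludes by mutual factorization of $p_E$ and $p_{\hat{E}}$, which is your argument specialized to $q = p_E$. Your slightly more general formulation (coequalizing $E$ and coequalizing $\hat{E}$ are the same condition on an arbitrary arrow $q$, so the two universal properties coincide) is a cosmetic repackaging of the same idea, and your care about the handedness of the congruence matches the paper's conventions.
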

\begin{proof}
 By definition, it is clear that $p_{\hat{E}}$ factors through $p_E$. To see the converse, let $E' := \{ (h,k) \in \End(X) \times \End(X) \mid p_E \circ h= p_E \circ k \}$ that is clearly a right congruence containing $E$. Thus, $\hat{E} \subseteq E'$. This means that $p_E$ coequalizes $\hat{E}$. Hence, $p_E$ factors through $p_{\hat{E}}$. 
\end{proof}

\begin{lemma}
\label{existence of universal quotients}
 A semi-galois category $\C$ has arbitrary universal quotients. 
\end{lemma}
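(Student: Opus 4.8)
The plan is to realise the universal quotient $p: X \twoheadrightarrow X/E$ as the coequalizer of a single parallel pair of arrows, which exists by the first lemma of \S \ref{s3s1} (coequalizers of parallel arrows). First I would observe that, by the corollary on local finiteness, $\End(X)$ is a finite monoid, so any relation $E \subseteq \End(X) \times \End(X)$ is automatically a finite set; enumerate it as $E = \{ (h_i, k_i) \}_{i \in I}$ with $I$ finite. Since $\C$ has an initial object (axiom $C_0$) and finite pushouts (axiom $C_1$), it has all finite coproducts, with $X \sqcup Y$ realised as the pushout of $X \leftarrow \emptyset \rightarrow Y$; hence the finite coproduct $\bigsqcup_{i \in I} X$ of copies of $X$ exists in $\C$.

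Next I would assemble the whole family $E$ into one parallel pair. Writing $\iota_i: X \rightarrow \bigsqcup_{i \in I} X$ for the coproduct injections, define $H, K: \bigsqcup_{i \in I} X \rightarrow X$ to be the unique arrows determined by $H \circ \iota_i = h_i$ and $K \circ \iota_i = k_i$ for every $i \in I$. By the first lemma of \S \ref{s3s1}, the coequalizer $p: X \rightarrow Y$ of the pair $H, K$ exists in $\C$, and I claim this $p$ is the required universal quotient of $X$ by $E$.

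The verification of the two clauses is then immediate. For $Q_0$: from $p \circ H = p \circ K$, precomposing with each $\iota_i$ yields $p \circ h_i = p \circ k_i$, so $p$ coequalizes $E$. For $Q_1$: if $q: X \rightarrow Z$ coequalizes $E$, i.e.\ $q \circ h_i = q \circ k_i$ for all $i \in I$, then $q \circ H$ and $q \circ K$ agree after precomposition with every injection $\iota_i$, whence $q \circ H = q \circ K$ by the universal property of the coproduct; the universal property of the coequalizer $p$ then furnishes the unique factorisation $q' : Y \rightarrow Z$ with $q = q' \circ p$. Thus $p$ satisfies $Q_0$ and $Q_1$, and we may set $Y =: X/E$.

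This construction is essentially routine once the two ingredients — finite coproducts and binary coequalizers — are both available; the only point demanding a moment's care is the translation between \emph{coequalizing the entire family $E$} and \emph{coequalizing the single pair $H, K$}, which is precisely the universal property of $\bigsqcup_{i \in I} X$. In particular no genuine obstacle arises, and for the existence statement one does not even need to pass to the smallest right congruence $\hat{E}$ of Lemma \ref{universal quotient is by right congruence}; that reduction is relevant only for identifying the quotient more concretely afterwards.
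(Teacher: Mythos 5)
Your proof is correct, but it decomposes the colimit differently from the paper. The paper first takes, for each pair $(h,k) \in E$, the coequalizer $q_{(h,k)}: X \twoheadrightarrow X_{(h,k)}$ separately, and then glues these quotients together by a finite wide pushout under $X$, checking that the induced $\pi: X \twoheadrightarrow W$ has the universal property. You instead aggregate the whole family into a single parallel pair $H, K: \bigsqcup_{i \in I} X \rightarrow X$ out of a finite coproduct (obtained as an iterated pushout over the initial object, via axioms $C_0$ and $C_1$) and take one coequalizer. Both routes rest on the same ingredients --- local finiteness of $\C$ to ensure $E$ is finite, and the first lemma of \S \ref{s3s1} for coequalizers --- and both correctly verify $Q_0$ and $Q_1$ at the end. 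Your version is the more standard ``joint coequalizer via coproduct'' construction and arguably needs slightly less verification, since the two universal properties compose mechanically; the paper's version has the mild advantage that the intermediate objects $X_{(h,k)}$ and the wide pushout diagram are reused verbatim in the proof of Corollary \ref{preserving universal quotients}, where one reads off that $\F$ preserves universal quotients because it preserves each coequalizer and the pushout. If you adopt your construction, that corollary still follows just as easily, since $\F$ preserves the coproduct and the coequalizer of $H,K$; you would only need to restate the explicit description of $\F(X/E)$ in Remark \ref{on universal quotients} in terms of your decomposition. Your closing observation that Lemma \ref{universal quotient is by right congruence} is not needed for existence is also consistent with the paper.
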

\begin{proof}
 Let $X \in \C$ and $E \subseteq \End(X) \times \End(X)$, for which we construct a universal quotient $X \twoheadrightarrow X/E$. For each member $(h,k) \in E$, consider its coequalizer $q_{(h,k)}: X \twoheadrightarrow X_{(h,k)}$. Then one can take in $\C$ a finite pushout diagram for the family $\{ q_{(h,k)} : X \twoheadrightarrow X_{(h,k)} \}_{(h,k) \in E}$. 
\begin{equation}
\label{pushout diagram constructing universal quotient}
 \xymatrix{
  X \ar@{->>}[r]^{q_{(h,k)}} \ar@{->>}[d]_{q_{(h',k')}} \ar@{..>>}[rd]^{\pi} &  X_{(h,k)} \ar[d]^{l_{(h,k)}} \\
  X_{(h',k')} \ar[r]_{l_{(h',k')}}  &  W 
}
\end{equation}
Note that every $l_{(h,k)}$ is an epimorphism; and thus we can define an epimorphism $\pi: X \twoheadrightarrow W$ so that the above diagram commutes. 

We prove that $\pi$ is a universal quotient of $X$ by $E$. By definition, it is obvious that $\pi$ coequalizes $E$. Let $g: X \rightarrow Z$ coequalize $E$. Then, $g: X \rightarrow Z$ factors through the coequalizer $q_{(h,k)}: X \twoheadrightarrow X_{(h,k)}$ for each $(h,k) \in E$: that is, there exists $g_{(h,k)}: X_{(h,k)} \rightarrow Z$ such that $g= g_{(h,k)} \circ q_{(h,k)} $ for each $(h,k) \in E$. Since the diagram (\ref{pushout diagram constructing universal quotient}) is a pushout diagram, one obtains a unique arrow $g':W \rightarrow Z$ such that $g_{(h,k)} = g' \circ l_{(h,k)}$ for each $(h,k) \in E$. Then, one can see that $g=g' \circ \pi$. Now, since $\pi$ is an epimorphism, such a $g'$ is unique, which concludes that $\pi$ is indeed a universal quotient of $X$ by $E$. 
\end{proof}

Note that $\fsets$ together with the identity functor $\id_{\fsets}: \fsets \rightarrow \fsets$ is also a semi-galois category; thus, in the following, saying ``universal quotients in $\fsets$'' also make sense as well. 

\begin{corollary}
\label{preserving universal quotients}
 The fibre functor $\F: \C \rightarrow \fsets$ preserves universal quotients. That is, if $p:X \twoheadrightarrow X/E$ is a universal quotient of $X \in \C$ by $E\subseteq \End(X) \times \End(X)$, then the corresponding arrow $p_*: \F(X) \twoheadrightarrow \F(X/E)$ in $\fsets$ is a universal quotient by $\F(E) := \{(h_*,k_*) \mid (h,k) \in E\} \subseteq \End(\F(X)) \times \End(\F(X))$. 
\end{corollary}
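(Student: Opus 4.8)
The plan is to exploit the explicit construction of universal quotients from the proof of Lemma \ref{existence of universal quotients}, together with the two preservation properties of $\F$ already established, rather than to argue directly from the universal property $Q_0$--$Q_1$. The reason for avoiding the direct route is instructive: the coequalizing condition $Q_0$ transports trivially along $\F$ (from $p \circ h = p \circ k$ one gets $p_* \circ h_* = p_* \circ k_*$ by functoriality), but verifying the universal factorization $Q_1$ directly would require lifting an \emph{arbitrary} map $g: \F(X) \rightarrow T$ in $\fsets$ that coequalizes $\F(E)$ to an arrow in $\C$, and such maps need not arise from $\C$ at all. This is exactly where I expect the main obstacle to lie, and the construction-based approach sidesteps it entirely.

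Concretely, recall first that $\End(X)$ is finite (by the local finiteness corollary), so $E \subseteq \End(X) \times \End(X)$ is a finite relation and the construction in Lemma \ref{existence of universal quotients} applies. There, $p: X \twoheadrightarrow X/E$ is obtained in two stages: for each $(h,k) \in E$ one forms the coequalizer $q_{(h,k)}: X \twoheadrightarrow X_{(h,k)}$, and then one takes the finite pushout $W$ of the finite family $\{ q_{(h,k)} \}_{(h,k) \in E}$, yielding the canonical epimorphism $\pi: X \twoheadrightarrow W = X/E$ of diagram (\ref{pushout diagram constructing universal quotient}).

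Next I would apply $\F$ to this whole diagram. By the first lemma of \S \ref{s3s1}, $\F$ preserves coequalizers, so each $\F(q_{(h,k)})$ is precisely the coequalizer of the pair $(h_*, k_*)$ in $\fsets$. By axiom $F_1$, $\F$ preserves finite pushouts, so the image under $\F$ of diagram (\ref{pushout diagram constructing universal quotient}) is again a finite pushout diagram in $\fsets$, with cone point $\F(W)$ and canonical map $p_* = \F(\pi): \F(X) \twoheadrightarrow \F(W) = \F(X/E)$.

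Finally, I observe that this is exactly the recipe of Lemma \ref{existence of universal quotients} carried out in $\fsets$ for the relation $\F(E) = \{ (h_*, k_*) \mid (h,k) \in E \}$: the coequalizers of the pairs $(h_*, k_*)$, followed by the finite pushout of the resulting family. Since that recipe produces the universal quotient of $\F(X)$ by $\F(E)$, and since universal quotients are unique up to canonical isomorphism, $p_*$ is the universal quotient of $\F(X)$ by $\F(E)$, as claimed. The only points needing care are that $\F(E)$ indexes the same finite family and that $\F$ preserves \emph{both} colimit constructions involved; both are guaranteed by the lemmas cited, so no genuine difficulty remains once the concrete construction is used in place of the abstract universal property.
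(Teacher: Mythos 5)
Your proposal is correct and takes essentially the same route as the paper, which simply states that the corollary is immediate from the explicit coequalizer-then-pushout construction of universal quotients in Lemma \ref{existence of universal quotients} together with the fact that $\F$ preserves finite colimits. Your write-up is a faithful (and more detailed) elaboration of exactly that argument, including the correct observation that the direct verification of $Q_1$ in $\fsets$ would be the wrong way in.
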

\begin{proof}
 Immediate from the above construction of universal quotients and the fact that $\F: \C \rightarrow \fsets$ preserves finite colimits. 
\end{proof}

\begin{remark}
\label{on universal quotients}
More explicitly, the set $\F(X/E)$ can be described as the quotient $\F(X)/ \equiv$ of the set $\F(X)$ by the smallest equivalence relation $\equiv$ on $\F(X)$ that contains the following relation: 
\begin{eqnarray*}
 \{(h_*\xi, k_*\xi) \in \F(X) \times \F(X) \mid (h,k) \in E \hspace{0.1cm} \wedge \hspace{0.1cm} \xi \in \F(X) \}. 
\end{eqnarray*}
This remark will be used later (\S \ref{appendix b two}). 
\end{remark}

\subsection{Galois objects}
\label{s3s2}
\noindent

\begin{definition}[Covering]
 We say a \emph{(finite) covering} of an object $Y$ to mean a (finite) family $\{ \iota_i: Y_i \hookrightarrow Y\}_{i=1}^n$ of monomorphisms $\iota_i:Y_i \hookrightarrow Y$ ($1\leq i \leq n$) such that the induced morphism $\coprod_i \iota_i: \coprod_i Y_i \rightarrow Y$ is an epimorphism. 
\end{definition}

\begin{definition}[Rooted object]
 An object $X \in \C$ is called a \emph{rooted object} if, for every object $Y \in \C$, an arrow $f: X \rightarrow Y$ in $\C$ and a covering $\{\iota_i: Y_i\hookrightarrow Y\}_{i=1}^n$ of $Y$, the arrow $f$ factors through some component $\iota_k:Y_k \hookrightarrow Y$ of the covering:
\begin{eqnarray*}
  \xymatrix{
         &  Y_k \ar@<-0.15em>@{^{(}->}[d]^{\iota_k} \\
    X \ar[r]_f \ar@{..>}[ur]^{\exists f_k}& Y
}
\end{eqnarray*}
That is, there is an arrow $f_k: X \rightarrow Y_k$ such that $f= \iota_k \circ f_k$. 
\end{definition}

\begin{proposition}
\label{images of rooted objects}
 An object $X \in \C$ is rooted if and only if, for every covering $\{\iota_i: X_i \hookrightarrow X\}_{i=1}^n$ of $X$, there exists an index $i$ such that $\iota_i: X_i \hookrightarrow X$ is in fact an isomorphism. 
\end{proposition}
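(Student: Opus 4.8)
The plan is to prove both implications. The forward direction will be an immediate consequence of the rooted property applied to the identity arrow, while the reverse direction will require pulling the given covering back along $f$ and recognizing the result as a covering of $X$.

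For the forward direction ($\Rightarrow$), suppose $X$ is rooted and let $\{\iota_i: X_i \hookrightarrow X\}_{i=1}^n$ be a covering of $X$. I would apply the defining property of rootedness to the object $Y := X$, the arrow $f := \id_X: X \to X$, and this very covering. This yields an index $k$ and an arrow $f_k: X \to X_k$ with $\id_X = \iota_k \circ f_k$, so $\iota_k$ is a split epimorphism. Since $\iota_k$ is also a monomorphism, a short cancellation argument finishes the job: precomposing $\iota_k \circ f_k = \id_X$ with $\iota_k$ gives $\iota_k \circ (f_k \circ \iota_k) = \iota_k = \iota_k \circ \id_{X_k}$, and cancelling the monomorphism $\iota_k$ on the left yields $f_k \circ \iota_k = \id_{X_k}$. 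Hence $\iota_k$ is an isomorphism with inverse $f_k$.

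For the reverse direction ($\Leftarrow$), assume that every covering of $X$ has an isomorphic component. Given $Y$, an arrow $f: X \to Y$, and a covering $\{\iota_i: Y_i \hookrightarrow Y\}_{i=1}^n$, I would form the pullbacks $X_i := X \times_Y Y_i$ with projections $j_i: X_i \to X$ and $g_i: X_i \to Y_i$. Each $j_i$ is a monomorphism, monomorphisms being stable under pullback. The key claim is that $\{j_i: X_i \hookrightarrow X\}_{i=1}^n$ is a covering of $X$, and I would verify this by passing through the fiber functor. Since $\F$ preserves finite pullbacks and finite coproducts (the latter because $\F(\emptyset)=\emptyset$ and $\F$ preserves finite pushouts), one has $\F(X_i) \cong \F(X) \times_{\F(Y)} \F(Y_i)$; by injectivity of $(\iota_i)_*$ this identifies $(j_i)_*$ with the inclusion of $\{\xi \in \F(X) \mid f_*\xi \in \imm((\iota_i)_*)\}$. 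As $\{\iota_i\}$ is a covering of $Y$, the lemma characterizing epimorphisms gives $\bigcup_i \imm((\iota_i)_*) = \F(Y)$, so every $\xi \in \F(X)$ lands in some $\F(X_i)$. Therefore the copairing $\coprod_i \F(X_i) \to \F(X)$ is surjective, and by the same lemma $\coprod_i j_i$ is an epimorphism, establishing that $\{j_i\}$ is a covering of $X$. By hypothesis some $j_k$ is then an isomorphism; setting $f_k := g_k \circ j_k^{-1}: X \to Y_k$ and using the pullback relation $f \circ j_k = \iota_k \circ g_k$, I compute $\iota_k \circ f_k = \iota_k \circ g_k \circ j_k^{-1} = f \circ j_k \circ j_k^{-1} = f$, which is precisely the desired factorization of $f$ through $\iota_k$. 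Hence $X$ is rooted.

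The main obstacle is the reverse direction, and specifically the verification that the pulled-back family $\{j_i\}$ is again a covering; this is where the structural axioms do the real work, via preservation of pullbacks and coproducts by $\F$ together with the epimorphism-detection lemma. Once this covering is in hand, the hypothesis and the universal property of the pullback close the argument routinely.
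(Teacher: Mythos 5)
Your proof is correct and follows essentially the same route as the paper: the forward direction is the immediate consequence of applying rootedness to $\id_X$, and the reverse direction pulls the covering back along $f$ and checks, via preservation of pullbacks and coproducts by $\F$, that the pulled-back family is again a covering. The paper leaves that last verification implicit ("easily proved to be a covering of $X$"); your write-up simply supplies the details.
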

\begin{proof}
  The only-if part is trivial from the definition. Conversely, let $X$ be an object satisfying the condition, $f: X \rightarrow Y$ be an arrow, and $\{\iota_k: Y_k \hookrightarrow Y\}$ be a covering of $Y$. It suffices to show that $f$ factors through some of $\iota_k$. For this aim, consider the pullback $\{f^*\iota_k: f^*Y_k \hookrightarrow X\}$, which is easily proved to be a covering of $X$. By the assumption, there is an index $k$ such that $f^*\iota_k: f^*Y_k \rightarrow X$ is an isomorphim, whence $f$ factors through $\iota_k: Y_k \hookrightarrow Y$. This proves that $X$ is rooted. 
\end{proof}

\begin{proposition}
 Let $X$ be a rooted object and $f: X \twoheadrightarrow Y$ be an epimorphism. Then $Y$ is also rooted. 
\end{proposition}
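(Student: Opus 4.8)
The plan is to verify $Y$ against the equivalent characterization of rootedness established in Proposition~\ref{images of rooted objects}: that is, I would show that every covering $\{\kappa_j: Y_j \hookrightarrow Y\}_{j=1}^m$ of $Y$ admits at least one component $\kappa_j$ that is an isomorphism. The guiding idea is to transport such a covering of $Y$ back along the epimorphism $f: X \twoheadrightarrow Y$ to a covering of $X$, exploit the rootedness of $X$ there, and then push the resulting conclusion forward across $f$.

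First I would pull the given covering back along $f$. Forming the pullbacks $f^*\kappa_j: f^*Y_j \hookrightarrow X$, each is again a monomorphism since pullbacks preserve monomorphisms, and the family $\{f^*\kappa_j\}_{j=1}^m$ is a covering of $X$; this is precisely the fact already invoked in the proof of Proposition~\ref{images of rooted objects}, namely that the pullback of a covering along any arrow is again a covering. Because $X$ is rooted, Proposition~\ref{images of rooted objects} then supplies an index $j$ for which $f^*\kappa_j: f^*Y_j \to X$ is an isomorphism.

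Next I would use the pullback square to transfer this isomorphism back to $Y$. Writing $g: f^*Y_j \to Y_j$ for the other projection, commutativity of the square gives $\kappa_j \circ g = f \circ (f^*\kappa_j)$; inverting the isomorphism $f^*\kappa_j$ exhibits $f$ as a factorization $f = \kappa_j \circ h$, where $h := g \circ (f^*\kappa_j)^{-1}$. Since $f$ is an epimorphism and factors through $\kappa_j$, the arrow $\kappa_j$ is itself an epimorphism. On the other hand $\kappa_j$ is a monomorphism, being a component of a covering.

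Finally, I would conclude that $\kappa_j$ is an isomorphism: by the lemma of \S\ref{s3s1} characterizing epimorphisms and monomorphisms through the fiber functor, $(\kappa_j)_*$ is simultaneously a surjection and an injection, hence a bijection in $\fsets$; since $\F$ reflects isomorphisms (axiom $F_2$), $\kappa_j$ is an isomorphism. This produces the required isomorphism component in the covering of $Y$, establishing via Proposition~\ref{images of rooted objects} that $Y$ is rooted. The only mildly delicate step is passing from ``$f^*\kappa_j$ is an isomorphism'' to ``$\kappa_j$ is an epimorphism'', but this rests solely on the elementary factorization property of epimorphisms, so no genuine obstacle arises; every remaining step is a direct application of the preliminary lemmas.
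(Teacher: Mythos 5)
Your proof is correct, but it takes a small detour relative to the paper's argument. The paper applies the definition of rootedness directly: given a covering $\{\iota_k: Y_k \hookrightarrow Y\}$ of $Y$ and the arrow $f: X \rightarrow Y$, the defining property of the rooted object $X$ immediately produces a factorization of $f$ through some component $\iota_k$; since $f$ is an epimorphism, so is $\iota_k$, and being also a monomorphism it is an isomorphism. You instead invoke the covering characterization of Proposition \ref{images of rooted objects}, pull the covering back along $f$ to find an invertible component $f^*\kappa_j$ over $X$, and then use the pullback square to recover the factorization $f = \kappa_j \circ h$ --- but this pull-back-and-transfer step is precisely the content of the ``if'' direction of Proposition \ref{images of rooted objects}, so you are re-deriving inline what the definition of rootedness hands you for free. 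Both arguments terminate identically (the component is epi because $f$ factors through it, mono as part of a covering, hence iso via the fiber-functor lemmas of \S \ref{s3s1} and axiom $F_2$ --- a step you make explicit and the paper leaves implicit); yours is sound, and the only cost is length.
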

\begin{proof}
Let $\{\iota_k: Y_k \hookrightarrow Y\}$ be a covering of $Y$. Since $X$ is now rooted, $f$ factors through some component $\iota_k: Y_k \hookrightarrow Y$.
\begin{eqnarray*}
 \xymatrix{
   & Y_k \ar@<-0.15em>@{^{(}->}[d]^{\iota_k} \\  
 X \ar@{..>}[ru] \ar@{->>}[r]_f & Y
}
\end{eqnarray*}
Since $f$ is an epimorphism, so is $\iota_k: Y_k \hookrightarrow Y$. Thus, $\iota_k$ is an isomorphism. 
\end{proof}

\begin{example}[Rooted $M$-set]
 Let $M$ be a profinite monoid. A finite $M$-set $X \in \Cl_f M$ is rooted if and only if there exists $\xi \in S_X$ such that every $\xi' \in S_X$ can be expressed as $\xi' = \xi \cdot m$ for some $m \in M$. 
\end{example}



\noindent
Abstractly, let $X \in \C$ be an arbitrary object. Since $\C$ has pullbacks, we can take intersections $X_1 \cap X_2 \hookrightarrow X$ of two monomorphisms $X_1 \hookrightarrow X$ and $X_2 \hookrightarrow X$. Since $\F(X)$ is a finite set, there exists a (unique) minimal subobject, denoting $X_{\xi} \hookrightarrow X$, for every element $\xi \in \F(X)$ such that $\xi \in \F(X_{\xi})$. By definition, it is not difficult to see that $X_{\xi}$ is rooted. In fact, one can also show the converse.
\begin{proposition}
\label{characterization of rooted objects}
 An object $X$ is rooted if and only if $X=X_{\xi}$ for some $\xi \in \F(X)$.
\end{proposition}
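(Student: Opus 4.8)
The plan is to prove the two implications separately, observing that the forward direction (``if $X = X_\xi$ then $X$ is rooted'') is precisely the fact recorded just before the statement, so the real content lies in the converse.

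For the forward direction I would recall why each $X_\xi$ is rooted, using the criterion of Proposition \ref{images of rooted objects}. Given any covering $\{\iota_i : Y_i \hookrightarrow X_\xi\}$, the induced epimorphism forces $\F(\coprod_i Y_i) \to \F(X_\xi)$ to be surjective (since $\F$ preserves finite coproducts and reflects epimorphisms), so the distinguished element $\xi \in \F(X_\xi)$ lies in the image of some $\F(Y_i)$, i.e.\ $\xi \in \F(Y_i)$. Viewing $Y_i$ as a subobject of $X$ through $Y_i \hookrightarrow X_\xi \hookrightarrow X$ and invoking the \emph{minimality} of $X_\xi$, one gets $X_\xi \le Y_i \le X_\xi$ as subobjects of $X$, whence $\iota_i$ is an isomorphism; by Proposition \ref{images of rooted objects} this shows $X_\xi$ is rooted.

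For the converse, assume $X$ is rooted and consider the family $\{X_\xi \hookrightarrow X\}_{\xi \in \F(X)}$ of minimal subobjects. First I would check that this is a (finite) covering: the index set $\F(X)$ is finite, each component is a monomorphism, and the induced arrow $\coprod_{\xi} X_\xi \to X$ is an epimorphism. For the last point I would apply the Lemma characterizing epimorphisms in $\C$ as exactly the arrows $f$ with $f_*$ surjective, together with the fact that $\F$ preserves finite coproducts (coproducts being pushouts over $\emptyset_\C$, and $\F(\emptyset_\C) = \emptyset$): the map $\coprod_\xi \F(X_\xi) \to \F(X)$ hits every $\xi \in \F(X)$ through its own $X_\xi$-summand, since $\xi \in \F(X_\xi)$ by definition, so it is surjective and the arrow is epic. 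Having produced this covering, I would invoke Proposition \ref{images of rooted objects}: because $X$ is rooted, some component $X_{\xi_0} \hookrightarrow X$ must be an isomorphism, i.e.\ $X = X_{\xi_0}$, which is exactly the desired conclusion.

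I do not expect a genuine obstacle here; the argument is a direct application of the rootedness criterion once the right covering is chosen. The two points requiring care are the verification that $\{X_\xi\}_{\xi}$ is a covering --- specifically the passage from ``$\F$ preserves finite pushouts'' to ``$\F$ preserves finite coproducts'' and the use of the epimorphism lemma --- and, in the forward direction, reading the minimality of $X_\xi$ as an inequality of subobjects of $X$ rather than of $X_\xi$. With these in place, Proposition \ref{images of rooted objects} closes both directions at once.
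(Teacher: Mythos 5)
Your proof is correct and follows essentially the same route as the paper: both directions are closed by Proposition \ref{images of rooted objects}, using the covering $\{X_\xi \hookrightarrow X\}_{\xi \in \F(X)}$ for the rooted-implies-$X=X_\xi$ direction and the minimality of $X_\xi$ for the converse. You merely spell out the verifications the paper dismisses as ``clearly'' (that $\{X_\xi\}_\xi$ is a covering, and that $\xi$ lands in some component), which is fine.
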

\begin{proof}
 First, let $X$ be rooted. Consider the family $\{X_{\xi} \hookrightarrow X\}_{\xi \in \F(X)}$, which is clearly a covering of $X$. By assumption, there is some component $X_{\xi} \hookrightarrow X$ that is an isomorphism. Thus $X=X_{\xi}$. Conversely, let $X=X_{\xi}$ and $\{Z_i \hookrightarrow X\}_{i=1}^n$ be a covering of $X$. Then there is an index $i$ such that $\xi \in \F(Z_i)$, whence $X=X_{\xi} \subseteq Z_i \subseteq X$. Thus $Z_i \hookrightarrow X$ is an isomorphism, which concludes that $X$ is rooted. 
\end{proof}

\begin{proposition}
\label{maps from rooted objects}
 Let $X \in \C$ be rooted and $\xi \in \F(X)$ be such that $X=X_{\xi}$. Then, for any $Y \in \C$, the map $\omega_{X,\xi}:\Hom(X,Y) \ni f \mapsto f_* \xi \in \F(Y)$ is injective.
\end{proposition}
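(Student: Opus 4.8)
The plan is to reduce injectivity to a statement about equalizers. Suppose $f, g: X \rightarrow Y$ are two arrows with $\omega_{X,\xi}(f) = \omega_{X,\xi}(g)$, i.e.\ $f_*\xi = g_*\xi$ in $\F(Y)$; I must show $f = g$. By the faithfulness of $\F$ (one of the elementary lemmas of \S \ref{s3s1}) it would in fact suffice to show $f_* = g_*$, but it is cleaner to compare $f$ and $g$ directly through their equalizer. So first I would form the equalizer $e: Z \rightarrow X$ of the parallel pair $f, g$, which exists by the first lemma of \S \ref{s3s1}. Since $\F$ preserves equalizers, the monomorphism $e_*: \F(Z) \rightarrow \F(X)$ exhibits $\F(Z)$ as the equalizer of $f_*, g_*$ in $\fsets$, that is, as the subset $\{\eta \in \F(X) \mid f_*\eta = g_*\eta\}$.

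The key step is then to locate $\xi$ inside this equalizer. The hypothesis $f_*\xi = g_*\xi$ says precisely that $\xi$ belongs to the subset above, so $\xi$ lies in the image of $e_*$; viewing $e: Z \hookrightarrow X$ as a subobject of $X$, this reads $\xi \in \F(Z)$. Now I would invoke the defining minimality of $X_\xi$: since $Z \hookrightarrow X$ is a subobject whose fiber contains $\xi$, the minimal such subobject $X_\xi$ factors through it, giving monomorphisms $X_\xi \hookrightarrow Z \xrightarrow{\,e\,} X$ as subobjects of $X$.

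It remains to exploit the assumption $X = X_\xi$. The composite $X_\xi \hookrightarrow Z \xrightarrow{\,e\,} X$ is the canonical subobject inclusion of $X_\xi$, which under $X = X_\xi$ is an isomorphism; hence it provides a section of $e$, so $e$ is a split epimorphism. Being simultaneously a monomorphism (it is an equalizer), $e$ is therefore an isomorphism. (Alternatively, taking fibers along $X_\xi \hookrightarrow Z \hookrightarrow X$ yields $\F(X) = \F(X_\xi) \subseteq \F(Z) \subseteq \F(X)$, so $e_*$ is bijective and $e$ is an isomorphism by axiom $F_2$.) From $f \circ e = g \circ e$ and the invertibility of $e$ I conclude $f = g$, which is exactly the injectivity of $\omega_{X,\xi}$.

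The argument is essentially formal, and the only point that needs care is the translation carried out in the second step: namely, correctly identifying the fiber $\F(Z)$ of the equalizer with the set-theoretic equalizer of $f_*$ and $g_*$, so that membership of $\xi$ there can be recast as a subobject relation to which the minimality of $X_\xi$ genuinely applies. Once that identification is in place, the remaining work is routine bookkeeping with the correspondence between monomorphisms and injections and with the isomorphism-reflecting axiom $F_2$.
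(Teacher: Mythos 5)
Your argument is correct and follows essentially the same route as the paper's proof: form the equalizer $e: Z \hookrightarrow X$ of $f$ and $g$, observe that $f_*\xi = g_*\xi$ places $\xi$ in $\F(Z)$, and invoke the minimality of $X_\xi$ together with $X = X_\xi$ to conclude $Z = X$ and hence $f = g$. Your version merely spells out the intermediate identifications (preservation of equalizers, the split-epi observation) that the paper leaves implicit.
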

\begin{proof}
Let $f, g: X \rightarrow Y$ be arrows such that $f_* \xi = g_* \xi$ and $m: Z \hookrightarrow X$ be the equalizer of $f$ and $g$. Then $\xi \in \F(Z)$. Let $X_{\xi}$ be the minimal subobject of $X$ containing $\xi$. By assumption $X=X_{\xi}$, and thus $X=X_{\xi} \subseteq Z \subseteq X$: i.e.\ $X=Z$. This proves that $f=g$. 
\end{proof}

\begin{definition}[Galois object]
 A pair $(X,\xi)$ of an object $X \in \C$ and $\xi \in \F(X)$ is said to be a \emph{galois object} if $X=X_{\xi}$ and the map $\omega_{X,\xi}: \End (X) \ni f \mapsto f_* \xi \in \F(X)$ is bijective. 
\end{definition}

\begin{remark}
 If $\sgc$ is a galois category, the concept of galois objects defined here coincides with the original one. This is because the rooted objects in a galois category $\C$ are exactly the connected objects; see e.g.\ \cite{Tonini}.
\end{remark}



\begin{proposition}
\label{cofinality of galois objects}
 For an arbitrary object $Y \in \C$, there exists a galois object $(X, \xi)$ such that $\Hom(X,Y) \ni \sigma \mapsto \sigma_* \xi\in \F(Y)$ is a bijection. 
\end{proposition}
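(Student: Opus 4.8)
The plan is to realize the galois object as a rooted subobject of a finite power of $Y$, mimicking the construction of the galois closure in the classical theory of galois categories. Write $\F(Y) = \{y_1, \dots, y_n\}$ and form the $n$-fold product $Y^n$, which exists because $\C$ has a final object and finite pullbacks (axioms $C_0$, $C_1$); since $\F$ preserves these (axioms $F_0$, $F_1$), one has $\F(Y^n) = \F(Y)^n$. Put $\xi := (y_1, \dots, y_n) \in \F(Y^n)$ and let $X := (Y^n)_\xi \hookrightarrow Y^n$ be the minimal subobject containing $\xi$; by Proposition \ref{characterization of rooted objects} the pair $(X,\xi)$ is rooted. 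I claim that $(X,\xi)$ is the galois object sought.

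The bijection $\Hom(X,Y) \to \F(Y)$, $\sigma \mapsto \sigma_*\xi$, is the easy half. Injectivity is immediate from Proposition \ref{maps from rooted objects}, since $X = X_\xi$. For surjectivity, I would restrict each projection $\pi_i : Y^n \to Y$ along the inclusion $X \hookrightarrow Y^n$ to obtain $p_i : X \to Y$ with $(p_i)_*\xi = y_i$; as $i$ ranges over $1, \dots, n$ these values exhaust $\F(Y)$.

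The substance of the argument, and the step I expect to be the main obstacle, is to verify that $\omega_{X,\xi}: \End(X) \to \F(X)$ is bijective, i.e.\ that $X$ carries enough endomorphisms. Injectivity is again Proposition \ref{maps from rooted objects} (with $Y$ replaced by $X$), so everything reduces to surjectivity: given $\zeta \in \F(X) \subseteq \F(Y)^n$, I must produce $f \in \End(X)$ with $f_*\xi = \zeta$. Writing $\zeta = (y_{\sigma(1)}, \dots, y_{\sigma(n)})$ for a suitable map $\sigma : \{1,\dots,n\} \to \{1,\dots,n\}$, I would consider the coordinate morphism $F := (\pi_{\sigma(1)}, \dots, \pi_{\sigma(n)}) : Y^n \to Y^n$, which satisfies $F_*\xi = \zeta$. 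Factoring $F|_X : X \to Y^n$ as $X \twoheadrightarrow Z \hookrightarrow Y^n$ via axiom $C_2$, the image $Z$ is rooted (epimorphic images of rooted objects are rooted) and contains $\zeta$, so $Z$ is the minimal subobject of $Y^n$ containing $\zeta$. Since $\zeta \in \F(X)$ and $X \hookrightarrow Y^n$, this minimal subobject already lies inside $X$; hence $F|_X$ factors through the inclusion $X \hookrightarrow Y^n$, which yields $f \in \End(X)$ with $f_*\xi = \zeta$, establishing surjectivity.

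The only delicate point is the identification of the image of $F|_X$ with the minimal subobject $(Y^n)_\zeta$, together with the observation that $(Y^n)_\zeta = X_\zeta \subseteq X$ whenever $\zeta \in \F(X)$; both follow from the uniqueness and minimality of the subobjects $(-)_\zeta$ and from Proposition \ref{characterization of rooted objects}. Once surjectivity of $\omega_{X,\xi}$ is in hand, $(X,\xi)$ is a galois object, and combined with the bijection of the second paragraph this is exactly the asserted statement.
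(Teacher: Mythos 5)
Your proposal is correct and follows essentially the same route as the paper: both form $Y^n$, take the minimal subobject $X=(Y^n)_\xi$ through the tuple $\xi$ of all elements of $\F(Y)$, show $(X,\xi)$ is galois by transporting the coordinate endomorphism $\hat\tau$ of $Y^n$ down to $\End(X)$, and use the projections $\pi_i$ for the bijection $\Hom(X,Y)\simeq\F(Y)$. The only immaterial difference is in how the endomorphism of $X$ is produced: the paper pulls $X$ back along $\hat\tau$ and invokes minimality of $X$ at $\xi$, whereas you take the image factorization of $F|_X$ and invoke the same minimality to see that this image lands inside $X$.
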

\begin{proof}
  Put $\F(Y) = \{\eta_1,\cdots, \eta_n\}$ and let $\alpha: X \hookrightarrow Y^n$ be the minimal subobject containing $\xi = (\eta_1, \cdots, \eta_n) \in \F(Y^n)$. We first show that $(X, \xi)$ is a galois object. It is sufficient to prove that, for any $\gamma \in \F(X)$, there exists a unique $g\in \End(X)$ such that $g_* \xi = \gamma$. Let $\alpha_* \gamma = (\eta_{\tau(1)},\cdots, \eta_{\tau(n)})$ where $\tau$ is a transformation on $\{1,\cdots,n\}$. Define $\hat{\tau}: Y^n \rightarrow Y^n$ to be the endomorphism of $Y^n$ naturally induced from $\tau$. Then $ \hat{\tau}_*  \alpha_* \xi = \alpha_* \gamma$. Consider the pullback $\hat{\tau}^* X$ of $\alpha: X \hookrightarrow Y$ along $\hat{\tau}: Y^n \rightarrow Y^n$; then $\xi \in \F(\hat{\tau}^* X)$. By the minimality of $X$, this implies that $X \hookrightarrow \hat{\tau}^* X$; using this inclusion together with the pullback diagram for $\hat{\tau}^* X$, one can construct an endomorphism $g:X \rightarrow X$ such that the following diagram commutes:
\begin{equation}
   \xymatrix{ Y^n  \ar[r]^{\hat{\tau}} &  Y^n \\
              X \ar[u]^{\alpha} \ar@{..>}[r]_{\exists g} & X \ar[u]_{\alpha} 
            }
\end{equation}
Thus we have $\alpha_* g_* \xi = \hat{\tau}_*\alpha_*\xi = \alpha_* \gamma$. By the injectivity of $\alpha_*$, we consequently obtain $g_* \xi = \gamma$. This shows that the map $\omega_{X, \xi}: \End(X) \ni g \mapsto g_* \xi \in \F(X)$ is surjective. By Proposition \ref{maps from rooted objects}, it follows that $\omega_{X,\xi}$ is also injective and thus $(X, \xi)$ is a galois object. Finally, let $\pi_i: Y^n \rightarrow Y$ be the projection onto the $i$-th component and $\sigma^i$ be the composition $\sigma^i= \pi_i \alpha$. Then clearly $\sigma^i_* \xi = \eta_i$ for each $1 \leq i \leq n$. This proves that $\Hom(X,Y) \ni \sigma \mapsto  \sigma_* \xi \in \F(Y)$ is surjective. Again, by Proposition \ref{maps from rooted objects}, it is also injective and thus a bijection. 
\end{proof}

One can see that each object $X \in \C$ can be decomposed into a disjoint union $X = X_1 \sqcup X_2 \sqcup \cdots \sqcup X_n$ of connected components $X_i \hookrightarrow X$; and in the case where $\C$ is galois, this decomposition was used (e.g.\ in the proof that every galois category $\sgc$ is equivalent to one of the form $\langle \Cl_f G, \F_G \rangle$, cf.\ \cite{Tonini}) as a step of reducing several arguments on objects to the case for connected objects. In the case where $\C$ is not galois, however, we need a more subtle decomposition of objects in order to perform the same reduction (\S \ref{s4}). The concepts of \emph{optimal covering} and \emph{root degree} of objects are used for this purpose; and formally, defined as follows. 

\begin{definition}[Optimal covering and root degree]
 A covering $\{j_k: X_k \hookrightarrow X\}$ of $X$ is said to be \emph{optimal} if:
  \begin{enumerate}
    \item each $X_k$ is rooted;
    \item if $j_k$ can factor through $j_l$, then $k=l$. 
  \end{enumerate}
Moreover, the \emph{root degree} of an object $X \in \C$ (or simply, the \emph{degree}) is defined as the size $n$ of an optimal covering $\{\lambda_i: X_i \hookrightarrow X\}_{i=1}^n$ of $X$. 
\end{definition}

These concepts are general enough and well-defined in the following sense: 

\begin{proposition}
  Every object $X \in \C$ has an optimal covering, which is unique up to isomorphism. 
\end{proposition}
\begin{proof}
 First, the existence of optimal covering is proved by induction on $|\F(X)|$. For the base step, assume that $|\F(X)|=1$. Then the identity $\{\id_X: X \rightarrow X\}$ is an optimal covering of $X$. For the induction step, let $X$ be such that $|\F(X)| = n+1$. If $X$ is rooted, then $\id_X: X \rightarrow X$ is an optimal covering of $X$. Otherwise, there is a covering $\{j_k: X_k \hookrightarrow X\}$ such that no $j_k: X_k \hookrightarrow X$ is an isomorphism, i.e.\ $|\F(X_k)| < n+1$ in particular. By induction hypothesis, each $X_k$ has an optimal covering. Using them, it is not difficult to construct an optimal covering of $X$. 

 Finally, for the proof of uniqueness, let $\{\lambda_i: X_i \hookrightarrow X\}_{i=1}^n$ and $\{\lambda_k': X_k' \hookrightarrow X \}_{k=1}^m$ be optimal coverings of $X$. Since $X_i$ is rooted and $\{ \lambda_k': X_k' \hookrightarrow X \}$ is a covering of $X$, $\lambda_i: X_i \hookrightarrow X$ factors through some $\lambda_k': X_k' \hookrightarrow X$. One can see that such an index $k$ is unique for $i$. In fact, assume that $\lambda_i$ also factors through $\lambda_l'$. The inverse argument shows that $\lambda_k'$ factors through some $\lambda_j$, which implies that $\lambda_i$ factors through $\lambda_j$ and thus $j=i$ by the optimality of $\{\lambda_i\}$. From the fact that $\lambda_k'$ factors through $\lambda_i$ and $\lambda_i$ factors through $\lambda_l'$, $\lambda_k'$ factors through $\lambda_l'$ and thus $k=l$. Therefore we can define a map $\sigma: \{1,\cdots,n\} \rightarrow \{1,\cdots, m\}$ so that $\lambda_i$ factors through $\lambda_{\sigma(i)}'$. The above argument in fact proves that $\sigma$ is an isomorphism (i.e.\ $n=m$ in particular), and that each of $\lambda_i$ and $\lambda_{\sigma(i)}'$ factor through each other, i.e.\ isomorphic. 
\end{proof}

\begin{remark}
 If $\C$ is galois, an optimal covering $\{X_k \hookrightarrow X\}$ of an object $X$ is equal to one given by the decomposition $X= X_1 \sqcup X_2 \sqcup \cdots \sqcup X_n$ into connected components $X_i \hookrightarrow X$; in other words, components of optimal coverings $X_i$ are mutually disjoint in the case of galois categories. However, if $\C$ is not galois, components of an optimal covering are not necessarily disjoint. Thus we denote such a decomposition as $X = X_1 \cup X_2 \cup \cdots \cup X_n$ to indicate that the components $X_i$ may intersect. 
\end{remark}

\subsection{Fundamental monoids}
\label{s3s3}
\noindent
The monoid $\End(\F)$ of natural endomorphisms $\F \Rightarrow \F$ can be equipped with a canonical profinite topology so that $\End(\F)$ forms a profinite monoid. In this section, we construct a profinite monoid using an inverse system based on galois object; and prove that it is canonically isomorphic to $\End(\F)$. This representation of $\End(\F)$ will be used in the proof of the duality theorem between profinite monoids and semi-galois categories (\S \ref{s4}).

To construct a necessary inverse system, several lemmas are in order:
\begin{lemma}
\label{arrows between galois objects}
 Let $(X,\xi)$ and $(X',\xi')$ be galois objects. Then there is at most one arrow $\lambda: X \rightarrow X'$ such that $ \lambda_*\xi = \xi'$; and if exists, the arrow $\lambda$ is an epimorphism.
\end{lemma}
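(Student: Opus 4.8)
The plan is to handle the two assertions separately, drawing in each case on the rootedness built into the definition of a galois object. For the uniqueness claim I would appeal directly to Proposition \ref{maps from rooted objects}. Since $(X,\xi)$ is a galois object we have in particular $X=X_\xi$, so $X$ is rooted with root $\xi$; hence that proposition tells us that the evaluation map $\omega_{X,\xi}\colon \Hom(X,X') \ni f \mapsto f_*\xi \in \F(X')$ is injective. Consequently, if $\lambda,\mu\colon X\to X'$ both satisfy $\lambda_*\xi = \xi' = \mu_*\xi$, then $\lambda=\mu$, which is exactly the ``at most one'' assertion.

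For the epimorphism claim, suppose such a $\lambda$ exists and factor it through its image as $\lambda = j_\lambda\circ\pi_\lambda$, with $\pi_\lambda\colon X\twoheadrightarrow \mathrm{Im}(\lambda)$ an epimorphism and $j_\lambda\colon \mathrm{Im}(\lambda)\hookrightarrow X'$ a monomorphism, using axiom $C_2$. The key observation is that $\xi'$ already lives on the image: writing $\zeta := (\pi_\lambda)_*\xi \in \F(\mathrm{Im}(\lambda))$, we have $(j_\lambda)_*\zeta = \lambda_*\xi = \xi'$, and since $j_\lambda$ is a monomorphism the map $(j_\lambda)_*$ is injective, so it exhibits $\mathrm{Im}(\lambda)$ as a subobject of $X'$ containing $\xi'$. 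I would then invoke the minimality of $X'_{\xi'}$: as $\mathrm{Im}(\lambda)\hookrightarrow X'$ is a subobject containing $\xi'$, we get $X'_{\xi'}\subseteq \mathrm{Im}(\lambda)$. But $(X',\xi')$ being galois forces $X'=X'_{\xi'}$, whence $X'\subseteq \mathrm{Im}(\lambda)\subseteq X'$, i.e.\ $j_\lambda$ is an isomorphism. Therefore $\lambda = j_\lambda\circ\pi_\lambda$ is a composite of an isomorphism with an epimorphism, hence itself an epimorphism.

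There is no serious obstacle here: the argument is entirely formal once one notes that ``galois object'' packages the two rootedness conditions $X=X_\xi$ and $X'=X'_{\xi'}$, and that these (via Proposition \ref{maps from rooted objects} and the minimality defining $X'_{\xi'}$) are in fact all that the statement uses — the bijectivity half of the galois condition is not needed for this lemma. The only point requiring care is the bookkeeping of the image factorization from axiom $C_2$, together with the fact that monomorphisms correspond to injections on fibers, so that $\xi'$ may genuinely be regarded as an element of $\F(\mathrm{Im}(\lambda))$; this is where I would be most explicit, to avoid conflating $\mathrm{Im}(\lambda)$ with its image inside $X'$.
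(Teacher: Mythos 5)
Your proof is correct and follows essentially the same route as the paper's: uniqueness via the injectivity of $\omega_{X,\xi}$ from Proposition \ref{maps from rooted objects}, and the epimorphism claim by observing that $\mathrm{Im}(\lambda)$ is a subobject of $X'$ containing $\xi'$, so that $X'=X'_{\xi'}\subseteq \mathrm{Im}(\lambda)\subseteq X'$ forces $\mathrm{Im}(\lambda)=X'$. Your version merely spells out the image factorization more explicitly, and your side remark that only the rootedness (not the full bijectivity) of the galois condition is used is accurate.
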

\begin{proof}
 The first claim follows from Proposition \ref{maps from rooted objects}. To prove the second claim, let $\lambda: X \rightarrow X'$ be such that $ \lambda_*\xi = \xi'$. Then, the image $\mathrm{Im}(\lambda) \subseteq X'$ contains $\xi'$. By the fact that $X'= X'_{\xi'}$, one obtains $\mathrm{Im}(\lambda) = X'$ because $X'=X'_{\xi'} \subseteq \mathrm{Im}(\lambda) \subseteq X'$. This shows that $\lambda$ is an epimorphism.
\end{proof}
\noindent
By this lemma, a pre-order $\leq$ on galois objects can be defined by $(X,\xi) \leq (X', \xi')$ if and only if there exists a \emph{pointed arrow} $\lambda: (X',\xi') \rightarrow (X,\xi)$, i.e.\ an arrow $\lambda: X' \rightarrow X$ such that $ \lambda_*\xi' = \xi$. In what follows, this pre-ordered set of galois objects is denoted $\G$ that turns out to be \emph{cofiltered} in the following sense:

\begin{lemma}
\label{cofilteredness of galois objects}
 For an arbitrary pair of galois objects $(X, \xi), (X',\xi') \in \G$, there is a galois object $(X'',\xi'') \in \G$ such that $(X,\xi) \leq (X'',\xi'')$ and $(X',\xi') \leq (X'',\xi'')$.
\end{lemma}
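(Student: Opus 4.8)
The plan is to imitate the construction used in the proof of Proposition \ref{cofinality of galois objects}, replacing the power $Y^n$ there by the binary product $X \times X'$. First I would form the product $X \times X'$, which exists in $\C$ since $\C$ has a final object and finite pullbacks; because $\F$ preserves finite products (by the axioms $F_0$ and $F_1$) we may identify $\F(X\times X')$ with $\F(X)\times\F(X')$, and accordingly set $\xi'' := (\xi,\xi')$. Let $\alpha: X'' \hookrightarrow X\times X'$ denote the minimal subobject of $X\times X'$ containing $\xi''$. By construction $X'' = X''_{\xi''}$, so $X''$ is rooted and the map $\omega_{X'',\xi''}: \End(X'') \ni g \mapsto g_*\xi'' \in \F(X'')$ is injective by Proposition \ref{maps from rooted objects}.

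The heart of the argument is to show that $\omega_{X'',\xi''}$ is also surjective, which will make $(X'',\xi'')$ a galois object. Given $\gamma \in \F(X'')$, I would write $\alpha_*\gamma = (\gamma_1,\gamma_2)$ with $\gamma_1\in\F(X)$ and $\gamma_2\in\F(X')$. Since $(X,\xi)$ and $(X',\xi')$ are galois, there exist $h\in\End(X)$ and $h'\in\End(X')$ with $h_*\xi=\gamma_1$ and $h'_*\xi'=\gamma_2$; then the product endomorphism $h\times h' \in \End(X\times X')$ satisfies $(h\times h')_*\alpha_*\xi'' = \alpha_*\gamma$. I would then replay the pullback step of Proposition \ref{cofinality of galois objects} verbatim: since $(h\times h')_*$ carries $\xi''$ into $\F(X'')$, the minimality of $X''$ forces an inclusion $X'' \hookrightarrow (h\times h')^* X''$ into the pullback of $\alpha$ along $h\times h'$, and this inclusion together with the pullback square yields an endomorphism $g: X'' \to X''$ with $\alpha\circ g = (h\times h')\circ\alpha$. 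Applying $\alpha_*$ and using its injectivity gives $g_*\xi''=\gamma$, establishing surjectivity and hence that $(X'',\xi'')$ is a galois object.

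Finally, I would restrict the two projections $X\times X' \to X$ and $X\times X'\to X'$ along $\alpha$ to obtain arrows $\sigma: X''\to X$ and $\sigma': X''\to X'$ with $\sigma_*\xi''=\xi$ and $\sigma'_*\xi''=\xi'$; these are precisely pointed arrows $(X'',\xi'')\to(X,\xi)$ and $(X'',\xi'')\to(X',\xi')$, so that $(X,\xi)\leq(X'',\xi'')$ and $(X',\xi')\leq(X'',\xi'')$, as required. The only genuinely nontrivial step is the surjectivity of $\omega_{X'',\xi''}$, and this is handled by the pullback-and-minimality trick already isolated in Proposition \ref{cofinality of galois objects}; everything else is routine bookkeeping with products and the preservation of finite products by $\F$.
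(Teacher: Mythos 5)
Your proof is correct, but it takes a different (and in one respect sharper) route than the paper. The paper's own proof is a two-line application of Proposition \ref{cofinality of galois objects} to the object $Y = X \times X'$: that proposition hands you a galois object $(X'',\xi'')$ together with a pointed arrow $\sigma: X'' \to X\times X'$ hitting $(\xi,\xi')$, and composing with the two projections finishes the argument. Note that the galois object produced this way is the minimal subobject of $(X\times X')^N$ containing the tuple of \emph{all} $N=|\F(X\times X')|$ elements, and the construction never uses that $X$ and $X'$ are themselves galois. You instead take the minimal subobject $X''$ of $X\times X'$ itself containing just $(\xi,\xi')$ and verify directly that it is galois; the surjectivity of $\omega_{X'',\xi''}$ is where you genuinely need the galois hypothesis on $(X,\xi)$ and $(X',\xi')$, since the endomorphism $h\times h'$ playing the role of $\hat\tau$ only exists because every element of $\F(X)$ (resp.\ $\F(X')$) is of the form $h_*\xi$ (resp.\ $h'_*\xi'$). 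The pullback-and-minimality step is then carried out exactly as in the paper's proof of Proposition \ref{cofinality of galois objects}, and the projections restricted along $\alpha$ give the required pointed arrows. What your version buys is a more economical witness $(X'',\xi'')$ (a subobject of $X\times X'$ rather than of a large power) and an explicit confirmation that the product of two galois objects already contains a galois object covering both; what it costs is redoing the pullback argument rather than citing the proposition as a black box. Both proofs are complete and correct.
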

\begin{proof}
 Let $(X,\xi)$ and $(X',\xi') \in \G$ be galois objects. By Proposition \ref{cofinality of galois objects}, there are a galois object $(X'',\xi'') \in \G$ and $\sigma: X'' \rightarrow X\times X'$ such that $ \sigma_*\xi'' = (\xi,\xi') \in \F(X \times X')$. Let $\pi:X\times X' \rightarrow X$ and $\pi': X \times X' \rightarrow X'$ be canonical projections. Then it is easy to see that $\pi \sigma: X'' \rightarrow X$ and $\pi' \sigma: X'' \rightarrow X'$ give the pre-ordering $(X,\xi) \leq (X'',\xi'')$ and $(X',\xi') \leq (X'',\xi'')$.
\end{proof}

\noindent
The cofiltered pre-ordered set $\G$ is used below as the index set of an inverse system of finite monoids whose limit is isomorphic to $\End(\F)$ as profinite monoids. The inverse system consists of finite monoids $\End(X)$ where $X$ ranges over all galois objects $(X,\xi) \in \G$. More precisely, the construction of the inverse system is based on the following two lemmas:

\begin{lemma}
\label{endomorphisms of galois objects}
 Let $(X, \xi)$ and $(X', \xi')$ be galois objects with $\lambda: (X,\xi) \rightarrow (X', \xi')$. Then, for every $u \in \End(X)$, there exists a unique $u' \in \End(X')$ such that the following diagram commutes:
\begin{equation*}
  \xymatrix{
    X \ar[r]^u  \ar@{->>}[d]_{\lambda} & X \ar@{->>}[d]^{\lambda} \\
    X' \ar@{..>}[r]_{\exists ! u'} & X'
}
\end{equation*}
\end{lemma}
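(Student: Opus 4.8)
The plan is to construct $u'$ from its value on the marked point $\xi'$, exploiting the defining bijection of the galois object $(X',\xi')$, and then to verify commutativity and uniqueness separately, using the rootedness of $X$ for the former and the fact that $\lambda$ is epic for the latter.

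First I would define the candidate for $u'$. Recall that $\lambda\colon(X,\xi)\to(X',\xi')$ being a pointed arrow means $\lambda_*\xi=\xi'$. Since $(X',\xi')$ is a galois object, the evaluation map $\omega_{X',\xi'}\colon \End(X') \ni g \mapsto g_*\xi' \in \F(X')$ is a bijection; in particular it is surjective, so there is a (unique) endomorphism $u' \in \End(X')$ characterised by $u'_*\xi' = \lambda_* u_* \xi$. Note that $\lambda_* u_* \xi$ is a legitimate element of $\F(X')$, and this prescription is forced: if the square is to commute, then applying the two composites to $\xi$ and using $\lambda_*\xi=\xi'$ gives exactly $u'_*\xi' = (u'\circ\lambda)_*\xi = (\lambda\circ u)_*\xi = \lambda_* u_* \xi$.

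Next I would check that this $u'$ does make the square commute, i.e.\ $u'\circ\lambda = \lambda\circ u$ as arrows $X \to X'$. Both sides are morphisms out of the rooted object $X=X_\xi$, so Proposition \ref{maps from rooted objects} guarantees that $\Hom(X,X') \ni f \mapsto f_*\xi$ is injective; hence it suffices to compare the two composites on $\xi$. Tracking the left-action convention of Notation \ref{composition of arrows}, we compute $(u'\circ\lambda)_*\xi = u'_*(\lambda_*\xi) = u'_*\xi' = \lambda_* u_* \xi = (\lambda\circ u)_*\xi$, where the middle equality is the defining equation of $u'$. Thus the two arrows agree on $\xi$ and therefore coincide.

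Finally, uniqueness is immediate: by Lemma \ref{arrows between galois objects} the pointed arrow $\lambda$ is an epimorphism, so any two solutions $u',u''$ of $u'\circ\lambda = \lambda\circ u = u''\circ\lambda$ satisfy $u'=u''$ by right-cancellation. The only real subtlety here is organisational rather than computational: one must first produce $u'$ from the \emph{surjectivity} of $\omega_{X',\xi'}$ (the galois property of the \emph{target}) before commutativity can even be stated, whereas it is the \emph{injectivity} coming from the rootedness of the \emph{source} $X$ that verifies it; keeping the two roles apart, together with the left/right bookkeeping of the $*$-notation, is the main thing to get right, but each step is routine once the correct defining value of $u'$ is identified.
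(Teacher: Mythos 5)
Your proof is correct and follows essentially the same route as the paper's: existence of $u'$ via the bijection $\End(X')\ni v\mapsto v_*\xi'\in\F(X')$ applied to the element $\lambda_*u_*\xi$, commutativity of the square via Proposition \ref{maps from rooted objects} by evaluating both composites at the root $\xi$, and uniqueness from the fact that $\lambda$ is an epimorphism (Lemma \ref{arrows between galois objects}). The only difference is expository: you spell out the injectivity step on $\xi$ a little more explicitly than the paper does.
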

\begin{proof}
 The uniqueness follows from the fact that the pointed arrow $\lambda:(X,\xi) \rightarrow (X',\xi')$ between galois objects must be an epimorphism (cf.\ Lemma \ref{arrows between galois objects}). For the existence, consider the element $\lambda_* u_* \xi \in \F(X')$. By the bijectivity of $\End(X') \ni v \mapsto v_* \xi' \in \F(X')$, there exists $u' \in \End(X')$ such that $\lambda_* u_* \xi  = u'_* \xi'$, whence $ \lambda_* u_* \xi = u'_* \lambda_* \xi$. By Proposition \ref{maps from rooted objects}, we obtain $\lambda u = u' \lambda$.  
\end{proof}

\begin{lemma}
 Let $(X,\xi)$ and $(X',\xi')$ be galois objects with $\lambda: (X,\xi) \rightarrow (X',\xi')$. Then the above correspondence $\End(X) \ni u \mapsto u' \in \End(X')$ is a surjective homomorphism of finite monoids. 
\end{lemma}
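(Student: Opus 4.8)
The plan is to verify the two assertions separately: first that $u \mapsto u'$ respects the monoid structure, and then that it is surjective. Throughout I would use that the assignment $u \mapsto u'$ is pinned down by the single commutation relation $\lambda u = u' \lambda$ together with its uniqueness, both furnished by Lemma \ref{endomorphisms of galois objects}; I would also repeatedly exploit that $\lambda$ is an epimorphism (Lemma \ref{arrows between galois objects}) and that $(X,\xi)$ and $(X',\xi')$ are galois objects, so that $\omega_{X,\xi}\colon \End(X) \to \F(X)$ and $\omega_{X',\xi'}\colon \End(X') \to \F(X')$ are bijections, and that $f \mapsto f_*\xi$ is injective on $\Hom(X,X')$ (Proposition \ref{maps from rooted objects}, since $X = X_\xi$ is rooted).

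For the homomorphism property the key observation is simply that commuting squares compose. For the unit, $\lambda \circ \id_X = \id_{X'} \circ \lambda$, so by uniqueness $(\id_X)' = \id_{X'}$. For products, given $u_1, u_2 \in \End(X)$ with $\lambda u_i = u_i' \lambda$, I would chain the two squares:
\begin{equation*}
 \lambda (u_1 u_2) = (\lambda u_1) u_2 = (u_1'\lambda) u_2 = u_1'(\lambda u_2) = u_1'(u_2'\lambda) = (u_1'u_2')\lambda .
\end{equation*}
Since $u_1'u_2' \in \End(X')$ satisfies the defining relation for $(u_1 u_2)'$, the uniqueness clause of Lemma \ref{endomorphisms of galois objects} forces $(u_1 u_2)' = u_1' u_2'$. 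This settles that $u \mapsto u'$ is a monoid homomorphism (note the composition order is preserved, so it is a homomorphism and not an anti-homomorphism).

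Surjectivity is where the real work lies, and I expect it to be the main obstacle. Fix $v \in \End(X')$; the aim is to produce $u \in \End(X)$ with $\lambda u = v\lambda$, for then $u' = v$ by uniqueness. The idea is to determine $u$ through its action on the distinguished point $\xi$. Concretely I would target the pointwise identity $\lambda_*(u_* \xi) = v_* \xi'$, because then $(\lambda u)_* \xi = (v\lambda)_* \xi$, and injectivity of $f \mapsto f_*\xi$ on $\Hom(X,X')$ (Proposition \ref{maps from rooted objects}) upgrades this to the honest equality $\lambda u = v \lambda$. To obtain such a $u$: since $\lambda$ is an epimorphism, the induced map $\lambda_*\colon \F(X) \to \F(X')$ is surjective (recall epimorphisms in a semi-galois category are exactly the arrows inducing surjections on fibres), so there is some $\gamma \in \F(X)$ with $\lambda_* \gamma = v_* \xi'$; and since $(X,\xi)$ is galois, $\omega_{X,\xi}$ is a bijection, yielding a unique $u \in \End(X)$ with $u_*\xi = \gamma$. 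This $u$ then satisfies $\lambda_* u_* \xi = \lambda_*\gamma = v_* \xi' = v_* \lambda_* \xi$, which is exactly the desired pointwise identity. The crux is this two-step construction—lifting $v_*\xi'$ along the epimorphism $\lambda_*$ and then realizing the lift as the image of $\xi$ under an endomorphism via the galois property—and the subtlety is precisely that it needs both the surjectivity of $\lambda_*$ and the bijectivity of $\omega_{X,\xi}$ simultaneously to go through.
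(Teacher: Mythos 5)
Your proposal is correct and follows essentially the same route as the paper: the homomorphism property via the uniqueness clause of Lemma \ref{endomorphisms of galois objects}, and surjectivity by lifting $v_*\xi'$ along the surjection $\lambda_*$, realizing the lift as $u_*\xi$ via the galois property of $(X,\xi)$, and upgrading the pointwise identity to $\lambda u = v\lambda$ by Proposition \ref{maps from rooted objects}. No gaps.
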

\begin{proof}
 The uniqueness of $u'$ shown in Lemma \ref{endomorphisms of galois objects} proves that the correspondence $ \End(X) \ni u \mapsto u' \in \End(X')$ is a monoid homomorphism. To see the surjectivity, it is sufficient to show that, for every $u' \in \End(X')$, there exists some $u\in \End(X)$ satisfying $\lambda u = u' \lambda$. For this aim, let $u' \in \End(X)$ be an arbitrary endomorphism on $X'$, and consider the element $u'_* \xi' \in \F(X')$. By the surjectivity of $\lambda_* : \F(X) \twoheadrightarrow \F(X')$, there exists an element $\xi_1 \in \F(X)$ such that $\lambda_* \xi_1 = u'_* \xi'$. Also, by the bijectivity of $\End(X) \ni v \mapsto v_* \xi \in \F(X)$, there is $u_1 \in \End(X)$ such that $\xi_1 = u_{1*}\xi$. Then, $ \lambda_* u_{1*} \xi =  u'_* \lambda_*\xi$. Again, by Proposition \ref{maps from rooted objects}, one obtains $\lambda u_1 = u' \lambda$. This proves the claim.  
\end{proof}

\noindent
For simplicity, we reserve capital Greeks $\Gamma, \Gamma' \cdots$ to name galois objects, e.g.\ $\Gamma = (X,\xi)$. Also, if we denote $\End(\Gamma)$ for $\Gamma = (X, \xi) \in \G$, we mean $\End(\Gamma) := \End(X)$. For galois objects $\Gamma=(X,\xi)$ and $\Gamma'=(X',\xi')$ with $\Gamma' \leq \Gamma$, the induced surjective homomorphism $\End(\Gamma) \twoheadrightarrow \End(\Gamma')$ is denoted $\rho^{\Gamma}_{\Gamma'}$. 
\begin{definition}
 The correspondence $\G \ni \Gamma \mapsto \End(\Gamma) \in {\bf Fin.Mon}$ defines an inverse system of finite monoids, where ${\bf Fin.Mon}$ denotes the category of finite monoids and homomorphisms. 
\end{definition}

\noindent
The limit of this inverse system $\{\End(\Gamma)\}_{\Gamma \in \G}$ is denoted $\displaystyle \lim_{\leftarrow \Gamma} \End(\Gamma)$ or simply $\lim_{\Gamma} \End(\Gamma)$. More explicitly, $\lim_{\Gamma} \End(\Gamma)$ consists of families ${\bf u}=\{ u_{\Gamma}\}_{\Gamma \in \G}$ of endomorphisms $u_{\Gamma} \in \End(\Gamma)$ such that $\rho^{\Gamma}_{\Gamma'} (u_{\Gamma}) = u_{\Gamma'}$ for all $\Gamma' \leq \Gamma \in \G$. The multiplication of ${\bf u} = \{u_{\Gamma}\}_{\Gamma \in \G}$ and ${\bf v} = \{v_{\Gamma}\}_{\Gamma \in \G}$ is given by:
\begin{eqnarray}
  {\bf uv} &=& \{ u_{\Gamma} v_{\Gamma} \}_{\Gamma \in G}.
\end{eqnarray}
The rest of this subsection is devoted to a proof of the following theorem:

\begin{theorem}
\label{delta is isomorphism}
 There exists a canonical isomorphism of profinite monoids:
 \begin{equation}
   \delta : \lim_{\leftarrow \Gamma} \End(\Gamma) \rightarrow \End(\F). 
 \end{equation}
\end{theorem}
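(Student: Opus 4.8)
The plan is to define $\delta$ explicitly on elements and then verify, in turn, that it is a well-defined monoid homomorphism, a bijection, and a homeomorphism. The guiding idea is the identification of fibers with hom-sets provided by the cofinality of galois objects (Proposition \ref{cofinality of galois objects}): for each object $Y \in \C$ one can choose a galois object $\Gamma = (X,\xi) \in \G$ for which $\Hom(X,Y) \ni \sigma \mapsto \sigma_*\xi \in \F(Y)$ is a bijection, and moreover this cofinality is inherited by every larger galois object $\Gamma' \geq \Gamma$ (using that pointed arrows are epimorphisms, Lemma \ref{arrows between galois objects}, together with Proposition \ref{maps from rooted objects}). Given $\uu = \{u_\Gamma\}_{\Gamma \in \G} \in \lim_\Gamma \End(\Gamma)$ I would define a natural endomorphism $\phi^{\uu} = \delta(\uu)$ by
\[
 \eta\, \phi^{\uu}_{Y} := \sigma_* (u_\Gamma)_* \xi \qquad (\eta \in \F(Y)),
\]
where $\Gamma = (X,\xi)$ is any galois object cofinal for $Y$ and $\sigma: X \to Y$ is the unique arrow with $\sigma_* \xi = \eta$.

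First I would check that this is independent of the chosen $\Gamma$. By cofilteredness (Lemma \ref{cofilteredness of galois objects}) it suffices to compare $\Gamma \leq \Gamma'$ joined by a pointed arrow $\lambda: X' \to X$ with $\lambda_* \xi' = \xi$; the unique arrow for $\Gamma'$ is then $\sigma' = \sigma\lambda$, and the compatibility relation $\rho^{\Gamma'}_{\Gamma}(u_{\Gamma'}) = u_\Gamma$, i.e.\ $\lambda u_{\Gamma'} = u_\Gamma \lambda$ (Lemma \ref{endomorphisms of galois objects}), lets me rewrite $\sigma'_*(u_{\Gamma'})_*\xi' = \sigma_*\lambda_*(u_{\Gamma'})_*\xi' = \sigma_*(u_\Gamma)_*\lambda_*\xi' = \sigma_*(u_\Gamma)_*\xi$. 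The same computation, carried out with a galois object cofinal for both $Y$ and $Z$, yields naturality $g_*(\eta\,\phi^{\uu}_Y) = (g_*\eta)\,\phi^{\uu}_Z$ for every $g: Y \to Z$, since $\sigma_*\xi = \eta$ forces $(g\sigma)_*\xi = g_*\eta$. Finally, the right-action convention for natural transformations (Notation \ref{composition of natural transformation}) makes $\delta$ a genuine homomorphism: writing $\eta\,\phi^{\uu}_Y = (\sigma u_\Gamma)_*\xi$ exhibits $\sigma u_\Gamma$ as the arrow representing $\eta\,\phi^{\uu}_Y$, so that $(\eta\,\phi^{\uu}_Y)\,\phi^{{\bf v}}_Y = \sigma_*(u_\Gamma)_*(v_\Gamma)_*\xi = \eta\,\phi^{\uu{\bf v}}_Y$ for ${\bf v} = \{v_\Gamma\}$.

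For bijectivity I would exploit that a galois object is cofinal for itself, i.e.\ $\omega_{X,\xi}: \End(X) \to \F(X)$ is a bijection by the very definition of a galois object. For injectivity, evaluating $\phi^{\uu}$ at $X$ on $\xi$ (take $\sigma = \id_X$) gives $\xi\,\phi^{\uu}_X = (u_\Gamma)_*\xi$, so $\delta(\uu) = \delta({\bf v})$ forces $(u_\Gamma)_*\xi = (v_\Gamma)_*\xi$ and hence $u_\Gamma = v_\Gamma$ for every $\Gamma$. For surjectivity, given $\phi \in \End(\F)$, define $u_\Gamma \in \End(X)$ as the unique endomorphism with $(u_\Gamma)_*\xi = \xi\,\phi_X$; naturality of $\phi$ along a pointed arrow $\lambda: X' \to X$ gives $\lambda_*(\xi'\,\phi_{X'}) = (\lambda_*\xi')\,\phi_X = \xi\,\phi_X$, which by Proposition \ref{maps from rooted objects} yields $\lambda u_{\Gamma'} = u_\Gamma\lambda$, so $\{u_\Gamma\}$ is a compatible family; and naturality of $\phi$ along $\sigma: X \to Y$ shows $\sigma_*(u_\Gamma)_*\xi = \sigma_*(\xi\,\phi_X) = (\sigma_*\xi)\,\phi_Y = \eta\,\phi_Y$, whence $\delta(\uu) = \phi$.

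It remains to upgrade this algebraic isomorphism to one of profinite monoids. Here I would regard $\End(\F)$ as a closed submonoid of the compact space $\prod_{X \in \C} \F(X)^{\F(X)}$ of finite discrete factors (the topology of pointwise convergence), and $\lim_\Gamma \End(\Gamma)$ with its usual inverse-limit topology; both are then compact Hausdorff. Continuity of $\delta$ is immediate coordinatewise, since $\uu \mapsto \eta\,\phi^{\uu}_Y = \sigma_*(u_\Gamma)_*\xi$ factors through the continuous projection $\lim_\Gamma \End(\Gamma) \to \End(\Gamma)$ followed by a map of finite discrete monoids. A continuous bijection from a compact space to a Hausdorff space is a homeomorphism, so $\delta$ is an isomorphism of topological, hence profinite, monoids. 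The main obstacle is the coherence bookkeeping in the first two steps: showing that $\delta(\uu)$ is well defined and natural, and dually that the family reconstructed in the surjectivity argument is compatible, both of which hinge on threading the pointed arrows through the inverse-system relation $\lambda u_{\Gamma'} = u_\Gamma\lambda$ and on the injectivity furnished by Proposition \ref{maps from rooted objects}; once this is set up correctly, the remaining verifications are routine.
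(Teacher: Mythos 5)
Your proposal is correct and follows essentially the same route as the paper: the same pointwise definition $\eta\cdot\delta\uu_Y := \sigma_* u_{\Gamma *}\xi$ via Proposition \ref{cofinality of galois objects}, the same well-definedness and naturality checks through cofilteredness and the relation $\lambda u_{\Gamma''} = u_\Gamma \lambda$, and the same injectivity (evaluate at $\xi$ with $\sigma = \id_X$) and surjectivity (set $u_{\Gamma *}\xi = \xi\cdot\phi_X$) arguments. The only difference is your final topological step: you equip $\End(\F)$ with the pointwise-convergence topology and verify that $\delta$ is a homeomorphism, whereas the paper simply transports the profinite topology along the algebraic isomorphism $\delta$ in the subsequent definition of $\pi_1(\C,\F)$; your version is a harmless (and slightly more informative) strengthening.
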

\begin{proof}
 Let $\uu = \{u_{\Gamma}\} \in \limmon$, from which we would like to define a natural endomorphism $\delta \uu: \F \Rightarrow \F \in \End(\F)$. For this aim, we first define a map $\delta \uu _Y: \F(Y) \rightarrow \F(Y)$ for each object $Y \in \C$; then, we will show that the family $\delta \uu = \{\delta \uu _Y\}$ forms a natural endomorphism on $\F$. Take $Y \in \C$ arbitrarily and let $\eta \in \F(Y)$. Then, by Proposition \ref{cofinality of galois objects}, there exists an arrow $\sigma: X \rightarrow Y$ from a galois object $\Gamma = (X,\xi)$ such that $\sigma_* \xi = \eta$. We define $\eta \cdot \delta \uu _Y \in \F(Y)$ as follows. (Recall Notation \ref{composition of natural transformation}, \S \ref{s2s1} for the notation of the action of $\delta \uu: \F \Rightarrow \F$ on $\eta \in \F(Y)$ at $Y \in \C$.)
\begin{eqnarray}
 \eta \cdot \delta \uu_Y &:=& \sigma_* u_{\Gamma *} \xi. 
\end{eqnarray}
This is well-defined. That is:
\begin{lemma}
 The value of $\eta \cdot \delta \uu_Y \in \F(Y)$ does not depend on the choice of the galois object $\Gamma = (X,\xi)$ and the arrow $\sigma: X \rightarrow Y$ with $\sigma_* \xi = \eta$. 
\end{lemma}
\begin{proof}
 Let $\Gamma' = (X',\xi')$ and $\sigma': X' \rightarrow Y$ be another choice. We need to prove the following equality:\begin{eqnarray} 
\label{well-definedness of delta}
\sigma_* u_{\Gamma *} \xi &=&  \sigma'_* u_{\Gamma' *} \xi'. 
\end{eqnarray}
By Lemma \ref{cofilteredness of galois objects}, there exists a galois object $\Gamma''= (X'',\xi'')$ that has pointed arrows $\lambda: (X'',\xi'') \rightarrow (X,\xi)$ and $\lambda': (X'',\xi'') \rightarrow (X',\xi')$. Then, we have $\xi= \lambda_* \xi''$ and $\xi'= \lambda_*\xi''$. So the required equation (\ref{well-definedness of delta}) is reduced to the following equality: 
\begin{eqnarray} 
\label{reduced well-definedness of delta}
 \sigma_* u_{\Gamma *} \lambda_* \xi'' &=&  \sigma'_* u_{\Gamma' *} \lambda'_* \xi''. 
\end{eqnarray}
Since $\uu \in \lim_{\Gamma} \End(\Gamma)$, we have $\rho^{\Gamma''}_{\Gamma} (u_{\Gamma''}) = u_{\Gamma}$ and $\rho^{\Gamma''}_{\Gamma'} (u_{\Gamma''}) = u_{\Gamma'}$. By the definition of $\rho^{\Gamma}_{\Gamma'}$, this means that $ \lambda u_{\Gamma''} = u_{\Gamma} \lambda$ and $ \lambda' u_{\Gamma''} = u_{\Gamma'} \lambda'$. Thus, the left and right hand sides of (\ref{reduced well-definedness of delta}) are equal to $ \sigma_* \lambda_* u_{\Gamma'' *} \xi''$ and $ \sigma'_* \lambda'_* u_{\Gamma'' *}\xi''$ respectively. Finally, notice that $ \sigma \lambda= \sigma' \lambda'$ because $ \sigma_* \lambda_*\xi'' = \sigma_*\xi = \eta = \sigma'_* \xi' = \sigma'_* \lambda'_* \xi''$. Thus:
\begin{eqnarray*} 
 \sigma_* u_{\Gamma *} \lambda_*\xi'' &=& \underline{\sigma_*\lambda_*} u_{\Gamma'' *} \xi'' \\
&=& \underline{\sigma'_*\lambda'_* } u_{\Gamma'' *} \xi'' \\
&=&  \sigma'_* u_{\Gamma' *}\lambda'_*  \xi''. 
\end{eqnarray*}
This proves the equation (\ref{reduced well-definedness of delta}). 
\end{proof}

\begin{lemma}
\label{naturality of delta}
 The family $\delta \uu = \{ \delta \uu_Y \} _{Y\in \C}$ is a natural endomorphism $\F \Rightarrow \F$. 
\end{lemma}
\begin{proof}
 Let $Y, Z$ be arbitrary objects in $\C$ and $f: Y \rightarrow Z$ be an arrow. We need to see that $(f_* \eta) \cdot\delta \uu_Z = f_* (\eta \cdot \delta \uu_Y)$. Let $(X,\xi)$ be a galois object with $\sigma: X \rightarrow Y$ such that $\sigma_*\xi = \eta$, whence $f_* \sigma_* \xi = f_* \eta$. Thus, by the definition of $\delta \uu_Z$ and $\delta \uu_Y$:
\begin{eqnarray*}
  (f_* \eta) \cdot \delta \uu_Z &=& (f \sigma)_* u_{\Gamma *} \xi\\
  &=&  f_* \underline{ \sigma_* u_{\Gamma *}\xi}\\
  &=& f_* \underline{(\eta \cdot \delta \uu_Y)}.
\end{eqnarray*}
This proves the naturality of $\delta \uu$. 
\end{proof}
\noindent
By these lemmas, we obtain a map $\delta: \lim_{\Gamma} \End(\Gamma) \rightarrow \End(\F)$. In fact, $\delta$ is not only a map but also a homomorphism of monoids. 
\begin{lemma}
\label{delta is homomorphism}
 The map $\delta: \lim_{\Gamma} \End(\Gamma) \rightarrow \End(\F)$ is a homomorphism. 
\end{lemma}
\begin{proof}
 It is easy to see that $\delta$ preserves the identity; so we show that $\delta$ preserves multiplication. Let $\uu = \{ u_{\Gamma} \}, \uu' = \{ u'_{\Gamma}\} \in \lim_{\Gamma} \End(\Gamma)$. We want to show that $\delta (\uu' \uu) = \delta (\uu') \cdot \delta (\uu)$, which is done by object-wise argument. Let $Y$ be an arbitrary object in $\C$ and $\eta \in \F(Y)$. Also, choose a galois object $\Gamma=(X,\xi)$ and an arrow $\sigma: X \rightarrow Y$ with $\sigma_* \xi = \eta$. By definition of $\delta \uu'$, we have:
\begin{eqnarray}
  \eta \cdot \delta \uu'_Y &=& \sigma_* u'_{\Gamma *} \xi,
\end{eqnarray}
which we denote $\eta' \in \F(Y)$. Since $\eta' = (\sigma u'_{\Gamma})_* \xi$ and $\sigma u'_{\Gamma} : X \rightarrow Y$ is an arrow from the galois object $(X,\xi)$, the definition of $\delta \uu_Y$ implies:
\begin{eqnarray*}
   \eta' \cdot \delta \uu_Y  &=& (\sigma u'_{\Gamma })_*  u_{\Gamma *} \xi\\
   &=& \sigma_* (u'_{\Gamma }  u_{\Gamma *})_*  \xi.
\end{eqnarray*}
Since $\uu' \uu = \{u'_{\Gamma} u_{\Gamma}\}$ and $\eta = \sigma_* \xi$, we obtain:
\begin{eqnarray*}
   \eta \cdot (\delta \uu'_Y \cdot \delta \uu_Y)  &=& \eta' \cdot \delta \uu_Y \\
   &=& \sigma_* (u'_{\Gamma }  u_{\Gamma *})_*  \xi\\
   &=& \eta \cdot \delta (\uu' \uu)_Y . 
\end{eqnarray*}
Since we took $Y\in \C$ and $\eta \in \F(Y)$ arbitrarily, this proves $\delta (\uu' \uu) = \delta \uu' \cdot \delta \uu$.
\end{proof}
\noindent
Finally, we prove that the homomorphism $\delta: \lim_{\Gamma} \End(\Gamma) \rightarrow \End(\F)$ is an isomorphism. First we show the injectivity of $\delta$; and then the surjectivity of $\delta$. 
\begin{lemma}
\label{injectivity of delta}
 The homomorphism $\delta: \lim_{\Gamma} \End(\Gamma) \rightarrow \End(\F)$ is injective. 
\end{lemma}
\begin{proof}
Let $\uu = \{ u_{\Gamma}\}, \uu' = \{ u'_{\Gamma}\} \in \limmon$ be such that $\delta \uu = \delta \uu'$. From this, we would like to show that $u_{\Gamma} = u'_{\Gamma}$ for every $\Gamma = (X, \xi) \in \G$. To this end, it is sufficient to see $ u_{\Gamma *}\xi = u'_{\Gamma *} \xi$ (cf.\ Proposition \ref{maps from rooted objects}). Since $\xi = \mathrm{id}_{X*} \xi$ and by definition of $\delta \uu, \delta \uu'$, we have $u_{\Gamma *}\xi =\xi \cdot \delta \uu_X $ and $u'_{\Gamma *} \xi = \xi \cdot  \delta \uu'_X$. But since we now have $\delta \uu = \delta \uu'$, this implies $ u_{\Gamma *}\xi = u'_{\Gamma *} \xi$ as requested. 
\end{proof}
\begin{lemma}
\label{surjectivity of delta}
 The homomorphism $\delta: \lim_{\Gamma} \End(\Gamma) \rightarrow \End(\F)$ is surjective.   
\end{lemma}
\begin{proof}
 Let $\phi: \F \Rightarrow \F \in \End(\F)$ be an arbitrary natural endomorphism on $\F$, from which we now construct $\uu = \{ u_{\Gamma}\}_{\Gamma \in \G} \in \limmon$ so that $\delta \uu = \phi$. For this aim, take $u_{\Gamma} \in \End(X)$ for each $\Gamma =(X, \xi) \in \G$ so that $u_{\Gamma *}\xi  = \xi \cdot \phi_X$. (Note that such $u_\Gamma$ exists because $(X,\xi)$ is galois.) We first see that $\uu := \{u_{\Gamma}\}_{\Gamma} \in \limmon$; and second, we see that $\delta \uu = \phi$. 

First, let $\Gamma = (X, \xi)$ and $\Gamma'=(X',\xi') \in \G$ be galois objects such that $\Gamma' \leq \Gamma$. Then, there exists an arrow $\lambda: X \rightarrow X'$ with $\lambda_* \xi = \xi'$. By the naturality of $\phi$, we have $  \lambda_* (\xi\cdot \phi_X) = ( \lambda_* \xi) \cdot \phi_{X'} $, i.e.\ $ \lambda_* u_{\Gamma *}\xi =   u_{\Gamma' *} \lambda_* \xi$ which proves $\lambda u_{\Gamma} = u_{\Gamma'} \lambda$. By definition of $\rho^{\Gamma}_{\Gamma'}$, this means that $\rho^{\Gamma}_{\Gamma'} (u_{\Gamma}) = u_{\Gamma'}$. In other words, $\uu = \{u_{\Gamma}\}_{\Gamma}$ is indeed a member of $\limmon$. Second, we see that $\delta \uu = \phi$, i.e.\ $\delta \uu_Y = \phi_Y$ for every object $Y \in \C$. Let $Y \in \C$, $\eta \in \F(Y)$, and $\sigma: X \rightarrow Y$ be an arrow from a galois object $\Gamma = (X, \xi)$ such that $\eta = \sigma_* \xi$. By definition of $\delta \uu$, we have:
\begin{eqnarray*}
  \eta \cdot \delta \uu_ Y  &=& \sigma_* u_{\Gamma *} \xi\\
  &=&   \sigma_* (\xi \cdot \phi_X).
\end{eqnarray*}
By the naturality of $\phi$, the right-most is equal to $( \sigma_* \xi) \cdot \phi_Y  = \eta \cdot \phi_Y $. From this, it follows that $\eta \cdot \delta \uu_Y  =\eta \cdot \phi_Y $. Thus, $\delta \uu_Y = \phi_Y$. 
\end{proof}
\noindent
Consequently, all these lemmas show that $\delta: \limmon \rightarrow \End(\F)$ is an isomorphism: the theorem is proved. 
\end{proof}

\begin{definition}[Fundamental monoids] 
 Let $\langle \C,\F \rangle$ be a semi-galois category. Then, the monoid $\End(\F)$ forms a profinite monoid with respect to the topology induced by the isomorphism $\delta: \limmon \rightarrow \End(\F)$. We call the profinite monoid $\End(\F)$ the \emph{fundamental monoid} of $\langle \C,\F \rangle$, and denote $\pi_1(\C,\F)$. 
\end{definition}

\section{The Duality Theorem}
\label{s4}
\noindent
This section proves the opposite equivalence between (i) the category of profinite monoids and continuous homomorphisms, denoted $\profmon$; and (ii) the category of semi-galois categories and exact functors, denoted $\semigalois$:
\begin{eqnarray*}
  \profmon &\simeq& \semigalois^{op}. 
\end{eqnarray*}
The proof consists of three parts: In the first part (\S \ref{appendix b one}), every profinite monoid $M$ is proved isomorphic to the fundamental monoid $\pi_1(\mfsets, \F_M)$ of the semi-galois category $\langle \mfsets, \F_M \rangle$ of finite $M$-sets and $M$-equivariant maps; in the second part (\S \ref{appendix b two}), proved is that every semi-galois category $\langle \C,\F \rangle$ is canonically equivalent to the semi-galois category $\langle \Cl_f M, \F_M \rangle$ with $M = \pi_1(\C,\F)$; and in the last part (\S \ref{appendix b three}), we show the target opposite equivalence $\profmon \simeq \semigalois^{op}$, using the results proved in the first two parts \S \ref{appendix b one} -- \S \ref{appendix b two}. 

\subsection{Reconstruction of profinite monoids}
\label{appendix b one}
\noindent
We start with the first part of the proof. Let $M$ be a profinite monoid and $\langle \mfsets, \F_M \rangle$ be the semi-galois category of finite $M$-sets and $M$-equivariant maps. The purpose of this section is to prove the following:
\begin{theorem}[The Reconstruction Theorem]
\label{reconstruction of profinite monoid}
 There is a canonical isomorphism of profinite monoids: 
\begin{equation}
\lambda_M: M \xrightarrow{\simeq} \pi_1(\mfsets,\F_M). 
\end{equation}
\end{theorem}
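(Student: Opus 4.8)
The plan is to write down $\lambda_M$ explicitly and then verify that it is a continuous bijective homomorphism between profinite (hence compact Hausdorff) monoids, from which the topological isomorphism follows automatically. For $m \in M$ I would define a natural endomorphism $\lambda_M(m): \F_M \Rightarrow \F_M$ by declaring its component at $X \in \mfsets$ to be the right translation $S_X \ni \xi \mapsto \xi \cdot m \in S_X$. Naturality of $\lambda_M(m)$ is exactly the $M$-equivariance of arrows in $\mfsets$: for $f: X \rightarrow Y$ one has $f_*(\xi \cdot m) = (f_* \xi) \cdot m$. With the right-action convention for natural transformations (Notation \ref{composition of natural transformation}), the associativity law $\xi \cdot (mn) = (\xi \cdot m) \cdot n$ translates into $\lambda_M(mn) = \lambda_M(m) \cdot \lambda_M(n)$, and $\xi \cdot 1 = \xi$ gives $\lambda_M(1) = \id_{\F_M}$; hence $\lambda_M: M \rightarrow \End(\F_M) = \pi_1(\mfsets, \F_M)$ is a homomorphism of monoids, manifestly canonical in $M$.

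For continuity, via the isomorphism $\delta$ of Theorem \ref{delta is isomorphism} it suffices to check that each composite $M \rightarrow \End(\F_M) \rightarrow \End(\Gamma)$ into a finite monoid $\End(\Gamma)$, for a galois object $\Gamma = (X,\xi)$, is locally constant. Unwinding the construction of $\delta^{-1}$, this composite sends $m$ to the unique $u \in \End(X)$ with $u_* \xi = \xi \cdot m$; since $m \mapsto \xi \cdot m$ is the restriction of the continuous action map $\rho_X$ into the discrete set $S_X$ it is locally constant, and $u$ depends on $\xi \cdot m$ only through the bijection $\omega_{X,\xi}$ attached to the galois object, so $\lambda_M$ is continuous. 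Injectivity is separate and easy: if $m \neq n$, some continuous finite-monoid quotient $q: M \twoheadrightarrow N$ separates them, and viewing $N$ as a finite right $M$-set via $q$, its identity $e$ satisfies $e \cdot m = q(m) \neq q(n) = e \cdot n$, whence $\lambda_M(m) \neq \lambda_M(n)$.

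Surjectivity is the crux. I would write $M = \lim_i N_i$ over its continuous finite quotient monoids $q_i: M \twoheadrightarrow N_i$, each viewed as a finite right $M$-set, the transition maps $p_{ij}: N_i \rightarrow N_j$ being $M$-equivariant monoid homomorphisms. Given $\phi \in \End(\F_M)$, set $m_i := e_i \phi_{N_i} \in N_i$ with $e_i = q_i(1)$. Naturality of $\phi$ along $p_{ij}$ together with $p_{ij}(e_i) = e_j$ gives $p_{ij}(m_i) = m_j$, so $(m_i)_i$ defines an element $m \in \lim_i N_i = M$. It then remains to show $\xi \phi_X = \xi \cdot m$ for every $X$ and $\xi \in S_X$. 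Fixing such $X,\xi$, the orbit map $a_\xi: M \rightarrow S_X$, $x \mapsto \xi \cdot x$, is $M$-equivariant and locally constant by continuity of $\rho_X$, hence factors as $a_\xi = \bar a_\xi \circ q_i$ through an $M$-equivariant map $\bar a_\xi: N_i \rightarrow X$ for some $i$ large enough. Naturality of $\phi$ along $\bar a_\xi$ yields $\bar a_\xi(m_i) = \bar a_\xi(e_i)\,\phi_X = \xi \phi_X$, while $m_i = q_i(m)$ gives $\bar a_\xi(m_i) = a_\xi(m) = \xi \cdot m$; thus $\xi \phi_X = \xi \cdot m$, i.e.\ $\phi = \lambda_M(m)$.

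Finally, a continuous bijective homomorphism between compact Hausdorff spaces is a homeomorphism, so $\lambda_M$ is an isomorphism of profinite monoids. I expect the surjectivity step to be the main obstacle: the essential use of the profinite (compact) structure of $M$ enters precisely through the local constancy of the action and orbit maps, which is what allows an abstract natural endomorphism $\phi$ to be recognized as right translation by a genuine element $m \in M$, together with the verification that the locally constant orbit map really does factor through an $M$-equivariant surjection onto a finite quotient monoid.
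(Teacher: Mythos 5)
Your proposal is correct, but it reaches bijectivity by a genuinely different route than the paper. Both arguments define $\lambda_M(m)$ identically (right translation componentwise, natural by $M$-equivariance) and both ultimately rest on the presentation $M \simeq \lim_i H_i$ over the finite quotient monoids viewed as finite $M$-sets. The paper, however, never proves injectivity and surjectivity of $\lambda_M$ separately: it first classifies the galois objects of $\mfsets$ as exactly the pairs $\Gamma_H = (X_H, e_H)$ attached to finite quotients $p: M \twoheadrightarrow H$ (Lemma \ref{galois objects in mfsets} and the lemma following it), proves $H \simeq \End(\Gamma_H)$ (Lemma \ref{h is isomorphic to end}), and then identifies the inverse system $\{\End(\Gamma)\}_{\Gamma}$ with $\{H_i\}_i$ so that $\delta^{-1}\circ\lambda_M$ is recognized, via Lemma \ref{two inverse systems are equivalent}, as the composite of three known isomorphisms. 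You instead bypass the classification of galois objects entirely: injectivity comes from separating points by a finite quotient, and surjectivity comes from extracting $m = (e_i\phi_{N_i})_i \in \lim_i N_i$ directly from a natural endomorphism $\phi$ and then factoring each orbit map $a_\xi: M \to S_X$ through an $M$-equivariant map $\bar a_\xi: N_i \to X$ (which works: the factorization exists by local constancy and cofilteredness, and $\bar a_\xi$ is $M$-equivariant since the kernel of $q_i$ is a two-sided congruence). Your argument is more elementary and self-contained in that it uses the machinery of \S\ref{s3s3} only to topologize $\End(\F_M)$, at the cost of redoing by hand the element-chasing that the paper packages into the statement that galois objects in $\mfsets$ are precisely the finite quotient monoids --- a structural fact the paper reuses later (e.g.\ in the proof of Theorem \ref{coherence of BM}), which is what its approach buys.
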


\noindent
Let $Y = \langle S,\rho \rangle \in \mfsets$. Then, by definition of finite $M$-sets, $\rho: M \rightarrow\End(S) = \End(\F_M(Y))$ is a monoid homomorphism; and for each $m \in M$, defines an element $\rho(m) \in \End(\F_M(Y))$, i.e.\ a map $\rho(m) : \F_M(Y) \rightarrow \F_M(Y)$. Consider the following family: 
\begin{eqnarray}
\label{family}
 \lambda_M(m) &:=& \bigl\{\rho(m): \F_M(Y) \rightarrow \F_M(Y)\bigr\}_{Y=\langle S,\rho \rangle \in \mfsets}.
\end{eqnarray}
Since $M$-equivariant maps (i.e.\ arrows in $\mfsets$) preserve $M$-actions (i.e.\ $\rho(m)$), the family (\ref{family}) defines a natural transformation $\F_M \Rightarrow \F_M$. Thus a map $\lambda_M: M \rightarrow \pi_1(\mfsets,\F_M)$ is defined by mapping $m\in M$ to $\lambda_M(m) \in \End(\F_M) = \pi_1(\mfsets,\F_M)$. We now show that $\lambda_M$ is a continuous isomorphism of profinite monoids. 

For this purpose, it is sufficient to prove that the following composition is an isomorphism:
\begin{equation}
\label{lambda delta}
   M \xrightarrow{\lambda_M} \End(\F_M) \xrightarrow{\delta^{-1}} \lim_{\leftarrow \Gamma} \End(\Gamma).
\end{equation}
Here $\Gamma$ ranges over all galois objects in $\mfsets$; and $\delta: \lim_{\Gamma} \End(\Gamma) \rightarrow \End(\F_M)$ is the isomorphism that we constructed in the previous section (cf.\ Theorem \ref{delta is isomorphism}). As shown in Lemma \ref{surjectivity of delta}, the inverse $\delta^{-1}: \End(\F_M) \rightarrow \lim_{\Gamma} \End(\Gamma)$ assigns to each $\phi \in \End(\F_M)$ the family $\{ u_{\Gamma}\} \in \lim_{\Gamma} \End(\Gamma)$ such that $\xi \cdot\phi_X = u_{\Gamma *} \xi$ for each galois object $\Gamma = (X,\xi)$ in $\mfsets$. This means that the composition (\ref{lambda delta}) assigns to each $m \in M$ the family $\{u^m_{\Gamma}\} \in \lim_{\Gamma} \End(\Gamma)$ such that $\xi \cdot m = u^m_{\Gamma *} \xi$ for each $\Gamma = (X,\xi)$. 

To prove that (\ref{lambda delta}) is an isomorphism, we first characterize galois objects in $\mfsets$. 

\begin{lemma}
\label{characterization of galois objects}
 Let $\Gamma = (X,\xi)$ be a galois object in $\mfsets$. Then, the equivalence $m \equiv_{\Gamma} m'$ given by $\xi \cdot m = \xi \cdot m'$ is a congruence on $M$ of finite index. 
\end{lemma}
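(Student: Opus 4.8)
The plan is to verify the three required properties in turn: that $\equiv_{\Gamma}$ is an equivalence relation, that it is a (two-sided) congruence, and that it has finite index. The only property for which I expect to need the full strength of the hypothesis that $\Gamma$ is a \emph{galois} object — rather than merely a rooted one — is the left-compatibility half of the congruence condition; everything else uses only that $X$ is a finite $M$-set rooted at $\xi$. First I would observe that $\equiv_{\Gamma}$ is by construction the fiber equivalence of the evaluation map $\mathrm{ev}_\xi : M \rightarrow S_X$, $m \mapsto \xi \cdot m$, so it is automatically reflexive, symmetric and transitive, and its classes are exactly the fibers of $\mathrm{ev}_\xi$. Since $X = X_\xi$, the object $X$ is rooted with root $\xi$, so by the characterization of rooted $M$-sets the map $\mathrm{ev}_\xi$ is surjective onto the finite set $S_X = \F_M(X)$; hence $\equiv_{\Gamma}$ has exactly $|S_X|$ classes and is of finite index. (One may further note that $\mathrm{ev}_\xi$ is continuous for the discrete topology on $S_X$, so each class is clopen and $M/\!\equiv_{\Gamma}$ is a finite discrete monoid, although this is not needed for the statement.)

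For the congruence property I would check right- and left-compatibility separately. Right-compatibility is immediate from associativity of the action: if $\xi \cdot m = \xi \cdot m'$, then for any $n \in M$ we have $\xi \cdot (mn) = (\xi \cdot m)\cdot n = (\xi \cdot m')\cdot n = \xi\cdot(m'n)$, so $mn \equiv_{\Gamma} m'n$; this uses only that $X$ is an $M$-set. The substantive step is left-compatibility, which is where the galois hypothesis enters. Given $n \in M$, I must show that $\xi \cdot m = \xi \cdot m'$ implies $(\xi\cdot n)\cdot m = (\xi\cdot n)\cdot m'$. The key point is that, because $(X,\xi)$ is a galois object, the map $\omega_{X,\xi}: \End(X) \rightarrow S_X$, $g \mapsto g_*\xi$, is bijective, hence in particular surjective, so there exists $g \in \End(X)$ with $g_*\xi = \xi\cdot n$. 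Since $g$ is $M$-equivariant, transporting the equality $\xi\cdot m = \xi\cdot m'$ along $g$ yields
\[
(\xi\cdot n)\cdot m = (g_*\xi)\cdot m = g_*(\xi\cdot m) = g_*(\xi\cdot m') = (g_*\xi)\cdot m' = (\xi\cdot n)\cdot m',
\]
so $nm \equiv_{\Gamma} nm'$. Combining the two compatibilities by transitivity then gives that $m \equiv_{\Gamma} m'$ and $n \equiv_{\Gamma} n'$ imply $mn \equiv_{\Gamma} m'n'$, i.e.\ $\equiv_{\Gamma}$ is a two-sided congruence.

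The hard part will be exactly this left-compatibility: without the galois assumption one obtains only a \emph{right} congruence, and it is precisely the surjectivity of $\omega_{X,\xi}$ — the defining feature of galois objects — that realizes every representative $\xi\cdot n$ as the image $g_*\xi$ of an $M$-equivariant endomorphism, and thereby forces two-sided compatibility. I therefore expect no genuine obstacle beyond correctly isolating this use of the galois property; the finite-index and equivalence-relation claims are routine once the surjectivity of $\mathrm{ev}_\xi$ coming from rootedness is invoked.
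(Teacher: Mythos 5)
Your proposal is correct and follows essentially the same route as the paper: the paper likewise dismisses right-compatibility and finite index as immediate and devotes the argument to left-compatibility, producing $v \in \End(X)$ with $v_*\xi = \xi\cdot n$ from the bijectivity of $\omega_{X,\xi}$ and transporting the equality along the $M$-equivariant map $v$, exactly as you do. Your additional spelling-out of the finite-index claim via surjectivity of the evaluation map onto the finite set $S_X$ is a correct elaboration of what the paper calls obvious.
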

\begin{proof}
 The relation $\equiv_\Gamma$ is obviously a right congruence of finite index. To see that $\equiv_\Gamma$ is also a left congruence, let $m \equiv_\Gamma m$ and $n \in M$; we show that $n m \equiv_\Gamma n m'$, i.e.\ $\xi \cdot nm = \xi \cdot nm'$. Since $\Gamma=(X,\xi)$ is a galois object, there exists $v \in \End(\Gamma)$ such that $\xi \cdot n = v_* \xi$. Thus:
\begin{eqnarray*}
   \xi \cdot nm &=& (v_* \xi) \cdot m \\
                &=& v_* (\xi \cdot m) \hspace{0.5cm} (\because \textrm{$v$ is an $M$-equivariant map.})\\
                &=& v_* (\xi \cdot m') \\ 
                &=& (v_* \xi) \cdot m \\
                &=& \xi \cdot nm'.
\end{eqnarray*}
Thus, $nm \equiv_\Gamma nm'$. 
\end{proof}

\noindent
Hence, the quotient $H_\Gamma := M / \equiv_\Gamma$ forms a finite monoid; and, by the continuity of $M$-action on the set $\F_M(X)$, the canonical projection $p_\Gamma: M \twoheadrightarrow H_\Gamma$ is continuous. 

In general, each finite quotient $p: M \twoheadrightarrow H$ of $M$ onto a finite monoid $H$ induces a finite $M$-set $X_H = \langle S_H, \rho_H \rangle \in \mfsets$, which is formally defined as $\F_M(X_H) = S_H := H$ and $\rho_H: S_H \times M \rightarrow S_H$ is given by, for each $m \in M$ and $h \in H$,
\begin{eqnarray}
\label{definition of action}
 \rho_H (h,m)  &:=& h \cdot p(m).
\end{eqnarray}
Let $e_H \in H = \F_M (X_H)$ denote the identity of $H$. 

\begin{lemma}
\label{galois objects in mfsets}
  $\Gamma_H = (X_H, e_H)$ is a galois object in $\mfsets$.
\end{lemma}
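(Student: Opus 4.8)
The plan is to verify directly the two defining conditions of a galois object for the pair $(X_H, e_H)$: namely that $X_H = (X_H)_{e_H}$ (so that $e_H$ is a root of $X_H$), and that the evaluation map $\omega_{X_H, e_H}: \End(X_H) \ni f \mapsto f_*(e_H) \in \F_M(X_H)$ is a bijection.

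First I would establish rootedness. Since $\F_M(X_H) = S_H = H$ and the action is $\rho_H(h,m) = h \cdot p(m)$, the root candidate $e_H$ satisfies $e_H \cdot m = e_H \cdot p(m) = p(m)$ for every $m \in M$. Because $p: M \twoheadrightarrow H$ is surjective, every $h \in H$ is of the form $p(m)$, hence $h = e_H \cdot m$ for some $m \in M$. Thus every sub-$M$-set of $X_H$ containing $e_H$ must contain all of $H$, so the minimal subobject $(X_H)_{e_H}$ equals $X_H$; by Proposition \ref{characterization of rooted objects}, $X_H$ is rooted with root $e_H$. (This is exactly the criterion recorded in the Example on rooted $M$-sets.) Injectivity of $\omega_{X_H, e_H}$ is then immediate from Proposition \ref{maps from rooted objects} applied with $Y = X_H$, since $X_H = (X_H)_{e_H}$.

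For surjectivity, for each $h \in H$ I would exhibit an explicit endomorphism realizing $h$, namely the \emph{left translation} $\ell_h: X_H \rightarrow X_H$ defined on underlying sets by $\ell_h(k) := h k$ (the product in $H$). The essential verification is that $\ell_h$ is $M$-equivariant: using associativity in $H$ and the definition of $\rho_H$, one has $\ell_h(k \cdot m) = h\,(k\, p(m)) = (hk)\, p(m) = (\ell_h(k)) \cdot m$, so $\ell_h \in \End(X_H)$. Since $\ell_h(e_H) = h\, e_H = h$, we obtain $\omega_{X_H, e_H}(\ell_h) = (\ell_h)_*(e_H) = h$, which proves surjectivity. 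Combined with injectivity, $\omega_{X_H, e_H}$ is a bijection, and hence $(X_H, e_H)$ is a galois object.

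The only point requiring any care --- and the ``main obstacle'', such as it is --- is the equivariance of the left-translation maps $\ell_h$; everything else reduces to the surjectivity of $p$ and the definition of the action. The conceptual content is that, for the $M$-set structure transported from a monoid quotient, the right $M$-action (through $p$, by right multiplication) commutes with left multiplication in $H$, so the left translations $\ell_h$ are precisely the endomorphisms needed to reach every element of the fiber from the root $e_H$.
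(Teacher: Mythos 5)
Your proposal is correct and follows essentially the same route as the paper's proof: rootedness of $X_H$ at $e_H$ via surjectivity of $p$, injectivity of $\omega_{X_H,e_H}$ from rootedness, and surjectivity via the left translations $k \mapsto hk$, which are $M$-equivariant because left multiplication in $H$ commutes with the right action through $p$. The only difference is that you write out the equivariance computation explicitly, which the paper leaves as an observation.
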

\begin{proof}
  First, $X_H$ is rooted with $e_H \in \F_M(X_H)$ its root because every $h \in \F_M(X_H)$ is of the form $h= p(m)$ for some $m \in M$; and so, $h = e_H \cdot m$ by (\ref{definition of action}). Second, we see that $\omega: \End(X_H) \ni u \mapsto  u_* e_H \in \F_M (X_H)$ is bijective. Since $X_H$ is rooted, the map $\omega$ is injective. To see the surjectivity, let $h \in H = \F_M(X_H)$ be an arbitrary element. Notice that the left action $H \ni k \mapsto hk \in H$ defines an $M$-equivariant map $u_h: X_H \rightarrow X_H$, i.e.\ commutes with right $M$-action (\ref{definition of action}); and also, $ u_{h*} e_H = h $. This shows that $\omega$ is indeed bijective; and thus, $\Gamma_H = ( X_H,e_H )$ is a galois object in $\mfsets$. 
\end{proof}

\begin{lemma}
 Let $\Gamma = (X, \xi)$ be a galois object in $\mfsets$ and $p_\Gamma: M \twoheadrightarrow H_\Gamma$ be the finite quotient defined above. Then, $\Gamma$ and $\Gamma_{H_\Gamma}$ are isomorphic as $M$-sets. In other words, galois objects in $\mfsets$ are exactly those of the form $\Gamma_H$ for some finite quotient $M \twoheadrightarrow H$. 
\end{lemma}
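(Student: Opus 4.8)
The plan is to exhibit an explicit pointed isomorphism of $M$-sets between $\Gamma = (X,\xi)$ and $\Gamma_{H_\Gamma} = (X_{H_\Gamma}, e_{H_\Gamma})$, after which the ``in other words'' clause follows at once by combining this with Lemma \ref{galois objects in mfsets} (which already shows that every object of the form $\Gamma_H$ is galois). Recall that, since $\Gamma = (X,\xi)$ is galois, $X$ is rooted with root $\xi$; hence by the characterization of rooted $M$-sets every state $\eta \in S_X$ can be written $\eta = \xi \cdot m$ for some $m \in M$. Recall also that $\F_M(X_{H_\Gamma}) = H_\Gamma = M/\!\equiv_\Gamma$ carries the right $M$-action $h \cdot m := h \cdot p_\Gamma(m)$.

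First I would define $\phi: S_X \rightarrow H_\Gamma$ by $\phi(\xi \cdot m) := p_\Gamma(m)$. Rootedness guarantees that $\phi$ is defined on all of $S_X$, and well-definedness is precisely the defining property of $\equiv_\Gamma$: if $\xi \cdot m = \xi \cdot m'$ then $m \equiv_\Gamma m'$, so $p_\Gamma(m) = p_\Gamma(m')$. To see that $\phi$ is a bijection I would produce its two-sided inverse $\psi: H_\Gamma \rightarrow S_X$, $\psi(p_\Gamma(m)) := \xi \cdot m$, which is well-defined by the same equivalence read in the opposite direction. The identities $\phi(\psi(p_\Gamma(m))) = \phi(\xi \cdot m) = p_\Gamma(m)$ and $\psi(\phi(\xi \cdot m)) = \psi(p_\Gamma(m)) = \xi \cdot m$ then show that $\phi$ and $\psi$ are mutually inverse.

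Next I would check that $\phi$ is $M$-equivariant and pointed. Using that $p_\Gamma$ is a monoid homomorphism and the definition of the action on $X_{H_\Gamma}$,
\[
\phi\bigl((\xi \cdot m)\cdot n\bigr) = \phi\bigl(\xi \cdot (mn)\bigr) = p_\Gamma(mn) = p_\Gamma(m)\,p_\Gamma(n) = \phi(\xi \cdot m)\cdot n,
\]
so $\phi$ commutes with the right $M$-actions. Pointedness is the computation $\phi(\xi) = \phi(\xi \cdot 1) = p_\Gamma(1) = e_{H_\Gamma}$. Hence $\phi$ is a pointed $M$-equivariant isomorphism $(X,\xi) \xrightarrow{\simeq} (X_{H_\Gamma}, e_{H_\Gamma})$, that is, $\Gamma \cong \Gamma_{H_\Gamma}$.

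I do not expect a genuine obstacle here: every verification reduces to the definition of $\equiv_\Gamma$, the rootedness of $X$, and the fact that $p_\Gamma$ is a homomorphism. The only point that needs a little care is that the target action $h \cdot m = h \cdot p_\Gamma(m)$ on $X_{H_\Gamma}$ is exactly what makes $\phi$ equivariant. It is worth noting that the full galois hypothesis (bijectivity of $\omega_{X,\xi}$) is invoked only upstream --- through Lemma \ref{characterization of galois objects}, to ensure that $\equiv_\Gamma$ is a two-sided congruence and hence that $H_\Gamma$ is a monoid --- whereas the isomorphism itself rests on rootedness alone.
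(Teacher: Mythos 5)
Your proof is correct and takes essentially the same route as the paper: your inverse map $\psi(p_\Gamma(m)) = \xi \cdot m$ is exactly the arrow $f: X_{H_\Gamma} \rightarrow X$ that the paper constructs, and both arguments rest on the same three ingredients --- the definition of $\equiv_\Gamma$ for well-definedness, rootedness of $X$ at $\xi$ for bijectivity, and the definition of the action on $X_{H_\Gamma}$ for equivariance and pointedness. Your closing observation that the full galois hypothesis enters only upstream (to make $\equiv_\Gamma$ two-sided so that $H_\Gamma$ is a monoid) is also accurate.
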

\begin{proof}
 Define an arrow $X_{H_\Gamma} \xrightarrow{f} X$ by $\F_M(X_{H_\Gamma}) \ni [m] \mapsto m \cdot \xi \in \F_M(X)$, where $[m]$ denotes the $\equiv_\Gamma$-equivalence class containing $m \in M$. By definition of $\equiv_\Gamma$, this arrow is well-defined; and bijective by that $\xi$ is a root of $X$. Also, by definition of $\Gamma_{H_\Gamma}$, it is easy to see that $f$ preserves $M$-actions and maps the root $e_H = [e_M]$ to $\xi$.  
\end{proof}

\begin{lemma}
\label{h is isomorphic to end}
 $H \simeq \End(\Gamma_H)$. 
\end{lemma}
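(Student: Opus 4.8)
The plan is to promote the set-theoretic bijection already produced in the proof of Lemma~\ref{galois objects in mfsets} to a monoid isomorphism; essentially all the content is there, and what remains is only to check that this bijection is multiplicative. Recall that in that proof the map
\[
\omega: \End(\Gamma_H) = \End(X_H) \longrightarrow H = \F_M(X_H), \qquad u \mapsto u_* e_H,
\]
was shown to be a bijection, its inverse sending $h \in H$ to the left-translation endomorphism $u_h: k \mapsto h k$. So I would take $\omega$ itself as the candidate isomorphism $\End(\Gamma_H) \xrightarrow{\simeq} H$ and verify only that it respects units and products.

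The single computational fact I would isolate first is the precise shape of an $M$-equivariant endomorphism of $X_H$. Because the right action is $h \cdot m = h\, p(m)$ and $p: M \twoheadrightarrow H$ is surjective, equivariance of $u \in \End(X_H)$ is equivalent to $u(hk) = u(h)\,k$ for all $h,k \in H$; setting $h = e_H$ shows that $u = u_a$ is left translation by $a := u_* e_H$ (and conversely every left translation is equivariant). Granting this, the homomorphism property is immediate: for $u, v \in \End(X_H)$ set $a = u_* e_H$ and $b = v_* e_H$. Using the convention $uv = u \circ v$ for the monoid $\End(X_H)$ (Notation~\ref{composition of arrows}), functoriality of $\F_M$, and the fact that $u$ acts as left translation by $a$, one computes $\omega(uv) = (u \circ v)_* e_H = u_*(v_* e_H) = u_*(b) = a b = \omega(u)\,\omega(v)$, while $\omega(\id_{X_H}) = e_H$ is the unit of $H$. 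Hence $\omega$ is a monoid isomorphism, giving $H \simeq \End(\Gamma_H)$.

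I expect no genuine difficulty here; the one point that requires care --- and in fact the reason the whole development is set up with right rather than left $M$-sets (cf.\ the remark on left and right) --- is the \emph{side} of the action. Equivariance for a right action forces endomorphisms to be left translations, and left translations compose in the same order as multiplication in $H$, so $\omega$ comes out as a genuine isomorphism. Had $X_H$ been a left $M$-set, the identical argument would instead produce an anti-isomorphism $H^{\mathrm{op}} \simeq \End(\Gamma_H)$; so I would make sure that the chosen side and the composition convention $uv = u \circ v$ match up, which is the only place where a sign-like error could creep in.
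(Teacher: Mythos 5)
Your proof is correct and is essentially the paper's argument viewed from the other end: the paper checks that $h \mapsto u_h$ is a homomorphism and then verifies bijectivity (surjectivity via the rootedness of $e_H$, which is your observation that equivariance forces $u = u_{u_* e_H}$), while you take the already-established bijection $\omega: u \mapsto u_* e_H$ and verify multiplicativity directly. The content, including the compatibility of the composition convention $uv = u \circ v$ with left translations, is the same.
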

\begin{proof}
 The above construction $h \mapsto u_h$ of Lemma \ref{galois objects in mfsets} defines a homomorphism $H \rightarrow \End(\Gamma_H)$. The injectivity is clear. To see the surjectivity, let $u \in \End(\Gamma_H)$ and $h := u_* e_H$. Since $e_H$ is a root of $X_H$ and $ u_* e_H = h = u_{h*} e_H$, we have $u = u_h$. This shows that $H \ni h \mapsto u_h \in \End(\Gamma_H)$ is an isomorphism of finite monoids. 
\end{proof}

\noindent
Let $p: M \twoheadrightarrow H$ be a finite quotient and $q_H: H \xrightarrow{\simeq} \End(\Gamma_H)$ be the isomorphism of Lemma \ref{h is isomorphic to end}. Also, denote by $\gamma: M \rightarrow \End(\Gamma_H)$ the composition:
\begin{equation}
 \gamma: M \xrightarrow{\lambda_M} \End(\F_M) \xrightarrow{\delta^{-1}} \lim_{\Gamma} \End(\Gamma) \xrightarrow{\pi_{\Gamma_H}} \End(\Gamma_H)
\end{equation}
where $\pi_{\Gamma_H}: \lim_{\Gamma} \End(\Gamma) \twoheadrightarrow \End(\Gamma_H)$ is the canonical projection. 
\begin{lemma}
\label{two inverse systems are equivalent}
 The following diagram commutes:
 \begin{eqnarray}
  \xymatrix{
   M \ar@{->>}[r]^p \ar@{->>}[dr]_{\gamma} & H \ar[d]^{q_H}\\
   & \End(\Gamma_H) 
 }
 \end{eqnarray}
\end{lemma}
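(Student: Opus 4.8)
The plan is to evaluate the composition $\gamma$ on an arbitrary element $m \in M$ and to check directly that $\gamma(m) = q_H(p(m))$, the key tool being the uniqueness of arrows out of a rooted object (Proposition \ref{maps from rooted objects}).

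First I would unwind the definition of $\gamma(m)$. By the description of $\delta^{-1} \circ \lambda_M$ recalled above, the family $\delta^{-1}\lambda_M(m) = \{ u^m_{\Gamma} \}_{\Gamma}$ is characterized by $u^m_{\Gamma *} \xi = \xi \cdot m$ for every galois object $\Gamma = (X,\xi)$ in $\mfsets$. Applying the projection $\pi_{\Gamma_H}$ to the component at $\Gamma_H = (X_H, e_H)$, we obtain that $\gamma(m) = u^m_{\Gamma_H} \in \End(\Gamma_H)$ is precisely the endomorphism determined by $u^m_{\Gamma_H *} e_H = e_H \cdot m$.

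Next I would compute the right-hand side using the definition of the action $\rho_H$ on $X_H$ given in (\ref{definition of action}): since $e_H$ is the identity of $H$, we have $e_H \cdot m = \rho_H(e_H, m) = e_H \cdot p(m) = p(m)$. Hence $\gamma(m)$ is the endomorphism of $X_H$ carrying the root $e_H$ to the element $p(m) \in \F_M(X_H)$. On the other side, by Lemma \ref{h is isomorphic to end} we have $q_H(p(m)) = u_{p(m)}$, the endomorphism given by left translation by $p(m)$, and it satisfies $u_{p(m) *} e_H = p(m) \cdot e_H = p(m)$ as well. Thus both $\gamma(m)$ and $q_H(p(m))$ are endomorphisms of $X_H$ that send $e_H$ to the same element. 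Since $X_H$ is rooted with root $e_H$, Proposition \ref{maps from rooted objects} forces these two endomorphisms to coincide, so $\gamma(m) = q_H(p(m))$; as $m$ was arbitrary, the diagram commutes.

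I do not expect any genuine obstacle here; the argument is essentially a matter of chasing definitions through the isomorphism $\delta$. The only point requiring care is bookkeeping the two distinct roles that elements of $H$ play on $X_H$ — namely the right $M$-action through $p$ (which \emph{defines} $X_H$) versus the left translations that realize $H$ as $\End(\Gamma_H)$ via $q_H$ — and verifying that both candidate endomorphisms agree on the distinguished root $e_H$, after which the rootedness of $X_H$ closes the gap.
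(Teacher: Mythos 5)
Your proof is correct and follows essentially the same route as the paper: both unwind $\gamma(m)$ as the component $u^m_{\Gamma_H}$ characterized by sending $e_H$ to $e_H\cdot m = p(m)$, observe that $q_H(p(m)) = u_{p(m)}$ also sends $e_H$ to $p(m)$, and conclude by the uniqueness of endomorphisms of the rooted object $X_H$ determined by their value at the root. No issues.
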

\begin{proof}
 Recall that the composition $M \xrightarrow{\lambda_M} \End(\F_M) \xrightarrow{\delta^{-1}} \lim_{\Gamma} \End(\Gamma)$ assigns to each $m \in M$ the family $\{u_\Gamma^m\} \in \lim_{\Gamma} \End(\Gamma)$ such that $\xi \cdot m = u_{\Gamma *}^m \xi$ for every $\Gamma =(X,\xi)$. Since, by definition, $\gamma(m) \in \End(\Gamma_H)$ is the component $u^m_{\Gamma_H}$ of $\{u_\Gamma^m\}$ at the galois object $\Gamma_H = (X_H,e_H)$, it is characterized by the property that $p(m) = \gamma(m)_* e_H$, which also characterizes $q_H \circ p (m) \in \End(\Gamma_H)$ by definition of $q_H$. Thus, we have $\gamma(m) = q_H \circ p(m)$ for each $m \in M$, i.e.\ $\gamma = q_H \circ p$. 
\end{proof}

\noindent
Let $\ff = \{p_i: M \twoheadrightarrow H_i\}$ be the family of all finite quotients of $M$, which forms an inverse system of finite monoids by defining a pre-order $i \leq j$ if and only if there exists a (necessarily unique) homomorphism $\tau^j_i: H_j \rightarrow H_i$ such that $p_i = \tau^j_i \circ p_j$. By taking the inverse limit of this system $\ff$, we obtain a profinite monoid $\lim_i H_i$ and a canonical homomorphism $p: M \rightarrow \lim_i H_i$, which assigns to each $m \in M$ the family $\{p_i(m)\} \in \lim_i H_i$. It is well-known that this homomorphism $p: M \rightarrow \lim_i H_i$ is in fact an isomorphism of profinite monoids. 

Thus we have successive isomorphisms of profinite monoids:
\begin{eqnarray*}
 M &\xrightarrow{\simeq}& \lim_i H_i \hspace{0.5cm} (\textrm{induced by $p$}) \\
   &\xrightarrow{\simeq}& \lim_i \End(\Gamma_{H_i}) \hspace{0.5cm} (\textrm{induced by $q_{H_i}$}) \\
   &\xrightarrow{\simeq}& \lim_\Gamma \End(\Gamma) \hspace{0.5cm} (\because \textrm{by Lemma \ref{characterization of galois objects}}) 
\end{eqnarray*}
Then a careful chase of this sequence, using Lemma \ref{two inverse systems are equivalent}, shows that the composition of these isomorphisms is in fact equal to the composition $M \xrightarrow{\lambda_M} \End(\F_M) \xrightarrow{\delta^{-1}} \lim _{\Gamma} \End(\Gamma)$. Thus, $\delta^{-1} \circ \lambda_M$ and also $\lambda_M$ as well are isomorphisms of profinite monoids. This completes the proof of Theorem \ref{reconstruction of profinite monoid}. 

\subsection{Representation of semi-galois categories}
\label{appendix b two}
\noindent
In this section, we prove the following:

\begin{theorem}[Representation theorem]
\label{representation of semi-galois category}
 For an arbitrary semi-galois category $\langle \C, \F \rangle$, there exists a canonical equivalence of categories $\Phi: \C \xrightarrow{\simeq} \Cl_f \pi_1(\C,\F)$ such that the following diagram commutes:
\begin{equation}
\label{diagram for comparison functor}
  \xymatrix{
   \C \ar[rr]^{\Phi} \ar[dr]_{\F}& & \Cl_f \pi_1(\C,\F) \ar[dl]^{\F_{\pi_1(\C,\F)}} \\
   & \fsets & 
}
\end{equation}
\end{theorem}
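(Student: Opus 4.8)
The plan is to build the comparison functor $\Phi$ explicitly out of the fundamental monoid $M := \pi_1(\C,\F) = \End(\F)$, and then verify that it is fully faithful and essentially surjective. First I would set $\Phi(Y) := \langle \F(Y), \rho_Y\rangle$, where the right $M$-action is $\eta\cdot\phi := \eta\,\phi_Y$ for $\eta \in \F(Y)$, $\phi \in M$ (Notation \ref{composition of natural transformation}); the action axioms are immediate, and for an arrow $f: Y \rightarrow Z$ the naturality of each $\phi$ gives $f_*(\eta\,\phi_Y) = (f_*\eta)\,\phi_Z$, so $\Phi(f) := f_*$ is $M$-equivariant. Continuity of $\rho_Y$ follows by choosing, via Proposition \ref{cofinality of galois objects}, a galois object $(X,\xi)$ with an arrow onto $Y$ and noting that $\eta\cdot\phi$ depends only on the image of $\phi$ in the finite monoid $\End(X)$; hence $\Phi(Y) \in \Cl_f M$. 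By construction $\F_M\circ\Phi = \F$, so the diagram (\ref{diagram for comparison functor}) commutes, and faithfulness of $\Phi$ is immediate: if $f_* = g_*$ then $f = g$, since $\F$ is faithful.

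The core of the argument is \emph{fullness}, which I would deduce from a subobject correspondence: for each $W \in \C$ the assignment $(Z \hookrightarrow W) \mapsto \F(Z)$ is a bijection between subobjects of $W$ and $M$-stable subsets of $\F(W)$. The essential ingredient is the identification $\F(W_\eta) = \eta\cdot M$ of the fiber of the minimal subobject $W_\eta \hookrightarrow W$ containing $\eta$ (cf.\ Proposition \ref{characterization of rooted objects}) with the $M$-orbit of $\eta$: taking a pointed arrow $\sigma: X \rightarrow W_\eta$ from a galois object with $\sigma_*\xi = \eta$, minimality forces $\sigma$ to be epic, and naturality gives $\eta\cdot\phi = \sigma_*(\xi\,\phi_X)$, while $\xi\cdot M = \F(X)$ because $(X,\xi)$ is galois and the projection $\lim_\Gamma\End(\Gamma) \twoheadrightarrow \End(X)$ is onto; hence $\eta\cdot M = \mathrm{Im}(\sigma_*) = \F(W_\eta)$. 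Surjectivity of the correspondence then follows by realizing an $M$-stable set $T = \bigcup_{\eta\in T}\eta\cdot M = \bigcup_{\eta\in T}\F(W_\eta)$ as the image of $\coprod_{\eta\in T} W_\eta \rightarrow W$, using that $\F$ preserves finite coproducts and image factorizations; injectivity follows from an intersection argument together with axiom $F_2$. Granting this, fullness is obtained by a graph argument: given $M$-equivariant $g: \F(Y)\rightarrow\F(Z)$, its graph is an $M$-stable subset of $\F(Y)\times\F(Z) = \F(Y\times Z)$ (as $\F$ preserves finite products), hence equals $\F(W)$ for a subobject $W\hookrightarrow Y\times Z$; the first projection $W\rightarrow Y$ is a fiberwise bijection, so an isomorphism by $F_2$, and composing its inverse with the second projection yields $f: Y\rightarrow Z$ with $f_* = g$.

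For \emph{essential surjectivity} I would decompose an arbitrary finite right $M$-set into the finite coproduct of its orbits, each a cyclic (rooted) $M$-set; since $\Phi$ preserves finite coproducts, it suffices to realize each cyclic $M$-set $C$. Writing $C = c_0\cdot M$, its continuous action factors through a finite quotient $M\twoheadrightarrow H$, and because $M \cong \lim_\Gamma\End(\Gamma)$ (Theorem \ref{delta is isomorphism}) this quotient factors through $\End(X)$ for some galois object $(X,\xi)$, exhibiting $C$ as a quotient $\F(X)/\theta$ by a right congruence $\theta$ (transported to $\End(X)$ via the bijection $\omega_{X,\xi}$ of Proposition \ref{maps from rooted objects}). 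I would then realize this quotient inside $\C$ by the universal quotient $X\twoheadrightarrow X/\theta$ (Lemma \ref{existence of universal quotients}), whose fiber, by Corollary \ref{preserving universal quotients} and Remark \ref{on universal quotients}, is $\F(X)$ modulo the smallest $M$-congruence containing the pairs $(h_*\zeta,k_*\zeta)$ with $(h,k)\in\theta$. Since these pairs identify with the right-translates of $\theta$, and $\theta$ is itself a right congruence, they generate exactly $\theta$, so $\Phi(X/\theta)\cong C$. Fully faithful together with essentially surjective then gives the asserted equivalence $\Phi$, compatible with the fiber functors.

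The steps I expect to be the main obstacle are the orbit identification $\F(W_\eta) = \eta\cdot M$ and the matching computation for universal quotients: both require carefully relating the right $M$-action, which operates through $\End(\F) \cong \lim_\Gamma\End(\Gamma)$, to the left action of the endomorphisms $h_*$ that appears in the subobject and universal-quotient constructions. Once these two book-keeping facts are in place, the remaining verifications (coproduct preservation, the graph construction, and the commutativity of (\ref{diagram for comparison functor})) are formal.
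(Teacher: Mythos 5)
There is a genuine gap in your essential surjectivity argument. You claim that an arbitrary finite right $M$-set decomposes as the finite \emph{coproduct} of its orbits and conclude that, since $\Phi$ preserves finite coproducts, it suffices to realize cyclic $M$-sets. For a monoid (as opposed to a group) this is false: distinct orbits need not be disjoint. Take $M=\{1,e\}$ with $e^2=e$ acting on $S=\{a,b,c\}$ by $a\cdot e=b\cdot e=c\cdot e=c$; then $a\cdot M=\{a,c\}$ and $b\cdot M=\{b,c\}$ overlap, so $S$ (three elements) is not the coproduct of its maximal orbits (four elements). This is exactly the phenomenon that forces the paper to work with \emph{optimal coverings} $X=X_1\cup\cdots\cup X_n$ whose components may intersect, rather than coproduct decompositions, and to handle the non-cyclic case of essential surjectivity by induction on $|S|$, writing $S$ as the \emph{pushout} of $S_1\hookleftarrow S_1\cap S_2\hookrightarrow S_2$ and realizing it in $\C$ by the corresponding pushout of objects supplied by the induction hypothesis. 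Your realization of each cyclic $M$-set as $\Phi(X/\theta)$ for a universal quotient of a galois object is correct and matches the paper's cyclic case, but the gluing step must be replaced by this pushout-over-intersections induction; as written, the coproduct step fails.

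The remainder of your proposal is sound, and your fullness argument takes a genuinely different route from the paper's. The paper proves fullness by induction on the root degree of $Y$, reducing to rooted objects and using that epimorphisms from galois objects are universal quotients by right congruences; you instead establish a bijection between subobjects of $W$ and $M$-stable subsets of $\F(W)$, anchored on the orbit identification $\F(W_\eta)=\eta\cdot M$, and then apply the graph trick to an equivariant map. That identification is correct as you argue it (the pointed arrow $\sigma$ from a galois object factors through $W_\eta$ by minimality and is epic onto it, and $\xi\cdot M=\F(X)$ by surjectivity of the projection onto $\End(X)$), the image of $\coprod_{\eta\in T}W_\eta\rightarrow W$ does realize an arbitrary stable $T$, and the first projection from the graph is a fiberwise bijection, hence invertible by $F_2$. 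This buys a shorter, non-inductive proof of fullness; the paper's induction on root degree buys machinery (optimal coverings, pushout gluing) that is then reused for essential surjectivity, which is precisely the machinery your coproduct step is missing.
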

\begin{proof}
 Throughout this proof, we denote $\pi_1:= \pi_1(\C,\F)$ for simplicity. Firstly, we define a functor $\Phi: \C \rightarrow \Cl_f \pi_1$ so that the diagram (\ref{diagram for comparison functor}) commutes. After that, we prove (i) the \emph{fully faithfulness} of $\Phi$ and (ii) the \emph{essentially surjectivity} of $\Phi$. 

Now we define a functor $\Phi: \C \rightarrow \pifsets$. Let $Y \in \C$ be an object. Then the fundamental monoid $\pi_1$ acts on the set $\F(Y)$ from the right by:
\begin{equation*}
  \F(Y) \times \pi_1 \ni (\eta, \phi) \longmapsto \eta \cdot \phi_Y \in \F(Y).
\end{equation*}
We denote this right action by $\rho_Y: \F(Y) \times \pi_1 \rightarrow \F(Y)$. To see that $\rho_Y \in \pifsets$, we need to see the continuity of $\rho_Y$. 
\begin{lemma}
 The action $\rho_Y$ is continuous with respect to the topology of $\pi_1$.
\end{lemma}
\begin{proof}
 To see this, it is sufficient to show that, for every $\eta, \eta' \in \F(Y)$, the following set is open in $\pi_1 = \End(\F)$:
\begin{eqnarray}
 U_{\eta,\eta'} &=& \{\phi \in \pi_1 \mid \eta \cdot \phi_Y = \eta' \}.
\end{eqnarray}
Recall that the topology of $\pi_1=\End(\F)$ is induced from the isomorphism $\delta: \lim_{\Gamma} \End(\Gamma) \rightarrow \End(\F)$. Under this isomorphism, $U_{\eta,\eta'}$ corresponds to the subset of $\lim_{\Gamma} \End(\Gamma)$ given by:
\begin{equation}
\label{open set in limend}
 \bigl\{ \uu=\{u_{\Gamma}\} \in \lim_{\leftarrow \Gamma} \End(\Gamma) \hspace{0.1cm} \mid \hspace{0.1cm} \sigma_{0 *} u_{\Gamma_0 *}\xi_0 = \eta' \bigr\},
\end{equation}
where $\sigma_0: X_0 \rightarrow Y$ is a fixed arrow from a galois object $\Gamma_0 = (X_0,\xi_0) \in \G$ with $\xi_0 \sigma_{0 *} = \eta$. By definition of the topology of $\lim_{\Gamma} \End(\F)$, the set (\ref{open set in limend}) is clearly open in $\lim_{\Gamma} \End(\Gamma)$. 
\end{proof}
\noindent
The assignment $\C \ni Y \mapsto \langle \F(Y), \rho_Y \rangle \in \pifsets$ gives the object portion of the functor $\Phi: \C \rightarrow \pifsets$. The arrow portion is given by, for an arrow $f: Y \rightarrow Z$ in $\C$, 
\begin{eqnarray}
 \Phi (f)  &:=& f_*: \F(Y) \rightarrow \F(Z)
\end{eqnarray}
It is obvious that $\Phi (f) : \F(Y) \rightarrow \F(Z)$ is indeed $\pi_1$-equivariant (i.e.\ commutes with right actions of $\pi_1$) by the naturality of members in $\pi_1 = \End(\F)$ with respect to arrows in $\C$. Thus $\Phi (f) = f_*$ is an arrow from $\Phi(Y) = \langle \F(Y), \rho_Y \rangle$ to $\Phi(Z) = \langle \F(Z), \rho_Z \rangle$ in $\pifsets$. By this construction, it is clear that the diagram (\ref{diagram for comparison functor}) commutes. 

The proof of the theorem is established if we prove (i) the fully faithfulness of $\Phi$; and (ii) the essentially surjectivity of $\Phi$. To do so, we need a bit more effort.
\begin{proposition}
\label{fully faithfulness}
 The functor $\Phi$ is fully faithful. 
\end{proposition}
\begin{proof}
 By definition, $\Phi$ is clearly faithful. So we prove the fullness of $\Phi$. It is sufficient to show that, for arbitrary objects $Y, Z$ in $\C$ and arrow $\sigma: \Phi(Y) \rightarrow \Phi(Z)$ in $\pifsets$, there exists an arrow $h: Y \rightarrow Z$ in $\C$ such that $\sigma = \Phi (h) = h_*$. We prove this claim by induction on the degree of $Y$. Before the proof, we need a lemma concerning epimorphisms from galois objects.

\begin{lemma}
\label{epi from galois objects}
 Let $(X,\xi) \in \G$ and $f: X \twoheadrightarrow Y$ be an epimorphism. Then, $f$ is the universal quotient of $X$ by the right congruence $E:=\{(h,k) \in \End(X) \times \End(X) \mid f h=f k \}$. 
\end{lemma}
\begin{proof}
 Let $p_E: X \twoheadrightarrow X/E$ be a universal quotient of $X$ by $E$. Since $f$ coequalizes $E$, $f$ factors through $p_E$, i.e.\ there is an arrow $t: X/E \rightarrow Y$ such that $f= t p_E$. Since $f$ is an epimorphism, so is $t$. Thus, to complete the proof, it is sufficient to see that $t_*$ is injective. 

We now have the following two diagrams in $\C$ and in $\fsets$ respectively:
\begin{eqnarray}
  \xymatrix{
   X \ar@{->>}[d]_{p_E} \ar[r]^f & Y  \\
   X/E \ar[ru]_t &  
} & \hspace{1cm} & 
  \xymatrix{
   \F(X) \ar@{->>}[d]_{p_{E*}} \ar[r]^{f_*} & \F(Y)  \\
   \F(X/E) \ar[ru]_{t_*} &  
}
\end{eqnarray}
By Lemma \ref{preserving universal quotients}, the map $p_{E*}: \F(X) \twoheadrightarrow \F(X/E)$ is a universal quotient of $\F(X)$ by $\F(E)$ in $\fsets$. Let us denote by $[\xi'] \in \F(X/E)$ the equivalence class containing $\xi' \in \F(X)$ (with respect to the equivalence described in Remark \ref{on universal quotients}, \S \ref{s3s1}). To prove that $t_*$ is injective, assume that $ t_* [\xi_1] = t_*[\xi_2]$, i.e.\ $f_* \xi_1 = f_* \xi_2$. Since $(X,\xi)$ is a galois object, there exist $h,k \in \End(X)$ such that $\xi_1 = h_*\xi$ and $\xi_2 = k_*\xi$. Thus, we obtain $f_* h_*\xi = f_* k_* \xi$. By Proposition \ref{maps from rooted objects}, we have $fh= fk$ which implies $(h,k) \in E$. Since $(\xi_1, \xi_2) = (h_* \xi, k_* \xi)$ with $(h,k) \in E$, this proves that $[\xi_1] = [\xi_2]$ in $\F(X/E)$ (cf.\ Remark \ref{on universal quotients}). Thus $t_*$ is indeed injective. 
\end{proof}

\begin{lemma}
 Let $Y \in \C$ have degree one, and $\sigma: \Phi(Y) \rightarrow \Phi(Z)$ be an arrow in $\pifsets$. Then there exists $h: Y \rightarrow Z$ in $\C$ such that $\sigma = h_*$. 
\end{lemma}
\begin{proof}
 Since $Y$ is a rooted object, Proposition \ref{characterization of rooted objects} shows that there exists $\eta \in \F(Y)$ such that $Y=Y_{\eta}$. By Proposition \ref{cofinality of galois objects} and the fact that $\G$ is cofiltered, it is easy to see that there exist a galois object $(X,\xi) \in \G$ and an arrow $f: X \rightarrow Y$ such that (i) $f_* \xi = \eta$ and (ii) $\Hom(X,Z) \ni g \mapsto g_* \xi \in \F(Z)$ is bijective. Then, there exists $g \in \Hom(X,Z)$ such that $g_* \xi = \sigma \eta = \sigma f_* \xi$. Now, we have arrows as in the following diagrams in $\C$ and in $\pifsets$ respectively:
\begin{eqnarray}
  \xymatrix{
   X \ar@{->>}[d]_f \ar[rd]^g \\
   Y & Z 
} & \hspace{1cm} & 
  \xymatrix{
  \F(X) \ar@{->>}[d]_{f_*} \ar[rd]^{g_*} & \\
  \F(Y) \ar[r]_{\sigma} & \F(Z)
}
\end{eqnarray}
Notice that, if we could prove that the right diagram commutes in $\pifsets$, i.e.\ $g_*= \sigma f_*$, then it follows that there exists $h: Y \rightarrow Z$ in $\C$ such that $\sigma = h_*$ as requested. In fact, by Lemma \ref{epi from galois objects}, the epimorphism $f: X \twoheadrightarrow Y$ is the universal quotient $X \twoheadrightarrow X/ E$ by some left congruence $E \subseteq \End(X) \times \End(X)$. Then, for every $(u,v) \in E$, we have:
\begin{eqnarray*}
    g_* u_* &=& (\sigma f_*) u_* \hspace{0.5cm} (\because \textrm{we now assume $g_*=f_* \sigma$})\\
           &=& ( \sigma f_*) v_* \hspace{0.52cm} (\because \textrm{$f$ coequalizes $\forall (u,v) \in E$})\\
           &=&  g_* v_*.
\end{eqnarray*}
Thus, $gu=gv$. Since $f: X \twoheadrightarrow Y$ is universal with respect to coequalizers of $E$, it follows that $g$ factors through $f$, i.e.\ $g= hf$ for some $h: Y \rightarrow Z$. Then, $h_*f_* = g_* = \sigma f_*$ and the surjectivity of $f_*$ imply that $\sigma = h_*$. Hence, our task is reduced to proving that $g_* = \sigma f_*$. For this aim, notice that, since $(X,\xi) = \Gamma$ is galois, (i) $\End(X) \ni k \mapsto k_* \xi \in \F(X)$ is a bijection; and (ii) the canonical projection $\pi_1 \ni \delta \{u_{\Gamma}\} \mapsto u_{\Gamma} \in \End(X)$ is a sujection. From this, it follows that, for every $\xi' \in \F(X)$, there is $\phi \in \pi_1$ such that $\xi' = \xi \cdot \phi_X$. By this and the fact that $\sigma: \F(Y) \rightarrow \F(Z)$ is $\pi_1$-equivariant (i.e.\ commutes with $\forall \phi \in \pi_1$), we have, for every $\xi' = \xi \cdot \phi_X \in \F(X)$:
\begin{eqnarray*}
  g_* \xi' &=& (g_* \xi) \cdot \phi_Z  \hspace{0.7cm} (\because \textrm{$g_*$ is $\pi_1$-equivariant})\\
           &=& ( \sigma f_*\xi) \cdot \phi_Z \hspace{0.5cm} (\because \textrm{definition of $g$})\\
           &=&  \sigma f_* (\xi \cdot \phi_X) \hspace{0.55cm} (\because \textrm{$f_*$ and $\sigma$ are $\pi_1$-equivariant}) \\
           &=& \sigma f_* \xi'.
\end{eqnarray*}
Since $\xi' \in \F(X)$ is choosen arbitrarily, this proves that $g_* = \sigma f_*$. 
\end{proof}

\begin{lemma}
 If the above lemma holds for objects $Y \in \C$ with degree less than or equal to $n \geq 1$, then it also holds for all objects $Y \in \C$ with degree $n + 1$. 
\end{lemma}
\begin{proof}
 Let $Y$ be an object with degree $n+1$. Then there is a covering $\{j_1: Y_1 \hookrightarrow Y, j_2: Y_2 \hookrightarrow Y\}$ of $Y$ such that $Y_1$ has degree one and $Y_2$ has degree $n$. Let $\sigma: \F(Y) \rightarrow \F(Z)$ be an arrow in $\pifsets$. Then, by induction hypothesis, there exist $f_1:Y_1 \rightarrow Z$ and $f_2: Y_2 \rightarrow Z$ with $f_{1*} =\sigma j_{1*}$ and $f_{2*} = \sigma j_{2*}$. Now, we have the following diagrams in $\C$ and in $\pifsets$: 
\begin{eqnarray}
  \xymatrix{
   Y_1 \ar@/^/[rrd]^{f_1} \ar[rd]_{j_1} & \\
    & Y & Z \\
   Y_2 \ar@/_/[rru]_{f_2} \ar[ru]^{j_2} & 
} & \hspace{1cm} &
  \xymatrix{
   \F(Y_1) \ar@/^/[rrd]^{f_{1*}} \ar[rd]_{j_{1*}} & \\
    & \F(Y) \ar[r]^{\sigma} & \F(Z) \\
   \F(Y_2) \ar@/_/[rru]_{f_{2*}} \ar[ru]^{j_{2*}} & 
}
\end{eqnarray}
Consider a pullback diagram of $j_1$ and $j_2$ in the left diagram in $\C$:
\begin{eqnarray}
\label{a pullback diagram in C}
  \xymatrix{
    & Y_1 \ar@/^/[rrd]^{f_1} \ar[rd]_{j_1} & & \\
    Y_1 \cap Y_2 \ar[ru] \ar[rd] & & Y & Z \\
    & Y_2 \ar@/_/[rru]_{f_2} \ar[ru]^{j_2} & &
}
\end{eqnarray}
and its image under $\Phi: \C \rightarrow \pifsets$, which is also a pullback diagram:
\begin{eqnarray}
\label{a pullback diagram in pifsets}
  \xymatrix{
    & \F(Y_1) \ar@/^/[rrd]^{f_{1*}} \ar[rd]_{j_{1*}} & & \\
    F(Y_1 \cap Y_2) \ar[ru] \ar[rd] & & \F(Y) \ar[r]^{\sigma} & \F(Z) \\
    & \F(Y_2) \ar@/_/[rru]_{f_{2*}} \ar[ru]^{j_{2*}} & &
}
\end{eqnarray}
Since the out-most square of (\ref{a pullback diagram in pifsets}) in $\pifsets$ commutes, so does the out-most square of (\ref{a pullback diagram in C}) in $\C$ by the faithfulness of $\Phi$. Also, it is easy to see that the pullback square in the first diagram in $\C$ is also a pushout square. Thus, there exists $h:Y \rightarrow Z$ that makes the diagram commutes. This arrow is the required one: i.e.\ $\sigma=h_*$. 
\end{proof}
\noindent
By these lemmas, the proof of the fully faithfulness of $\Phi$ is done. 
\end{proof}
\noindent
Finally, we prove the essentially surjectivity of $\Phi$, which concludes the proof of the representation theorem. 
\begin{proposition}
 The functor $\Phi: \C \rightarrow \pifsets$ is essentially surjective. 
\end{proposition}
\begin{proof}
 Let $\langle S, \rho \rangle \in \pifsets$ be a finite $\pi_1$-set. We need to prove that there exists $Y \in \C$ such that $\Phi(Y)$ is isomorphic to $S$ as $\pi_1$-sets. The proof is by induction on the size $|S|$ of $S$. The case of $|S|=1$ is trivial. So assume that the claim is true for those $S \in \C$ having $|S| \leq n$. In order to proceed the induction, we first prepare a lemma:
\begin{lemma}
 Assume that there exists a surjection $\sigma: \Phi(X) \twoheadrightarrow S$ in $\pifsets$ with $\Gamma = (X, \xi) \in \G$. Then there is a right congruence $E \subseteq \End(X) \times \End(X)$ such that $S \simeq \Phi(X/E)$. 
\end{lemma}
\begin{proof}
 Define $E:= \{(h,k) \in \End(X) \times \End(X) \mid  \sigma h_*= \sigma k_* \}$. Then $E$ is clearly a right congruence on $\End(X)$. Consider a universal quotient $p: X \twoheadrightarrow X/E$ in $\C$. By the universality of $p_*$, there exists $\bar{\sigma}: \Phi(X/E) \rightarrow S$ such that the following diagram commutes:
\begin{equation}
  \xymatrix{
   \Phi(X) \ar@{->>}[d]_{p_*} \ar@{->>}[r]^{\sigma} & S \\
   \Phi(X/E) \ar@{..>}[ru]_{\bar{\sigma}} &
}
\end{equation}
We construct an inverse of $\bar{\sigma}: \Phi(X/E) \rightarrow S$. For this aim, notice that $\Phi(X/E)$ is equal to the quotient $\Phi(X) / \equiv$, where the equivalence relation $\equiv$ on $\F(X)$ is the smallest $\pi_1$-equivalence among those containing the following subset of $\F(X) \times \F(X)$:
\begin{equation}
\label{pi equivalence}
  \bigl\{ (\xi_1,\xi_2) \mid \exists (h,k) \in E.  \hspace{0.1cm}\exists \zeta \in \F(X). \hspace{0.1cm} \xi_1 = h_*\zeta \wedge \xi_2 = k_* \zeta \bigr\}.
\end{equation}
Since $\sigma: \Phi(X) \twoheadrightarrow S$ is surjective, every $s \in S$ can be represented as $s= \sigma \xi'$ for some $\xi' \in \F(X)$. We now see that the assignment $S \ni \xi' \sigma \mapsto p_* \xi' \in \Phi(X/E)$ is well-defined. Since this assignment is, if well-defined, an inverse of $\bar{\sigma}$ in $\pifsets$, it concludes the proof of this lemma. 

Let $s \in S$ be represented in two ways as $s = \sigma \xi_1 = \sigma \xi_2$ with $\xi_1, \xi_2 \in \F(X)$. Then, we need to show that $ p_*\xi_1 = p_* \xi_2$. By the assumption that $\Gamma = (X, \xi)$ is a galois object, there exist $h, k \in \End(X)$ such that $\xi_1 = h_* \xi$ and $\xi_2 = k_* \xi$. Thus, from $ \sigma\xi_1 = s = \sigma \xi_2$, we obtain $ \sigma h_*\xi  = \sigma k_* \xi$. Since $(X, \xi)$ is a galois object, every $\xi' \in \F(X)$ can be represented as $\xi' = \xi \cdot \phi_X$ for some $\phi \in \pi_1$. Therefore, for every $\xi' \in \F(X)$:
\begin{eqnarray*}
 \sigma h_* \xi' &=&  \sigma h_*(\xi \cdot \phi_X) \\
                  &=& (\sigma h_*\xi) \cdot \phi \hspace{0.5cm} (\because \textrm{$h_*, \sigma$ are $\pi_1$-equivariant})\\
                  &=&  (\sigma k_*\xi) \cdot \phi \\
                  &=& \sigma k_*(\xi \cdot \phi_X) \hspace{0.5cm} (\because \textrm{$k_*, \sigma$ are $\pi_1$-equivariant})\\
                  &=& \sigma  k_* \xi'
\end{eqnarray*}
This implies $ \sigma h_* =\sigma k_*$, i.e.\ $(h,k) \in E$. So, the pair $(\xi_1,\xi_2) = (h_*\xi,k_* \xi)$ belongs to the set (\ref{pi equivalence}). This proves that $p_*\xi_1  = p_* \xi_2$ in $\Phi(X/E)$ and thus the assignment $S \in \xi' \sigma \mapsto p_* \xi' \in \Phi(X/E)$ is well-defined. This establishes an isomorphism $S \simeq \Phi(X/E)$. 
\end{proof}

\noindent
To continue the induction, let $S \in \pifsets$ be such that $|S|=n+1$. The proof is divided into two cases. 
\begin{case}
 $S= a \cdot \pi_1$ for some $a \in S$. 
\end{case}

\noindent
By the above lemma, it suffices to show that there exists a surjection $\Phi(X) \twoheadrightarrow S$ with $\Gamma = (X,\xi) \in \G$. Since $\kappa: \pi_1 \ni \phi \mapsto a \cdot \phi \in S$ is a continuous surjection onto a finite set $S$ and $\pi_1 \simeq \lim_{\Gamma} \End(\Gamma)$, there exist a galois object $\Gamma = (X,\xi) \in \G$ and a surjection $\lambda: \End(\Gamma) \rightarrow S$ such that the following commutes:
\begin{equation}
 \xymatrix{ 
  \pi_1 \simeq \lim_{\Gamma} \End(\Gamma) \ar@{->>}[d]_{r} \ar@{->>}[r]^>>>>>{\kappa} & S \\
  \End(\Gamma) \ar@{..>>}[ru]_{\lambda} &
}
\end{equation}
Combining with the bijection $\omega_{X,\xi}: \End(\Gamma) \ni h \mapsto h_* \xi \in \F(X)$, we obtain a surjective map $\omega_{X,\xi}^{-1} \lambda = \sigma: \F(X) \twoheadrightarrow S$. We show that $\sigma$ is in fact a $\pi_1$-equivariant map, i.e.\ a surjection in $\pifsets$. Recall that, since $(X,\xi)$ is a galois object, every $\xi' \in \F(X)$ can be represented as $\xi' =\xi \cdot \phi_X$ for some $\phi \in \pi_1$. Also notice that, by the construction, the map $\sigma: \F(X) \twoheadrightarrow S$ assigns to each $\xi'=\xi \cdot\phi_X $ the element $a \cdot \phi \in S$. Thus, for every $\psi \in \pi_1$ and $\xi'=\xi \cdot \phi_X \in \F(X)$:
\begin{eqnarray*}
   (\sigma \xi') \cdot \psi &= & (a\cdot \phi) \cdot \psi \\
                            &= & a \cdot (\phi \cdot \psi) \\
                            &= & \sigma ((\xi \cdot \phi_X ) \cdot \psi_X) \\
                            &= & \sigma (\xi' \cdot \psi). 
\end{eqnarray*}
This proves that the map $\sigma: \F(X) \twoheadrightarrow S$ gives a surjection from $\Phi(X)$ to $S$ in $\pifsets$ as requested.

\begin{case}
 $S \neq a \cdot \pi_1$ for any $a \in S$. 
\end{case}

\noindent
In this case, there exist subobjects $S_1, S_2$ of $S$ such that $|S_i| < |S|$ ($i=1,2$) and $\{S_1 \hookrightarrow S, S_2 \hookrightarrow S\}$ is a covering of $S$. By induction hypothesis, there exist $Y_1, Y_2 \in \C$ such that $\Phi(Y_i) \simeq S_i$ ($i=1,2$). Consider the following pullback diagram taken in $\pifsets$:
\begin{equation}
   \xymatrix{
     S_1 \cap S_2 \ar[r] \ar[d] & S_1 \ar[d] \\
     S_2 \ar[r] & S
}
\end{equation}
Since $|S_1 \cap S_2| < |S|$, there also exists $Z \in \C$ such that $\Phi(Z) \simeq S_1 \cap S_2$. By taking a pushout diagram of $Z \hookrightarrow Y_1$ and $Z \hookrightarrow Y_2$ in $\C$, we have diagrams in $\C$ and in $\pifsets$ as follows:
\begin{eqnarray}
  \xymatrix{
  Z \ar[r] \ar[d] & Y_1 \ar[d] \\
  Y_2 \ar[r] & Y
} & \hspace{1.5cm} & 
  \xymatrix{
  S_1 \cap S_2 \ar[r] \ar[d] & S_1 \ar[d] \\
  S_2 \ar[r] & S
}
\end{eqnarray}
Then, by the fact that both diagrams are pushouts and $\Phi$ preserves them, it follows that $\Phi(Y) \simeq S$. This completes the proof of the proposition. 
\end{proof}

\noindent
We have proved that $\Phi: \C \rightarrow \pifsets$ is fully faithful and essentially surjective. Thus, $\Phi$ is an equivalence of categories. 
\end{proof}

\subsection{The duality theorem}
\label{appendix b three}
\noindent
Finally, we prove that the category $\profmon$ of profinite monoids and continuous homomorphisms is opposite equivalent to the category $\semigalois$ of semi-galois categories and \emph{exact functors} (Definition \ref{exact functor}). The results in the previous subsections (\S \ref{appendix b one} -- \S \ref{appendix b two}) are then understood as the object portions of the target duality:
\begin{eqnarray}
\label{duality}
\profmon &\simeq& \semigalois^{op}. 
\end{eqnarray}
So, the last problem for obtaining this duality is to show that the correspondences on objects, i.e.\ $M \mapsto \langle \mfsets, \F_M \rangle$ and $\langle \C, \F \rangle \mapsto \pi_1(\C,\F)$, extend to functors $\profmon \rightarrow \semigalois^{op}$ and $\semigalois^{op} \rightarrow \profmon$; and prove that these are mutually inverse. 

To proceed to the proof, we first need to define arrows in $\semigalois$, i.e.\ arrows between semi-galois categories. Briefly speaking, they are defined as suitable isomorphism classes of functors between semi-galois categories that preserve fiber functors and finite limits and colimits, which we will call \emph{exact functors}. To be more precise, exact functors are functors that preserve fiber functors up to natural isomorphism, and formally, defined as follows:

\begin{definition}[exact functor]
\label{exact functor}
 Let $\langle \C,\F \rangle$ and $\langle \C', \F' \rangle$ be semi-galois categories. An \emph{exact functor} is a pair $\langle A,\sigma \rangle$ of a functor $A: \C \rightarrow \C'$ and a natural isomorphism $\sigma: \F' \circ A \Rightarrow \F$ such that $A$ preserves finite limits and colimits. 
\end{definition}

\noindent
The composition of composable exact functors $\langle A,\sigma \rangle: \langle \C,\F \rangle \rightarrow \langle \C',\F' \rangle$, $\langle A',\sigma' \rangle: \langle \C',\F' \rangle \rightarrow \langle \C'',\F'' \rangle$ is given by $\langle A' \circ A, \sigma'' \rangle:\langle \C,\F \rangle \rightarrow \langle \C'',\F'' \rangle$, where the natural isomorphism $\sigma'': \F'' \circ (A' \circ A) \Rightarrow \F$ is defined by:
\begin{equation}
\label{composition of exact functors}
 \sigma'': \F'' \circ A' \circ A \xRightarrow{\sigma' A} \F' \circ A \xRightarrow{\hspace{0.15cm}\sigma \hspace{0.15cm}} \F.
\end{equation}
It is straightforward to show that the composition of exact functors is indeed associative. Also, the pair of identities $\langle \id_{\C}, \id_{\F} \rangle : \langle \C, \F \rangle \rightarrow \langle \C, \F \rangle$ is obviously an exact functor that is unit with respect to the above composition of exact functors. However, to define $\semigalois$ so that it is opposite equivalent to $\profmon$, we need to identify exact functors $\langle A,\sigma \rangle$ with respect to a suitable equivalence relation. 

For this aim, let $\langle \C,\F \rangle$ and $\langle \C',\F' \rangle$ be semi-galois categories, and $\langle A,\sigma \rangle$ and $\langle B,\tau \rangle: \langle \C,\F \rangle \rightarrow \langle \C', \F' \rangle$ be parallel exact functors. We say that  $\langle A,\sigma \rangle$ and $\langle B,\tau \rangle$ are \emph{equivalent} and write $\langle A,\sigma \rangle \equiv \langle B, \tau \rangle$ if there exists a natural isomorphism $\lambda: A \Rightarrow B$ such that $F'\lambda \cdot \tau = \sigma$. It is not difficult to see that this equivalence $\equiv$ is a category congruence (i.e.\ preserved by compositions); and thus, we can define a category $\semigalois$ as follows:

\begin{definition}[The category of semi-galois categories]
 The category $\semigalois$ is defined as the category whose objects are semi-galois categories $\langle \C,\F \rangle$ and whose arrows are equivalence classes of exact functors $\langle A,\sigma \rangle$ with respect to the above equivalence relation $\equiv$. Denoting the equivalence class of $\langle A,\sigma \rangle$ by $[A,\sigma]$, the composition of $[A,\sigma]: \langle \C, \F \rangle \rightarrow \langle \C',\F' \rangle$ and $[A',\sigma']: \langle \C',\F' \rangle \rightarrow \langle \C'',\F'' \rangle$ is defined by $[A' \circ A, \sigma'']$, where $\sigma''$ is the one given in (\ref{composition of exact functors}). 
\end{definition}

\noindent
The rest of this subsection is devoted to a proof of the fact that $\semigalois$ and $\profmon$ are opposite equivalent to each other via the constructions $\langle \C,\F \rangle \mapsto \pi_1(\C,\F)$ of fundamental monoids; and $M \mapsto \langle \mfsets,\F_M \rangle$ of the category of finite $M$-sets. Formally:
\begin{theorem}[The duality theorem]
 There are equivalences of categories, $\pi_1: \semigalois^{op} \xrightarrow{\simeq} \profmon$ and $\rep: \profmon \xrightarrow{\simeq} \semigalois^{op}$ such that 
\begin{enumerate}
 \item $\pi_1$ assigns the fundamental monoid $\pi_1(\C,\F)$ to each semi-galois category $\langle \C,\F \rangle \in \semigalois$; 
 \item $\rep$ assigns the semi-galois category $\langle \mfsets, \F_M \rangle$ to each profinite monoid $M \in \profmon$; 
\end{enumerate}
and also, $\pi_1$ and $\rep$ are mutually inverse up to natural isomorphisms. 
\end{theorem}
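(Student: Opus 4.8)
The plan is to upgrade the two object-level isomorphisms already established---the Reconstruction Theorem (Theorem \ref{reconstruction of profinite monoid}) giving $\lambda_M: M \xrightarrow{\simeq} \pi_1(\mfsets,\F_M)$, and the Representation Theorem (Theorem \ref{representation of semi-galois category}) giving $\Phi: \C \xrightarrow{\simeq} \Cl_f \pi_1(\C,\F)$---into a genuine adjoint equivalence. Since the substantive content is in these two theorems, the remaining work is entirely (a) defining both assignments on morphisms and (b) checking the naturality of the resulting unit and counit. I do not need to prove fullness and faithfulness separately: producing the two natural isomorphisms $\id_{\profmon} \cong \pi_1 \circ \rep$ and $\id_{\semigalois^{op}} \cong \rep \circ \pi_1$ directly exhibits $\pi_1$ and $\rep$ as mutually quasi-inverse.

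First I would define $\rep$ on arrows by \emph{restriction of scalars}. Given a continuous homomorphism $f: M \to N$, send a finite $N$-set $\langle S, \rho \rangle$ to the finite $M$-set $\langle S, \rho^f \rangle$ with $\rho^f(s,m) := \rho(s, f(m))$, and act as the identity on underlying maps; this gives a functor $f^*: \Cl_f N \to \mfsets$. Continuity of $\rho^f$ follows from that of $f$ and $\rho$. Because $f^*$ leaves underlying sets and maps unchanged, $\F_M \circ f^* = \F_N$ holds strictly, and since the forgetful functors create finite limits and colimits, $f^*$ preserves them; hence $\langle f^*, \id \rangle$ is an exact functor and $\rep(f) := [f^*, \id]$ is a well-defined arrow $\rep(M) \to \rep(N)$ in $\semigalois^{op}$. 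Functoriality is immediate from $(g \circ f)^* = f^* \circ g^*$ and $\id_M^* = \id$. Dually, I would define $\pi_1$ on arrows by \emph{conjugation}. An arrow of $\semigalois^{op}$ from $\langle \C,\F \rangle$ to $\langle \C',\F' \rangle$ is the class $[A,\sigma]$ of an exact functor $\langle A,\sigma \rangle : \langle \C',\F' \rangle \to \langle \C,\F \rangle$ with $A: \C' \to \C$ and $\sigma: \F \circ A \Rightarrow \F'$. For a natural endomorphism $\phi: \F \Rightarrow \F$, define $\pi_1([A,\sigma])(\phi): \F' \Rightarrow \F'$ to have, at each $X' \in \C'$, the component obtained by transporting $\phi_{A(X')}$ along the isomorphism $\sigma_{X'}$ (i.e. the $\sigma_{X'}$-conjugate of $\phi_{A(X')}$, read in the right-action convention of Notation \ref{composition of natural transformation}). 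Naturality in $X'$ follows from naturality of $\phi$ and $\sigma$ together with functoriality of $A$, and a short conjugation computation shows $\phi \mapsto \pi_1([A,\sigma])(\phi)$ is a monoid homomorphism $\pi_1(\C,\F) \to \pi_1(\C',\F')$. Well-definedness on $\equiv$-classes is a diagram chase using the defining relation $\F'\lambda \cdot \tau = \sigma$, and continuity I would verify against the clopen generators $U_{\eta,\eta'}$ of $\pi_1(\C',\F')$ shown open in the proof of Theorem \ref{representation of semi-galois category}, whose preimages are again of that form.

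With both functors in hand, I would read off the two natural isomorphisms. For the unit, the components $\lambda_M: M \xrightarrow{\simeq} \pi_1\rep(M)$ are supplied by the Reconstruction Theorem, and naturality for $f: M \to N$ is checked by unwinding that $\lambda_M$ sends $m$ to the family $\{\rho(m)\}$ and that $\pi_1\rep(f) = \pi_1([f^*,\id])$ is conjugation by the identity, so that $\pi_1\rep(f) \circ \lambda_M = \lambda_N \circ f$ reduces to the compatibility of restriction of scalars with the defining actions. For the counit, the fiber-functor-compatible equivalence $\Phi$ of the Representation Theorem provides an isomorphism $\rep\,\pi_1\langle \C,\F \rangle \cong \langle \C,\F \rangle$ in $\semigalois^{op}$, and naturality amounts to checking that $\Phi$ intertwines an arbitrary exact functor $A$ with the restriction-of-scalars functor along $\pi_1([A,\sigma])$.

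I expect the main obstacle to be bookkeeping rather than a new idea. The most delicate point is the naturality of the counit $\rep\,\pi_1 \cong \id$ in $\semigalois^{op}$, since it requires commuting the comparison functor $\Phi$ past an arbitrary exact functor and then certifying the required natural-isomorphism datum inside the congruence-quotient category $\semigalois$, where arrows are $\equiv$-classes of exact functors. A closely related hazard is keeping the direction-reversal of the opposite category consistent with the left/right action conventions, so that $\pi_1$ emerges as a genuine homomorphism (and not an anti-homomorphism) of profinite monoids; the conjugation formula above is chosen precisely to secure this. The continuity of $\pi_1$ on arrows is a minor but genuinely necessary check, as is confirming that all constructions descend correctly to the $\equiv$-quotient.
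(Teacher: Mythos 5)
Your proposal is correct and follows essentially the same route as the paper: $\rep$ on arrows by restriction of scalars with the identity natural isomorphism, $\pi_1$ on arrows by $\sigma$-conjugation $\phi \mapsto \sigma^{-1}\cdot \phi A \cdot \sigma$, well-definedness on $\equiv$-classes and continuity checked as you describe, and the two natural isomorphisms supplied by $\lambda_M$ from the Reconstruction Theorem and $\Phi$ from the Representation Theorem. You also correctly single out the genuinely delicate step, which in the paper comes down to verifying that each component $\sigma_Y$ is equivariant for the two $\pi_1(\C',\F')$-actions, so that $\sigma$ itself witnesses the required equivalence of exact functors.
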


\noindent
We firstly define the functor $\pi_1:\semigalois^{op} \rightarrow \profmon$; secondly the functor $\rep:\profmon \rightarrow \semigalois^{op}$; and finally, prove that they are mutually inverse. 
\begin{definition}
 Let $\langle \C,\F \rangle$ and $\langle \C', \F' \rangle$ be semi-galois categories and $\langle A,\sigma \rangle: \langle \C,\F \rangle \rightarrow \langle \C',\F' \rangle$ be an exact functor. Then, a map $\pi_1(A,\sigma) :\pi_1(\C',\F') \rightarrow \pi_1(\C,\F)$ between their fundamental monoids is defined as follows:
\begin{eqnarray*}
  \pi_1(\C',\F')  &\xrightarrow{\pi_1(A,\sigma)}& \pi_1(\C,\F) \\
  \phi &\longmapsto& \sigma^{-1} \cdot \phi A \cdot \sigma
\end{eqnarray*}
\end{definition}

\begin{lemma}
 The map $\pi_1(A,\sigma): \pi_1(\C',\F') \rightarrow \pi_1(\C,\F)$ is a continuous homomorphism of profinite monoids, i.e.\ defines an arrow in $\profmon$. 
\end{lemma}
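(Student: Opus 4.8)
The plan is to verify three things in turn: that the assignment $\phi \mapsto \sigma^{-1}\cdot\phi A\cdot\sigma$ really lands in $\End(\F)=\pi_1(\C,\F)$, that it respects the monoid structure, and that it is continuous for the profinite topologies. The first point is essentially formal: the whiskering $\phi A$ of a natural transformation $\phi:\F'\Rightarrow\F'$ by the functor $A$ is again natural (its component at $X\in\C$ is $\phi_{A(X)}$), and $\sigma,\sigma^{-1}$ are natural isomorphisms, so their vertical composite $\sigma^{-1}\cdot\phi A\cdot\sigma:\F\Rightarrow\F$ is a natural endomorphism of $\F$. Explicitly, using the right-action convention of Notation \ref{composition of natural transformation}, its component at $X$ sends $\xi\in\F(X)$ to $((\xi\cdot\sigma^{-1}_X)\cdot\phi_{A(X)})\cdot\sigma_X$.

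For the homomorphism property I would argue purely by the calculus of natural transformations. Preservation of the identity is immediate from $\sigma^{-1}\cdot\id_{\F'}A\cdot\sigma=\sigma^{-1}\cdot\sigma=\id_\F$. For multiplication, I would expand $\pi_1(A,\sigma)(\phi)\cdot\pi_1(A,\sigma)(\psi)=(\sigma^{-1}\cdot\phi A\cdot\sigma)\cdot(\sigma^{-1}\cdot\psi A\cdot\sigma)$ and cancel the inner $\sigma\cdot\sigma^{-1}=\id_{\F'A}$, leaving $\sigma^{-1}\cdot\phi A\cdot\psi A\cdot\sigma$; since whiskering is functorial, $\phi A\cdot\psi A=(\phi\cdot\psi)A$, and this equals $\pi_1(A,\sigma)(\phi\cdot\psi)$. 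Thus $\pi_1(A,\sigma)$ is a monoid homomorphism (note that both monoids use the same first-then composition convention, so no order reversal occurs in the multiplication).

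The substantive part is continuity, and the key is to recall how the topology on $\pi_1(\C,\F)=\End(\F)$ is generated. From the construction behind Theorem \ref{delta is isomorphism} and the lemma computing the sets $U_{\eta,\eta'}=\{\theta\in\End(\F)\mid \eta\cdot\theta_Y=\eta'\}$ in the proof of Theorem \ref{representation of semi-galois category}, these sets are clopen and, as $Y$ ranges over (galois) objects of $\C$ and $\eta,\eta'$ over $\F(Y)$, they form a subbasis of the profinite topology — indeed, for a galois object $\Gamma=(X,\xi)$ the basic clopen $\pi_\Gamma^{-1}(u)$ of the limit topology (where $\pi_\Gamma$ is the canonical projection to $\End(\Gamma)$) is exactly $U_{\xi,\,u_*\xi}$, since $X$ is rooted at $\xi$. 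Hence it suffices to show that the preimage of each $U_{\eta,\eta'}$ under $\pi_1(A,\sigma)$ is open in $\pi_1(\C',\F')=\End(\F')$. Writing $\theta=\pi_1(A,\sigma)(\phi)$ and using the component formula above, the condition $\eta\cdot\theta_Y=\eta'$ becomes $(\eta\cdot\sigma^{-1}_Y)\cdot\phi_{A(Y)}=\eta'\cdot\sigma^{-1}_Y$ after applying the bijection $\sigma^{-1}_Y$. Setting $\zeta:=\eta\cdot\sigma^{-1}_Y$ and $\zeta':=\eta'\cdot\sigma^{-1}_Y$ in $\F'(A(Y))$, this is precisely $\phi\in\{\phi'\in\End(\F')\mid \zeta\cdot\phi'_{A(Y)}=\zeta'\}$, i.e.\ the analogous clopen set $U_{\zeta,\zeta'}$ for the object $A(Y)\in\C'$, which is open by the same lemma. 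Therefore $\pi_1(A,\sigma)^{-1}(U_{\eta,\eta'})=U_{\zeta,\zeta'}$ is open, and $\pi_1(A,\sigma)$ is continuous.

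The main obstacle I anticipate is bookkeeping rather than conceptual: one must be scrupulous about the left-versus-right action conventions (Notations \ref{composition of arrows}--\ref{composition of natural transformation}) and about the direction of vertical composition, since a careless ordering would turn the homomorphism into an anti-homomorphism or send the preimage computation to the wrong object. The only genuinely non-formal input is the identification of the topology of $\End(\F)$ by the clopen sets $U_{\eta,\eta'}$, which is already available from the proofs of Theorems \ref{delta is isomorphism} and \ref{representation of semi-galois category}; once that is in hand, continuity reduces to the one-line preimage computation above.
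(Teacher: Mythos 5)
Your proposal is correct and follows essentially the same route as the paper: the multiplicativity is verified by exactly the same cancellation $\sigma\cdot\sigma^{-1}=\id_{\F'A}$ together with functoriality of whiskering, $\phi A\cdot\psi A=(\phi\cdot\psi)A$. The only difference is that the paper dismisses continuity and unit preservation as ``easy to see,'' whereas you supply the (correct) detail that the preimage of each subbasic clopen $U_{\eta,\eta'}$ is the clopen $U_{\sigma^{-1}_Y(\eta),\,\sigma^{-1}_Y(\eta')}$ attached to $A(Y)$.
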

\begin{proof}
 It is easy to see that $\pi_1(A,\sigma)$ is continuous and preserves the monoid unit. To show the rest of the claim, let $\phi,\psi \in \pi_1(\C',\F')$. Then:
 \begin{eqnarray*}
   \pi_1(A,\sigma) (\phi \cdot \psi) &=& \sigma^{-1} \cdot (\phi \cdot \psi) A \cdot \sigma \\
                                     &=& \sigma^{-1} \cdot \phi A \cdot \psi A \cdot \sigma \\
                                     &=& (\sigma^{-1} \cdot \phi A \cdot \sigma) \cdot (\sigma^{-1} \cdot \psi A \cdot \sigma) \\
                                     &=& \pi_1(A,\sigma) (\phi) \cdot \pi_1(A,\sigma) (\psi). 
 \end{eqnarray*}
Thus, $\pi_1(A,\sigma)$ is a continuous homomorphism. 
\end{proof}

\begin{lemma}
The equivalence $\langle A,\sigma \rangle \equiv \langle B,\tau \rangle$ implies $\pi_1(A,\sigma) = \pi_1(B,\tau)$. In other words, the assignment $[A,\sigma] \mapsto \pi_1(A,\sigma)$ is well-defined.
\end{lemma}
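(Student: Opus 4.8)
The plan is to unfold the two sides of the claimed equality and reduce it to a single interchange identity governed by the naturality of the endomorphism $\phi$. By definition one has $\pi_1(A,\sigma)(\phi) = \sigma^{-1}\cdot \phi A\cdot\sigma$ and $\pi_1(B,\tau)(\phi)=\tau^{-1}\cdot\phi B\cdot\tau$ for every $\phi\in\pi_1(\C',\F')$, so it suffices to prove that these two natural endomorphisms of $\F$ coincide for each such $\phi$. The hypothesis $\langle A,\sigma\rangle\equiv\langle B,\tau\rangle$ supplies a natural isomorphism $\lambda:A\Rightarrow B$ with $\F'\lambda\cdot\tau=\sigma$, and I would exploit exactly this relation.

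First I would record that $\F'\lambda:\F'\circ A\Rightarrow\F'\circ B$ is itself a natural isomorphism: each component $\lambda_X:A(X)\to B(X)$ is invertible in $\C'$, hence $\F'(\lambda_X)$ is invertible in $\fsets$, with inverse $\F'\lambda^{-1}$ (whiskering $\F'$ with the inverse $\lambda^{-1}:B\Rightarrow A$). Consequently $\sigma^{-1}=\tau^{-1}\cdot\F'\lambda^{-1}$, using that $\cdot$ is ``first--then'' composition, so that $(\F'\lambda\cdot\tau)^{-1}=\tau^{-1}\cdot(\F'\lambda)^{-1}$. Substituting into $\pi_1(A,\sigma)(\phi)$ gives $\tau^{-1}\cdot\F'\lambda^{-1}\cdot\phi A\cdot\F'\lambda\cdot\tau$, and comparing with $\pi_1(B,\tau)(\phi)=\tau^{-1}\cdot\phi B\cdot\tau$ shows that the whole statement collapses to the single identity $\phi A\cdot\F'\lambda=\F'\lambda\cdot\phi B$ of natural transformations $\F'\circ A\Rightarrow\F'\circ B$.

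This last identity is verified object-wise and is nothing but the naturality of $\phi:\F'\Rightarrow\F'$ applied to the arrow $\lambda_X:A(X)\to B(X)$. Spelling it out at $X\in\C$ with the right-action convention (Notation~\ref{composition of natural transformation}), the left-hand side sends $\eta\in\F'(A(X))$ to $(\lambda_X)_*(\eta\,\phi_{A(X)})$, while the right-hand side sends it to $((\lambda_X)_*\eta)\,\phi_{B(X)}$; these agree precisely by the naturality square of $\phi$ along $\lambda_X$ (the same form of naturality already used in Lemma~\ref{naturality of delta}). I expect the only point requiring care---rather than a genuine obstacle---to be the bookkeeping of the composition order and of the right-versus-left action conventions fixed in Notations~\ref{composition of arrows} and~\ref{composition of natural transformation}; once these are pinned down, the computation is forced and purely formal.
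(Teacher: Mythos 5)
Your proposal is correct and follows essentially the same route as the paper's proof: substitute $\sigma = \F'\lambda\cdot\tau$ into $\sigma^{-1}\cdot\phi A\cdot\sigma$ and cancel the conjugation by $\F'\lambda$ via the naturality of $\phi$ along the components $\lambda_X$. The paper simply cites "the naturality of $\phi$" for the middle step, whereas you spell out the objectwise verification and the composition-order bookkeeping, but the argument is the same.
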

\begin{proof}
 By definition of the equivalence $\langle A,\sigma \rangle \equiv \langle B,\tau \rangle$, there exists a natural isomorphism $\lambda: A \Rightarrow B$ such that $\F'\lambda \cdot \tau = \sigma$. Then, for $\phi \in \pi_1(\C',\F')$:
\begin{eqnarray*}
  \pi_1(A,\sigma) (\phi) &=& \sigma^{-1} \cdot \phi A \cdot \sigma \\
                  &=& \tau^{-1} \cdot (\F' \lambda)^{-1} \cdot \phi A \cdot (\F' \lambda) \cdot \tau \\
                  &=& \tau^{-1} \cdot \phi B \cdot \tau \hspace{0.5cm} (\because \textrm{the naturality of $\phi$})\\
                  &=& \pi_1(B,\tau) (\phi)
\end{eqnarray*}
Thus, $\pi_1(A,\sigma) = \pi_1(B,\tau)$. 
\end{proof}

\begin{lemma}
 The assignment $[ A,\sigma ] \mapsto \pi_1(A,\sigma)$ preserves the identities and the composition of exact functors. 
\end{lemma}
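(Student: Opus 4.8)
The plan is to verify the two defining conditions of functoriality separately: preservation of identities and preservation of composition. Since $\pi_1$ is to be a contravariant functor (equivalently, a functor $\semigalois^{op} \rightarrow \profmon$), the second condition amounts to showing that $\pi_1(\langle A',\sigma'\rangle \circ \langle A,\sigma\rangle) = \pi_1(A,\sigma) \circ \pi_1(A',\sigma')$, with the order of the two factors reversed; note that the domains and codomains match, since $\pi_1(A',\sigma'): \pi_1(\C'',\F'') \rightarrow \pi_1(\C',\F')$ and $\pi_1(A,\sigma): \pi_1(\C',\F') \rightarrow \pi_1(\C,\F)$.

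For the identity, I would simply unwind the definition: $\pi_1(\id_\C,\id_\F)(\phi) = \id_\F^{-1} \cdot \phi\,\id_\C \cdot \id_\F$, and since whiskering $\phi$ along $\id_\C$ leaves $\phi$ unchanged and $\id_\F$ is the unit for vertical composition, this equals $\phi$; hence $\pi_1(\id_\C,\id_\F) = \id_{\pi_1(\C,\F)}$.

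For composition, the key input is the explicit description of the composite $\langle A' \circ A, \sigma''\rangle$ recorded in (\ref{composition of exact functors}), namely $\sigma'' = (\sigma' A)\cdot \sigma$ (first whisker $\sigma'$ along $A$, then apply $\sigma$), from which $\sigma''^{-1} = \sigma^{-1} \cdot (\sigma'^{-1}A)$. I would then expand both sides on an arbitrary $\psi \in \pi_1(\C'',\F'')$. On one hand, $\pi_1(A'\circ A, \sigma'')(\psi) = \sigma''^{-1} \cdot \psi(A'\circ A) \cdot \sigma''$; substituting the two expressions for $\sigma''$ and $\sigma''^{-1}$ and using associativity of whiskering, $\psi(A'\circ A) = (\psi A')A = \psi A' A$, gives $\sigma^{-1}\cdot(\sigma'^{-1}A)\cdot(\psi A' A)\cdot(\sigma' A)\cdot\sigma$. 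On the other hand, computing $\pi_1(A,\sigma)\bigl(\pi_1(A',\sigma')(\psi)\bigr) = \sigma^{-1}\cdot\bigl(\sigma'^{-1}\cdot\psi A'\cdot\sigma'\bigr)A\cdot\sigma$ and using that whiskering distributes over vertical composition, $(\alpha\cdot\beta)A = (\alpha A)\cdot(\beta A)$, yields the identical expression. Comparing the two settles the claim.

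The only real obstacle here is bookkeeping rather than any conceptual difficulty: one must track the paper's convention that natural transformations act on the right and compose left-to-right (Notation \ref{composition of natural transformation}), invert $\sigma''$ in the correct order, and invoke the interchange law for whiskering along a composite functor. Once these are handled cleanly the two expressions coincide term for term, so no nontrivial estimate or structural lemma is required.
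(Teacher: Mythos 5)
Your proof is correct and follows essentially the same route as the paper's: the identity case is immediate from the definition, and the composition case is the same direct expansion of $\pi_1(A'\circ A,(\sigma' A)\cdot\sigma)(\phi)$ using $(\sigma' A)^{-1}=\sigma'^{-1}A$, $\phi(A'\circ A)=(\phi A')A$, and the distribution of whiskering over vertical composition to regroup the middle as $(\sigma'^{-1}\cdot\phi A'\cdot\sigma')A$. The bookkeeping of the right-action/left-to-right composition convention is handled correctly, so there is nothing to add.
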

\begin{proof}
 The first claim is obvious. To see the second, let $[ A,\sigma ]: \langle \C, \F \rangle \rightarrow \langle \C',\F' \rangle$ and $[ A',\sigma' ]: \langle \C',\F' \rangle \rightarrow \langle \C'',\F'' \rangle$ be composable arrows in $\semigalois$. Their composition is given by $[ A' \circ A, (\sigma' A') \cdot \sigma]$. Then, for every $\phi \in \pi_1(\C',\F')$:
\begin{eqnarray*}
 && \pi_1(A'\circ A, (\sigma' A) \cdot \sigma) (\phi) \\
 &=& \sigma^{-1} \cdot (\sigma' A)^{-1} \cdot \phi (A' \circ A) \cdot (\sigma' A) \cdot \sigma \\
 &=& \sigma^{-1} \cdot (\sigma'^{-1} \cdot \phi A' \cdot \sigma') A \cdot \sigma \\
 &=& \pi_1(A, \sigma) (\sigma'^{-1} \cdot \phi A' \cdot \sigma') \\
 &=& \pi_1(A,\sigma) \circ \pi_1(A',\sigma') (\phi)
\end{eqnarray*}
\end{proof}

\begin{definition}
The functor $\pi_1: \semigalois^{op} \rightarrow \profmon$ is defined as the assignments $\langle \C,\F \rangle \mapsto \pi_1(\C,\F)$ on objects; and $[A,\sigma] \mapsto \pi_1(A,\sigma)$ on arrows. 
\end{definition}

\noindent
To give its inverse, we define a functor $\rep: \profmon \rightarrow \semigalois^{op}$. Let $M, M'$ be profinite monoids and $f: M \rightarrow M'$ be a continuous homomorphism. We need to define an arrow $\rep(f): \langle \Cl_f M', \F_{M'} \rangle \rightarrow \langle \mfsets, \F_M \rangle$ of semi-galois categories. 

For each finite $M'$-set $\langle S,\rho \rangle \in \Cl_f M'$, canonically induced by $f$ is a finite $M$-set $\langle S,f^* \rho \rangle \in \mfsets$, where $f^*\rho: M \rightarrow \End(S)$ is the composition:
\begin{equation}
\label{pullback of m-sets}
  M \xrightarrow{f} M' \xrightarrow{\rho} \End(S).
\end{equation}
Also, if $h: \langle S, \rho \rangle \rightarrow \langle T,\tau \rangle$ is an arrow in $\Cl_f M'$, i.e.\ an $M'$-equivariant map, the map $h:S \rightarrow T$ obviously preserves $M$-actions on $S$ and $T$ given by $f^* \rho$ and $f^* \tau$ respectively. That is, the map $h$ is an $M$-equivariant map from $\langle S,f^*\rho \rangle$ to $\langle T,f^* \tau \rangle$. Moreover, it is obvious that the assignments $\langle S,\rho \rangle \mapsto \langle S,f^*\rho \rangle$ and $h \mapsto h$ define a functor $\Cl_f M' \rightarrow \mfsets$, which we denote by $\rr(f)$. 

\begin{lemma}
 The functor $\rr(f):\Cl_f M' \rightarrow \mfsets$ preserves finite limits and finite colimits, and also, strictly preserves fiber functors: i.e.\ $\F_M \circ \rr(f) = \F_{M'}$. 
\end{lemma}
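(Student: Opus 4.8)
The plan is to treat the two assertions separately. The strict preservation of fiber functors is immediate from the construction: for any $\langle S,\rho\rangle \in \Cl_f M'$ one has $\F_M(\rr(f)\langle S,\rho\rangle) = \F_M\langle S,f^*\rho\rangle = S = \F_{M'}\langle S,\rho\rangle$, and on an arrow $h$ both functors return the same underlying map $h$; hence $\F_M \circ \rr(f) = \F_{M'}$ on the nose. I would record this first, since it is precisely the hypothesis that drives the rest of the argument.

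For the preservation of finite limits and colimits I would avoid computing each limit/colimit shape by hand and instead exploit that the two underlying-set functors already detect all limits and colimits. Concretely, I would first establish the auxiliary fact that $\F_M:\mfsets \to \fsets$ \emph{reflects} finite limits and finite colimits. This rests on three properties already available for a semi-galois category: $\mfsets$ has all finite limits and colimits (axioms $C_0,C_1$); $\F_M$ preserves them (axioms $F_0,F_1$ together with the lemma that $\F$ preserves equalizers and coequalizers, which jointly yield all finite limits and colimits); and $\F_M$ reflects isomorphisms (axiom $F_2$). Given a cone over a finite diagram $D$ in $\mfsets$ whose image under $\F_M$ is a limit cone in $\fsets$, I would form the genuine limit $L$ of $D$ in $\mfsets$ and the canonical comparison arrow $c$ from the apex of the cone to $L$; applying $\F_M$ turns both cones into limit cones of $\F_M\circ D$, so $\F_M c$ becomes an isomorphism in $\fsets$, whence $c$ is an isomorphism in $\mfsets$ by $F_2$. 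Thus the original cone is already a limit cone, i.e.\ $\F_M$ reflects finite limits; the dual argument, using the initial object and finite pushouts, gives reflection of finite colimits.

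With this in hand the conclusion is a short diagram chase. Let $(L,\{p_i\})$ be a finite limit cone in $\Cl_f M'$. Since $\rr(f)$ is a functor it carries this to a cone $(\rr(f)L,\{\rr(f)p_i\})$ over $\rr(f)\circ D$ in $\mfsets$. Applying $\F_M$ and using $\F_M\circ\rr(f)=\F_{M'}$, this image cone equals $(\F_{M'}L,\{\F_{M'}p_i\})$, which is a limit cone in $\fsets$ because $\F_{M'}$ preserves finite limits. By the reflection property just established, $(\rr(f)L,\{\rr(f)p_i\})$ is therefore a limit cone in $\mfsets$, so $\rr(f)$ preserves finite limits; replacing ``limit'' by ``colimit'' throughout gives preservation of finite colimits. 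The only delicate point — and the step I would treat most carefully — is the reflection lemma, since that is where $F_2$ is genuinely used; everything else is formal once the fiber functors are known to be exact and to intertwine through the strict identity $\F_M\circ\rr(f)=\F_{M'}$.
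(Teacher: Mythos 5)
Your proposal is correct, but it takes a genuinely different route from the paper. The paper disposes of this lemma in one line, appealing directly to the explicit construction of finite limits and colimits in $\mfsets$: they are computed on underlying sets, with the $M$-action induced pointwise, and since $\rr(f)$ leaves underlying sets and underlying maps untouched (only restricting the action along $f$), the image of a (co)limit cone is manifestly again a (co)limit cone. You instead route everything through an abstract reflection lemma: $\F_M$ reflects finite (co)limits because $\mfsets$ has them, $\F_M$ preserves them, and $\F_M$ reflects isomorphisms (axiom $F_2$), after which the strict identity $\F_M\circ\rr(f)=\F_{M'}$ and the exactness of $\F_{M'}$ finish the job by a diagram chase. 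Your argument is sound --- the comparison-arrow argument for reflection is standard, and the needed inputs (existence of all finite limits from $1_\C$ plus pullbacks, all finite colimits from $\emptyset_\C$ plus pushouts, preservation by the fiber functor, reflection of isomorphisms) are all available since $\langle\mfsets,\F_M\rangle$ and $\langle\Cl_f M',\F_{M'}\rangle$ are semi-galois categories. What your approach buys is generality: it shows that \emph{any} functor between semi-galois categories strictly intertwining the fiber functors is automatically exact, with no reference to the particular shape of $\rr(f)$. What the paper's approach buys is brevity and concreteness: it never needs the reflection lemma, only the elementary observation that (co)limits of $M$-sets are computed on underlying sets. Note that both arguments ultimately lean on the unproved assertion from Example~1 that $\langle\Cl_f M,\F_M\rangle$ satisfies the semi-galois axioms, so neither is more self-contained than the other on that score.
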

\begin{proof}
 This is obvious from the definition of $\rr(f)$ and the construction of finite limits and finite colimits in $\mfsets$. 
\end{proof}

\noindent
Thus, the pair $\langle \rr(f), \id_{F_M} \rangle$ defines an exact functor from $\langle \Cl_f M', \F_{M'} \rangle$ to $\langle \mfsets, \F_M \rangle$; and represents an arrow $\rep(f):= [\rr(f), \id_{\F_M}]$ of semi-galois categories. Since the functor $\rr(f)$ is defined by the composition (\ref{pullback of m-sets}), it also follows that we have the strict identity $\rr(g \circ f) = \rr(f) \circ \rr(g)$ of functors, where $f: M \rightarrow M'$ and $g: M' \rightarrow M''$ are composable continuous homomorphisms. Finally, $\rr(\id_M) = \id_{\mfsets}$. 

\begin{definition}
 The functor $\rep: \profmon \rightarrow \semigalois^{op}$ is defined by the assigments $M \mapsto \langle \mfsets, \F_M \rangle$ on objects; and $f \mapsto \rep(f)$ on arrows. 
\end{definition}
Now, we have two functors $\pi_1: \semigalois^{op} \rightarrow \profmon$ and $\rep: \profmon \rightarrow \semigalois^{op}$. In the rest of this subsection, we check that these functors are mutually inverse:

\begin{proposition}
 $\pi_1 \circ \rep \simeq \id_{\profmon}$. 
\end{proposition}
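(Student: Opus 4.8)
The plan is to exhibit a natural isomorphism $\lambda: \id_{\profmon} \Rightarrow \pi_1 \circ \rep$ whose component at a profinite monoid $M$ is precisely the isomorphism $\lambda_M: M \xrightarrow{\simeq} \pi_1(\mfsets,\F_M)$ supplied by the Reconstruction Theorem (Theorem \ref{reconstruction of profinite monoid}). Since that theorem already guarantees each $\lambda_M$ to be an isomorphism of profinite monoids, the object portion is settled, and the only thing left to check is naturality: for every continuous homomorphism $f: M \rightarrow M'$, the square
\[
\xymatrix{
M \ar[r]^{\lambda_M} \ar[d]_f & \pi_1(\mfsets,\F_M) \ar[d]^{(\pi_1 \circ \rep)(f)} \\
M' \ar[r]_{\lambda_{M'}} & \pi_1(\Cl_f M',\F_{M'})
}
\]
commutes in $\profmon$.

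First I would unfold the right-hand vertical arrow. By definition $\rep(f) = [\rr(f),\id_{\F_M}]$, where $\rr(f): \Cl_f M' \rightarrow \mfsets$ sends $\langle S,\rho \rangle$ to $\langle S, f^*\rho \rangle$ with $f^*\rho = \rho \circ f$, and where $\F_M \circ \rr(f) = \F_{M'}$ holds strictly (the preceding lemma). Feeding this into the formula $\pi_1(A,\sigma)(\phi) = \sigma^{-1} \cdot \phi A \cdot \sigma$ with $\sigma = \id_{\F_M}$ collapses it to a whiskering: $(\pi_1 \circ \rep)(f)(\phi) = \phi\,\rr(f)$ for every $\phi \in \End(\F_M) = \pi_1(\mfsets,\F_M)$. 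Evaluating at an object $\langle S,\rho \rangle \in \Cl_f M'$, the component of $\phi\,\rr(f)$ is simply $\phi_{\rr(f)\langle S,\rho \rangle} = \phi_{\langle S, f^*\rho \rangle}$.

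With both arrows made explicit, naturality becomes an elementwise verification. Fix $m \in M$ and trace it around the square, evaluated at an arbitrary $\langle S,\rho \rangle \in \Cl_f M'$. Going across the top and down the right, $\lambda_M(m)$ has component $\rho_Y(m)$ at each $Y = \langle S, \rho_Y \rangle \in \mfsets$, so $(\pi_1 \circ \rep)(f)(\lambda_M(m))$ has component $(f^*\rho)(m) = \rho(f(m))$ at $\langle S,\rho \rangle$. Going down the left and across the bottom, $\lambda_{M'}(f(m))$ has component $\rho(f(m))$ at the same object, directly by the definition of $\lambda_{M'}$. These agree for all $\langle S,\rho \rangle$, giving $(\pi_1 \circ \rep)(f) \circ \lambda_M = \lambda_{M'} \circ f$; combined with the Reconstruction Theorem this yields $\pi_1 \circ \rep \simeq \id_{\profmon}$.

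I do not expect a genuine obstacle: the mathematical content is carried entirely by the Reconstruction Theorem, and what remains is bookkeeping. The one point demanding care is variance — $\rep$ lands in $\semigalois^{op}$ and $\pi_1$ reverses arrows once more, so one must confirm that $(\pi_1 \circ \rep)(f)$ indeed points from $\pi_1(\mfsets,\F_M)$ to $\pi_1(\Cl_f M',\F_{M'})$, covariantly in $f$, so that the square is well-defined — together with keeping straight the right-action convention (Notation \ref{composition of natural transformation}) under which members of $\End(\F)$ act and whisker.
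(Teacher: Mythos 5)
Your proposal is correct and follows essentially the same route as the paper: both reduce the statement to the naturality of the Reconstruction Theorem's isomorphisms $\lambda_M$ and verify the square by the same elementwise computation, tracing $m \in M$ through a test object $\langle S,\rho \rangle \in \Cl_f M'$ and observing that both composites act by $\rho(f(m))$. Your explicit unfolding of $\pi_1(\rr(f),\id_{\F_M})$ as whiskering is just a slightly more spelled-out version of the paper's identical step.
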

\begin{proof}
 Recall the proof of Theorem \ref{reconstruction of profinite monoid}, where we constructed an isomorphism $M \xrightarrow{\lambda_M} \pi_1(\mfsets,\F_M)$ of profinite monoids. So, in order to prove the claim, it is sufficient to show that the following diagram commutes for every continuous homomorphism $f:M \rightarrow M'$:
\begin{eqnarray}
\label{naturality of pi and r}
 \xymatrix{
  M \ar[d]_f \ar[rr]^{\hspace{-0.7cm}\lambda_M} & & \pi_1(\mfsets,\F_M) \ar[d]^{\pi_1(\rr(f), \id_{\F_M})}\\
  M' \ar[rr]_{\hspace{-0.7cm}\lambda_{M'}} && \pi_1(\Cl_f M',\F_{M'})
}
\end{eqnarray}
For simplicity, put $\theta:=\pi_1(\rr(f), \id_{\F_M})$ throughout this proof. Let $m \in M$ and take $Y:=\langle S,\rho \rangle \in \Cl_f M'$ arbitrarily. Then, the $Y$-component of the natural transformation $\theta (\lambda_M (m)) \in \pi_1(\Cl_f M', \F_{M'})$ is given by: for each element $a \in S = \F_{M'} (Y)$, 
\begin{eqnarray*}
   a \cdot \bigl(\theta (\lambda_M (m)) \bigr) _Y &=& \bigl ( f^* \rho (m) \bigr) (a) \\
   &=& a \cdot f(m). 
\end{eqnarray*}
By definition of $\lambda_{M'}$, this is clearly equal to $a \cdot \bigl(\lambda_{M'} ( f(m) )\bigr)_Y$. Thus, the diagram (\ref{naturality of pi and r}) commutes. 
\end{proof}

\begin{proposition}
 $\rep \circ \pi_1 \simeq \id_{\semigalois}$.
\end{proposition}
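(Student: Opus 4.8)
The plan is to assemble the desired natural isomorphism directly from the Representation Theorem (Theorem~\ref{representation of semi-galois category}). For each semi-galois category $\langle\C,\F\rangle$ that theorem supplies a comparison equivalence $\Phi_\C\colon\C\xrightarrow{\simeq}\Cl_f\pi_1(\C,\F)$ with $\F_{\pi_1(\C,\F)}\circ\Phi_\C=\F$ on the nose. Hence $\langle\Phi_\C,\id_\F\rangle$ is an exact functor; and since $\Phi_\C$ is an equivalence of categories (so that any quasi-inverse automatically preserves finite limits and colimits, and, by the strict identity $\F=\F_{\pi_1(\C,\F)}\Phi_\C$, the fiber functors up to isomorphism), the class $[\Phi_\C,\id_\F]$ is an isomorphism in $\semigalois$. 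These classes will be the components of the natural isomorphism; the passage through $\semigalois^{op}$ only reverses their direction.

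It then remains to check naturality. Fix an exact functor $\langle A,\sigma\rangle\colon\langle\C,\F\rangle\to\langle\C',\F'\rangle$. Unwinding the definition of the homomorphism $\pi_1(A,\sigma)\colon\phi\mapsto\sigma^{-1}\cdot\phi A\cdot\sigma$ and of $\rep$, the functor that $\rep\circ\pi_1$ assigns to $[A,\sigma]$ is (up to $\equiv$) the restriction functor $\rr(\pi_1(A,\sigma))\colon\Cl_f\pi_1(\C,\F)\to\Cl_f\pi_1(\C',\F')$, which points in the same direction as $A$ because $\pi_1$ and $\rr$ are each contravariant. Using the composition rule (\ref{composition of exact functors}) together with the strictness $\F_{\pi_1}\Phi=\F$ and the fact that $\rr$ strictly preserves fiber functors, both composites are computed with identity structure maps apart from the single factor $\sigma$ carried by the left leg, so that the square amounts to the single assertion
\[ [\Phi_{\C'}\circ A,\;\sigma]\ \equiv\ [\rr(\pi_1(A,\sigma))\circ\Phi_\C,\;\id_\F] \]
of arrows $\langle\C,\F\rangle\to\langle\Cl_f\pi_1(\C',\F'),\F_{\pi_1(\C',\F')}\rangle$ in $\semigalois$.

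The witnessing natural isomorphism will be $\sigma$ itself. For each $Y\in\C$ the action of $\psi\in\pi_1(\C',\F')$ on $\rr(\pi_1(A,\sigma))(\Phi_\C(Y))$ is, by the definitions of $\rr$ and of $\pi_1(A,\sigma)$, the conjugate by the bijection $\sigma_Y\colon\F'(A(Y))\to\F(Y)$ of the action of $\psi$ at $A(Y)$ on $\F'(A(Y))$; concretely, substituting along $\sigma_Y$ cancels the outer $\sigma_Y^{-1}\sigma_Y$ appearing in $(\sigma^{-1}\cdot\psi A\cdot\sigma)_Y$. Thus $\lambda_Y:=\sigma_Y$ is a $\pi_1(\C',\F')$-equivariant bijection from $\Phi_{\C'}(A(Y))$ to $\rr(\pi_1(A,\sigma))(\Phi_\C(Y))$, natural in $Y$ by the naturality of $\sigma$. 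Finally, applying the underlying-set functor $\F_{\pi_1(\C',\F')}$ to $\lambda$ returns $\sigma$, which is exactly the coherence condition $\F_{\pi_1(\C',\F')}\lambda=\sigma$ demanded by the equivalence $\equiv$; this establishes the displayed identity, and hence naturality.

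The real content has already been provided by the Representation Theorem, so I expect no deep obstacle: once the action formula for $\rr(\pi_1(A,\sigma))$ is written out, the equivariance of $\sigma_Y$ is a one-line cancellation. The only genuinely delicate points are bookkeeping ones: tracking the variance introduced by $\semigalois^{op}$ (so that the two contravariances of $\pi_1$ and $\rr$ combine to make $\rep\circ\pi_1$ a covariant endofunctor whose value on $[A,\sigma]$ points the same way as $A$), and discharging the $\equiv$-condition against the nontrivial structure map $\sigma$ rather than against $\id$. I anticipate that organizing this bookkeeping cleanly, rather than any substantial computation, will be the main difficulty.
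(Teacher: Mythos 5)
Your proposal is correct and follows essentially the same route as the paper: reduce naturality to the single equivalence $[\Phi_{\C'}\circ A,\sigma]\equiv[\rr(\pi_1(A,\sigma))\circ\Phi_{\C},\id_{\F}]$ and witness it by $\lambda_Y:=\sigma_Y$, checking that $\sigma_Y$ intertwines the action $\rho_{A(Y)}$ with the pulled-back action $\rr(\rho_Y)(\phi)=(\sigma^{-1}\cdot\phi A\cdot\sigma)_Y$ via the conjugation cancellation. This is exactly the paper's argument, including the observation that the coherence condition forces $\lambda_Y=\sigma_Y$ in the first place.
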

\begin{proof}
 For this proof, recall Theorem \ref{representation of semi-galois category}, where we constructed a canonical functor $\Phi: \C \rightarrow \Cl_f \pi_1(\C,\F)$ that was shown to be an equivalence of categories for every semi-galois category $\langle \C,\F \rangle$. Moreover, $\Phi$ strictly preserves fiber functors, i.e.\ $\F_{\pi_1(\C,\F)} \circ \Phi = \F$. This means that $\langle \Phi, \id_{\F} \rangle$ represents an arrow $[\Phi,\id_{\F}]$ of semi-galois categories. In the current proof, we denote $\Phi$ by $\Phi_{\C,\F}$ when indicating the domain.

Let $\langle \C,\F\rangle, \langle \C',\F' \rangle$ be semi-galois categories; and $[A,\sigma]: \langle \C,\F \rangle \rightarrow \langle \C',\F' \rangle$ be an arrow between them. To prove the claim, it is sufficient to show that the following diagram commutes in $\semigalois$: 
\begin{eqnarray}
 \xymatrix{
  \langle \C,\F \rangle \ar[rr]^{\hspace{-0.2cm}[\Phi_{\C,\F}, \id_{\F}]} \ar[d]_{[A,\sigma]}&& \Cl_f \pi_1(\C,\F) \ar[d]^{\rep(\pi_1(A,\sigma))} \\
  \langle \C',\F' \rangle \ar[rr]_{\hspace{-0.2cm}[\Phi_{\C',\F'}, \id_{\F'}]} && \Cl_f \pi_1(\C',\F')   
}
\end{eqnarray}
In more elementary words, this is equivalent to showing the following equivalence of exact functors:
\begin{eqnarray}
\label{eqeq}
 \langle \Phi_{\C',\F'} \circ A, \sigma \rangle &\equiv& \langle \rr(\pi_1(A,\sigma)) \circ \Phi_{\C,\F}, \id_{\F} \rangle. 
\end{eqnarray}
We now see that the natural isomorphism $\sigma: \F'\circ A \Rightarrow \F$ itself plays the role of proving this equivalence. 

To save the notation, let us write $\rr:= \rr(\pi_1(A,\sigma))$, $\Phi:=\Phi_{\C,\F}$ and $\Phi':= \Phi_{\C',\F'}$. Also, for each $\langle S,\rho\rangle \in \Cl_f \pi_1(\C,\F)$, we denote the induced finite $\pi_1(\C',\F')$-set $\rr(\langle S,\rho \rangle) = \langle S, \pi_1(A,\sigma)^* \rho \rangle \in \Cl_f \pi_1(\C',\F')$ simply by $\langle S,\rr(\rho) \rangle$. Then, to prove the equivalence (\ref{eqeq}), we need to construct a natural isomorphism $\lambda: \Phi' \circ A \Rightarrow \rr \circ \Phi$ such that:
\begin{eqnarray}
\label{requiremarkent for lambda}
 \F_{\pi_1(\C',\F')} \lambda \cdot \id_{\F} = \sigma.
\end{eqnarray}
Notice that the requested natural isomorphism $\lambda$ should consist of isomorphisms $\lambda_Y: \langle \F'(A(Y)), \rho_{A(Y)} \rangle \rightarrow \langle \F(Y), \rr(\rho_Y) \rangle$ ($Y \in \C$) of finite $\pi_1(\C',\F')$-sets. That is, each map $\lambda_Y: \F'(A(Y)) \rightarrow \F(Y))$ is an isomorphism that preserves $\pi_1(\C',\F')$-actions. On the other hand, the requested identity (\ref{requiremarkent for lambda}) says that this map $\lambda_Y$ must be equal to $\sigma_Y: \F' (A(Y)) \rightarrow \F(Y)$--- in other words, finding a natural isomorphism $\lambda: \Phi' \circ A \Rightarrow \rr \circ \Phi$ so that it satisfies (\ref{requiremarkent for lambda}) is equivalent to proving that $\sigma_Y: \F'(A(Y)) \rightarrow \F(Y)$ is not just an isomorphism, but also preserves $\pi_1(\C',\F')$-actions on $\F'(A(Y))$ and $\F(Y)$ given by $\rho_{A(Y)}$ and $\rr(\rho_Y)$ respectively. 

To see this, we recall the definitions of $\rho_{A(Y)}$ and $\rr(\rho_Y)$. On the one hand, $\rho_{A(Y)}$ is the following monoid homomorphism, which defines a $\pi_1(\C',\F')$-action on the finite set $\F'(A(Y))$:
\begin{equation}
\label{rho a y}
  \rho_{A(Y)} : \pi_1(\C',\F') \ni \phi \mapsto \phi_{A(Y)} \in \End\bigl(\F'(A(Y))\bigr). 
\end{equation}
That is, for each $\phi \in \pi_1(\C',\F')$ and $\eta \in \F'(A(Y))$, the action $\eta \cdot \phi \in \F'(A(Y))$ of $\phi$ on $\eta$ is $\eta \cdot \phi := \eta \cdot  \phi_{A(Y)}$. On the other hand, as mentioned above, $\rr(\rho_Y)$ is defined as a monoid homomorphism $\pi_1(A,\sigma)^* \rho_Y$. That is:
\begin{equation}
  \rr(\rho_Y) : \pi_1(\C', \F') \xrightarrow{\pi_1(A,\sigma)} \pi_1(\C,\F) \xrightarrow{\rho_Y} \End(\F(Y)), 
\end{equation}
which maps each $\phi \in \pi_1(\C', \F')$ to $\rho_Y \bigl(\pi_1(A,\sigma) (\phi)\bigr) = \bigl(\sigma^{-1} \cdot \phi A \cdot \sigma \bigr)_Y$. Combining with (\ref{rho a y}), we have the following commutative diagram for each $\phi \in \pi_1(\C',\F')$:
\begin{eqnarray}
 \xymatrix{
  \F' (A(Y)) \ar[r]^{\hspace{0.2cm}\sigma_Y} \ar[d]_{\rho_{A(Y)}(\phi)} & \F (Y) \ar[d]^{\rr(\rho_Y)(\phi)} \\
  \F' (A(Y)) \ar[r]_{\hspace{0.2cm}\sigma_Y} & \F(Y) 
}
\end{eqnarray}
This proves that, as requested, the map $\sigma_Y: \F'(A(Y)) \rightarrow \F(Y)$ indeed preserves $\pi_1(\C',\F')$-actions on $\F'(A(Y))$ and $\F(Y)$ given by $\rho_{A(Y)}$ and $\rr(\rho_Y)$ respectively. This completes the proof. 
\end{proof}

\noindent
By these natural isomorphisms, $\pi_1 \circ \rep \simeq \id_{\profmon}$ and $\rep \circ \pi_1 \simeq \id_{\semigalois}$, we obtained the requested duality:
\begin{eqnarray}
\pi_1 : \semigalois^{op} \leftrightarrows \profmon : \rep
\end{eqnarray}

\section{Eilenberg's Variety Theory, Revisited}
\label{s5}
\noindent
So far we have studied the general structure of semi-galois categories (\S \ref{s2} -- \S \ref{s4}); in particular, semi-galois categories were proved dual to profinite monoids under the construction $\sgc \mapsto \pi_1(\C,\F)$ of fundamental monoids. This extends, to the case of profinite monoids, the classical duality between profinite groups and galois categories. While the duality for galois categories was used to unify several galois theories and to develop galois theory for schemes (cf.\ \S \ref{s2s2}), the first use of our duality theorem for semi-galois categories is to review Eilenberg's variety theory \cite{Eilenberg} from a more axiomatic standpoint. 

Technically speaking, we review here two variants of \emph{Eilenberg's variety theorem}--- the central theorem in Eilenberg theory--- due to Straubing \cite{Straubing} and Chaubard et al.\ \cite{Chaubard_Pin_Straubing} (cf.\ \S \ref{section var one}); and give yet another proof of these theorems based on appropriate duality theorems (\S \ref{section var two}). This proof is not intended to simplify the original proof; but instead, to highlight the duality principle behind the variety theorems. This rather conceptual proof of the variety theorems naturally gives rise to several questions concerning the relationship between Galois theory, Eilenberg theory and \emph{B\"uchi's monadic second-order logic over finite words} \cite{Buchi} MSO[$<$]; after pursueing general topos-theoretic aspects of semi-galois categories in the next section (\S \ref{s6}), this matter will be discussed in the last section (\S \ref{s7}), where we also discuss the original motivation of the variety theorems in order to clarify the motivation of our yet another conceptual framework of Eilenberg theory.

\subsection{The variety theorem}
\label{section var one}
\noindent
This subsection recalls necessary concepts and results concerning the variety theorems due to Straubing \cite{Straubing} and Chaubard et al.\ \cite{Chaubard_Pin_Straubing} for the sake of reader's convenience; no novel result is presented in this subsection. Our consideration on these theorems starts in the next subsection (\S \ref{section var two}). For more information, the reader is refered to the original source \cite{Straubing,Chaubard_Pin_Straubing}.

Firstly let us fix some general notations that are used throughout this section. We denote by $\freemon$ the category of free monoids (over \emph{finite} alphabets) and monoid homomorphisms; the symbols $\D, \D' \cdots$ denote those subcategories of $\freemon$ which contain all free monoids as objects (but not necessarily all morphisms, i.e.\ may not be full). If a homomorphism $f: A^* \rightarrow B^*$ belongs to some $\D \subseteq \freemon$, we denote it as $f \in \D$. 

The central concepts for the variety theorems of \cite{Straubing,Chaubard_Pin_Straubing} are \emph{$\D$-varieties} of (i) regular languages, (ii) \emph{finite stamps} (= finite monoids with fixed generators), and (iii) \emph{finite actions} (= DFAs in our terminology), to be recalled in detail below. Then the variety theorems claim canonical bijective correspondences between varieties of these three types. Following \cite{Adamek_general}, we also define \emph{local varieties}\footnote{The original definitions of $\D$-varieties are slightly different from those presented below, but actually equivalent. This equivalent modification of definition is just to make it straightforward to see the relationship between $\D$-varieties and local varieties.}. 
\begin{definition}[$\D$-varieties of regular languages]
\label{d varieties of regular languages}
 A class $\V$ of regular languages is called a \emph{$\D$-variety of regular languages} if, denoting $\V_A:= \{ L \subseteq A^* \mid L \in \V \}$, the following properties hold:
 \begin{description}
  \item[R1)] $\emptyset \in \V_A$ and $A^* \in \V_A$ for every alphabet $A$;
  \item[R2)] if $L, R \in \V_A$, then $L \cup R, L \cap R$ and $A^* \backslash L \in \V_A$; 
  \item[R3)] if $L \in \V_A$ and $w \in A^*$, then $w^{-1} L$ and $ L w^{-1} \in \V_A$;
  \item[R4)] if $L \in \V_B$ and $f:A^* \rightarrow B^* \in \D$, then $f^{-1}L \in \V_A$. 
 \end{description}
A class $\V_A$ of regular languages over $A$ is called a \emph{local variety of regular languages} if it satisfies {\bf R1)}, {\bf R2)} and {\bf R3)}.
\end{definition}
\noindent
Here, for a language $L \subseteq A^*$ and a word $w \in A^*$, we denote by $w^{-1}L \subseteq A^*$ the language $\{u \in A^* \mid wu \in L\}$ and call it the \emph{left quotient of $L$ by $w$}. The language $Lw^{-1} \subseteq A^*$ is defined similarly and called the \emph{right quotient of $L$ by $w$}. 

\begin{remark}
The original varieties of regular languages in Eilenberg's sense are exactly $\D$-varieties of regular languages with $\D=\freemon$ in particular. In this sense, the notion of $\D$-varieties of regular languages subsumes the original varieties.
\end{remark}

\begin{remark}
\label{why variety}
 In Eilenberg theory, a main motivation to focus on varieties of regular languages among other general classes of regular languages is that the membership in varieties of regular languages admits \emph{algebraic characterization} in terms of syntactic monoids: For instance, the class of those regular languages which are definable by the first-order fragment FO[$<$] of MSO[$<$] is a variety of regular languages; and a regular language $L$ is a menber of this variety if and only if its syntactic monoid $M(L)$ contains only trivial subgroups (i.e.\ is \emph{aperiodic}). (See e.g.\ \cite{survey_logic,Pin}.)

The practical motivation of this algebraic characterization of FO[$<$]-definability concerns its decidability: The aperiodicity of $M(L)$ is equivalent to the property that $M(L)$ satisfies $\forall x \in M(L). \hspace{0.1cm} x^\omega = x^{\omega + 1}$; and importantly, this equational property is decidable. From this and the fact that the construction $L \mapsto M(L)$ is effectively calculable, it follows that whether a given regular language is FO[$<$]-definable is decidable. In general, for a class of regular languages, forming a variety is a necessary and sufficient condition to admit such equational characterization in terms of syntactic monoids. (This is a consequence from Eilenberg's variety theorem and \emph{Reiterman's theorem} \cite{Reiterman}.)
\end{remark}

\begin{remark}
 As mentioned above, Straubing's variant \cite{Straubing} of the variety theorem considers $\D$-varieties of regular langugages, while Eilenberg's original one considers ($\freemon$-) varieties of regular languages in particular. Originally, this extension of variety concept was to deal with classes of regular languages of natural interest that are not varieties in the sense of Eilenberg but their menbership still admits similar (relaxed) algebraic characterization in terms of syntactic monoids. See \cite{Straubing}. 
\end{remark}

We choose Straubing's variant \cite{Straubing} of Eilenberg's variety theorem as our starting point of reconsideration of Eilenberg's theory because Eilenberg's original theorem itself does not follow naturally from the duality theorem of Rhodes et al.\ \cite{Rhodes_Steinberg}, while so does Straubing's variant (cf.\ \S \ref{section var two}) that also subsumes Eilenberg's one in a certain precise sense. This is simply because the definition of varieties of regular languages and that of pseudo-varieties of finite monoids are not symmetric; in Straubing's variant, pseudo-varieties of finite monoids were replaced with \emph{$\D$-varieties of finite stamps} that are defined so as to be symmetric with $\D$-varieties of regular languages. 

Informally, \emph{finite stamps} are finite monoids with fixed generators; formally, they are defined as follows:

\begin{definition}[Finite stamps]
\label{finite stamps}
 A \emph{finite stamp} over an alphabet $A$ is a surjective homomorphism $s: A^* \twoheadrightarrow M$ onto a finite monoid $M$. 
\end{definition}

\noindent
Given two finite stamps $s: A^* \twoheadrightarrow M$ and $t: A^* \twoheadrightarrow N$ over the same alphabet $A$, the \emph{product} of $s$ and $t$ is defined as $u: A^* \twoheadrightarrow U$, where $U$ is the image of the pairing homomorphism $\langle s,t \rangle: A^* \rightarrow M \times N$ and $u$ is the canonical factor of $\langle s,t \rangle$ through $U$. We denote by $s*t$ the product of $s$ and $t$. 
Finally, we denote by $1_A: A^* \twoheadrightarrow 1$ the stamp onto the trivial singleton monoid $1$.

\begin{definition}[$\D$-variety of finite stamps]
A class $\vv$ of finite stamps is called a \emph{$\D$-variety of finite stamps} if, denoting $\vv_A := \{s: A^* \twoheadrightarrow M \mid s \in \vv\}$, the following properties hold:
\begin{description}
  \item[M1)] $1_A: A^* \twoheadrightarrow 1$ is in $\vv_A$ for every alphabet $A$;
  \item[M2)] if $s:A^* \twoheadrightarrow M, t: A^* \twoheadrightarrow N \in \vv_A$, then $s*t \in \vv_A$;
  \item[M3)] if $s: A^* \twoheadrightarrow M \in \vv_A$ and $h: M \twoheadrightarrow N$ is surjective, then $h \circ s \in \vv_A$; \vspace{0.1cm}
  \item[M4)] if $t:B^* \twoheadrightarrow N \in \vv_B$ and $s: A^* \twoheadrightarrow M$ is such that $j \circ s = t \circ f$ where $j: M \hookrightarrow N$ is an embedding, and $f:A^* \rightarrow B^*$ is in $\D$, then $s \in \vv_A$. 
\end{description}
A class $\vv_A$ of finite stamps over $A$ is called a \emph{local variety of finite stamps} if it satisfies {\bf M1)}, {\bf M2)} and {\bf M3)}. 
\end{definition}

\begin{remark}
There is a canonical bijective correspondence between pseudo-varieties of finite monoids in the original sense of Eilenberg \cite{Eilenberg} and $\D$-varieties of finite stamps with $\D=\freemon$ in particular as discussed by Straubing (cf.\ \cite{Straubing}). In this sense, $\D$-varieties of finite stamps subsume Eilenberg's pseudo-varieties of finite monoids, as in the case of $\D$-varieties of regular languages. 
\end{remark}

There is another good reason to choose Straubing's variety theorem as our starting point: That is, as formally recalled below, while Straubing's variety theorem itself \cite{Straubing} concerns a bijective correspondence between ($\D$-varieties of) regular languages and finite stamps, this correspondence can be naturally extended so that suitable classes of DFAs are involved as well (through taking recognizing languages and transformation monoids of DFAs); this extension was originally investigated by Chaubard et al.\ \cite{Chaubard_Pin_Straubing} in their study on wreath products of $\D$-varieties. Since in Eilenberg's variety theory classification of DFAs has been of central concern in itself and also fruitful auxiliary step in classification of regular languages and finite monoids, it is essential to involve the variety theorem of Chaubard et al.\ \cite{Chaubard_Pin_Straubing} in the reinterpretation of Eilenberg theory. Starting from Straubing's variant of Eilenberg's variety theorem makes it straightforward to do this in a transparent way. 

In the variety theorem of Chaubard et al.\ \cite{Chaubard_Pin_Straubing}, the concept of \emph{$\D$-varieties of finite actions} (= DFAs in our terminology) was introduced so that they correspond to those of regular languages and finite stamps. Formally, finite actions are defined as follows: 

\begin{definition}[Finite actions]
 A \emph{finite action} over an alphabet $A$ is a map $s: S \times A^* \rightarrow S$ such that $S$ is a finite set; and for every $u, v \in A^*$ and $\xi \in S$, we have $s (s(\xi,u), v) = s (\xi, uv)$ and $s(\xi,\varepsilon) = \xi$. 
\end{definition}
\noindent
As in the case of DFAs, we write $s(\xi,u) = \xi \cdot u$ and call the set $S$ as the \emph{set of states} (or \emph{state set}) and the map $s: S \times A^* \rightarrow S$ as the \emph{transition function}. 

Given two finite actions $s: S \times A^* \rightarrow S, t: T \times A^* \rightarrow T$ over the same alphabet $A$, the \emph{product} of $s$ and $t$ is the finite action whose set of states is $S \times T$; and the transition function is given by $S \times T \times A^* \ni (\xi, \eta, u) \mapsto (\xi \cdot u, \eta \cdot u) \in S \times T$. We denote this finite action by $s \times t$. 
Also, we say that $t$ is a \emph{subaction} of $s$ if $T \subseteq S$ and $\xi \cdot u \in T$ for every $\xi \in T$ and $u \in A^*$; we say that $t$ is a \emph{quotient} of $s$ if there exists a surjection $p_*: S \twoheadrightarrow T$ such that $ p_* (\xi \cdot u) = (p_* \xi) \cdot u$ for every $\xi \in S$ and $u \in A^*$; and also, for a finite action $s: S \times B^* \rightarrow S$ and a homomorphism $f: A^* \rightarrow B^*$, we denote by $f^*s$ the finite action $s \circ (\id_S \times f): S \times A^* \rightarrow S \times B^* \rightarrow S$. Finally, we say that a finite action $s: S \times A^* \rightarrow S$ is \emph{trivial} if $\xi \cdot u = \xi$ for every $u \in A^*$ and $\xi \in S$. 

\begin{definition}[$\D$-variety of finite actions]
A class $\va$ of finite actions is called a \emph{$\D$-variety of finite actions} if, denoting $\va_A := \{s: A^* \times S \rightarrow S \mid s \in \va\}$, the following properties hold: 
\begin{description}
  \item[D1)] all trivial finite actions are in $\va$;
  \item[D2)] if $s:S \times A^* \rightarrow S \in \va_A$ and $t: T \times A^* \rightarrow T \in \va_A$, then $s \times t \in \va_A$;
  \item[D3)] if $s: S \times  A^* \rightarrow S \in \va_A$ and $t$ is a subaction of $s$, then $t \in \va_A$; 
  \item[D4)] if $s: S \times  A^* \rightarrow S \in \va_A$ and $t$ is a quotient of $s$, then $t \in \va_A$; 
  \item[D5)] if $s: S \times B^* \rightarrow S \in \va_B$ and $f: A^* \rightarrow B^* \in \D$, then $f^* s: S \times Y \rightarrow S \in \va_A$. 
\end{description}
A class $\va_A$ of finite actions over $A$ is called a \emph{local variety of finite actions} if it satisfies {\bf D1)}, {\bf D2)}, {\bf D3)} and {\bf D4)}. 
\end{definition}

Now the variety theorems of Straubing \cite{Straubing} and Chaubard et al.\ \cite{Chaubard_Pin_Straubing} are stated as follows: Let $\varreg, \varmon, \vardfa$ denote respectively the lattices consisting of (i) $\D$-varieties of regular languages; (ii) those of finite stamps; and (iii) those of finite actions, where the order is given by the inclusion of $\D$-varieties. 

\begin{theorem}[Straubing, \cite{Straubing}; Chaubard, Pin, Straubing, \cite{Chaubard_Pin_Straubing}]
\label{the classical variety theorems}
 There are canonical isomorphisms $\varreg \simeq \varmon \simeq \vardfa$ of lattices.
\end{theorem}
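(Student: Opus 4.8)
The plan is to reduce the global assertion to a family of \emph{local} assertions, one per alphabet, and then to glue them using the functoriality of the duality $\profmon \simeq \semigalois^{op}$ proved in \S\ref{appendix b three}. Indeed, unwinding the definitions of the three kinds of $\D$-variety, a $\D$-variety of any given type is precisely a family $\{\V_A\}_A$ of \emph{local} varieties of that type, indexed by the alphabets $A$, subject to one additional closure condition---\textbf{R4}, \textbf{M4}, or \textbf{D5} respectively---each of which demands closure under an inverse-image operation along the homomorphisms $f:A^*\to B^*$ lying in $\D$. Thus it suffices to: (i) exhibit, for each fixed $A$, canonical lattice isomorphisms among the local varieties of regular languages, of finite stamps, and of finite actions over $A$; and (ii) verify that these isomorphisms are natural with respect to the $\D$-indexed inverse-image operations, so that they identify the sublattices carved out by \textbf{R4}, \textbf{M4}, and \textbf{D5}.

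For (i) I would route all three local lattices through the free profinite monoid $\widehat{A^*}$, showing each to be canonically isomorphic to the lattice of profinite quotients $\widehat{A^*}\twoheadrightarrow N$ (ordered by refinement). The stamp side is immediate, since a profinite quotient is by definition the cofiltered limit of the finite stamps $A^*\twoheadrightarrow M$ through which it factors, and the corresponding local variety of stamps (closed under \textbf{M2}, \textbf{M3}) is exactly that set of factors. The language side is the standard Stone-type correspondence (cf.\ \cite{Gehrke_Grigorieff_Pin}) assigning to $\widehat{A^*}\twoheadrightarrow N$ the Boolean algebra of regular languages recognised by $N$, which is automatically closed under the quotients \textbf{R3}. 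The action side is where the duality theorem of \S\ref{s4} enters: by Remark \ref{dfas are simplest} a finite action over $A$ is an object of $\Adfa\simeq \Cl_f\widehat{A^*}$, and a profinite quotient $\widehat{A^*}\twoheadrightarrow N$ yields via $\rr$ an embedding $\Cl_f N\hookrightarrow \Cl_f\widehat{A^*}$ whose image consists exactly of the finite actions that factor through $N$; one then checks that the local varieties of finite actions over $A$ (closed under \textbf{D2}--\textbf{D4}) are precisely these images, so that they too are classified by the profinite quotients of $\widehat{A^*}$.

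For (ii), the observation is that all three inverse-image operations are one and the same transport of structure along a homomorphism $f:A^*\to B^*$ in $\D$: the inverse image $f^{-1}L$ of a language, the pullback stamp of \textbf{M4}, and the reindexed action $f^{*}s$ all correspond, under the local isomorphisms of (i), to the functor $\rr(f)$ and the induced homomorphism $\pi_1(A,\sigma)$ of fundamental monoids, whose functoriality and naturality were established in \S\ref{appendix b three}. Consequently the family of local isomorphisms is natural in $A$ over $\D$, and therefore restricts to lattice isomorphisms between the global $\D$-varieties, giving $\varreg\simeq\varmon\simeq\vardfa$.

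The hardest part will be the action side of step (i), namely proving that a class of finite actions closed merely under products, subactions and quotients (but \emph{not} a priori under disjoint unions) is already of the form ``actions factoring through $N$'' for a single profinite quotient $N$. This is exactly the point where one must show that such a class is coproduct-closed and is generated, inside the semi-galois category $\Cl_f N$, by the regular $N$-sets; the duality of \S\ref{s4} supplies the conceptual bridge, but the closure analysis itself must be carried out by hand, paralleling the computations of Straubing \cite{Straubing} and Chaubard et al.\ \cite{Chaubard_Pin_Straubing}. A secondary, more bookkeeping-heavy task is verifying in step (ii) that the three inverse-image operations genuinely coincide under the local isomorphisms, which amounts to chasing $f$, $\rr(f)$ and $\pi_1(A,\sigma)$ through the diagrams of \S\ref{appendix b three}.
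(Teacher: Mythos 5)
Your proposal follows essentially the same route as the paper's own argument in \S \ref{section var two}: identify each of the three local lattices over a fixed alphabet $A$ with the lattice of profinite quotients of $\widehat{A^*}$ (using the duality of \S \ref{s4} on the action side), then globalize by naturality of the inverse-image operations over $\D$ --- which the paper packages as isomorphisms between lattices of subfunctors/quotient functors $\D \to \bialg^{op}$, $\profmon$, $\semigalois^{op}$. The only cosmetic difference is that the paper routes the language side through sub-bialgebras of $\reg(A)$ following Rhodes--Steinberg rather than invoking the Stone-type correspondence directly, and, like you, it leaves the closure analysis for local varieties of finite actions (your ``hardest part'') as a stated lemma whose detailed verification is omitted.
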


\begin{remark}
When $\D=\freemon$, the isomorphism $\varreg \simeq \varmon$ implies the original Eilenberg variety theorem. 
\end{remark}

\subsection{The variety theorems via duality theorems}
\label{section var two}
\noindent
We proceed to our proof of the isomorphism $\varreg \simeq \varmon \simeq \vardfa$ that explicitly uses Theorem 8.4.10 \cite{Rhodes_Steinberg} and Theorem \ref{duality}, \S \ref{s4}. For this purpose, we construct distinct but isomorphic lattices $\varreg'$ ($\simeq \varreg$), $\varmon'$ ($\simeq \varmon$) and $\vardfa'$ ($\simeq \vardfa$) that consist of certain functors from $\D$ to $\bialg^{op}$, $\profmon$, $\semigalois^{op}$ respectively. (Here $\bialg$ denotes the category of bialgebras over $\field_2$ and bialgebra homomorphisms, cf.\ \cite{Rhodes_Steinberg}.) The isomorphism $\varreg' \simeq \varmon' \simeq \vardfa'$ is directly proved by the equivalence $\bialg^{op} \simeq \profmon \simeq \semigalois^{op}$, from which the target isomorphism $\varreg \simeq \varmon \simeq \vardfa$ follows; one can also see that this construction of isomorphism coincides with the original one given in \cite{Straubing,Chaubard_Pin_Straubing} (cf.\ Remark \ref{application}).

Here we shall ommit the detail of proofs and just sketch necessary arguments in order to make it easy to overview quickly the essential point: The detailed proofs will just bother the reader since they are entirely straightforward. The only point to which the reader needs to pay attention is which structures replace the classical structures--- $\D$-varieties, and how; in particular, among others, local varieties of finite actions are equivalently replaced with semi-galois categories with finitely generated fundamental monoids (cf.\ Lemma \ref{char of local varieties of finite actions}). 

Now we start our argument, beginning with local varieties. Let $A$ be an alphabet. Denote by $\varreg_A, \varmon_A$ and $\vardfa_A$ the lattices of local varieties of regular languages, finite stamps, and finite actions over $A$ respectively, where the order is given by the inclusion of local varieties. These lattices are canonically isomorphic to certain lattices $\varreg_A', \varmon_A', \vardfa_A'$ made from bialgebras over $\field_2$, profinite monoids and semi-galois categories; and using this fact, the isomorphism $\varreg \simeq \varmon \simeq \vardfa$ is proved as sketched just above. 

We start with the most straightforward one, that is, the lattice $\varmon_A$. Let $\varmon_A'$ be the lattice consisting of (isomorphism classes of) quotients $\pi: \widehat{A^*} \twoheadrightarrow M$ of the free profinite monoid $\widehat{A^*}$ in $\profmon$. Here, two quotients $\pi: \widehat{A^*} \twoheadrightarrow M, \pi': \widehat{A^*} \twoheadrightarrow M'$ are ordered $\pi \leq \pi'$ if and only if there is a surjective homomorphism $\lambda: M' \twoheadrightarrow M$ such that $\pi = \lambda \circ \pi'$. Then:

\begin{proposition}[More generally, see Chen and Urbat \cite{Chen_Urbat}]
\label{characterization of varmon}
 There is a canonical isomorphism $\varmon_A \simeq \varmon_A'$ of lattices. 
\end{proposition}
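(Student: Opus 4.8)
The plan is to translate everything into finite-index congruences on $A^*$ and then pass to the free profinite monoid $\widehat{A^*}$. First I would set up the dictionary between finite stamps over $A$ (up to isomorphism) and finite-index congruences on $A^*$: a stamp $s: A^* \twoheadrightarrow M$ corresponds to its kernel congruence $\sim_s$ given by $u \sim_s v \iff s(u)=s(v)$, and conversely a finite-index congruence $\sim$ gives the stamp $A^* \twoheadrightarrow A^*/\sim$. Under this dictionary the three local axioms become purely lattice-theoretic. The product $s * t$ has kernel $\sim_s \cap \sim_t$, since $u(s*t)=v(s*t)$ iff $u\sim_s v$ and $u \sim_t v$; so {\bf M2)} is closure under finite intersection. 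A quotient $h\circ s$ with $h$ surjective has kernel a finite-index coarsening $\sim \supseteq \sim_s$, and every finite-index coarsening arises this way (factor $s$ through the canonical projection $A^*/\sim_s \twoheadrightarrow A^*/\sim$); so {\bf M3)} is upward-closure among finite-index congruences. The trivial stamp $1_A$ has kernel the top congruence $A^*\times A^*$, so {\bf M1)} says the top is present. Hence a local variety $\vv_A$ corresponds exactly to a filter $\mathcal C$ in the lattice of finite-index congruences on $A^*$ (upward-closed, closed under finite meets, containing the top), and inclusion of local varieties matches inclusion of filters.

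Next I would relate filters of finite-index congruences to continuous quotients of $\widehat{A^*}$. Each finite-index congruence $\sim$ extends uniquely to a clopen congruence $\hat\sim$ on $\widehat{A^*}$, namely the kernel of the continuous extension $\widehat{A^*}\twoheadrightarrow A^*/\sim$ furnished by the universal property of the free profinite monoid, and every clopen congruence arises so. To a filter $\mathcal C$ I would assign the closed congruence $\theta_{\mathcal C}:=\bigcap_{\sim\in\mathcal C}\hat\sim$, equivalently the continuous quotient $\pi_{\mathcal C}:\widehat{A^*}\twoheadrightarrow \widehat{A^*}/\theta_{\mathcal C}\cong \varprojlim_{\sim\in\mathcal C}A^*/\sim$; the downward directedness needed for this inverse limit is exactly {\bf M2)}. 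Conversely, to a continuous quotient $\pi:\widehat{A^*}\twoheadrightarrow M$ (with kernel $\theta_\pi$) I would assign the filter $\mathcal C_\pi := \{\sim \mid \theta_\pi\subseteq\hat\sim\}$ of finite-index congruences through which $\pi$ factors.

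I would then verify that these two assignments are mutually inverse. The passage filter $\to$ congruence $\to$ filter is formal, but the other round-trip — that $\theta_{\mathcal C_\pi}=\theta_\pi$, i.e. that a closed congruence is recovered as the intersection of the clopen congruences containing it — is the one genuine topological input, and the step I expect to be the main obstacle. It is precisely the residual finiteness of profinite monoids: $\widehat{A^*}/\theta_\pi$ is profinite, so the projections onto its finite discrete quotients separate points, whence $\theta_\pi$ equals the intersection of the clopen congruences above it. Finally I would check the order correspondence: a larger filter $\mathcal C'\supseteq\mathcal C$ yields a smaller kernel $\theta_{\mathcal C'}\subseteq\theta_{\mathcal C}$, hence a finer quotient $M_{\mathcal C'}$ admitting a surjection onto $M_{\mathcal C}$ compatible with the projections from $\widehat{A^*}$, which is exactly $\pi_{\mathcal C}\le\pi_{\mathcal C'}$ in the order of $\varmon_A'$. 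This makes $\vv_A\mapsto\pi_{\mathcal C}$ an order isomorphism and establishes $\varmon_A\simeq\varmon_A'$. The argument is entirely internal to $\profmon$ and does not invoke the semi-galois duality of Theorem \ref{duality}; that duality is rather what will underlie the companion statements for $\varreg_A$ and $\vardfa_A$ obtained from the same correspondence.
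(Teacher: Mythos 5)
Your argument is correct and is essentially the paper's own approach, which the paper only sketches: the paper defines $\varmon_A\to\varmon_A'$ by ordering a local variety of stamps by factorization (directed by \textbf{M2)}) and taking the inverse limit, which is exactly your $\varprojlim_{\sim\in\mathcal{C}}A^*/\sim\;\cong\;\widehat{A^*}/\theta_{\mathcal{C}}$, and the congruence/filter dictionary together with the residual-finiteness verification are precisely the details the paper omits (deferring to Chen and Urbat). One small caveat: the round-trip filter $\to$ congruence $\to$ filter is not purely ``formal'' either --- to see that $\theta_{\mathcal{C}}\subseteq\hat{\sim}$ forces $\sim\in\mathcal{C}$ you need a compactness argument (a downward-directed family of clopen congruences whose intersection lies in the clopen $\hat{\sim}$ already has a member contained in $\hat{\sim}$, after which \textbf{M3)} finishes) --- but this is routine given what you have set up.
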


\noindent
The isomorphism $\varmon_A \rightarrow \varmon_A'$ is given by taking inverse limits of local varieties $\va_A$ of finite stamps over $A$. (We regard a local variety of finite stamps $\va_A$ as an inverse system of finite monoids by ordering $s \leq s'$ for $s:A^* \twoheadrightarrow N_s, s': A^* \twoheadrightarrow N_{s'} \in \va_A$ if and only if there exists a surjective homomorphism $\lambda: N_{s'} \twoheadrightarrow N_s$ such that $s = \lambda \circ s'$.) 

A corresponding representation for the lattice $\varreg_A$ was given by Rhodes et al.\ \cite{Rhodes_Steinberg}. We recall only two necessary facts from their work; for more detail, the reader is referred to \S 8.4 \cite{Rhodes_Steinberg}. Let $\reg(A)$ be the Boolean algebra of all regular languages over $A$. Then, they pointed out the facts that:
\begin{enumerate}
 \item \emph{$\reg(A)$ canonically admits a structure of bialgebra over $\field_2$} (e.g.\ \cite{Abe}); and \vspace{0.1cm}
 \item \emph{A class $\V_A \subseteq \reg(A)$ of regular languages over $A$ is a local variety of regular languages if and only if $\V_A$ is a sub-bialgebra of $\reg(A)$.}
\end{enumerate}
Generally, Boolean algebras are equivalent to Boolean rings whose summation is given by the symmetric difference $L \oplus R := (L \backslash R) \cup (R \backslash L)$; and thus, can be regarded also as vector spaces over $\field_2$. The bialgebra structure of $\reg(A)$ is the one with respect to this vector-space structure. The above two facts means that (1) $\reg(A)$ is an object of $\bialg$; and (2) if we denote by $\varreg_A'$ the lattice consisting of (isomorphism classes of) sub-bialgebras of $\reg(A)$ in $\bialg$, then it represents the target lattice $\varreg_A$\footnote{To represent local varieties of regular languages by abstract algebraic structures, one can also use comonoids in the category $\bool$ of Boolean algebras as they are equivalent to bialgebras over $\field_2$, cf.\ \S 8.4 \cite{Rhodes_Steinberg}.}:
\begin{proposition}[Rhodes and Steinberg, \S 8.4, \cite{Rhodes_Steinberg}]
\label{characterization of varreg}
 There is a canonical isomorphism $\varreg_A \simeq \varreg_A'$ of lattices. 
\end{proposition}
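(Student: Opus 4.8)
The plan is to exhibit the isomorphism as essentially the identity on underlying subsets of $\reg(A)$, relying on fact (2) quoted above from Rhodes and Steinberg. Concretely, I would define a map $\Theta: \varreg_A \rightarrow \varreg_A'$ sending a local variety $\V_A \subseteq \reg(A)$ to itself, now viewed as (the isomorphism class of the subobject determined by) the inclusion $\V_A \hookrightarrow \reg(A)$. Fact (2) asserts precisely that a subset $\V_A \subseteq \reg(A)$ satisfies {\bf R1)}, {\bf R2)}, {\bf R3)} if and only if it is closed under the full bialgebra structure, i.e.\ is a sub-bialgebra; so $\Theta$ is well-defined and its image is exactly all sub-bialgebras, giving surjectivity. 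Since a sub-bialgebra is determined by its underlying subset (the structural inclusion being a monomorphism of bialgebras), distinct local varieties yield distinct sub-bialgebras, so $\Theta$ is injective; this also shows that the ``isomorphism classes of sub-bialgebras'' in the definition of $\varreg_A'$ coincide with honest sub-bialgebra subsets of $\reg(A)$.

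The heart of the argument, which I would either cite from \S 8.4 of \cite{Rhodes_Steinberg} or reprove, is the matching of the three axioms with the two halves of the bialgebra structure. On the algebra side, $\reg(A)$ is the Boolean ring with addition $\oplus$ (symmetric difference, zero $\emptyset$) and multiplication $\cap$ (unit $A^*$); since complementation is $L \mapsto L \oplus A^*$ and union is $L \cup R = L \oplus R \oplus (L \cap R)$, a subset is a Boolean subalgebra containing $\emptyset$ and $A^*$ exactly when it is an $\field_2$-subalgebra of this ring. Thus {\bf R1)} and {\bf R2)} together are equivalent to being closed under the multiplication and unit (and the $\field_2$-linear structure) of $\reg(A)$. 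On the coalgebra side, the comultiplication is the transpose of the concatenation of $A^*$, and I would show that an $\field_2$-subspace is a subcoalgebra precisely when it is stable under all left and right quotients $w^{-1}L$ and $Lw^{-1}$ ($w \in A^*$); this is exactly {\bf R3)}. Combining the two gives fact (2).

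Finally I would check that $\Theta$ is a lattice isomorphism. Both lattices are ordered by inclusion of subsets of $\reg(A)$: on $\varreg_A$ this is by definition, and on $\varreg_A'$ the order of sub-bialgebras is inclusion of their underlying subobjects. Hence $\Theta$ and its inverse are order-preserving, so $\Theta$ is an order isomorphism between posets that are lattices, and therefore a lattice isomorphism (meets and joins, being determined by the order, are automatically preserved). The main obstacle is entirely concentrated in fact (2), and within it the coalgebra half: making the comultiplication on $\reg(A)$ precise and verifying that subcoalgebra closure is equivalent to closure under all quotients requires the finiteness built into regular languages (each regular language has only finitely many distinct quotients), which is what lets the transpose of concatenation land in the algebraic (rather than completed) tensor square $\reg(A) \otimes \reg(A)$. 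Once this is in hand, the remainder is the bookkeeping above.
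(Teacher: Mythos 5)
Your proposal is correct and follows exactly the route the paper takes: the paper offers no independent proof but reduces the proposition to the two facts quoted from \S 8.4 of \cite{Rhodes_Steinberg} (that $\reg(A)$ is a bialgebra over $\field_2$ and that local varieties are precisely its sub-bialgebras), and your $\Theta$ together with the order-isomorphism observation is just the explicit form of that reduction. Your expansion of fact (2) — matching {\bf R1)}--{\bf R2)} with the Boolean-ring algebra structure and {\bf R3)} with the quotient-stability characterization of subcoalgebras, including the finiteness of quotients needed for the comultiplication to land in the algebraic tensor square — is a faithful reconstruction of the cited material rather than a different argument.
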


Finally, let $\va_A$ be any class of finite actions over an alphabet $A$; and define a category $\dfa(\va_A)$ to be the full subcategory of $A\mathchar`-\dfa$ (cf.\ \S \ref{s2}) whose objects are finite actions that belong to $\va_A$. (Finite actions are identified with DFAs.) Also, denote by $\F_{\va_A}: \dfa(\va_A) \rightarrow \fsets$ the restriction of the forgetful functor $\F_A: A\mathchar`-\dfa \rightarrow \fsets$ to the subcategory $\dfa(\va_A) \subseteq A\mathchar`-\dfa$. Then:
\begin{lemma}
\label{local variety of finite actions is semigalois}
 A class $\va_A$ of finite actions over $A$ is a local variety of finite actions over $A$ if and only if $\langle \dfa(\va_A), \F_{\va_A} \rangle$ is a semi-galois subcategory of $\langle A\mathchar`-\dfa, \F_A \rangle$. 
\end{lemma}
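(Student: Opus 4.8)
The plan is to prove both implications by matching, one by one, the four local-variety axioms \textbf{(D1)}--\textbf{(D4)} against the closure conditions that define a semi-galois subcategory of $\langle \Adfa, \F_A \rangle$. Throughout I would use the concrete descriptions of the categorical operations in $\Adfa$: the terminal object $1$ is the one-point trivial action and the initial object $\emptyset$ is the empty action; a finite product is the product action $s \times t$; a monomorphism is an injective transition-preserving map, so subobjects are exactly subactions; an epimorphism is a surjective transition-preserving map, so quotient objects are exactly quotient actions; and equalizers (resp.\ coequalizers) are computed as subactions (resp.\ quotient actions). Since $\F_{\va_A}$ is the restriction of $\F_A$ and every construction in $\dfa(\va_A)$ will be created by the inclusion, i.e.\ computed exactly as in $\Adfa$, the fiber-functor axioms $F_0$, $F_1$, $F_2$ and the exactness of the inclusion are inherited automatically; the whole content therefore lies in the object-level closure properties.

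For the implication ``semi-galois subcategory $\Rightarrow$ local variety'' I would simply read off the axioms from the definition. Being replete, containing $\emptyset$ and $1$, and closed under finite (co)limits, subobjects and quotient objects, the subcategory contains every finite coproduct of $1$, i.e.\ every trivial action, which is \textbf{(D1)}; closure under finite products is \textbf{(D2)}; closure under subobjects is \textbf{(D3)}; and closure under quotient objects is \textbf{(D4)}.

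The substantive direction is ``local variety $\Rightarrow$ semi-galois subcategory''. First, \textbf{(D1)} supplies $\emptyset$ and $1$, so $C_0$ holds; \textbf{(D2)} gives closure under finite products and \textbf{(D3)} closure under subactions, and since a finite pullback $X \times_Z Y$ is a subaction of $X \times Y$, these together yield all finite limits, while \textbf{(D4)} gives all coequalizers. The one property not immediate from \textbf{(D1)}--\textbf{(D4)} is closure under finite coproducts, which is needed to obtain pushouts (hence all finite colimits, completing $C_1$), and this is the crux. I would establish it by an explicit construction: writing $T_2$ for the two-element trivial action (in $\va_A$ by \textbf{(D1)}), one has $X \times T_2 \cong X \sqcup X$, which lies in $\va_A$ by \textbf{(D2)}; collapsing one of the two copies to a single fixed sink state exhibits $X \sqcup 1$ as a quotient of $X \sqcup X$, so $X \sqcup 1 \in \va_A$ by \textbf{(D4)}; and finally $X \sqcup Y$ is isomorphic to the subaction
\begin{equation*}
  \{\, (x, *) : x \in S_X \,\} \cup \{\, (*, y) : y \in S_Y \,\}
\end{equation*}
of $(X \sqcup 1) \times (Y \sqcup 1)$, where $*$ denotes the added sink in each factor, and this lies in $\va_A$ by \textbf{(D2)} and \textbf{(D3)}. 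Thus $\va_A$ is closed under binary coproducts, and together with $\emptyset$ under all finite coproducts; since a pushout is the coequalizer of a pair of maps into a coproduct, closure under pushouts follows from \textbf{(D4)}. The factorization axiom $C_2$ holds because the image of an arrow of $\dfa(\va_A)$ is a quotient of its domain, hence remains in $\va_A$ by \textbf{(D4)}. Collecting these facts shows that $\langle \dfa(\va_A), \F_{\va_A} \rangle$ verifies all six axioms and that the inclusion into $\langle \Adfa, \F_A \rangle$ is exact.

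I expect the coproduct construction to be the main obstacle, since closure under disjoint unions is conspicuously absent from \textbf{(D1)}--\textbf{(D4)} and must instead be synthesised from products, subactions and quotients; once that is in hand, the remaining verifications are formal, amounting to recognising the concrete shape of each finite (co)limit in $\Adfa$ and invoking the matching axiom.
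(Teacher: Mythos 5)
The paper never actually prints a proof of this lemma (the verifications of \S \ref{section var two} are declared ``entirely straightforward'' and omitted), so there is no official argument to compare against line by line. Judged on its own terms, your forward direction is correct and is the genuinely substantive part: the synthesis of binary coproducts from \textbf{(D1)}--\textbf{(D4)} via $X \times T_2 \cong X \sqcup X$, the collapse $X \sqcup X \twoheadrightarrow X \sqcup 1$, and the exhibition of $X \sqcup Y$ as a subaction of $(X\sqcup 1)\times(Y\sqcup 1)$ all check out, as do the reductions of pullbacks to products plus subactions, of pushouts to coproducts plus coequalizers, and of $C_2$ to \textbf{(D4)}. You are right that this is where the work is.

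The gap is in the direction you call easy. You ``read off'' \textbf{(D3)} and \textbf{(D4)} from the assertion that a semi-galois subcategory is closed under subobjects and quotient objects, but that is not part of the definition: by the Remark following the lemma, a semi-galois subcategory is only a fully faithful exact inclusion, i.e.\ a full replete subcategory of $\Adfa$ that is a semi-galois category and is closed under its own finite limits and colimits. That gives closure under equalizers and coequalizers of pairs of arrows \emph{between objects already in the subcategory}, but an arbitrary subaction $Y \subseteq X$ need not arise as such an equalizer, and an arbitrary quotient $X \twoheadrightarrow X/{\sim}$ need not be the coequalizer of a pair whose domain is known to lie in the subcategory (its kernel pair is a subobject of $X \times X$, so the two closure properties feed into each other circularly). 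The statement you need is true, but the only route to it within this paper goes through \S \ref{s3}--\S \ref{s4}: the subcategory, being semi-galois, is equivalent to $\Cl_f N$ with $N = \pi_1(\dfa(\va_A),\F_{\va_A})$, the fully faithful exact inclusion corresponds under the duality theorem to a surjection $\widehat{A^*} \twoheadrightarrow N$, and hence its essential image is exactly the class of finite actions whose $\widehat{A^*}$-action factors through $N$ --- a class that is manifestly closed under subactions and quotient actions. You should either invoke that argument explicitly or supply a direct one; as written, the converse implication assumes its own conclusion.
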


\begin{remark}
 Here, by a \emph{semi-galois subcategory} (or by an \emph{embedding}), we mean an arrow of semi-galois categories $[A,\sigma]: \sgc \hookrightarrow \langle \C',\F' \rangle$ such that $A: \C \rightarrow \C'$ is fully faithful. It is not difficult to see that this class of arrows among semi-galois categories exactly corresponds to the class of \emph{surjective} homomorphisms $\pi_1(\C',\F') \twoheadrightarrow \pi_1(\C,\F)$ among profinite monoids under the duality $\semigalois^{op} \simeq \profmon$. In what follows, \emph{subobjects} $\sgc$ of a semi-galois category $\langle \C',\F' \rangle$ mean those which have embeddings $\sgc \hookrightarrow \langle \C',\F' \rangle$. (Thus our class of subobjects in $\semigalois$ is more restrictive than the usual meaning of ``subobjects'' in a category; more formally, our subobjects are the effective monomorphisms in $\semigalois$.)
\end{remark}

In addition to Lemma \ref{local variety of finite actions is semigalois}, we have a fully abstract characterization of those semi-galois categories which appear from some local varieties  $\va_A$ of finite actions.

\begin{lemma}
\label{char of local varieties of finite actions}
 A semi-galois category $\sgc$ is equivalent to $\langle \dfa(\va_A), \F_{\va_A} \rangle$ for some local variety $\va_A$ over an alphabet $A$ if and only if $\pi_1(\C,\F)$ is topologically generated by $|A|$ elements. 
\end{lemma}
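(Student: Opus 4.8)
The plan is to deduce the statement directly from the duality theorem, the Representation Theorem, and the two immediately preceding results (Lemma \ref{local variety of finite actions is semigalois} and the remark following it), once the fundamental monoid of the ambient category $\langle \Adfa, \F_A\rangle$ has been identified. First I would record that $\pi_1(\Adfa, \F_A) \cong \widehat{A^*}$: by Remark \ref{dfas are simplest} there is an equivalence $\Adfa \simeq \Cl_f \widehat{A^*}$ compatible with the fiber functors, so the Representation Theorem (Theorem \ref{representation of semi-galois category}) identifies the fundamental monoid of $\langle \Adfa, \F_A\rangle$ with the free profinite monoid $\widehat{A^*}$ generated by the $|A|$-element alphabet $A$.

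Next I would assemble the chain of correspondences. By Lemma \ref{local variety of finite actions is semigalois}, local varieties $\va_A$ of finite actions over $A$ are exactly the semi-galois subcategories $\langle \dfa(\va_A), \F_{\va_A}\rangle \hookrightarrow \langle \Adfa, \F_A\rangle$; and by the remark following that lemma these embeddings correspond, under the duality $\semigalois^{op} \simeq \profmon$, to the continuous surjections $\widehat{A^*} = \pi_1(\Adfa, \F_A) \twoheadrightarrow \pi_1(\dfa(\va_A), \F_{\va_A})$. Hence the semi-galois categories arising (up to equivalence) from some local variety of finite actions over $A$ are precisely those whose fundamental monoid is a continuous quotient of $\widehat{A^*}$. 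Since $\pi_1$ is an equivalence of categories, $\sgc \simeq \langle \dfa(\va_A), \F_{\va_A}\rangle$ holds if and only if $\pi_1(\C,\F) \cong \pi_1(\dfa(\va_A), \F_{\va_A})$; so $\sgc$ is of the required form if and only if $\pi_1(\C,\F)$ is a continuous quotient of $\widehat{A^*}$.

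It therefore remains to prove the purely profinite fact that a profinite monoid $M$ is a continuous quotient of $\widehat{A^*}$ if and only if it is topologically generated by $|A|$ elements. For the forward direction I would use that $A^*$ is dense in $\widehat{A^*}$, so if $\pi: \widehat{A^*} \twoheadrightarrow M$ is a continuous surjection then the images $\pi(a)$ ($a \in A$) generate a dense submonoid of $M$, i.e.\ topologically generate $M$. For the converse, given $|A|$ topological generators of $M$ I would invoke the universal property of the free profinite monoid to extend the assignment $A \to M$ to a continuous homomorphism $\widehat{A^*} \to M$; its image is closed by compactness of $\widehat{A^*}$ and contains a dense submonoid, hence equals $M$. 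The main point requiring care is exactly this last equivalence---repackaging ``topologically generated by $|A|$ elements'' as ``continuous quotient of the free profinite monoid $\widehat{A^*}$'' via compactness and the universal property---since the categorical part of the argument is entirely routine bookkeeping layered on top of the duality theorem and Lemma \ref{local variety of finite actions is semigalois}.
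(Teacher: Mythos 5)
Your proposal is correct and follows exactly the route the paper intends: the paper omits the proof of this lemma entirely (explicitly declaring the details "entirely straightforward"), but the ingredients it lines up — Remark \ref{dfas are simplest} identifying $\langle A\mathchar`-\dfa,\F_A\rangle$ with $\langle \Cl_f\widehat{A^*},\F_{\widehat{A^*}}\rangle$, Lemma \ref{local variety of finite actions is semigalois}, and the remark that embeddings of semi-galois categories correspond dually to continuous surjections of fundamental monoids — are precisely the ones you assemble. Your final reduction (continuous quotients of $\widehat{A^*}$ are exactly the profinite monoids topologically generated by $|A|$ elements, via density of $A^*$, compactness, and the universal property) correctly supplies the one genuinely non-categorical step the paper leaves implicit.
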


Now, we denote by $\vardfa'_A$ the lattice of (isomorphism classes of) subobjects of the semi-galois category $\langle A\mathchar`-\dfa, \F_A \rangle$ in $\semigalois$. By mapping $\va_A \mapsto \langle \dfa(\va_A), \F_{\va_A} \rangle$, we get:
\begin{proposition}
\label{characterization of vardfa}
 There is a canonical isomorphism $\vardfa_A \simeq \vardfa_A'$ of lattices. 
\end{proposition}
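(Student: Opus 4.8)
The plan is to show that the assignment $\Psi : \vardfa_A \to \vardfa_A'$ given by $\va_A \mapsto [\dfa(\va_A), \F_{\va_A}]$ is an isomorphism of lattices. Since a monotone bijection between lattices whose inverse is also monotone automatically preserves finite meets and joins, it suffices to verify that $\Psi$ is a well-defined, order-preserving bijection with order-preserving inverse. The well-definedness of $\Psi$ --- that $\langle \dfa(\va_A), \F_{\va_A}\rangle$ really is a subobject of $\langle \Adfa, \F_A\rangle$ in $\semigalois$ --- is exactly the content of Lemma \ref{local variety of finite actions is semigalois}.

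For monotonicity, if $\va_A \subseteq \va_A'$ are local varieties, then $\dfa(\va_A)$ is a full subcategory of $\dfa(\va_A')$, and the inclusion is itself an embedding preserving the fiber functors; composing it with $\dfa(\va_A') \hookrightarrow \Adfa$ exhibits $[\dfa(\va_A),\F_{\va_A}]$ as factoring through $[\dfa(\va_A'),\F_{\va_A'}]$, that is, $\Psi(\va_A) \le \Psi(\va_A')$ in $\vardfa_A'$. The same computation read backwards gives monotonicity of the inverse: if the embedding of $\dfa(\va_A)$ factors through that of $\dfa(\va_A')$ up to the equivalence $\equiv$ on exact functors, then every finite action lying in $\va_A$ is isomorphic in $\Adfa$ to one in $\va_A'$, whence $\va_A \subseteq \va_A'$ because local varieties of finite actions are closed under isomorphism (e.g.\ by axioms D3 and D4). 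Injectivity of $\Psi$ is then the special case in which the two embeddings factor through each other.

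The crux is surjectivity. Given a subobject, represented by an embedding $[B,\sigma]: \sgc \hookrightarrow \langle \Adfa, \F_A\rangle$ with $B$ fully faithful and exact, I would set $\va_A$ to be the essential image of $B$, i.e.\ the class of all finite actions over $A$ isomorphic in $\Adfa$ to some $B(X)$. Because $B$ is fully faithful, it factors as an equivalence $\C \xrightarrow{\simeq} \dfa(\va_A)$ onto this full subcategory, followed by the inclusion $\dfa(\va_A)\hookrightarrow\Adfa$; since $B$ is exact and preserves fiber functors up to $\sigma$, this inclusion is an embedding isomorphic, as a subobject, to $[B,\sigma]$. Hence $\langle \dfa(\va_A), \F_{\va_A}\rangle$ is a semi-galois subcategory of $\langle \Adfa, \F_A\rangle$, so by the converse direction of Lemma \ref{local variety of finite actions is semigalois} the class $\va_A$ is a local variety of finite actions, and by construction $\Psi(\va_A) = [B,\sigma]$.

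The main obstacle I expect is making this essential-image correspondence fully rigorous at the level of the quotient category $\semigalois$: one must check that isomorphism classes of effective monomorphisms into $\langle \Adfa,\F_A\rangle$ are in order-preserving bijection with the full, isomorphism-closed semi-galois subcategories of $\Adfa$, and that ordering by factorization of embeddings translates correctly through the equivalence $\equiv$. A convenient sanity check --- and an alternative route --- is supplied by the duality: by Remark \ref{dfas are simplest} one has $\pi_1(\Adfa, \F_A) \cong \widehat{A^*}$, and by the remark following Lemma \ref{local variety of finite actions is semigalois} the subobjects of $\langle \Adfa, \F_A\rangle$ correspond, via Theorem \ref{duality}, bijectively and order-preservingly to quotients $\widehat{A^*} \twoheadrightarrow M$ of the free profinite monoid. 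Thus $\vardfa_A' \cong \varmon_A'$, which matches Proposition \ref{characterization of varmon} and confirms both the cardinality and the order structure independently of the essential-image argument.
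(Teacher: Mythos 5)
Your proposal is correct and follows exactly the route the paper intends: the paper gives no detailed proof here, merely declaring the map $\va_A \mapsto \langle \dfa(\va_A), \F_{\va_A} \rangle$ and leaning on Lemma \ref{local variety of finite actions is semigalois}, and your argument supplies precisely the omitted verifications (well-definedness, monotonicity in both directions via factorization of embeddings, and surjectivity via the essential image of a fully faithful exact functor). The closing cross-check against $\varmon_A'$ via the duality is a sensible extra confirmation but not needed.
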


To proceed to the proof of $\varreg \simeq \varmon \simeq \vardfa$, we recall the idea of Rhodes et al.\ \cite{Rhodes_Steinberg} of regarding varieties of regular languages (= $\freemon$-varieties) as suitable functors from $\freemon$ to $\bialg^{op}$. Since this reinterpretation can be applied to arbitrary $\D$-varieties of regular languages, it is rephrased in our terminologies here.

Let $\V$ be a $\D$-variety of regular languages, and put $\V_A := \{ L \in \reg(A) \mid L \in \V \}$ for each alphabet $A$. Then, by definition, each $\V_A$ is clearly a local variety of regular languages over $A$ (cf.\ Definition 8). Thus, by Proposition \ref{characterization of varreg}, each $\V_A$ is a sub-bialgebra of the bialgebra $\reg(A)$; in particular, $\V_A \in \bialg$. Moreover, the assignment $\D \ni A^* \mapsto \V_A \in \bialg$ on objects extends to a functor, denoted $\V: \D \rightarrow \bialg^{op}$, by assigning to each $f: A^* \rightarrow B^* \in \D$ the inverse map $f^{-1}: \V_B \rightarrow \V_A$. (Note that this is well-defined because $\V$ satisfies the axiom ${\bf R4}$, namely, for each $L \in \V_B$ the inverse image $f^{-1}L \in \reg(A)$ belongs to $\V_A$.) Thus, each $\D$-variety $\V$ of regular languages defines a functor $\V: \D \rightarrow \bialg^{op}$ in a canonical way. 

In particular, let $\reg$ be the $\D$-variety of regular languages such that $\reg_A = \reg(A)$, i.e.\ the $\D$-variety consisting of all regular languages. Then this $\D$-variety of regular languages also defines a functor denoted $\reg: \D \rightarrow \bialg^{op}$; and other functors $\V: \D \rightarrow \bialg^{op}$ induced from $\D$-varieties are all subfunctors of this functor $\reg: \D \rightarrow \bialg^{op}$. Conversely, every subfunctors of the functor $\reg: \D \rightarrow \bialg^{op}$ arise in this way; in other words, $\D$-varieties of regular languages bijectively correspond to (isomorphism classes of) subfunctors of $\reg: \D \rightarrow \bialg^{op}$. In summary, if we denote by $\varreg'$ the lattice consisting of (isomorphism classes of) subfunctors of $\reg: \D \rightarrow \bialg^{op}$, then this argument concludes that:

\begin{theorem}[Rhodes and Steinberg, \cite{Rhodes_Steinberg}] 
There is a canonical isomorphism $\varreg \simeq \varreg'$ of lattices.
\end{theorem}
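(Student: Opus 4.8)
The plan is to promote the local correspondence of Proposition~\ref{characterization of varreg} to the global level, the only extra ingredient being that axiom {\bf R4)} matches precisely the compatibility of a family of sub-bialgebras with the transition maps $f^{-1}$. Concretely, a subfunctor of $\reg:\D\to\bialg^{op}$ amounts to a choice, for every alphabet $A$, of a sub-bialgebra $G_A\subseteq\reg(A)$ such that $f^{-1}(G_B)\subseteq G_A$ for every $f:A^*\to B^*\in\D$; and the order on $\varreg'$ is inclusion of such families. The preceding paragraph already defines a map $\Theta:\varreg\to\varreg'$ sending a $\D$-variety $\V$ to the family $\{\V_A\}_A$. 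Since both $\varreg$ and $\varreg'$ are ordered by inclusion, it suffices to show $\Theta$ is a bijection that is monotone in both directions; an order isomorphism between complete lattices automatically preserves all meets and joins, so this yields the desired isomorphism of lattices.

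First I would check that $\Theta$ is well defined. For each $A$, axioms {\bf R1)}--{\bf R3)} say exactly that $\V_A$ is a local variety of regular languages over $A$, hence a sub-bialgebra of $\reg(A)$ by Proposition~\ref{characterization of varreg}. Because inverse image along a monoid homomorphism preserves Boolean operations and the comultiplication, $f^{-1}:\reg(B)\to\reg(A)$ is a bialgebra homomorphism; axiom {\bf R4)} then says precisely that $f^{-1}(\V_B)\subseteq\V_A$, i.e.\ that the transition maps of $\reg$ restrict to the subfamily $\{\V_A\}_A$. Thus $\Theta\V$ is a genuine subfunctor of $\reg$.

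Next I would build the inverse. Given a subfunctor $G=\{G_A\}_A$ of $\reg$, set $\V^G_A:=G_A$ and $\V^G:=\{\V^G_A\}_A$. Each $G_A$ is a sub-bialgebra of $\reg(A)$, so by Proposition~\ref{characterization of varreg} it is a local variety, giving {\bf R1)}, {\bf R2)} and {\bf R3)}; and the compatibility $f^{-1}(G_B)\subseteq G_A$ is exactly {\bf R4)}. Hence $\V^G\in\varreg$. The assignments $\V\mapsto\Theta\V$ and $G\mapsto\V^G$ are visibly mutually inverse, since each is the identity on components. Monotonicity in both directions is immediate from the fact that on both sides the order is detected component-by-component: $\V\subseteq\mathcal{W}$ holds iff $\V_A\subseteq\mathcal{W}_A$ for all $A$, which is the containment of the associated subfunctors.

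I expect the only non-formal steps to be the two appeals to the local correspondence (Proposition~\ref{characterization of varreg}) and the verification that $f^{-1}$ is a bialgebra homomorphism, so that {\bf R4)} is literally the subfunctor condition; the remainder is the bookkeeping that recognizes a $\D$-variety as nothing more than a family of local varieties glued compatibly along the maps of $\D$. The main conceptual obstacle, already discharged by Rhodes and Steinberg, is Proposition~\ref{characterization of varreg} itself; granting it, the present statement is a purely organizational consequence, and the fact that meets and joins on both sides are transported by the order isomorphism $\Theta$ completes the argument.
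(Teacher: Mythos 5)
Your proposal is correct and follows essentially the same route as the paper: both reduce the global statement to the local correspondence of Proposition~\ref{characterization of varreg} via the assignment $\V\mapsto\{\V_A\}_A$, observe that axiom {\bf R4)} is exactly the condition that the transition maps $f^{-1}$ restrict to the subfamily (i.e.\ the subfunctor condition), and note that the inverse is the evident component-wise assignment. The paper only sketches this argument, so your more explicit verification of well-definedness, the inverse, and the order compatibility is a faithful filling-in of the same proof rather than a different one.
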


By the same construction, this argument applies to the lattices $\varmon$ and $\vardfa$ as well, which we now sketch briefly. First, let $FC: \D \rightarrow \profmon$ be the functor that assigns to each $A^*$ its free profinite completion $\widehat{A^*}$; and to each arrow $f: A^* \rightarrow B^* \in \D$ the canonically induced homomorphism $\hat{f}: \widehat{A^*} \rightarrow \widehat{B^*}$. Now, denote by $\varmon'$ the lattice consisting of (isomorphism classes of) quotients of the functor $FC: \D \rightarrow \profmon$ (that is ordered in the same way as $\varmon_A'$). Using Proposition \ref{characterization of varmon} instead of Proposition \ref{characterization of varreg} in the above argument, we obtain:
\begin{theorem}
 There is a canonical isomorphism $\varmon \simeq \varmon'$ of lattices.
\end{theorem}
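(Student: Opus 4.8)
The plan is to mirror, almost verbatim, the argument just given for $\varreg \simeq \varreg'$, replacing the pointwise statement Proposition \ref{characterization of varreg} by its analogue Proposition \ref{characterization of varmon}, and the bialgebra $\reg(A)$ by the free profinite monoid $\widehat{A^*}$. Concretely, I regard a $\D$-variety $\vv$ of finite stamps as a quotient of the functor $FC: \D \rightarrow \profmon$, and I show that this assignment is a bijection onto all such quotients which is moreover an isomorphism of lattices.

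First I would pass from a $\D$-variety $\vv$ to a quotient of $FC$. For each alphabet $A$ the class $\vv_A = \{ s: A^* \twoheadrightarrow M \mid s \in \vv\}$ satisfies {\bf M1)}--{\bf M3)}, hence is a local variety of finite stamps over $A$; by Proposition \ref{characterization of varmon} it corresponds to a quotient $\pi_A: \widehat{A^*} \twoheadrightarrow G(A^*)$ in $\profmon$, obtained as the inverse limit of the inverse system $\vv_A$ of finite stamps. The only point to check is that $\{\pi_A\}$ is natural, i.e.\ that for each $f: A^* \rightarrow B^*$ in $\D$ the continuous extension $\widehat{f} = FC(f): \widehat{A^*} \rightarrow \widehat{B^*}$ descends to a homomorphism $G(f): G(A^*) \rightarrow G(B^*)$ with $\pi_B \circ \widehat{f} = G(f) \circ \pi_A$. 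Since $\pi_A$ is surjective such a $G(f)$ is unique if it exists, and existence amounts to checking that for each $t: B^* \twoheadrightarrow N$ in $\vv_B$ the composite $\widehat{A^*} \xrightarrow{\widehat{f}} \widehat{B^*} \xrightarrow{\pi_B} G(B^*) \twoheadrightarrow N$ factors through $\pi_A$. On the dense submonoid $A^* \subseteq \widehat{A^*}$ this composite is the corestriction of $t \circ f: A^* \rightarrow N$ onto $\mathrm{Im}(t\circ f)$; and this corestricted stamp lies in $\vv_A$ precisely by axiom {\bf M4)}. Hence it factors through $\pi_A$, the projections are compatible over the cofiltered system $\vv_B$, and $\{\pi_A\}$ assembles into a quotient of $FC$.

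Next I would run the converse. Given a quotient $q = \{ q_A: \widehat{A^*} \twoheadrightarrow G(A^*)\}$ of $FC$, Proposition \ref{characterization of varmon} yields for each $A$ the local variety $\vv_A$ of those finite stamps $A^* \twoheadrightarrow M$ whose continuous extension factors through $q_A$; set $\vv := \bigcup_A \vv_A$. Axioms {\bf M1)}--{\bf M3)} hold since each $\vv_A$ is a local variety, and {\bf M4)} is exactly the naturality of $q$: if $t \in \vv_B$ and $f \in \D$ with $j \circ s = t \circ f$ for an embedding $j$, then the extension of $t$ factors as $\widehat{t} = \bar{t} \circ q_B$, so naturality gives $\widehat{t} \circ \widehat{f} = \bar{t} \circ G(f) \circ q_A$, showing that the extension of $t \circ f = j \circ s$ factors through $q_A$; corestricting along $j$ shows $s$ factors through $q_A$, i.e.\ $s \in \vv_A$. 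The two assignments are mutually inverse because Proposition \ref{characterization of varmon} already makes them inverse bijections pointwise, and the global data (naturality on one side, {\bf M4)} on the other) correspond under that pointwise dictionary.

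Finally, both directions are order-preserving: $\vv \subseteq \vv'$ holds iff $\vv_A \subseteq \vv'_A$ for all $A$, which by Proposition \ref{characterization of varmon} is equivalent to $\pi_A \leq \pi'_A$ in $\varmon_A'$ for all $A$, i.e.\ to the corresponding quotients of $FC$ being ordered; since meets and joins in a lattice are determined by the order, this order isomorphism is a lattice isomorphism $\varmon \simeq \varmon'$. I expect the main obstacle to be the naturality/{\bf M4)} bookkeeping in the two middle steps --- specifically, keeping straight the passage between a finite stamp over $A^*$ and a continuous finite quotient of $\widehat{A^*}$, the behaviour of images under $\widehat{f}$, and the use of density of $A^*$ in $\widehat{A^*}$ --- rather than anything conceptually new, since the lattice-theoretic and pointwise ingredients are already in hand.
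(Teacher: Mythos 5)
Your proposal is correct and takes essentially the same approach as the paper: the paper's entire ``proof'' is the remark that one reruns the $\varreg \simeq \varreg'$ argument with Proposition~\ref{characterization of varmon} in place of Proposition~\ref{characterization of varreg}, and your write-up is exactly that argument with the naturality-versus-{\bf M4)} bookkeeping (and the corestriction along the embedding $j$) made explicit. No gaps.
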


Similarly, let $DFA: \D \rightarrow \semigalois^{op}$ denote the functor that assigns to each $A^*$ the semi-galois category $\langle A\mathchar`-\dfa, \F_A \rangle$; and to each $f: A^* \rightarrow B^* \in \D$ the arrow $\rep(\hat{f}): \langle B\mathchar`-\dfa, \F_B \rangle \rightarrow \langle A\mathchar`-\dfa,\F_A \rangle$ in $\semigalois$. (Here, we identify $\langle A\mathchar`-\dfa, \F_A \rangle$ with $\langle \widehat{A^*}\mathchar`-\fsets, \F_{\widehat{A^*}} \rangle$; and the arrow $\rep(\hat{f})$ is the one defined in \S \ref{s4}.) Denote by $\vardfa'$ the lattice consisting of (isomorphism classes of) subfunctors of $DFA: \D \rightarrow \semigalois^{op}$. Using Proposition \ref{characterization of vardfa} instead of Proposition \ref{characterization of varreg} in the above argument, we obtain:
\begin{theorem}
 There is a canonical isomorphism $\vardfa \simeq \vardfa'$ of lattices.
\end{theorem}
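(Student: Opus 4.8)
The plan is to run verbatim the same argument just used for $\varreg \simeq \varreg'$, but replacing Proposition \ref{characterization of varreg} by the already-established local statement Proposition \ref{characterization of vardfa}, and replacing the bialgebra $\reg(A)$ by the semi-galois category $\langle A\mathchar`-\dfa, \F_A\rangle$. First I would show that a $\D$-variety $\va$ of finite actions induces a subfunctor of $DFA\colon \D \to \semigalois^{op}$. Given $\va$, put $\va_A := \{ s \in \va \mid s \text{ over } A \}$ for each alphabet $A$; the axioms {\bf D1)}--{\bf D4)} say precisely that each $\va_A$ is a local variety of finite actions over $A$, so by Proposition \ref{characterization of vardfa} it corresponds to a subobject $\langle \dfa(\va_A), \F_{\va_A}\rangle \hookrightarrow \langle A\mathchar`-\dfa, \F_A\rangle$ in $\vardfa_A'$, which serves as the $A$-component of the candidate subfunctor.

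The crux is to show that these components assemble into an actual subfunctor of $DFA$, and this is exactly where axiom {\bf D5)} enters. For $f \colon A^* \to B^* \in \D$, the arrow $DFA(f) = \rep(\hat f)$ is represented by the pullback functor $\rr(\hat f)\colon B\mathchar`-\dfa \to A\mathchar`-\dfa$ defined in \S \ref{s4}. Under the identification $B\mathchar`-\dfa \simeq \Cl_f \widehat{B^*}$ of Remark \ref{dfas are simplest}, this functor sends a finite $B$-action $s$ to its pullback along $\hat f\colon \widehat{A^*} \to \widehat{B^*}$, and since $A^*$ is dense in $\widehat{A^*}$ this pullback is exactly the finite action $f^* s$. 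Hence axiom {\bf D5)}, namely that $f^* s \in \va_A$ whenever $s \in \va_B$, is precisely the assertion that $\rr(\hat f)$ carries $\dfa(\va_B)$ into $\dfa(\va_A)$; equivalently, that the embedding $\rep(\hat f)$ restricts to an arrow $\langle \dfa(\va_B), \F_{\va_B}\rangle \to \langle \dfa(\va_A), \F_{\va_A}\rangle$ compatible with the two embeddings into $DFA(A^*)$ and $DFA(B^*)$. This is exactly the naturality square required of a subfunctor, so $\{ \langle \dfa(\va_A), \F_{\va_A}\rangle \}_A$ is a subfunctor $\mathcal{G} \hookrightarrow DFA$.

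Conversely, from a subfunctor $\mathcal{G} \hookrightarrow DFA$ I would recover a $\D$-variety: each component $\mathcal{G}(A^*) \hookrightarrow \langle A\mathchar`-\dfa, \F_A\rangle$ is a subobject, hence by Proposition \ref{characterization of vardfa} determines a local variety $\va_A$, and setting $\va := \bigcup_A \va_A$ the axioms {\bf D1)}--{\bf D4)} hold because each $\va_A$ is local, while {\bf D5)} holds because $\mathcal{G}$ is stable under all $\rep(\hat f)$, by the same translation $\rr(\hat f) \leftrightarrow f^*$. These two assignments are mutually inverse. They are moreover order-preserving in both directions, since inclusion $\va \subseteq \va'$ of $\D$-varieties holds iff $\va_A \subseteq \va_A'$ for every $A$, iff $\langle \dfa(\va_A)\rangle$ is a subobject of $\langle \dfa(\va_A')\rangle$ for every $A$, which is exactly componentwise inclusion of subfunctors. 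Thus we obtain the desired lattice isomorphism $\vardfa \simeq \vardfa'$, and it is canonical because each ingredient (Proposition \ref{characterization of vardfa} and the identification $\rr(\hat f)(s) = f^* s$) is canonical.

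The main obstacle I anticipate is the precise matching of {\bf D5)} with the subfunctor condition: one must trace the definitions of $\rep(\hat f)$ and $\rr(\hat f)$ from \S \ref{s4} through the equivalence $A\mathchar`-\dfa \simeq \Cl_f \widehat{A^*}$ and verify carefully that the pullback of a finite $\widehat{B^*}$-set along $\hat f$ agrees, up to the canonical isomorphism, with $f^* s$, and that ``subobject'' is taken in the restrictive sense of the Remark following Lemma \ref{local variety of finite actions is semigalois} (an effective monomorphism, dual to a surjection of fundamental monoids). Once this identification is pinned down, the remainder is the same formal bookkeeping as in the $\varreg \simeq \varreg'$ argument.
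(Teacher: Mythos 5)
Your proposal is correct and follows essentially the same route as the paper, which proves this theorem by simply rerunning the $\varreg \simeq \varreg'$ template with Proposition \ref{characterization of vardfa} in place of Proposition \ref{characterization of varreg} and with axiom {\bf D5)} playing the role of {\bf R4)}. In fact you supply more detail than the paper does (it only sketches the argument), and you correctly isolate the one nontrivial verification, namely that $\rr(\hat f)$ agrees with $f^*$ under the identification $A\mathchar`-\dfa \simeq \Cl_f\widehat{A^*}$.
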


Finally, in order to deduce the target isomorphisms $\varreg \simeq \varmon \simeq \vardfa$, it suffices to prove the isomorphisms $\varreg' \simeq \varmon' \simeq \vardfa'$. Because the latter lattices $\varreg', \varmon', \vardfa'$ consist of functors from $\D$ to $\bialg^{op}, \profmon$ and $\semigalois^{op}$ respectively, their isomorphisms can be proved as direct consequences from the dualities $\bialg^{op} \simeq \profmon$ (Theorem 8.4.10, \cite{Rhodes_Steinberg}) of Rhodes et al.; and $\profmon \simeq \semigalois^{op}$ (Theorem \ref{duality}, \S \ref{s4}); and also the fact that the bialgebra $\reg(A) \in \bialg$ corresponds to $\widehat{A^*} \in \profmon$ under the duality $\bialg^{op} \simeq \profmon$ (cf.\ \S 8.4, \cite{Rhodes_Steinberg}); and that $\widehat{A^*} \in \profmon$ corresponds to $\langle A\mathchar`-\dfa,\F_A \rangle \in \semigalois$ under the duality $\profmon \simeq \semigalois^{op}$. This argument completes the proof of the isomorphisms $\varreg \simeq \varmon \simeq \vardfa$. 

\begin{remark}
The isomorphism $\varreg \simeq \varmon \simeq \vardfa$ constructed here coincides with the original constructions given in \cite{Straubing,Chaubard_Pin_Straubing} that use syntactic monoids of regular languages, transformation monoids of DFAs and recognized languages of DFAs. 
\end{remark}
\begin{remark}
\label{application}
In more elementary words, the above remark implies the following fact: Let $\V \in \varreg$ be a ($\freemon$-) variety of regular languages and denote by $\C_{\V_A}$ the full subcategory of $A\mathchar`-\dfa$ whose objects are DFAs accepting languages in $\V_A$ (with respect to every initial and final states). Also let $\F_{\V_A}: \C_{\V_A} \rightarrow \fsets$ be the restriction of the fiber functor $\F_A: A\mathchar`-\dfa \rightarrow \fsets$ onto $\C_{\V_A} \subseteq A\mathchar`-\dfa$. Then (I) $\langle \C_{\V_A}, \F_{\V_A} \rangle$ forms a semi-galois category; and (II) its fundamental monoid $\pi_1(\C_{\V_A},\F_{\V_A})$ is isomorphic to the relatively free profinite monoid $\overline{\Omega}_A \vv$ with respect to the pseudo-variety $\vv$ of finite monoids that corresponds to $\V$ under the isomorphism $\varreg \simeq \varmon$ of the variety theorem. 
\end{remark}

\section{Topos Representation}
\label{s6}
\noindent
As stated in Lemma \ref{char of local varieties of finite actions}, the class of semi-galois categories (with finitely generated fundamental monoids) is isomorphic to that of local varieties of finite actions; and this isomorphism was used in the reinterpretation of the variety theorems \cite{Straubing,Chaubard_Pin_Straubing} so that these theorems can be seen as a natural consequence of the duality theorems $\bialg^{op} \simeq \profmon \simeq \semigalois^{op}$ of abstract algebras. As discussed in \S 6, \cite{Uramoto16}, this abstract interpretation of the variety theorems gives us a reason to get interested in the general structure of semi-galois categories; in this relation, we give here a simple specification of the class of those topoi which are equivalent to semi-galois categories.

Technically speaking, we study a semi-galois categorical fragment of the duality between \emph{pretopoi} and \emph{coherent topoi} (cf.\ \S \ref{s6s1}). Since the class of semi-galois categories is a proper subclass of pretopoi, the class of coherent topoi dual to semi-galois categories must constitute a specific subclass of coherent topoi. In fact, by the fact that semi-galois categories are always of the form $\Cl_f M$ for some profinite monoids $M$, it is not difficult to see that the coherent topoi dual to semi-galois categories are exactly the classifying topoi $\Cl M$ of profinite monoids $M$. In this section, we provide a purely topos-theoretic characterization of this class of coherent topoi (i.e.\ the class of classifying topoi $\Cl M$ of profinite monoids $M$), independent of the concept of profinite monoids, as an application of the duality theorem between profinite monoids and semi-galois categories. To be specific, we see that a topos $\E$ is equivalent to $\Cl M$ for some profinite monoid $M$ if and only if $\E$ is (i) \emph{coherent}, (ii) \emph{noetherian}, and (iii) \emph{has a surjective coherent point} $p: \sets \rightarrow \E$ (\S \ref{s6s2}). Together with the result in \S \ref{s5}, this class of topoi is in a bijective correspondence with the class of local varieties of regular languages. 
\subsection{Coherent topoi and pretopoi}
\label{s6s1}
\noindent
For general references on the material in this section, the reader is refered to e.g.\ \cite{Johnstone,Johnstone_elephant,MacLane_Moerdijk}. For the sake of reader's convenience, we summarize here the fundamental properties and constructions concerning coherent topoi and pretopoi that we need in this paper. The reader who is familiar with coherent topoi can skip this subsection and directly go to the next subsection (\S \ref{s6s2}). 

Firstly recall that a Grothendieck topology $J$ on a category $\C$ is said to be \emph{of finite type} if $J$ is generated by finite covering families. To be more precise, for each $X \in \C$, every covering sieve $S \in J(X)$ contains a finite number of arrows $f_i: Y_i \rightarrow X$ ($i=1,\cdots,n$) that \emph{generate} $S$, i.e.\ every $g: Z \rightarrow X \in S$ can be written as $g = f_i \circ g'$ for some $i \in \{1,\cdots,n\}$ and $g': Z \rightarrow Y_i$ (cf.\ \cite{MacLane_Moerdijk}). \emph{Coherent topoi} are those topoi of sheaves over categories with finite limits and Grothendieck topologies of finite type. Formally, they are defined as follows:


\begin{definition}[coherent topos]
 A topos $\E$ is called a \emph{coherent topos} if there exist (i) a category $\C$ with finite limits and (ii) a Grothendieck topology $J$ on $\C$ of finite type such that $\E$ is equivalent to the sheaf topos $\Sh(\C,J)$ with respect to the site $(\C,J)$. 
\end{definition}

As we will recall soon, coherent topoi are dual in a certain precise sense to the class of categories so called \emph{pretopoi}. This correspondence from coherent topoi to pretopoi is given by taking \emph{coherent objects}, i.e.\ \emph{compact} and \emph{stable} objects. To define these concepts, recall that a family $\{f_\lambda: Y_\lambda \rightarrow X\}$ of arrows is called \emph{jointly epic} if, for every parallel arrows $g, h: X \rightarrow Z$, the equalities $g \circ f_\lambda = h \circ f_\lambda: Y_\lambda \rightarrow X \rightarrow Z$ for all $\lambda$ imply the identity $g=h$. 

\begin{definition}[compact object]
 An object $X \in \E$ is called \emph{compact} if, for every jointly epic family $\{f_\lambda: Y_\lambda \rightarrow X\}$ of arrows, some finite subfamily $\{f_{\lambda_1}, \cdots, f_{\lambda_n}\}$ of it is already jointly epic. 
\end{definition}

\begin{definition}[stable object]
 An object $X \in \E$ is called \emph{stable} if, for every arrows $f: Y \rightarrow X$ and $g: Z \rightarrow X$ from compact objects $Y, Z \in \E$, their pullback $Y \times_X Z$ is also compact. 
\end{definition}

\begin{definition}[coherent object]
 An object $X \in \E$ is \emph{coherent} if it is both compact and stable. 
\end{definition}

Given a topos $\E$ in general, we denote by $\E_{coh}$ the full subcategory of $\E$ that consists of coherent objects. In the case where $\E$ is a coherent topos, it is known that $\E_{coh} =: \C$ satisfies the following properties: 
\begin{description}
 \item[$P_1$)] $\C$ has finite limits;
 \item[$P_2$)] $\C$ has finite coproducts, which are \emph{disjoint} and \emph{universal};
 \item[$P_3$)] $\C$ has coequalizers of \emph{equivalence relations}, which are universal;
 \item[$P_4$)] every equivalence relation in $\C$ is \emph{effective};
 \item[$P_5$)] every epimorphism in $\C$ is a coequalizer.
\end{description}
\noindent
(See \cite{Johnstone,Johnstone_elephant} for italic terminologies here.) 

\begin{definition}[pretopos]
A category $\C$ is called a \emph{pretopos} if it satisfies these five axioms.  
\end{definition}

\begin{remark}[semi-galois categories are pretopoi]
 Let $\C$ be a semi-galois category; by definition, it has a fiber functor $\F: \C \rightarrow \fsets$ so that $\langle \C,\F \rangle$ is equivalent to $\langle \Cl_f M, \F_M \rangle$ with $M = \pi_1(\C,\F)$. Then, it is straightforward to see that $\C = \Cl_f M$ satisfies the above axioms, i.e.\ forms a pretopos. 
\end{remark}

As we briefly mentioned above, coherent topoi are dual to pretopoi under the construction $\E \mapsto \E_{coh}$ of categories of coherent objects. The inverse correspondence from pretopoi to coherent topoi is then given by taking sheaf topoi $\Sh(\C,J_\C)$ over pretopoi $\C$ with respect to the \emph{precanonical topologies} $J_\C$. 

\begin{definition}[precanonical topology]
 Let $\C$ be a pretopos. The \emph{precanonical topology} on $\C$ is defined as the Grothendieck topology $J_\C$ that consists of those sieves $S \in J_\C(X)$ for each $X \in \C$ which are generated by finite jointly epimorphic families of arrows. 
\end{definition}

\begin{remark}
In particular, note that the precanonical topology $J_\C$ on a pretopos $\C$ is clearly of finite type; thus by definition the sheaf topos $\Sh(\C,J_\C)$ is a coherent topos. In what follows, we shall denote $\Sh(\C,J_\C)$ simply by $\Sh(\C)$, whence the Grothendieck topology on $\C$ should be understood as the precanonical topology $J_\C$. 
\end{remark}

Now we have the correspondences from coherent topoi $\E$ to pretopoi $\E_{coh}$; and from pretopoi $\C$ to coherent topoi $\Sh(\C)$. A fundamental fact about coherent topoi and pretopoi is that these correspondences $\E \mapsto \E_{coh}$ and $\C \mapsto \Sh(\C)$ are in fact mutually inverses up to equivalence: 

\begin{theorem}[e.g.\ \cite{Johnstone}]
 Let $\E$ be a coherent topos and $\C$ be a pretopos. Then:
 \begin{enumerate}
  \item $\E$ is equivalent to $\Sh(\E_{coh})$; and
  \item $\C$ is equivalent to $(\Sh(\C))_{coh}$. 
 \end{enumerate}
\end{theorem}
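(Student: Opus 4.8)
The plan is to show that the two assignments $\E \mapsto \E_{coh}$ and $\C \mapsto \Sh(\C)$ are mutually quasi-inverse, handling the two clauses together since they share their main ingredients: the \emph{Comparison Lemma} for sites (a full, finite-limit-closed, generating subcategory carrying the induced topology presents the same sheaf topos) and a precise identification of the coherent objects of $\Sh(\C)$ with the representable sheaves. Throughout, let $y \colon \C \to \Sh(\C)$ denote the Yoneda embedding followed by sheafification.

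For clause (2), I would first check that the precanonical topology $J_\C$ on a pretopos $\C$ is subcanonical, so that $y$ is full and faithful: the axioms $P_2$--$P_5$ (disjoint universal coproducts, universal and effective coequalizers of equivalence relations, epis being coequalizers) are precisely what makes each finite jointly epic family effective-epimorphic, and hence each representable presheaf a sheaf. Next I would verify that every $y(X)$ is a coherent object. Compactness holds because, by the finite-type character of $J_\C$, any jointly epic family over $y(X)$ already contains a finite jointly epic subfamily coming from $\C$; stability holds because $\C$ has finite limits and $y$ preserves them. The substantial step is the converse---that every coherent object $E \in \Sh(\C)$ is representable. Since representables generate $\Sh(\C)$, there is a jointly epic family $\{y(X_i) \to E\}$; compactness of $E$ reduces this to a finite family, and forming $P = \coprod_i X_i$ in $\C$ (preserved by $y$) yields an epimorphism $y(P) \twoheadrightarrow E$. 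Stability of $E$ makes the kernel pair of this epimorphism again representable, say by an equivalence relation $R \rightrightarrows P$ in $\C$; effectiveness of equivalence relations together with $J_\C$ turning epis into coequalizers then lets me identify $E$ with $y(P/R)$, the coequalizer computed inside $\C$. Hence $(\Sh(\C))_{coh} \simeq \C$.

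For clause (1), I start from a presentation $\E \simeq \Sh(\C,J)$ with $\C$ finitely complete and $J$ of finite type, which exists by the definition of a coherent topos. I would show that the coherent objects $\E_{coh}$ form a separating (generating) family in $\E$ that is closed under finite limits: generation follows because the images of objects of $\C$ are coherent and cover every sheaf, while closure under finite limits is part of $\E_{coh}$ being a pretopos. I then verify that the canonical topology of $\E$ restricts along $\E_{coh} \hookrightarrow \E$ exactly to the precanonical topology $J_{\E_{coh}}$ of finite jointly epic families. With these facts in hand the Comparison Lemma applies and gives $\E \simeq \Sh(\E_{coh}, J_{\E_{coh}}) = \Sh(\E_{coh})$.

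The main obstacle I anticipate is the representability of coherent objects in clause (2): descending from a finite epimorphic presentation $y(P) \twoheadrightarrow E$ back to an object of $\C$ genuinely uses the full force of the pretopos axioms---I must know that the kernel pair stays representable (stability), that it is an effective equivalence relation, and that the precanonical topology forces the epimorphism to be the coequalizer of its kernel pair, so the colimit can be formed inside $\C$ rather than merely in $\Sh(\C)$. A secondary technical point, needed for clause (1), is checking that coherent objects truly generate $\E$ and that the ambient canonical topology restricts to the precanonical one; both require tracing the finite-type condition on $J$ through sheafification.
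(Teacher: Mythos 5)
The paper offers no proof of this statement: it is quoted as a classical result with a pointer to Johnstone, so there is no internal argument to compare against. Your sketch is essentially the standard proof from that reference (subcanonicity of the precanonical topology, identification of the coherent objects of $\Sh(\C)$ with the representables, and the Comparison Lemma applied to $\E_{coh} \hookrightarrow \E$), and its overall architecture is sound. The one step you understate is in clause (2): stability of $E$ only gives that the kernel pair $y(P) \times_E y(P)$ is \emph{compact}, not that it is representable; to conclude representability you need the auxiliary lemma that a compact subobject of a representable sheaf $y(P \times P)$ is itself representable (proved by covering it with finitely many representables and taking the union of their images inside $P \times P$, using the image factorization and finite unions available in a pretopos). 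With that lemma inserted, the descent to $y(P/R)$ goes through as you describe.
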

\subsection{Classifying topoi of profinite monoids}
\label{s6s2}
\noindent
Because semi-galois categories are always pretopoi, the class of semi-galois categories must correspond to a specific subclass of coherent topoi under the correspondence between pretopoi and coherent topoi. In fact, as we will see later, semi-galois categories correspond exactly to coherent topoi of the form $\Cl M$ for some profinite monoids $M$. In this section we give a simple characterization of the class of coherent topoi $\E$ in the form $\Cl M$ for some profinite monoids $M$ in a purely topos-theoretic terminology, not explicitly mentioning to the concept of profinite monoids. This characterization of topoi $\Cl M$ is in a sense a topos-theoretic translation of the duality theorem between profinite monoids and semi-galois categories; and formally is stated as follows.

\begin{theorem}
\label{characterization of BM}
 Let $\E$ be a topos. Then we have the following:
 \begin{enumerate}
  \item the topos $\E$ is equivalent to $\Cl M$ for some profinite monoid $M$ if and only if $\E$ is (i) coherent, (ii) noetherian and (iii) has a surjective coherent point $p: \sets \rightarrow \E$;
  \item when this is the case, the inverse image functor $p^*: \E \rightarrow \sets$ restricts to $p^*: \E_{coh} \rightarrow \fsets$, and the pair $\langle \E_{coh}, p^* \rangle$ forms a semi-galois category; 
  \item the profinite monoid $M$ such that $\E \simeq \Cl M$ can be taken as $M = \pi_1(\E_{coh},p^*)$.
 \end{enumerate}
\end{theorem}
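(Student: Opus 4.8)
The plan is to play the two dualities now available against each other: the duality between pretopoi and coherent topoi recalled in \S\ref{s6s1}, and the duality between semi-galois categories and profinite monoids of Theorem \ref{duality}. Under the former, a coherent topos $\E$ is reconstructed as $\Sh(\E_{coh})$ from the pretopos $\E_{coh}$ of its coherent objects; the role of the three hypotheses in (1) is precisely to promote this pretopos to a semi-galois category $\langle \E_{coh}, p^* \rangle$, whose fundamental monoid will be the desired $M$. Accordingly I would prove (2) first---this carries the whole technical weight---and then read off (3) and both implications of (1) from the two dualities.

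So assume $\E$ is coherent with a surjective coherent point $p:\sets \rightarrow \E$, and set $\C := \E_{coh}$. By \S\ref{s6s1}, $\C$ is a pretopos, which at once supplies the terminal and initial objects ($C_0$), the epi-mono factorisation ($C_2$), and all finite limits, in particular finite pullbacks (half of $C_1$). Because $p$ is a \emph{coherent} point, $p^*$ sends coherent objects to coherent objects of $\sets$, i.e.\ to finite sets, so $p^*$ restricts to a functor $\C \rightarrow \fsets$. As an inverse image functor $p^*$ is left exact and a left adjoint, so it preserves all finite limits and all colimits; this gives $F_0$ and the preservation of finite pullbacks, and will give the preservation of finite pushouts once those are shown to be computed in $\C$ as in $\E$. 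Finally, $p$ being a surjection makes $p^*$ faithful, hence conservative ($\C$ being balanced, as every epimorphism in a pretopos is regular), which is exactly $F_2$.

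The one axiom still to establish is the existence of finite pushouts ($C_1$), and here the hypothesis that $\E$ is \emph{noetherian} is essential; this is the main obstacle. A pushout of coherent objects is the coequaliser of a pair of arrows into a finite coproduct, and in a pretopos such a coequaliser is formed by quotienting by the equivalence relation \emph{generated} by the pair. Reflexive and symmetric closure are finitary operations staying within $\C$, but the transitive closure of the resulting relation $R$ appears as the union of the ascending chain $R \subseteq R \cup R^2 \subseteq R \cup R^2 \cup R^3 \subseteq \cdots$ of subobjects of the coherent object $D \times D$. The noetherian hypothesis furnishes the ascending chain condition on subobjects of coherent objects, so this chain stabilises, the generated equivalence relation is again coherent, and its (effective, by $P_4$) quotient is a pushout lying in $\C$. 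Hence $\C = \E_{coh}$ is closed under finite colimits in $\E$, finite pushouts in $\C$ agree with those in $\E$, and $p^*$---a left adjoint---preserves them, completing $C_1$ and $F_1$. Thus $\langle \E_{coh}, p^* \rangle$ is a semi-galois category, which is (2).

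Parts (3) and the ``if'' implication of (1) are now formal. Theorem \ref{duality} applied to $\langle \E_{coh}, p^* \rangle$ gives an equivalence $\E_{coh} \simeq \Cl_f M$ over $\fsets$ with $M := \pi_1(\E_{coh}, p^*)$; applying $\Sh(-)$ and using $\E \simeq \Sh(\E_{coh})$ together with $\Sh(\Cl_f M) \simeq \Cl M$ yields $\E \simeq \Cl M$, which is (3) and the forward direction. For the ``only if'' direction I would verify the three conditions for $\E = \Cl M$ directly: it is coherent, being the sheaf topos on the pretopos $\Cl_f M$ for its precanonical topology (the standard presentation of the classifying topos of $M$); it is noetherian, since its coherent objects are the finite $M$-sets, each having only finitely many sub-$M$-sets and so trivially satisfying the ascending chain condition; and the forgetful geometric morphism $p:\sets \rightarrow \Cl M$, whose inverse image takes a finite $M$-set to its finite underlying set and is faithful, is a surjective coherent point. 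This establishes the equivalence in (1) and completes the proof.
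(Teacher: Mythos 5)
Your proposal is correct and its overall architecture coincides with the paper's: verify the three conditions directly for $\Cl M$, and conversely promote $\E_{coh}$ to a semi-galois category and chain the pretopos/coherent-topos duality with Theorem \ref{duality} via $\E \simeq \Sh(\E_{coh}) \simeq \Sh(\Cl_f M_p) \simeq \Cl M_p$. The one place where you genuinely diverge is the step you correctly identify as carrying the weight, namely closure of $\E_{coh}$ under finite pushouts. The paper's Lemma \ref{coherent objects form a semigalois category} uses the noetherian hypothesis through its reformulation in Remark \ref{Noether-ness} (every compact object is coherent): the pushout $Y \sqcup_X Z$, computed in $\E$, is an epimorphic image of the compact object $Y \sqcup Z$, hence compact, hence coherent. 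You instead invoke the ascending chain condition directly, building the generated equivalence relation inside the pretopos as a stabilizing chain $R \subseteq R \cup R^2 \subseteq \cdots$ of subobjects of a coherent product and then quotienting by effectivity ($P_4$). Both are sound; the paper's route is shorter and avoids any discussion of transitive closures, while yours is more self-contained within the pretopos axioms and makes visible exactly where the ACC is consumed. Two smaller remarks: in the only-if direction you cite the equivalence $\Cl M \simeq \Sh(\Cl_f M)$ and the identification $(\Cl M)_{coh} = \Cl_f M$ as standard, whereas the paper spends Theorem \ref{coherence of BM} and Lemma \ref{coherent objects in BM} proving them explicitly (the latter needs the small compactness-of-orbits argument using continuity of the action); if you want the proof self-contained you should supply those. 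Neither omission affects correctness.
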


\begin{remark}[Noether-ness]
\label{Noether-ness}
 Here, a coherent topos $\E$ is called \emph{noetherian} if, for every coherent object $X \in \E_{coh}$, the lattice of subobjects of $X$ satisfies the ascending chain condition. This condition is in fact equivalent to saying that every compact object in $\E$ is coherent \cite{Johnstone}. We will use this remark in our proof of the if part of Theorem \ref{characterization of BM}.
\end{remark}

We first prove the only-if part of the claim 1. 

\begin{lemma}
\label{coherent objects in BM}
 The coherent objects in $\Cl M$ are exactly the finite $M$-sets. That is, $(\Cl M)_{coh} = \Cl_f M$. 
\end{lemma}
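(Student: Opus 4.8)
The plan is to reduce the statement to a clean characterisation of the \emph{compact} objects, since stability will turn out to be automatic. Concretely, I would first prove that an object $X \in \Cl M$ is compact if and only if its underlying set $S_X$ is finite, and then observe that every object of $\Cl M$ is stable; together these give coherent $=$ compact $=$ finite, which is exactly $(\Cl M)_{coh} = \Cl_f M$. Throughout I will use that limits in $\Cl M$ are computed on underlying sets and that an $M$-equivariant map is an epimorphism in $\Cl M$ precisely when it is surjective on underlying sets; hence a family $\{f_\lambda: Y_\lambda \rightarrow X\}$ is jointly epic iff $\bigcup_\lambda \imm(f_{\lambda *}) = S_X$.

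The only substantive direction is that compactness forces finiteness, and this is where profiniteness of $M$ enters. Given any $X$, consider for each $\xi \in S_X$ the cyclic sub-$M$-set $\xi \cdot M := \{\xi \cdot m \mid m \in M\} \hookrightarrow X$; since $\xi = \xi \cdot 1$, the family $\{\xi \cdot M \hookrightarrow X\}_{\xi \in S_X}$ is jointly epic. If $X$ is compact, finitely many of these inclusions already cover $X$, so $S_X = \xi_1 \cdot M \cup \cdots \cup \xi_n \cdot M$. The key point is that each orbit $\xi_i \cdot M$ is finite: the map $M \rightarrow S_X$, $m \mapsto \xi_i \cdot m$ is continuous (it is $\rho_X$ with the first argument fixed, and $S_X$ carries the discrete topology), and $M$ is compact, so its image is a compact subset of a discrete space, hence finite. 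Thus $S_X$ is a finite union of finite sets, i.e.\ $X$ is a finite $M$-set. Conversely, if $S_X$ is finite then any jointly epic family covers the finite set $S_X$, so finitely many members already do, and $X$ is compact; this is routine.

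Finally I would check stability, which is immediate once compact $=$ finite is known: for any $X$ and any $f: Y \rightarrow X$, $g: Z \rightarrow X$ with $Y, Z$ compact (hence finite), the pullback $Y \times_X Z$ is a subobject of $Y \times Z$, whose underlying set $\{(y,z) \mid f_* y = g_* z\}$ is finite, so $Y \times_X Z$ is again compact. Hence every object of $\Cl M$ is stable, and therefore $X$ is coherent iff it is compact iff $S_X$ is finite, giving $(\Cl M)_{coh} = \Cl_f M$. The main obstacle is the single step where profiniteness is used --- the finiteness of each orbit $\xi \cdot M$ --- which crucially relies on the compactness of $M$ together with the continuity of the action into a discrete set; without profiniteness (e.g.\ for a discrete infinite monoid) compact objects would no longer coincide with finite $M$-sets.
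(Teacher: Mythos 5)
Your proof is correct and follows essentially the same route as the paper: the hard direction rests on exactly the same two observations, namely that each orbit $\xi \cdot M$ is finite because the action is a continuous map from the compact monoid $M$ into the discrete set $S_X$, and that compactness of $X$ extracts a finite subfamily from the jointly epic family of orbit inclusions. Your additional remarks --- that jointly epic means jointly surjective, that stability is automatic since pullbacks of finite $M$-sets are finite, and hence that coherent coincides with compact --- merely spell out what the paper compresses into ``it is clear that finite $M$-sets are coherent.''
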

\begin{proof}
 It is clear that finite $M$-sets are coherent. So, conversely, we see that a coherent object $X \in \Cl M$ is finite. Let $X \in \Cl M$ be coherent. For each element $\xi \in S_X$, its orbit $\xi \cdot M \subseteq S_X$ defines a subobject $X_\xi \hookrightarrow X$. Since the $M$-action is continuous (with $S_X$ discrete) and $M$ is compact, it follows that $\xi \cdot M$ is a finite set, i.e.\ $X_\xi \in \Cl_f M$. By the coherence of $X$ and that $X = \bigcup X_\xi$, we have $X = X_{\xi_1} \cup X_{\xi_2} \cup \cdots \cup X_{\xi_n}$ for some finite points $\xi_i \in S_X$. Since $X_{\xi_i} \in \Cl_f M$, one obtains $X \in \Cl_f M$. 
\end{proof}

\begin{lemma}
\label{coherent surjective point}
 The topos $\Cl M$ has a coherent surjective point $p: \sets \rightarrow \Cl M$. 
\end{lemma}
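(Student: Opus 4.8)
The plan is to realise $p$ as the \emph{forgetful point}. I would take the inverse image functor $p^*: \Cl M \rightarrow \sets$ to be the functor $U$ sending each right $M$-set $\langle S, \rho \rangle$ to its underlying set $S$ and each $M$-equivariant map $f$ to its underlying map $f_*$; this is precisely the extension of the fibre functor $\F_M$ from $\Cl_f M$ to all of $\Cl M$. The task then decomposes into three checks: that $U$ is the inverse image part of a genuine geometric morphism $p: \sets \rightarrow \Cl M$, that $p$ is surjective, and that $p$ is coherent.

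For the first check I would verify that $U$ preserves finite limits and admits a right adjoint $p_*$. Preservation of finite limits is immediate, since limits in $\Cl M$ are created by $U$: they are computed on underlying sets with the pointwise-induced action, which is automatically continuous. For the right adjoint I would exhibit $p_*$ explicitly as the cofree continuous $M$-set functor: to a set $T$ it assigns the set of locally constant (equivalently, continuous) maps $M \rightarrow T$, with the right action $(f \cdot m)(n) := f(mn)$. One checks that this action is again locally constant (any such $f$ factors through a finite monoid quotient of $M$, since clopen subsets of a profinite monoid are pulled back from finite quotients), so that $p_* T$ indeed lies in $\Cl M$, and that the bijection $\Hom_\sets(U X, T) \cong \Hom_{\Cl M}(X, p_* T)$, sending $\phi$ to $x \mapsto \phi(x)(1)$ with inverse $\psi \mapsto \bigl(x \mapsto (m \mapsto \psi(x \cdot m))\bigr)$, is natural; the continuity of the $M$-action on $X$ is exactly what forces $m \mapsto \psi(x \cdot m)$ to be locally constant. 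Thus $p^* = U$ has a right adjoint and preserves finite limits, so $p = (p^*, p_*)$ is a geometric morphism. (Alternatively, since $U$ also preserves all small colimits---these too being created on underlying sets---one may invoke the special adjoint functor theorem to produce $p_*$ without building it by hand.)

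For surjectivity I would use the standard criterion that a geometric morphism is a surjection exactly when its inverse image is faithful. Here $U$ is faithful---indeed injective on hom-sets---because an $M$-equivariant map is by definition determined by its underlying function. Hence $p$ is surjective. For coherence I would appeal directly to the already-established Lemma \ref{coherent objects in BM}, which identifies the coherent objects of $\Cl M$ with the finite $M$-sets, i.e.\ $(\Cl M)_{coh} = \Cl_f M$. Since $U$ sends a finite $M$-set to its finite underlying set, $p^*$ restricts to a functor $(\Cl M)_{coh} = \Cl_f M \rightarrow \fsets$; as the coherent objects of $\sets$ are precisely the finite sets, this is exactly the statement that $p$ is a \emph{coherent} point.

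I expect the only real obstacle to be the continuity bookkeeping in the first step: one must ensure that the cofree object $p_* T$ carries a genuinely continuous $M$-action and that the unit and counit of the adjunction respect continuity, using throughout that every continuous $M$-set factors its orbit maps through finite quotients of $M$. The surjectivity and coherence steps are then essentially formal, the latter resting entirely on Lemma \ref{coherent objects in BM}.
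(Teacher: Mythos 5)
Your proposal is correct and follows essentially the same route as the paper: the inverse image is the forgetful functor, the direct image is the cofree $M$-set of continuous maps $M \rightarrow T$ with action $(\xi\cdot m)(n)=\xi(mn)$, surjectivity comes from the forgetful functor being faithful (equivalently, conservative), and coherence follows from Lemma \ref{coherent objects in BM} together with $\sets_{coh}=\fsets$. You simply spell out the adjunction and continuity bookkeeping that the paper leaves as "straightforward."
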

\begin{proof}
 For later use we explicitly construct such a point $p$. On one hand, the inverse image functor $p^*: \Cl M \rightarrow \sets$ assigns to each $X \in \Cl M$ the underlying set $S_X$; and to each $f: X \rightarrow Y$ the corresponding map $f_*: S_X \rightarrow S_Y$. On the other hand, the direct image functor $p_*: \sets \rightarrow \Cl M$ assigns to each $S \in \sets$ the $M$-set $S^M$ consisting of continuous maps $\xi: M \rightarrow S$ with respect to the discrete topology on $S$; and to each map $f: S \rightarrow T$ the composition $S^M \ni \xi \mapsto f \circ \xi \in T^M$. The set $S^M$ is indeed an $M$-set, that is, equipped with the following right $M$-action:
\begin{eqnarray*}
 S^M \times M & \longrightarrow & S^M \\
 (\xi, m) & \longmapsto & \xi \cdot m : n \mapsto \xi (mn) 
\end{eqnarray*}
It is straightforward to see that $p^*: \Cl M \rightarrow \sets$ is a left adjoint to $p_*: \sets \rightarrow \Cl M$ and $p^*$ is left exact. That is, $p^*: \Cl M \rightarrow \sets$ and $p_*: \sets \rightarrow \Cl M$ define a point $p: \sets \rightarrow \Cl M$. By definition, $p^*$ reflects isomorphisms (i.e.\ $p$ is surjective), and by Lemma \ref{coherent objects in BM} and by $\sets_{coh} = \fsets$, $p^*$ preserves coherent objects (i.e.\ $p$ is coherent). 
\end{proof}

\begin{lemma}
\label{noetherian}
 The topos $\Cl M$ is noetherian. 
\end{lemma}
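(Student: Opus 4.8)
The plan is to unwind the definition of \emph{noetherian} (cf.\ Remark \ref{Noether-ness}) and reduce it to an essentially trivial finiteness statement, using the identification of coherent objects already established in Lemma \ref{coherent objects in BM}. First I would recall that $\Cl M$ is noetherian precisely when, for every coherent object $X \in (\Cl M)_{coh}$, the lattice of subobjects of $X$ satisfies the ascending chain condition. By Lemma \ref{coherent objects in BM} we already know $(\Cl M)_{coh} = \Cl_f M$, so it suffices to verify the ascending chain condition for subobject lattices of \emph{finite} $M$-sets $X = \langle S_X, \rho_X \rangle$.

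The key step is then to describe the subobjects of such a finite $M$-set concretely. A monomorphism into $X$ in $\Cl M$ is an injective $M$-equivariant map, and up to isomorphism of subobjects these correspond bijectively to the sub-$M$-sets of $X$, i.e.\ to those subsets $T \subseteq S_X$ that are closed under the right $M$-action (so that $\xi \cdot m \in T$ for all $\xi \in T$ and $m \in M$). Since the underlying set $S_X$ is finite, there are at most $2^{|S_X|}$ such subsets; hence the lattice $\Sb(X)$ of subobjects of $X$ is a \emph{finite} lattice.

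Finally, any finite lattice trivially satisfies the ascending chain condition: an ascending chain of subobjects takes values in a finite set and must therefore stabilize. This shows that every coherent object of $\Cl M$ has a subobject lattice satisfying the ascending chain condition, which is exactly the assertion that $\Cl M$ is noetherian. I do not expect a serious obstacle here: the entire substance of the lemma is carried by Lemma \ref{coherent objects in BM}, after which the claim degenerates to the observation that the subsets of a finite set form a finite lattice. The only point requiring a line of care is the passage from subobjects in $\Cl M$ to sub-$M$-sets of $S_X$, which rests on the standard fact that monomorphisms in $\Cl M$ are precisely the injective $M$-equivariant maps.
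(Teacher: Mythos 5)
Your proof is correct and follows exactly the paper's argument: reduce via Lemma \ref{coherent objects in BM} to finite $M$-sets, observe that the subobject lattice of a finite $M$-set is finite, and conclude the ascending chain condition trivially. The only difference is that you spell out the identification of subobjects with $M$-closed subsets, which the paper leaves implicit.
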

\begin{proof}
 Since coherent $M$-sets are finite by Lemma \ref{coherent objects in BM}, the lattice $\mathrm{Sub}(X)$ of subobjects of coherent $M$-sets $X$ is finite; and thus, clearly satisfy the ascending chain condition. That is, $\Cl M$ is noetherian. 
\end{proof}

\begin{theorem}
\label{coherence of BM}
 The topos $\Cl M$ is coherent, i.e.\ equivalent to the sheaf topos $\Sh(\Cl_f M, J_M)$ over the semi-galois category $\Cl_f M$ with respect to the precanonical topology $J_M$.
\end{theorem}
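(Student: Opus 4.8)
The plan is to prove the statement by exhibiting an explicit equivalence $\Cl M \simeq \Sh(\Cl_f M, J_M)$, from which coherence of $\Cl M$ is immediate since $J_M$ is of finite type on the finitely complete category $\Cl_f M$. The starting observation, already contained in the proof of Lemma \ref{coherent objects in BM}, is that every $M$-set $X \in \Cl M$ is the filtered union $X = \bigcup_{\xi \in S_X} X_\xi$ of its finite orbit-subobjects $X_\xi \hookrightarrow X$, and each $X_\xi$ lies in $\Cl_f M = (\Cl M)_{coh}$. Thus $\Cl M$ is generated under filtered colimits (directed unions) by its coherent objects $\Cl_f M$, and each finite $M$-set is a finitely presentable object of $\Cl M$, so that $\Hom_{\Cl M}(Y, -)$ commutes with such directed unions for every $Y \in \Cl_f M$. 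Two further facts about $J_M$ will be used repeatedly: being generated by finite jointly epimorphic families, $J_M$ is subcanonical (in a pretopos such families are effective epimorphic, so $J_M$ is coarser than the canonical topology), whence every representable presheaf $y(Y) = \Hom_{\Cl_f M}(-, Y)$ with $Y \in \Cl_f M$ is a sheaf; and being of finite type, the sheaf condition is a finite limit, so filtered colimits of $J_M$-sheaves are again $J_M$-sheaves.

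First I would define the comparison functor $\Psi \colon \Cl M \to \Sh(\Cl_f M, J_M)$ by $\Psi(X) := \Hom_{\Cl M}(-, X)\big|_{\Cl_f M}$ on objects and by postcomposition on arrows. Writing $X = \bigcup_\xi X_\xi$ and using finite presentability of the finite $Y$'s, one obtains $\Psi(X) \cong \colim_\xi y(X_\xi)$, a filtered colimit of representable sheaves; by the two facts above this is indeed a sheaf, so $\Psi$ is well defined and restricts to the Yoneda embedding $y$ on $\Cl_f M$.

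Next I would show $\Psi$ is an equivalence. Full faithfulness is essentially formal: since $\Psi X \cong \colim_\xi y(X_\xi)$ and $y(X_\xi)$ is a representable sheaf, one computes $\Hom_{\Sh}(\Psi X, \Psi X') \cong \lim_\xi \Psi X'(X_\xi) = \lim_\xi \Hom_{\Cl M}(X_\xi, X')$, which matches $\Hom_{\Cl M}(\bigcup_\xi X_\xi, X') = \Hom_{\Cl M}(X, X')$. For essential surjectivity I would invoke that in a coherent topos every object is a filtered colimit of coherent objects; since $\Sh(\Cl_f M, J_M)$ is coherent and $(\Sh(\Cl_f M, J_M))_{coh} \simeq \Cl_f M$ by the pretopos--coherent-topos duality applied to the pretopos $\Cl_f M$, the coherent objects are exactly the representables, so every sheaf is a filtered colimit $F \cong \colim_i y(Y_i)$ with $Y_i \in \Cl_f M$. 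As $\Psi$ preserves filtered colimits of finite $M$-sets (again by finite presentability and by the finite-type topology making presheaf filtered colimits of sheaves already sheaves) and takes the value $y(Y_i)$ on $Y_i$, we get $F \cong \colim_i y(Y_i) \cong \Psi(\colim_i Y_i)$, the right-hand colimit formed in the cocomplete category $\Cl M$.

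The main obstacle I expect is the essential-surjectivity step, specifically the verification that the two filtered-colimit presentations genuinely match under $\Psi$: that the finite jointly epimorphic families in $\Cl_f M$ (the $J_M$-covers) are carried by the inclusion $\Cl_f M \hookrightarrow \Cl M$ precisely to effective-epimorphic families, so that $\Sh(\Cl_f M, J_M)$ imposes on colimits of finite $M$-sets no identifications beyond those already forced in $\Cl M$ by these finite covers. This is exactly where finiteness of the type of $J_M$ and the pretopos structure of $\Cl_f M$ are essential, and where the identification $(\Sh(\Cl_f M, J_M))_{coh} \simeq \Cl_f M$ must be used. Once the equivalence is established, coherence of $\Cl M$ follows at once, and it is consistent with the stated duality, under which $\Cl M \simeq \Sh((\Cl M)_{coh}) = \Sh(\Cl_f M, J_M)$.
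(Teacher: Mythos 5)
Your proposal is correct in substance and uses the same comparison functor as the paper (the restricted hom-functor $X \mapsto \Hom_{\Cl M}(-,X)|_{\Cl_f M}$), but it takes a genuinely different route at the decisive step. The paper proves essential surjectivity by explicitly constructing an inverse on objects, $P \mapsto \colim_\lambda P(M_\lambda)$ over the cofinal system of finite quotients of $M$ (the galois objects of $\Cl_f M$), and then verifying $P \simeq \Phi\Psi(P)$ by a hands-on matching-family argument with the sheaf condition (its Lemmas \ref{fixpoint} and \ref{galois case of essential surjectivity}, which occupy most of the proof). You instead outsource this to general structure theory: $\Sh(\Cl_f M,J_M)$ is coherent by definition, its coherent objects are the representables by the pretopos--coherent-topos duality already quoted in \S\ref{s6s1}, every object of a coherent topos is a filtered colimit of coherent objects, and the comparison functor preserves the relevant filtered colimits. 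This is shorter and more conceptual, and your Yoneda-style computation of full faithfulness via $\Psi X \cong \colim_\xi y(X_\xi)$ is cleaner than the paper's explicit construction of $\bar\sigma$ from the components $\sigma_{M_\lambda}$. What the paper's version buys is self-containedness and an explicit description of the inverse equivalence; what yours buys is brevity, at the cost of two imported ingredients that you should make explicit: (i) the fact that every object of a coherent topos is a filtered colimit of coherent objects is standard (SGA4 VI) but is not among the facts stated in \S\ref{s6s1}, so it needs a citation; and (ii) the finite presentability of finite $M$-sets in $\Cl M$ requires a short argument that filtered colimits in $\Cl M$ are computed on underlying sets with continuous induced action and that the equivariance relations for a finite $M$-set reduce to finitely many conditions through a finite quotient of $M$. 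Also, a minor imprecision: the single-orbit subobjects $X_\xi$ do not themselves form a directed system, so the filtered colimit presentation $\Psi X \cong \colim y(X')$ should be indexed over all finite subobjects $X' \hookrightarrow X$ (finite unions of orbits), which is directed and cofinal for the purpose. None of these affects the correctness of the argument.
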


\begin{notation}
In what follows, given a semi-galois category $\sgc$ in general, we denote simply by $\Sh(\C)$ the sheaf topos over $\C$ with respect to the precanonical topology on $\C$. In particular, we shall denote $\Sh(\Cl_f M,J_M)$ by $\Sh(\Cl_f M)$. 
\end{notation}


\begin{proof}
This will be proved easily by using some axiomatic characterization of coherent topoi. Here, just in order to see the correspondence $\Cl M \leftrightarrow \Sh(\Cl_f M)$, we explicitly construct an equivalence. First the functor $\Phi: \Cl M \rightarrow \Sh(\Cl_f M)$ is defined naturally by $\Cl M \ni Y \mapsto \Hom_{\Cl M}(\ast,Y) \in \Sh(\Cl_f M)$. (Here notice that the presheaf $\Phi(Y) = \Hom_{\Cl M}(\ast, Y): \Cl_f M^{op} \rightarrow \sets$ defines actually a sheaf on $\Cl_f M$ with respect to the precanonical topology.) 

This functor $\Phi: \Cl M \rightarrow \Sh(\Cl_f M)$ is \emph{fully faithful}. To see the faithfulness of $\Phi$, let $f, g: Y \rightarrow Y'$ be $M$-equivariant maps between $M$-sets $Y,Y' \in \Cl M$ such that $\Phi(f) = \Phi(g)$. By $\Phi(f) = \Phi(g)$, it follows that $f |_{Y_\eta} = g |_{Y_\eta}$--- the restrictions of $f,g$ onto $Y_\eta \subseteq Y$--- for each $\eta \in S_Y$. But, as in the discussion at Lemma \ref{coherent objects in BM}, one also has $Y = \bigcup Y_\eta$ with $Y_\eta \in \Cl_f M$ for every $\eta \in S_Y$. Therefore, this implies $f = g$ and that $\Phi$ is faithful. To see the fullness of $\Phi$, let $\sigma: \Phi(Y) \Rightarrow \Phi(Y')$ be an arrow in $\Sh(\Cl_f M)$. Notice that we have a canonical isomorphism of sets $S_Y \simeq \Hom_{\Cl M} (M, Y) = \bigcup \Hom_{\Cl M} (M_\lambda, Y)$, where $M \twoheadrightarrow M_\lambda$ range over all finite quotients of $M$; and under this isomorphism, the components $\sigma_{M_\lambda}: \Hom_{\Cl M}(M_\lambda, Y) \rightarrow \Hom_{\Cl M}(M_{\lambda}, Y')$ of $\sigma: \Phi(Y) \Rightarrow \Phi(Y')$ define a map $\bar{\sigma}: S_Y \rightarrow S_Y'$, which is in fact $M$-equivariant. By construction, one can see that $\Phi(\bar{\sigma}) = \sigma: \Phi(Y) \Rightarrow \Phi(Y')$. Thus, $\Phi$ is full. 

To complete the proof, we need to show that $\Phi$ is also \emph{essentially surjective}; this is done by constructing an assignment $\Psi: \Sh(\Cl_f M) \rightarrow \Cl M$ on objects so that for each $P \in \Sh (\Cl_f M)$, one has an isomorphism $P \simeq \Phi \circ \Psi (P)$. 

For this purpose, take $P \in \Sh(\Cl_f M)$ arbitrarily. Then we define an $M$-set $\Psi(P) \in \Cl M$ as follows. The underlying set of $\Psi(P)$ is defined as $\colim_\lambda P(M_\lambda)$, where $\pi_\lambda: M \twoheadrightarrow M_\lambda$ range over all finite quotients of $M$ (and naturally form an inverse system in $\Cl_f M$ regarding $M_\lambda$ as finite $M$-sets). In what follows, the colimit diagram for $\colim_\lambda P(M_\lambda)$ is denoted by $u_\lambda: P(M_\lambda) \rightarrow \colim_\lambda P(M_\lambda)$. Here notice that since $P$ is a sheaf on $\Cl_f M$ with respect to $J_M$ and each $M_\lambda \twoheadrightarrow M_\mu$ is an epimorphism in $\Cl_f M$, the map $u_\lambda: P(M_\lambda) \rightarrow \colim P(M_\lambda)$ is injective. Thus, we can regard as $\Psi(P) = \bigcup P(M_\lambda)$. 

We prove the target isomorphism $P \simeq \Phi \circ \Psi (P)$ based on two sub-lemmas: 
\begin{lemma}
\label{fixpoint}
 Let $X \in \Cl M$ and $\pi_\lambda: M \twoheadrightarrow M_\lambda$ be a finite quotient. Also consider the congruence $\ker(\pi_\lambda) := \{(m,m') \in M \times M \mid \pi_\lambda(m) = \pi_\lambda(m')\}$. Moreover, define a subobject $X_\lambda \hookrightarrow X$ in $\Cl M$ so that the underlying set of $X_\lambda$ is given as follows:
\begin{eqnarray}
  S_\lambda &:=& \bigl\{ \xi \in S_X \mid \xi \cdot m = \xi \cdot m' \hspace{0.2cm} \forall (m,m') \in \ker(\pi_\lambda) \bigr\}. 
\end{eqnarray}
Then we have a canonical isomorphism of sets:
\begin{eqnarray}
 S_\lambda & \simeq & \Hom_{\Cl M} (M_\lambda, X).
\end{eqnarray}
\end{lemma}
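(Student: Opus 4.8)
The plan is to exhibit the canonical bijection explicitly by evaluating $M$-equivariant maps at a distinguished root of $M_\lambda$. Recall that $M_\lambda$, viewed as a finite $M$-set, carries the right action $h \cdot m := h\,\pi_\lambda(m)$, and that its identity $e_\lambda \in M_\lambda$ is a root: since $\pi_\lambda$ is surjective, every $h \in M_\lambda$ is of the form $h = \pi_\lambda(m) = e_\lambda \cdot m$ for some $m \in M$. I would therefore define the map
\[
  \omega: \Hom_{\Cl M}(M_\lambda, X) \longrightarrow S_X, \qquad f \longmapsto f_*(e_\lambda),
\]
and argue that it is injective with image exactly $S_\lambda$, which gives the claimed isomorphism $S_\lambda \simeq \Hom_{\Cl M}(M_\lambda, X)$.

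First I would establish injectivity, which is immediate from root-ness: if $f: M_\lambda \to X$ is $M$-equivariant, then for $h = \pi_\lambda(m)$ one has $f_*(h) = f_*(e_\lambda \cdot m) = f_*(e_\lambda) \cdot m$, so $f$ is completely determined by $f_*(e_\lambda)$. (This mirrors Proposition \ref{maps from rooted objects}, though here I argue directly, since $X$ need not be finite and $\Cl M$ is not itself a semi-galois category.) Next I would check that $\omega$ lands in $S_\lambda$: for $\xi = f_*(e_\lambda)$ and any $(m,m') \in \ker(\pi_\lambda)$ one computes $\xi \cdot m = f_*(e_\lambda \cdot m) = f_*(\pi_\lambda(m)) = f_*(\pi_\lambda(m')) = \xi \cdot m'$, so indeed $\xi \in S_\lambda$.

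The substantive step is surjectivity onto $S_\lambda$, where I would reverse the construction. Given $\xi \in S_\lambda$, I would define a candidate $f_*: M_\lambda \to S_X$ by $f_*(\pi_\lambda(m)) := \xi \cdot m$. The hard part — really the only point that uses the hypothesis defining $S_\lambda$ — is well-definedness: if $\pi_\lambda(m) = \pi_\lambda(m')$ then $(m,m') \in \ker(\pi_\lambda)$, so the defining condition of $S_\lambda$ yields $\xi \cdot m = \xi \cdot m'$; surjectivity of $\pi_\lambda$ then guarantees $f_*$ is defined on all of $M_\lambda$. I would then verify $M$-equivariance via $f_*(h \cdot n) = f_*(\pi_\lambda(mn)) = \xi \cdot (mn) = (\xi \cdot m) \cdot n = f_*(h) \cdot n$ for $h = \pi_\lambda(m)$, and note $f_*(e_\lambda) = \xi \cdot 1 = \xi$, so that $\omega(f) = \xi$. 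Since arrows of $\Cl M$ are precisely $M$-equivariant maps (no extra continuity constraint arises, the underlying sets being discrete), $f$ is a genuine morphism, and $\omega$ therefore restricts to a bijection onto $S_\lambda$, which is the asserted canonical isomorphism.
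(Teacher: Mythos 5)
Your proof is correct and takes essentially the same approach as the paper: both establish the bijection by evaluating $M$-equivariant maps at the identity $e_\lambda$ (the paper writes the map in the direction $S_\lambda \to \Hom_{\Cl M}(M_\lambda, X)$, with your $\omega$ appearing as its inverse $\psi$), with well-definedness resting exactly on the defining condition of $S_\lambda$. The only point the paper checks that you omit is that $S_\lambda$ is closed under the $M$-action (using that $\ker(\pi_\lambda)$ is a two-sided, hence left, congruence), which is what makes $X_\lambda$ a genuine subobject of $X$ in $\Cl M$ as the statement asserts.
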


\begin{notation}
 In what follows, for a finite quotient $\pi_\lambda: M \twoheadrightarrow M_\lambda$ and an element $m \in M$, we denote by $m_\lambda$ the element $\pi_\lambda(m) \in M_\lambda$. 
\end{notation}

\begin{proof}
 We first need to see that $S_\lambda \subseteq S_X$ is indeed closed under $M$-action, i.e.\ defines a subobject $X_\lambda \hookrightarrow X$ of $M$-sets. Let $\xi \in S_\lambda$ and $n \in M$. For each $(m,m') \in \ker(\pi_\lambda)$, we have $(nm, nm') \in \ker(\pi_\lambda)$ because $\ker(\pi_\lambda)$ is a (left) congruence. Thus, since $\xi \in S_\lambda$, we have $(\xi\cdot n) \cdot m = \xi \cdot nm = \xi \cdot nm' = (\xi \cdot n) \cdot m$. This means that, because $(m,m') \in \ker(\pi_\lambda)$ is now arbitrary, we proved $\xi \cdot n \in S_\lambda$. That is, the set $S_\lambda \subseteq S_X$ is closed under the $M$-action. 

The target isomorphism $\phi: S_\lambda \rightarrow \Hom_{\Cl M} (M_\lambda,X)$ is given as follows. For each $\xi \in S_\lambda$, define a map $\phi(\xi): M_\lambda \ni m_\lambda \mapsto \xi \cdot m \in X$. First note that $\phi(\xi): M_\lambda \rightarrow X$ is well-defined as a map of sets: That is, if $m_\lambda = m'_\lambda$ for $m,m' \in M$ (i.e.\ $(m,m') \in \ker(\pi_\lambda)$), we have $\xi \cdot m = \xi \cdot m'$ by $\xi \in S_\lambda$. Secondly, this map $\phi(\xi): M_\lambda \rightarrow X$ is $M$-equivariant: 
\begin{eqnarray*}
  \phi(\xi) (m_\lambda \cdot n) & = & \phi(\xi) ((mn)_\lambda) \\
  & = & \xi \cdot mn \\
  & = & (\xi \cdot m) \cdot n \\
  & = & (\phi(\xi) (m_\lambda) ) \cdot n. 
\end{eqnarray*}
Therefore, the map $\phi: S_\lambda \rightarrow \Hom_{\Cl M} (M_\lambda, X)$ is well-defined. Finally, this map $\phi$ is in fact bijective. For its inverse $\psi: \Hom_{\Cl M}(M_\lambda, X) \rightarrow S_\lambda$ is gien as follows: For each $f \in \Hom_{\Cl M}(M_\lambda,X)$, set $\psi(f) := f(1_\lambda) \in S_X$, where $1_\lambda = \pi_\lambda(1)$ is the identity of $M_\lambda$. Then $\psi(f)$ is indeed in $S_\lambda$. It is straightforward to see that $\psi$ is the inverse of $\phi$. 
\end{proof}

\begin{lemma}
\label{galois case of essential surjectivity}
 Let $P \in \Sh(\Cl_f M)$. Then $P(M_\lambda)$ is isomorphic to $\Psi(P)_\lambda$ under $u_\lambda: P(M_\lambda) \rightarrow \bigcup P(M_\lambda) = \Psi(P)$. More explicitly, $P(M_\lambda) \subseteq \Psi(P)$ is characterized as the following subset:
\begin{eqnarray}
\label{difficult part}
 P(M_\lambda) &=& \biggl\{ [s,\mu] \in \Psi(P) \mid [s,\mu] \cdot m = [s, \mu] \cdot m' \hspace{0.2cm} \forall (m,m') \in \ker(\pi_\lambda) \biggr\}. 
\end{eqnarray}
That is, combining with Lemma \ref{fixpoint}, we have a canonical isomorphism:
\begin{eqnarray*}
   P(M_\lambda) & \simeq & (\Phi \circ \Psi (P)) (M_\lambda).
\end{eqnarray*}
\end{lemma}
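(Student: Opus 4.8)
The plan is to identify the image of the (injective) coprojection $u_\lambda : P(M_\lambda) \to \Psi(P)$ with the subset $\Psi(P)_\lambda$ displayed in the statement, and then to feed this into Lemma~\ref{fixpoint} applied to $X = \Psi(P)$. First I would recall the right $M$-action on $\Psi(P) = \colim_\lambda P(M_\lambda)$: for $h \in M_\mu$ write $\ell_h : M_\mu \to M_\mu$, $k \mapsto hk$, which is $M$-equivariant and, under the isomorphism $\End(M_\mu) \simeq M_\mu$ of Lemma~\ref{h is isomorphic to end}, corresponds to $h$ (cf.\ Lemma~\ref{galois objects in mfsets}); then $[s,\mu]\cdot m = [\,P(\ell_{m_\mu})(s),\,\mu\,]$ for $s \in P(M_\mu)$ and $m_\mu = \pi_\mu(m)$. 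Since the finite quotients of $M$ form a cofiltered system, $\Psi(P)$ is a filtered colimit with injective transition maps, so any element is representable as $[s,\mu]$ with $\mu$ as fine as we wish; in particular I may assume throughout that $\mu$ refines $\lambda$, i.e.\ that there is a monoid surjection $q : M_\mu \twoheadrightarrow M_\lambda$ with $q \circ \pi_\mu = \pi_\lambda$, which is an epimorphism in $\Cl_f M$.

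One inclusion is immediate: if $s \in P(M_\lambda)$ and $(m,m') \in \ker(\pi_\lambda)$, then $m_\lambda = m'_\lambda$, hence $\ell_{m_\lambda} = \ell_{m'_\lambda}$ and $[s,\lambda]\cdot m = [s,\lambda]\cdot m'$; thus $u_\lambda\bigl(P(M_\lambda)\bigr) \subseteq \Psi(P)_\lambda$. For the reverse inclusion, take $[s,\mu] \in \Psi(P)_\lambda$ with $\mu$ refining $\lambda$. Unwinding the action and using injectivity of $u_\mu$, the defining condition $[s,\mu]\cdot m = [s,\mu]\cdot m'$ for all $(m,m') \in \ker(\pi_\lambda)$ translates into
\[
  P(\ell_a)(s) = P(\ell_b)(s) \qquad \text{for all } a,b \in M_\mu \text{ with } q(a) = q(b).
\]
Since $u_\mu \circ P(q) = u_\lambda$ and $u_\mu$ is injective, it then suffices to show that $s$ lies in the image of $P(q) : P(M_\lambda) \to P(M_\mu)$; for then $[s,\mu] = u_\lambda(t)$ for the preimage $t$, as required.

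The heart of the argument is this last claim, where I would use that $P$ is a sheaf for the precanonical topology. As $\{q : M_\mu \twoheadrightarrow M_\lambda\}$ is a single-arrow, hence finite and jointly epic, covering family, the sheaf condition makes
\[
  P(M_\lambda) \xrightarrow{\;P(q)\;} P(M_\mu) \rightrightarrows P\bigl(M_\mu \times_{M_\lambda} M_\mu\bigr)
\]
an equalizer, so $s \in \mathrm{im}\,P(q)$ once $P(p_1)(s) = P(p_2)(s)$, where $p_1,p_2$ are the two projections of $T := M_\mu \times_{M_\lambda} M_\mu$. Here lies the main obstacle, and the one genuinely new feature compared with the classical profinite-group case: for a group, $T$ splits as a disjoint union of copies of $M_\mu$ indexed by the fibre subgroup and the equalizer condition reduces to invariance under that subgroup, whereas for a monoid there are no inverses and no such decomposition is available. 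I would circumvent this by covering $T$ directly out of the galois object $M_\mu$: for each $(a,b) \in T$ (so $q(a) = q(b)$) the map $g_{(a,b)} : M_\mu \to T$, $h \mapsto (ah,bh)$, is a well-defined $M$-equivariant morphism and hits $(a,b)$ at $h = e_\mu$, so as $(a,b)$ ranges over the finite set $T$ these form a finite jointly epic family, hence a cover. Now $p_1 \circ g_{(a,b)} = \ell_a$ and $p_2 \circ g_{(a,b)} = \ell_b$, so the displayed condition on $s$ gives $P(\ell_a)(s) = P(\ell_b)(s)$, i.e.\ $P(g_{(a,b)})\bigl(P(p_1)(s)\bigr) = P(g_{(a,b)})\bigl(P(p_2)(s)\bigr)$ for every $(a,b)$. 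Since $P$ is separated and the $g_{(a,b)}$ form a cover, the family $\{P(g_{(a,b)})\}$ is jointly monic, whence $P(p_1)(s) = P(p_2)(s)$, completing the reverse inclusion.

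Combining the two inclusions, $u_\lambda$ restricts to a bijection $P(M_\lambda) \xrightarrow{\simeq} \Psi(P)_\lambda$. Finally, applying Lemma~\ref{fixpoint} with $X = \Psi(P)$ identifies $\Psi(P)_\lambda$ with $\Hom_{\Cl M}(M_\lambda, \Psi(P)) = \bigl(\Phi \circ \Psi(P)\bigr)(M_\lambda)$, giving the asserted canonical isomorphism $P(M_\lambda) \simeq (\Phi \circ \Psi(P))(M_\lambda)$. I would then verify that these isomorphisms are natural in $M_\lambda$ — i.e.\ compatible with restriction along epimorphisms $M_\nu \twoheadrightarrow M_\lambda$ — which is routine from the explicit formulas and is precisely what upgrades the object-wise statement into the natural isomorphism $P \simeq \Phi \circ \Psi(P)$ of sheaves needed for the essential surjectivity of $\Phi$.
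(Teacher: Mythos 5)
Your proof is correct and follows essentially the same route as the paper: both reduce the hard inclusion to the sheaf condition for the single cover $\rho^\mu_\lambda: M_\mu \twoheadrightarrow M_\lambda$ and verify the required compatibility by exploiting that $M$-equivariant maps out of a galois object are left multiplications, so that the compatibility condition is exactly the hypothesis $[s,\mu]\cdot m = [s,\mu]\cdot m'$ for $(m,m')\in\ker(\pi_\lambda)$. The only difference is one of formulation: the paper uses the matching-family form of the sheaf axiom, testing against arbitrary $Z$ and reducing to galois $Z$ by cofinality, whereas you use the equivalent equalizer form with the fiber product $M_\mu \times_{M_\lambda} M_\mu$, which you cover explicitly by copies of $M_\mu$ --- the same device in a different guise.
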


\begin{proof}
 The difficult part of this proof is to see that $\Psi(P)_\lambda$ (i.e.\ the right-hand side of the equation (\ref{difficult part})) is in the image of $u_\lambda: P(M_\lambda) \hookrightarrow \Psi(P)$; the inverse inclusion is easy. We use the similar technique to the one proving Theorem 1, \S 9, Chap.\ III \cite{MacLane_Moerdijk}. 

Let $[s,\mu] \in \Psi(P)_\lambda$; we want to prove that $[s,\mu] = [t, \lambda]$ for some $t \in P(M_\lambda)$. Since the inverse system consisting of finite quotients $\pi_\lambda: M \twoheadrightarrow M_\lambda$ is cofiltered, one may assume that we have $\rho^\mu_\lambda:M_\mu \twoheadrightarrow M_\lambda$ such that $\pi_\lambda = \rho^\mu_\lambda \circ \pi_\mu$. Also, since $P$ is now a sheaf on $\Cl_f M$ with respect to $J_M$ and $\rho^\mu_\lambda: M_\mu \twoheadrightarrow M_\lambda$ is a (single) covering of $M_\lambda \in \Cl_f M$, it suffices to see that, for parallel arrows $f, g: Z \rightarrow M_\mu$ from each $Z \in \Cl_f M$ such that $\rho^\mu_\lambda \circ f = \rho^\mu_\lambda \circ g$, we have $P(f)(s) = P(g)(s)$. (That is, the single element $s \in P(M_\mu)$ defines a matching family on $M_\lambda$ with respect to the covering $\rho^\mu_\lambda: M_\mu \twoheadrightarrow M_\lambda$.) Moreover, using Proposition \ref{cofilteredness of galois objects}, \S \ref{s3s2} (claiming that galois objects are cofinal in $\Cl_f M$) and by the fact that $P$ is a sheaf, it is sufficient to consider the case where $Z$ is a galois object in $\Cl_f M$, i.e.\ of the form $M_\kappa$. 

Let $f, g: M_\kappa \rightarrow M_\mu$ be $M$-equivariant maps such that $\rho^\mu_\lambda \circ f = \rho^\mu_\lambda \circ g$. Since $f, g$ are $M$-equivariant, they are determined by $m_\mu:=f(1_\kappa)$ and $m'_\mu:= g(1_\kappa)$ (for some $m, m' \in M$). Then note that $\rho^\mu_\lambda \circ f= \rho^\mu_\lambda \circ g$ implies that $(m,m') \in \ker(\pi_\lambda)$; and thus we have $[s,\mu] \cdot m = [s,\mu] \cdot m'$ by the assumption that $[s,\mu] \in \Psi(P)_\lambda$. This means that $[P(f)(s), \kappa] = [P(g)(s), \kappa]$. Since $u_\kappa: P(M_\kappa) \ni t \mapsto [t, \kappa] \in \Psi(P)$ is injective, one gets the desired equality $P(f)(s) = P(g)(s)$. 

Finally, we saw that $s \in P(M_\mu)$ defines a matching family on $M_\lambda$ with respect to the single covering $\rho^\mu_\lambda: M_\mu \twoheadrightarrow M_\lambda$; and since $P$ is a sheaf, this matching family $s$ has an amalgamation $t \in P(M_\lambda)$. This means that we have $[s,\mu] = [t, \lambda]$ with $t \in P(M_\lambda)$. Therefore, we can now conclude that $\Psi(P)_\lambda$ is in the image of $u_\lambda : P(M_\lambda) \hookrightarrow \Psi(P)$. This completes the proof. 
\end{proof}

We return to the proof of Theorem \ref{coherence of BM}. To complete the proof, we need to show a natural isomorphism $P \simeq \Phi \circ \Psi (P)$. By Lemma \ref{galois case of essential surjectivity}, we know that these two sheaves $P$ and $P':= \Phi \circ \Psi(P): \Cl_f M^{op} \rightarrow \sets$ are (naturally) isomorphic on galois objects $M_\lambda$ in $\Cl_f M$. However, since galois objects are cofinal in $\Cl_f M$ (Proposition \ref{cofilteredness of galois objects}) and since $P, P'$ are sheaves on $\Cl_f M$ with respect to $J_M$, it is straightforward to see that $P$ and $P'$ are naturally isomorphic. Consequently, the functor $\Phi: \Cl M \rightarrow \Sh(\Cl_f M)$ is fully faithful and essentially surjective, i.e.\ an euqivalence of categories, with $\Psi: \Sh(\Cl_f M) \rightarrow \Cl M$ giving its inverse. This concludes that $\Cl M$ is a coherent topos, with the semi-galois category $\Cl_f M \subseteq \Cl M$ being its defining pretopos. This completes the proof of Theorem \ref{coherence of BM}. 
\end{proof}

So far we have proved the only-if part of the claim 1 of Theorem \ref{characterization of BM}: That is, the topos $\Cl M$ is indeed (i) coherent, (ii) noetherian, and (iii) has a surjective coherent point $p: \sets \rightarrow \Cl M$. The rest of this section is devoted to a proof of its inverse, which heavily relies on the duality theorem between profinite monoids and semi-galois categories: 

\begin{theorem}
\label{the if part}
 Let $\E$ be a topos that is (i) coherent, (ii) noetherian, and (iii) has a surjective coherent point $p: \sets \rightarrow \E$. Then, the pair $\langle \E_{coh}, p^* \rangle$ forms a semi-galois category; and if we put $M_p :=\pi_1(\E_{coh}, p^*)$, then $\E$ is equivalent to $\Cl M_p$. 
\end{theorem}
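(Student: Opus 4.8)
The plan is to first establish that $\langle \E_{coh}, p^* \rangle$ is a semi-galois category, and then to deduce the equivalence $\E \simeq \Cl M_p$ almost formally by splicing together the theorems already available. For the first part, note that since $p$ is a coherent point its inverse image functor $p^*$ preserves coherent objects, and because $\sets_{coh} = \fsets$ this means $p^*$ restricts to a functor $\F := p^*|_{\E_{coh}} : \E_{coh} \rightarrow \fsets$. I would then verify the six axioms. Since $\E$ is coherent, $\E_{coh}$ is a pretopos (\S \ref{s6s1}); this immediately supplies the initial and final objects ($C_0$), the epi-mono factorization ($C_2$), and finite pullbacks (half of $C_1$). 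The fiber-functor axioms $F_0$ and $F_1$ for pullbacks follow because $p^*$ is left exact and preserves colimits, while $F_2$ holds because $p$ is surjective, i.e. $p^*$ reflects isomorphisms.

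The one axiom that is not purely a pretopos fact is the existence of finite pushouts, and this is exactly where the noetherian hypothesis enters. Given $f : Z \rightarrow X$ and $g : Z \rightarrow Y$ in $\E_{coh}$, I would form their pushout $P = X \sqcup_Z Y$ in the ambient topos $\E$, which has all finite colimits. The canonical map $X \sqcup Y \twoheadrightarrow P$ is an epimorphism (it is a coequalizer), and $X \sqcup Y$ is compact as a finite coproduct of coherent objects; since an epimorphic image of a compact object is compact, $P$ is compact. By the noetherian hypothesis in the form recorded in Remark \ref{Noether-ness} (every compact object is coherent), $P$ is coherent, hence lies in $\E_{coh}$. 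As $\E_{coh}$ is full in $\E$ and $P \in \E_{coh}$, the pushout computed in $\E$ is also the pushout in $\E_{coh}$; and since $p^*$ is a left adjoint it preserves this pushout, giving the remaining half of $C_1$ together with $F_1$. The same compactness-plus-noetherian reasoning also confirms that the image factorization of $C_2$ stays inside $\E_{coh}$. This completes the verification that $\langle \E_{coh}, p^* \rangle$ is a semi-galois category.

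For the second part, put $M_p := \pi_1(\E_{coh}, p^*)$. Since $\E$ is coherent, the pretopos--coherent-topos duality recalled in \S \ref{s6s1} gives $\E \simeq \Sh(\E_{coh})$. By the Representation Theorem (Theorem \ref{representation of semi-galois category}) the semi-galois category $\E_{coh}$ is equivalent, compatibly with the fiber functors, to $\Cl_f M_p$; and because the precanonical topology is defined in purely categorical terms (finite jointly epimorphic families), it is preserved under this equivalence, so $\Sh(\E_{coh}) \simeq \Sh(\Cl_f M_p)$. Finally Theorem \ref{coherence of BM} identifies $\Sh(\Cl_f M_p)$ with $\Cl M_p$. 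Chaining these equivalences yields $\E \simeq \Cl M_p$, as claimed.

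I expect the main obstacle to be precisely the pushout step: one must be careful that the pushout is genuinely computed in the full topos $\E$, that compactness is correctly inherited through the epimorphism $X \sqcup Y \twoheadrightarrow P$, and that the noetherian condition is invoked in exactly the form \emph{``compact implies coherent''} so that $P$ re-enters $\E_{coh}$ and there serves as the pushout. Everything else is either standard pretopos theory or a bookkeeping assembly of the previously established equivalences, so the conceptual weight of the theorem rests on this single use of noetherianness.
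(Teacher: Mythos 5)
Your proposal is correct and follows essentially the same route as the paper: the pretopos structure of $\E_{coh}$ supplies all axioms except finite pushouts, the noetherian hypothesis in the form ``compact implies coherent'' closes $\E_{coh}$ under pushouts via the epimorphism $X \sqcup Y \twoheadrightarrow X \sqcup_Z Y$, and the equivalence $\E \simeq \Sh(\E_{coh}) \simeq \Sh(\Cl_f M_p) \simeq \Cl M_p$ is assembled exactly as in the paper's proof. Your additional remarks (fullness of $\E_{coh}$ guaranteeing the pushout is computed correctly, $p^*$ preserving pushouts as a left adjoint) are fine points the paper leaves implicit but are consistent with its argument.
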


We start from showing the first claim: 

\begin{lemma}
\label{coherent objects form a semigalois category}
 The pair $\langle \E_{coh}, p^* \rangle$ forms a semi-galois category. 
\end{lemma}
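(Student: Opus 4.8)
The plan is to take $\C := \E_{coh}$ and $\F := p^*|_{\E_{coh}}$, and to verify the six axioms $C_0)$--$C_2)$ and $F_0)$--$F_2)$ in turn. First note that $\F$ really does land in $\fsets$: since $p$ is a coherent point, the inverse image $p^*$ preserves coherent objects, and as $\sets_{coh} = \fsets$ the object $p^*(X)$ is a finite set whenever $X \in \E_{coh}$. Moreover $\E_{coh}$ is essentially small, as is the category of coherent objects of any coherent topos. The verification then rests on two structural inputs: (a) the duality between coherent topoi and pretopoi recalled in \S\ref{s6s1}, by which $\E_{coh}$ is a \emph{pretopos}; and (b) the fact that $p^*$, being the inverse image part of a geometric morphism, is left exact and preserves all colimits.

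The axioms that follow at once from $\E_{coh}$ being a pretopos together with (b) are the following. Axiom $C_0)$ holds because a pretopos has finite limits (yielding the final object $1$ as the empty limit, and all finite pullbacks) and finite coproducts (yielding the initial object $\emptyset$); the pullback half of $C_1)$ is the same observation; and $C_2)$ holds because a pretopos is in particular a regular (indeed exact) category, so it admits (regular epi, mono)-factorisations. On the functor side, $F_0)$ holds since $p^*$ preserves the terminal object by left exactness and the initial object by cocontinuity, so $\F(1)=1$ and $\F(\emptyset)=\emptyset$; the pullback half of $F_1)$ is left exactness of $p^*$; and $F_2)$ is precisely the surjectivity hypothesis, namely that $p^*$ reflects isomorphisms (and a fortiori does so on the full subcategory $\E_{coh}$).

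The one point that is not formal --- and where the noetherian hypothesis is indispensable --- is the existence of finite \emph{pushouts} in $\E_{coh}$ (the remaining half of $C_1)$) and their preservation by $\F$ (the remaining half of $F_1)$). Here I would argue thus: given a span $X \leftarrow Z \rightarrow Y$ in $\E_{coh}$, form its pushout $X \sqcup_Z Y$ in the ambient topos $\E$, which has all colimits. This object is the coequaliser of a pair $Z \rightrightarrows X \sqcup Y$, hence a quotient of the finite coproduct $X \sqcup Y$; since finite coproducts and regular-epi quotients of compact objects are again compact, $X \sqcup_Z Y$ is compact. By the noetherian hypothesis every compact object of $\E$ is coherent (Remark \ref{Noether-ness}), so $X \sqcup_Z Y \in \E_{coh}$; being a full subcategory, $\E_{coh}$ then inherits this as its pushout. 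Preservation by $\F = p^*$ is immediate, as $p^*$ preserves all colimits and the pushout in $\E_{coh}$ is computed in $\E$.

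The main obstacle is exactly this last step: a general pretopos has coequalisers only of equivalence relations, not arbitrary pushouts, so closure of the coherent objects under pushout genuinely requires the noetherian condition via the equivalence ``compact $\Rightarrow$ coherent''. The supporting fact I would isolate is that a regular-epi quotient of a compact object is compact, most transparently checked in the subobject formulation of compactness: a covering of the quotient pulls back along the epimorphism to a covering of the compact source, admits a finite subcovering there, and pushes forward along the (surjective) epimorphism to a finite subcovering of the quotient. With this in hand all six axioms are confirmed, so $\langle \E_{coh}, p^* \rangle$ is a semi-galois category.
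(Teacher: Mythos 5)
Your proposal is correct and follows essentially the same route as the paper's proof: the formal axioms are dispatched by the pretopos structure of $\E_{coh}$ together with exactness of $p^*$ and the surjectivity/coherence of the point, and the only substantive step---closure of $\E_{coh}$ under finite pushouts---is handled exactly as in the paper, by realizing the pushout as an epimorphic image of the compact coproduct and invoking the noetherian hypothesis to upgrade compact to coherent. Your additional justification that regular-epi quotients of compact objects are compact is a fair elaboration of a point the paper leaves implicit.
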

\begin{proof}
 Firstly recall that, since $\sets_{coh}=\fsets$ and the point $p: \sets \rightarrow \E$ is now coherent, the inverse image functor $p^*: \E \rightarrow \sets$ restricts to coherent objects $p^*: \E_{coh} \rightarrow \fsets$. We see that $\langle \E_{coh}, p^* \rangle$ satisfies the axiom of semi-galois categories. 

By general facts about coherent objects in a coherent topos, $\E_{coh}$ has finite limits and admits epi-mono factorizations of arrows. Also, since $p: \sets \rightarrow \E$ is a geometric morphism, $p^*: \E_{coh} \rightarrow \fsets$ is exact. Furthermore, since $p$ is surjective, $p^*$ reflects isomorphisms. Therefore, it is sufficient to prove only that $\E_{coh}$ has finite pushouts. 

To see this, we use the assumption that $\E$ is noetherian, whence the compact objects are the coherent objects (cf.\ Remark \ref{Noether-ness}). Let $X, Y, Z \in \E_{coh}$ be coherent objects in $\E$; and consider the following pushout diagram:
\begin{eqnarray*}
  \xymatrix{
    X \ar[r]^f \ar[d]_g & Z \ar[d]^k \\
    Y \ar[r]_h          & Y \sqcup_X Z 
}.
\end{eqnarray*}
Since $Y, Z$ are compact, so is the pushout $Y \sqcup_X Z$ because $Y \sqcup_X Z$ is an epimorphic image of the compact object $Y \sqcup Z$. However, since $\E$ is now noetherian so that every compact object is coherent, it follows that $Y \sqcup_X Z$ is coherent. This means that $\E_{coh}$ is closed under finite pushouts. Therefore $\langle \E_{coh}, p^* \rangle$ forms a semi-galois category. 
\end{proof}

\begin{proof}
Now we complete the proof of Theorem \ref{the if part}. Let $M_p$ be the fundamental monoid $\pi_1(\E_{coh},p^*)$ of this semi-galois category $\langle \E_{coh},p^* \rangle$. Since $\E$ is a coherent topos, one generally has an equivalence $\E \simeq \Sh(\E_{coh})$ of topoi. However, by the duality theorem between profinite monoids and semi-galois categories, we have an equivalence $\E_{coh} \simeq \Cl_f M_p$. Now, applying the representation $\Cl M \simeq \Sh(\Cl_f M)$ for the profinite monoid $M_p$, we consequently obtain the desired representation of topoi, $\E \simeq \Cl M_p$. Summary:
\begin{eqnarray*}
   \E & \simeq & \Sh(\E_{coh})       \hspace{1.3cm} \textrm{(see \cite{Johnstone_elephant}.)}\\
      & \simeq & \Sh(\Cl_f M_p) \hspace{0.5cm} \textrm{($\because$ Theorem \ref{duality}, \S \ref{s4} and Lemma \ref{coherent objects form a semigalois category} above.)}\\
      & \simeq & \Cl M_p. \hspace{1.3cm} \textrm{($\because$ Theorem \ref{coherence of BM}.)}
\end{eqnarray*}
This complete the proof.
\end{proof}

\section{Discussions}
\label{s7}
\noindent
For a reconsideration of Eilenberg's variety theory, this paper put a special emphasis on the consideration of semi-galois categories. The first three sections (\S \ref{s2} -- \S \ref{s4}), for this aim, were devoted to basic studies on the general structure of semi-galois categories. Returning to Eilenberg theory, we discussed in \S \ref{s5} that semi-galois categories are essentially equivalent structures to local varieties of finite actions (Lemma \ref{char of local varieties of finite actions}); this axiomatization of classical structures in Eilenberg theory yielded a rather conceptual description of the variety theorems \cite{Straubing,Chaubard_Pin_Straubing} as a natural consequence of duality theorems betweem abstract algebras; we must, however, discuss now what this axiomatization of the theory is actually intended for. 

In the presentation made at LICS'16, we have posed roughly two types of problems concerning this axiomatization of Eilenberg's theory: One is \emph{logical} and the other is \emph{geometric}. Also, since our formulation of the theory is slightly different from those developed in \cite{Adamek_general,Chen_Urbat,Bojanczyk,Adamek_category}, our result implicitly indicates some new directions to extend Eilenberg's variety theory. We now discuss these problems in the rest of this paper in order to clarify some background motivation of the current study. However, these problems remain open in this paper; some of them will be pursued in future work.

\paragraph{Logical problem}
In the development of Eilenberg theory, B\"uchi's monadic second-order logic MSO[$<$] over finite words has been of central concern; indeed, historically speaking, Eilenberg's theory has been studied, in some sense, so as to develop a finite-semigroup-based method of deciding the definability of regular languages by several fragments of MSO[$<$] (cf.\ Remark \ref{why variety}, \S \ref{section var one}). To the best of our knowledge, however, we do not yet have a satisfactory conceptual understanding of this relationship between Eilenberg's variety theory and MSO[$<$] (or other logics) over finite words. The logical problem of our concern is about this problem in particular. 

In \S \ref{s5} and \S \ref{s6}, we saw that local varieties of regular languages (classes of regualr languages typically defined by logics over finite words such as FO[$<$] and MSO[$<$]) bijectively correspond--- via local varieties of finite actions--- to semi-galois categories $\sgc$; and hence (cf.\ \S \ref{s6}), also to coherent noetherian topoi $(\E,p)$ with coherent surjective points $p: \sets \rightarrow \E$. In particular, the latter structures have several logical descriptions. For example, being coherent topoi, such topoi $\E$ can be realized as classifying topoi of some \emph{coherent theories} \cite{Johnstone}; in other words, one can say that local varieties of regular languages correspond to coherent theories in this way. Nevertheless, coherent theories are logics of many sorts, while MSO[$<$] or other variants over finite words are typically logics of single sort. Therefore, our axiomatic description of local varieties of regular languages (by semi-galois categories and coherent noetherian topoi) themselves does not answer to our problem directly. In view of the relationship between coherent logics and coherent topoi, a natural question may be to design a natural class of logics over finite words so that they describe exactly the local varieties of regular languages. Concerning this problem, nothing is achieved in this paper; but the axiomatic nature of our formulation of Eilenberg theory may be an auxiliary step for this logical problem.

\paragraph{Geometric problem}
Unlike the case of galois categories, we did not consider here concerte geometric constructions of semi-galois categories; but we regard this problem as of particular importance because such constructions will give a concerte meaning (or \emph{semantics}) of Eilenberg's variety theory, hence, a new nuance of classifying hierarchies of regular languages; and conversely, a new insight on number theory as well. 

Since semi-galois categories are seamless extension of galois categories, it seems reasonable to expect that typical constructions of galois categories--- e.g.\ those $\Et(S)$ consisting of \emph{finite {\'e}tale coverings over connected schemes} $S$ (cf.\ \cite{Lenstra}) in particular--- can be extended so that the resulting category forms a semi-galois category rather than a galois category. A natural idea is to relax some condition of this construction; for instance, to drop the {\'e}taleness of coverings, although this naive idea itself is not successful at least to our current understanding. Concerning this problem, it may be meaningful to remark that we cannot prove yet the uniqueness of fiber functors on a semi-galois category, while so are those on a galois category; in fact, we vaguely consider that they are not unique, even up to isomorphism. In the case of galois categories, say $\Et(S)$, the uniqueness of fiber functors corresponds to the fact that the fundamental group $\pi_1(S,\xi)$ does not depend on the choice of the geometric point $\xi$ of $S$ up to isomorphism; in particular, in the case where $S$ is the spectrum $\spec(k)$ of a field $k$, this concerns the fact that the separable closure $k^{sep}$ of $k$ is unique up to isomorphism. The non-uniqueness of fiber functors on a semi-galois category may be a hint of what situation yields semi-galois categories; this problem represents one of the most important future direction of the current work.

\paragraph{Extension of the theory}
As mentioned in the introduction (\S \ref{s1s1}), one of leading motivations to review Eilenberg's theory is to seek a right direction to extend the theory to more subtle situations beyond regular languages. Recently, several authors \cite{Adamek_general,Adamek_category,Chen_Urbat,Bojanczyk} have being tackling on this problem with their respective generic frameworks. 

The result developed in this paper may provide yet another direction for this problem. For instance, in the precise sense described in \S \ref{s5} and \S \ref{s6}, the class of (i) local varieties of regular languages corresponds to that of (ii) coherent noetherian topoi $\langle \E,p \rangle$. In a sense, this gives a topos-based classification of language classes known as local varieties of regular languages. In view of this, it may be possible to ask e.g.\ what class of topoi appear in place of (ii) if we replace (i) with other classes of languages beyond regular languages. If this line of question could make sense in some way, the solution to the question may yield a rather systematic classification of language hierarchies by those of topoi--- to which we can apply several description from the logical and geometric point of view--- seamlessly extending Eilenberg's variety theory. Although this is just an informal (and very naive) description of future direction, it may be worthwile seeking some topos-theoretic extension of Eilenberg's theory in this sense. 

\bibliographystyle{abbrvnat}
\bibliography{preprint_semigalois}

\begin{thebibliography}{39}
\providecommand{\natexlab}[1]{#1}
\providecommand{\url}[1]{\texttt{#1}}
\expandafter\ifx\csname urlstyle\endcsname\relax
  \providecommand{\doi}[1]{doi: #1}\else
  \providecommand{\doi}{doi: \begingroup \urlstyle{rm}\Url}\fi

\bibitem[Abe(1980)]{Abe}
E.~Abe.
\newblock \emph{Hopf Algebras}.
\newblock Number~74 in Cambridge Tracks in Mathematics. Cambridge University
  Press, Cambridge New York, 1980.

\bibitem[Ad{\'{a}}mek et~al.(2014)Ad{\'{a}}mek, Milius, Myers, and
  Urbat]{Adamek_general}
J.~Ad{\'{a}}mek, S.~Milius, R.~Myers, and H.~Urbat.
\newblock Generalized {E}ilenberg variety theorem {I:} local varieties of
  languages.
\newblock In \emph{Foundations of Software Science and Computation Structures},
  pages 366--380, 2014.

\bibitem[Ad{\'{a}}mek et~al.(2015)Ad{\'{a}}mek, Myers, Urbat, and
  Milius]{Adamek_category}
J.~Ad{\'{a}}mek, R.~Myers, H.~Urbat, and S.~Milius.
\newblock Varieties of languages in a category.
\newblock In \emph{30th Annual {ACM/IEEE} Symposium on Logic in Computer
  Science, {LICS} 2015, Kyoto, Japan, July 6--10, 2015}, pages 414--425, 2015.

\bibitem[Almeida(1990)]{Almeida}
J.~Almeida.
\newblock Implicit operations on finite $\mathcal{J}$-trivial semigroups and a
  conjecture of i. simon.
\newblock \emph{J. Pure and Applied Algebra}, 69:\penalty0 205--218, 1990.

\bibitem[Almeida(1994)]{Almeida94}
J.~Almeida.
\newblock \emph{{Finite Semigroups and Universal Algebra}}.
\newblock World Scientific Publishing Co. Inc., River Edge, NJ, 1994.

\bibitem[Boja{\' n}czyk(2015)]{Bojanczyk}
M.~Boja{\' n}czyk.
\newblock Recognizable languages over monads.
\newblock In \emph{19th International Conference Development in Language Theory
  (DLT 2015)}, volume 9168 of \emph{Lecture Notes in Computer Science}, pages
  1--13. Springer, 2015.

\bibitem[Bruguieres(2013)]{Bruguieres}
A.~Bruguieres.
\newblock {G}alois-{G}rothendieck duality, {T}annaka duality and {H}opf
  (co)monads, 2013.
\newblock \url{http://www.math.univ-montp2.fr/~bruguieres/recherche.html}.

\bibitem[Bruguieres(2016)]{Bruguieres_personal}
A.~Bruguieres, February 2016.
\newblock personal communication.

\bibitem[B{\"u}chi(1960)]{Buchi}
R.~B{\"u}chi.
\newblock Weak second-order arithmetic and finite automata.
\newblock \emph{Zeitschrift fur mathematische Logik und Grundlagen der
  Mathematik}, pages 66--92, 1960.

\bibitem[Chaubard et~al.(2006)Chaubard, Pin, and
  Straubing]{Chaubard_Pin_Straubing}
L.~Chaubard, J.-E. Pin, and H.~Straubing.
\newblock Actions, wreath products of {C}-varieties and concatenation products.
\newblock \emph{Theoret. Comput. Sci.}, 356:\penalty0 73--89, 2006.

\bibitem[Chen and Urbat(2015)]{Chen_Urbat}
L.-T. Chen and H.~Urbat.
\newblock A fibrational approach to automata theory.
\newblock In \emph{6th International Conference on Algebra and Coalgebra in
  Computer Science (CALCO 2015)}, volume~35, pages 50--65, 2015.

\bibitem[Chen and Urbat(2016)]{Chen_Urbat_product}
L.-T. Chen and H.~Urbat.
\newblock Sch{\"u}tzenberger products in a category.
\newblock In \emph{20th International Conference on Developments in Language
  Theory}, pages 89--101. Springer, 2016.

\bibitem[Chen et~al.(2016)Chen, Ad{\`a}mek, Millius, and
  Urbat]{Millius_profinite_monad}
L.-T. Chen, J.~Ad{\`a}mek, S.~Millius, and H.~Urbat.
\newblock Profinite monads, profinite equations, and {R}eiterman's theorem.
\newblock In \emph{FoSSaCS'16}, 2016.

\bibitem[Deligne and Milne(1982)]{Deligne_Milne}
P.~Deligne and J.~Milne.
\newblock Tannakian categories, in {H}odge cycles, motives, and {S}himura
  varieties.
\newblock 900:\penalty0 101--228, 1982.

\bibitem[Diekert et~al.(2008)Diekert, Gastin, and Kufleitner]{survey_logic}
V.~Diekert, P.~Gastin, and M.~Kufleitner.
\newblock A survey on small fragments of first-order logic over finite words.
\newblock \emph{Int. J. Found. Compt. Sci.}, 19:\penalty0 513--548, 2008.

\bibitem[Eilenberg(1976)]{Eilenberg}
S.~Eilenberg.
\newblock \emph{Automata, Languages and Machines, vol. B}.
\newblock Academic Press, New York, 1976.

\bibitem[Gehrke et~al.(2008)Gehrke, Grigorieff, and Pin]{Gehrke_Grigorieff_Pin}
M.~Gehrke, S.~Grigorieff, and J.~Pin.
\newblock Duality and equational theory of regular languages.
\newblock In \emph{Automata, Languages and Programming, 35th International
  Colloquium, {ICALP} 2008}, pages 246--257, 2008.

\bibitem[Gehrke et~al.(2016)Gehrke, Krebs, and Pin]{Gehrke_ultrafilter}
M.~Gehrke, A.~Krebs, and J.~Pin.
\newblock Ultrafilters on words for a fragment of logic.
\newblock \emph{Theoretical Computer Science}, pages 37--58, 2016.

\bibitem[Grothendieck and Raynaud(2002)]{SGA}
A.~Grothendieck and M.~Raynaud.
\newblock Rev{\^{e}}tements {\`{e}}tales et groupe fondamental ({SGA} 1).
\newblock Available from: \url{http://arxiv.org/abs/math/0206203}, 2002.

\bibitem[Hopcroft et~al.(2006)Hopcroft, Motwani, and Ullman]{Hopcroft_Ulman}
J.~Hopcroft, R.~Motwani, and J.~Ullman.
\newblock \emph{Introduction to Automata Theory, Languages, and Computation}.
\newblock Addison Wesley, 2006.

\bibitem[Johnstone(2014)]{Johnstone}
P.~Johnstone.
\newblock \emph{Topos Theory}.
\newblock Dover Publications, 2014.

\bibitem[Johnstone(2003)]{Johnstone_elephant}
P.~Johnstone.
\newblock \emph{Sketches of An Elephant: A Topos Theory Compendium Vol. 2}.
\newblock Clarendon Press, 2003.

\bibitem[Lenstra()]{Lenstra}
H.~Lenstra.
\newblock Galois theory for schemes.

\bibitem[MacLane(1978)]{MacLane}
S.~MacLane.
\newblock \emph{Categories for the working mathematician}.
\newblock Springer-Verlag New York, 1978.

\bibitem[MacLane and Moerdijk(1992)]{MacLane_Moerdijk}
S.~MacLane and I.~Moerdijk.
\newblock \emph{Sheaves in Geometry and Logic}.
\newblock Springer New York, 1992.

\bibitem[Moerdijk and Vermeulen(2000)]{Moerdijk_proper}
I.~Moerdijk and J.~Vermeulen.
\newblock \emph{Proper maps of toposes}.
\newblock American Mathematical Society, 2000.

\bibitem[Pin(1995)]{Pin_var}
J.~Pin.
\newblock A variety theorem without complementation, 1995.

\bibitem[Pin()]{Pin}
J.-E. Pin.
\newblock Mathematical foundation of automata theory.
\newblock Available from:
  \url{http://www.liafa.jussieu.fr/~jep/PDF/MPRI/MPRI.pdf}.

\bibitem[Pippenger(1997)]{Pippenger}
N.~Pippenger.
\newblock {Regular Languages and {S}tone Duality}.
\newblock \emph{Theory Comput. Syst.}, 30\penalty0 (2):\penalty0 121--134,
  1997.

\bibitem[Pol{\' a}k(2001)]{Polak}
L.~Pol{\' a}k.
\newblock Syntactic semirings of a language.
\newblock In \emph{International Symposium on Mathematical Foundations of
  Computer Science (MFCS)}, volume 2136 of \emph{Lecture Notes in Computer
  Science}, pages 611--620. Springer, 2001.

\bibitem[Priestley(1970)]{Priestley}
H.~Priestley.
\newblock Representation of distributive lattices by means of ordered {S}tone
  spaces.
\newblock \emph{Bull. London Math. Soc.}, 2:\penalty0 186--190, 1970.

\bibitem[Reiterman(1982)]{Reiterman}
J.~Reiterman.
\newblock The {B}irkhoff theorem for finite algebras.
\newblock \emph{Algebra Universalis}, 14\penalty0 (1):\penalty0 1--10, 1982.

\bibitem[Reutenauer(1980)]{Reutenauer}
C.~Reutenauer.
\newblock S{\'e}ries formelles et alg{\`e}bres syntactiques.
\newblock \emph{J. Algebra}, 66:\penalty0 448--483, 1980.

\bibitem[Rhodes and Steinberg(2008)]{Rhodes_Steinberg}
J.~Rhodes and B.~Steinberg.
\newblock \emph{The $\mathfrak{q}$-Theory of Finite Semigroups}.
\newblock Springer Monographs in Mathematics. Springer, 2008.

\bibitem[Sch{\"a}ppi(2011)]{Schappi}
D.~Sch{\"a}ppi.
\newblock The formal theory of tannaka duality.
\newblock 2011.
\newblock arXiv:1112.5213[math.CT].

\bibitem[Straubing(2002)]{Straubing}
H.~Straubing.
\newblock On logical descriptions of regular languages.
\newblock In \emph{LATIN 2002}, Lect. Notes Comp. Sci. Springer, 2002.

\bibitem[Szamuely(2009)]{Szamuely}
T.~Szamuely.
\newblock \emph{Galois Groups and Fundamental Groups}.
\newblock Cambridge University Press, 2009.

\bibitem[Tonini(2009)]{Tonini}
F.~Tonini.
\newblock Notes on {G}rothendieck-{G}alois theory, October 2009.

\bibitem[Uramoto(2016)]{Uramoto16}
T.~Uramoto.
\newblock Semi-galois categories {I}: The classical {E}ilenberg variety theory.
\newblock In \emph{Proc. of LICS'16}, pages 545--554. ACM, 2016.

\end{thebibliography}

\end{document}